\documentclass{amsart}

\usepackage[english]{babel}
\usepackage[utf8]{inputenc}
\usepackage[T1]{fontenc}

\usepackage{amsmath,amsfonts,amssymb,amscd}
\usepackage{mathrsfs}
\usepackage{float,enumitem}
\usepackage{hyperref}
\usepackage{ifthen}
\usepackage{tikz}
\usepackage{tikz-cd}
\usepackage{graphicx}

\title[Convergence of locally symmetric spaces]{Benjamini--Schramm convergence of arithmetic locally symmetric spaces}

\author{Miko\l{}aj Fr{a}czyk}
\address{Jagiellonian University, Faculty of Mathematics and Computer Science, ul. {\L}ojasiewicza 6, 30-348 Krak{\'o}w, Poland }
\email{mikolaj.fraczyk@uj.edu.pl}
\author{Sebastian Hurtado}
\address{Yale University, Department of Mathematics, 10 Hillhouse Ave, New Haven, CT 06511}
\email{sebastian.hurtado-salazar@yale.edu}
\author{Jean Raimbault}
\address{Institut de Mathématiques de Marseille, UMR 7373, CNRS, Aix-Marseille Université}
\email{jean.RAIMBAULT@univ-amu.fr}

\DeclareFontFamily{U}{wncy}{}
\DeclareFontShape{U}{wncy}{m}{n}{<->wncyr10}{}
\DeclareSymbolFont{mcy}{U}{wncy}{m}{n}
\DeclareMathSymbol{\Sha}{\mathord}{mcy}{"58}

\newcommand{\liea}{\mathfrak{a}}

\newcommand{\sub}{\mathrm{Sub}}
\newcommand{\eps}{\varepsilon}

\newcommand{\wdt}[1]{\widetilde #1}

\newcommand{\ovl}[1]{\overline #1}

\newcommand{\bs}{\backslash}

\newcommand{\vol}{\operatorname{vol}}
\newcommand{\Vol}{\operatorname{vol}}

\newcommand{\tr}{\operatorname{tr}}

\newcommand{\rk}{\operatorname{rk}}

\newcommand{\Aut}{\operatorname{Aut}}
\renewcommand{\hom}{\operatorname{Hom}}
\newcommand{\Hom}{\operatorname{Hom}}
\newcommand{\inj}{\operatorname{inj}}

\newcommand{\id}{\operatorname{Id}}
\newcommand{\ad}{\operatorname{Ad}}
\newcommand{\Gal}{\operatorname{Gal}}

\newcommand{\Fix}{\operatorname{Fix}}

\newcommand{\interval}[4]{
  \ifthenelse{ \equal{#1}{o} } {\mathopen{]}} {\mathopen{[}}
  #2, #3
  \ifthenelse{ \equal{#4}{o} } {\mathclose{[}} {\mathclose{]}}
}

\newcommand{\T}{\mathbf{T}}
\renewcommand{\H}{\mathbf{H}}

\newcommand{\V}{\mathbf{V}}
\newcommand{\Z}{\mathbf{Z}}
\newcommand{\N}{\mathbf{N}}
\newcommand{\G}{\mathbf{G}}

\newcommand{\SL}{\mathrm{SL}}

\newcommand{\SO}{\mathrm{SO}}

\newcommand{\GL}{\mathrm{GL}}
\newcommand{\PGL}{\mathrm{PGL}}

\newcommand{\val}{\mathrm{V}}

\newcommand{\bG}{\mathbf{G}}

\newcommand{\mo}{\mathfrak{o}}

\newcommand{\res}{\mathrm{res}}

\newcommand{\Stab}{\mathrm{Stab}}

\newcommand{\CC}{\mathbb C}
\newcommand{\RR}{\mathbb R}
\newcommand{\ZZ}{\mathbb Z}

\newcommand{\QQ}{\mathbb Q}

\newcommand{\NN}{\mathbb N}

\newcommand{\Ade}{\mathbb A}

\newcommand{\InjRad}{\text{InjRad}}

\newcommand{\B}{\mathrm{B}}

\numberwithin{equation}{section}

\begin{document}

\newtheorem{theorem}{Theorem}[section]
\newtheorem{lemma}[theorem]{Lemma}
\newtheorem{corollary}[theorem]{Corollary}
\newtheorem{proposition}[theorem]{Proposition}
\newtheorem{conjecture}[theorem]{Conjecture}
\newtheorem{question}[theorem]{Question}
\newtheorem{remark}[theorem]{Remark}
\newtheorem{definition}[theorem]{Definition}

\newtheorem{theostar}{Theorem}
\renewcommand*{\thetheostar}{\Alph{theostar}}

\newtheorem{innertheoremrep}{Theorem}
\newenvironment{theoremrep}[1]
  {\renewcommand\theinnertheoremrep{#1}\innertheoremrep}
  {\endinnertheoremrep}

\maketitle

\begin{abstract}
  We prove that the thin parts of arithmetically defined locally symmetric space take up a negligible part of their volume and deduce asymptotic results on their Betti numbers. 
\end{abstract}

\setcounter{tocdepth}{1}
\tableofcontents


\section{Introduction}

In \cite{7Sam} the notion of Benjamini--Schramm convergence for sequences of locally symmetric spaces of finite volume was introduced, and in \cite{Abert_Biringer} it was generalised to Riemannian manifolds. This gives a compactification for the space of finite-volume quotients of a symmetric space $X$, at the cost of introducing limit points which are in general not manifolds themselves but instead {\em unimodular random manifolds}: random pointed manifolds satisfying an additional homogeneity condition. The simplest examples of such are finite-volume manifolds (the root can be chosen randomly according to the normalised volume measure) and symmetric spaces (there is a unique root). In the setting of locally symmetric spaces an almost equivalent notion (via the correspondence between discrete subgroups of Lie groups and locally symmetric spaces) is that of an {\em invariant random subgroup} of the isometry group, which is simply an invariant probability distribution on the Chabauty space of closed subgroups. Convergence in this space is usually called {\em Benjamini--Schramm convergence}. 

In the context of graphs Benjamini--Schramm convergence was introduced in \cite{BeSc01}; the relation with invariant random subgroups (in discrete groups) was introduced in \cite{AGV14}. A survey of the topic with emphasis on the locally symmetric setting is given in \cite{Gelander_ICM}. Concretely, for a sequence of manifolds $M_n$ of finite volume, locally isometric to a symmetric space $X$, Benjamini--Schramm convergence to $X$ (henceforth abbreviated as "BS-convergence") is equivalent to the following condition: 
\begin{equation} \label{BSconv_univcover}
  \forall R > 0 \: \lim_{n \to +\infty} \frac{\vol((M_n)_{\le R})}{\vol M_n} = 0 
\end{equation}
where for a Riemannian manifold $M$ we denote by $M_{\le R}$ its $R$-thin part, that is the subset of points where the injectivity radius is at most $R$. (For an orbifold we can also define the $R$-thin part, which will include the singular locus.)

It follows from the Stuck--Zimmer theorem \cite{Stuck_Zimmer} that, when $X$ is of higher rank without real- or complex-hyperbolic factors, BS-convergence to $X$ always holds for proper sequences of irreducible manifolds (see \cite[Theorem 1.5]{7Sam}). Besides the higher-rank case, a far-reaching example of such convergence taking place is that of congruence covers of a fixed manifold \cite[Theorem 1.12]{7Sam}. In \cite{Fraczyk} the first author showed that for real-hyperbolic 2- and 3-manifolds this is also true for sequences of non-commensurable arithmetic manifolds. The main result in the present paper is the following generalisation of this result to all symmetric spaces $X$ (see \ref{prelim_lattices} for the general definition of congruence lattices). 

\begin{theostar} \label{Main_BSconv}
  Let $G$ be a noncompact semisimple Lie group, let $X$ be its symmetric space and let $\Gamma_n$ be a sequence of pairwise non-conjugate congruence arithmetic lattices. Then, \eqref{BSconv_univcover} holds for the sequence of locally symmetric spaces $\Gamma_n \bs X$. 
\end{theostar}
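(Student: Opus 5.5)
We reduce the BS-convergence statement \eqref{BSconv_univcover} to a counting problem on conjugacy classes, and then attack that problem with arithmetic tools. For $\Gamma\bs X$, a point $x$ lies in the $R$-thin part iff some nontrivial $\gamma\in\Gamma$ displaces $\tilde x$ by at most $2R$. Hence
\[ \vol((\Gamma\bs X)_{\le R}) \le \sum_{[\gamma]\neq 1,\ \gamma \text{ ``$R$-small''}} \vol\big(\Gamma_\gamma\bs \{\tilde x : d(\tilde x,\gamma\tilde x)\le 2R\}\big). \]
The plan is to bound the right-hand side by $o(\vol\Gamma_n\bs X)$. Because $\vol(\Gamma\bs X)\to\infty$ for pairwise non-conjugate congruence lattices (Borel--Prasad finiteness), it is enough to show that this sum is sublinear in the covolume.

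\textbf{Step 1: reduce to bounded invariant trace field and a fixed commensurability datum.} Congruence lattices come from a triple $(k,\G,(P_v))$, and Prasad's volume formula gives a lower bound on the covolume of the form $D_k^{\dim\G/2}\cdot D_\ell^{s}\cdot\prod_v e(P_v)$. After passing to subsequences, I separate three regimes. In the first, the degree or discriminant of $k$ tends to infinity. In the second, the datum $(k,\G)$ is fixed and the local factors at the parahorics $P_v$ blow up, so that there are many ``deep level'' places. In the third, both are bounded, which leaves only finitely many maximal lattices and pushes us into the congruence-cover situation. The third regime is handled by \cite[Theorem 1.12]{7Sam}, possibly combined with a uniform spectral gap argument (property $(\tau)$ for congruence subgroups).

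\textbf{Step 2: count small-displacement conjugacy classes.} A semisimple $\gamma$ with small displacement has bounded archimedean eigenvalues. Its characteristic polynomial therefore has integral coefficients bounded at all real places, with degree bounded in terms of $[k:\QQ]$. The number of such polynomials is at most $C^{[k:\QQ]}$, which is exponentially smaller than $D_k^{\dim\G/2}$ once the degree grows. This step uses a Lehmer-type (Dobrowolski) bound to handle the case where $\gamma$ is close to unipotent. For each characteristic polynomial, the number of $\Gamma$-conjugacy classes and the volume of the centralizer quotient $\Gamma_\gamma\bs G_\gamma$ are bounded via the orbital-integral/class-number estimate. The local factor at a place $v$ counts fixed points of $\gamma$ on $G_v/P_v$, which is a small proportion of $[G_v:P_v]$ when $\gamma\neq1$ is not too close to the identity in $G_v$. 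Multiplying the local contributions then beats $\prod_v e(P_v)$. The non-semisimple and unipotent-heavy elements in non-cocompact lattices are dealt with separately: the cusp thin part has small volume by reduction theory (Siegel sets).

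\textbf{Main obstacle.} The hardest step is the uniform global estimate on $\vol(\Gamma_\gamma\bs G_\gamma)$ and on the number of conjugacy classes in $\Gamma$ with a given characteristic polynomial. This requires controlling class numbers and regulators of tori $\G_\gamma$ over $k$, uniformly in $k$, by a quantity like $D_k^{\epsilon}\cdot$(discriminant of the splitting field)$^{1/2+\epsilon}$. That bound must then be compared with the volume formula. My first attempt would be to use Ono's formula for Tamagawa numbers of tori together with Brauer--Siegel-type upper bounds. If those prove insufficient in the fixed-field regime, I would fall back on Step 1 combined with the local fixed-point counting on Bruhat--Tits buildings, where the savings from deep parahorics should dominate.
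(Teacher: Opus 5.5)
There is a genuine gap at the centre of your plan. Step~2 needs uniform bounds on orbital integrals, on the number of classes, and on $\vol(\Gamma_\gamma\bs G_\gamma)$ for \emph{every} nontrivial $R$-small class. Such bounds are only available, and are only proved in the paper, for strongly regular, $\RR$-regular $\gamma$.

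For irregular semisimple elements one has $\Delta(\gamma)=0$; these include torsion elements producing the singular locus and elements whose centralizer has a semisimple factor. The building argument then says nothing: it shows the fixed set of $\gamma$ lies in the $c\,v(\Delta(\gamma))$-neighbourhood of an apartment, which gives $\mathcal O(\gamma,1_{U_v})\le|\Delta(\gamma)|_v^{-a}$. The centralizer is also a reductive group rather than a torus, so there is no analogue of $\vol(\T(k)\bs\T(\Ade))\ll\Delta_k^{\dim\T/2+\eps}$. Even for tori, that bound relies on $N_{k/\QQ}(\Delta_{L/k})=O(1)$ from Proposition~\ref{bound_disc_splitting_field}; a Brauer--Siegel bound in terms of $D_L\ge D_k^{[L:k]}$ would swamp the saving.

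The saving also does not come from ``few fixed points on $G_v/P_v$''. The normalised local orbital integral equals $1$ at almost all places. The gain is the global factor $\Delta_k^{(\dim\G-\dim\T)/2}N_{k/\QQ}(\Delta_{\ell/k})\prod_{v\in S_U}q_v$ in Prasad's formula.

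The missing idea is the non-effective reduction of Theorem~\ref{general_criterion}:
\begin{itemize}
\item A nontrivial subsequential limit IRS is a.s.\ Zariski dense (Theorem~\ref{semisimple_IRS}).
\item Zariski-dense discrete groups meet the set $W$ of $\RR$-regular, strongly $m$-regular elements (Lemma~\ref{suff_dense}).
\item So it suffices to prove $\tr R^W_{\Gamma_n}f=o(\vol(\Gamma_n\bs X))$.
\end{itemize}
This discards all irregular and unipotent classes. It makes $\Gamma_\gamma\bs G_\gamma$ compact in the non-uniform case (Lemma~\ref{strreg}), so no Siegel-set argument is needed. With $m$ chosen so that $\gamma^m\in\Gamma_U$ for all $\gamma\in\Lambda$ (Lemma~\ref{last_bound1}), it also lets you pass from a maximal lattice $\Lambda$ to its principal subgroup $\Gamma_U$ via $\gamma\mapsto\gamma^m$.

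You also skip two reductions. First, from congruence to maximal lattices by monotonicity: $\Gamma\subset\Lambda$ implies the thin-part ratio of $\Gamma\bs X$ is at most that of $\Lambda\bs X$. Second, the index loss $[\Lambda:\Gamma_U]\ll C^{|S_U|}N_{k/\QQ}(\Delta_{\ell/k})^{1/2+\eps}\Delta_k^{3/2+\eps}$ (Proposition~\ref{normaliser_index}). The saving $\Delta_k^{(\dim\G-\dim\T)/2}$ beats this only when $\dim\G-\dim\T\ge4$, i.e.\ outside type $A_1$.

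The unbounded-degree regime also fails as you set it up. The number of candidate characteristic polynomials is of order $e^{O(R)[k:\QQ]}$, with base growing with $R$. Along towers of bounded root discriminant, covolumes grow only like $c^{[k:\QQ]}$ with $c$ independent of $R$. So your claim that $C^{[k:\QQ]}$ is exponentially smaller than $D_k^{\dim\G/2}$ fails for large $R$, and the Brauer--Siegel and archimedean constants deteriorate with the degree as well. Dobrowolski's bound only yields a systole lower bound (Proposition~\ref{dobrowolski_bound}), not this saving.

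The paper uses a different mechanism here. The arithmetic Margulis lemma (Theorem~\ref{AML}) shows that elements moving a point by at most $\eps_G[k:\QQ]$ generate a virtually nilpotent group. By Lemma~\ref{bdd_index_abelian}, every Chabauty limit of conjugates of $\Gamma_n$ is then virtually abelian, contradicting Zariski density of a nontrivial IRS (Theorem~\ref{Main_unbounded_degree}, or quantitatively Theorem~\ref{unbounded_degree}).

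Finally, your three regimes omit fixed $k$ with varying $\G$ (varying $\ell$ or non-quasi-split places). This is harmless only once it is absorbed, as in the paper, into the bounded-degree argument through $N_{k/\QQ}(\Delta_{\ell/k})$ and $S_U$.
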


We note that this theorem remains valid when taking a sequence of arithmetic lattices which are not necessarily congruence but which contains no infinite subsequence of pairwise commensurable lattices (or even no infinite sequence of lattices contained in the same maximal arithmetic lattice), and that this apparently more general statement follows formally from the theorem stated above since it is well-known that maximal arithmetic lattices are congruence (see Proposition \ref{defn_congruence} below). In fact we work directly with maximal lattices in our proof of Theorem \ref{Main_BSconv}. 

One of the main motivations for introducing Benjamini--Schramm convergence is that when a sequence of finite-volume $X$-manifolds converges to $X$, one can deduce asymptotic bounds for its Betti numbers (and more generally for multiplicities of automorphic representations). A very general result for negatively curved manifolds is given by Abért--Bergeron--Biringer--Gelander in \cite[Corollary 1.4]{ABBG_bettinumbers}. As an immediate corollary of their result and Theorem \ref{Main_BSconv} we get the following. 

\begin{theostar} \label{betti_numbers}
  Let $\Gamma_n$ be a sequence of pairwise distinct, torsion-free irreducible congruence arithmetic lattices or pairwise non-commensurable irreducible arithmetic lattices in a semisimple Lie group $G$. Let $\beta_i^{(2)}(X)$ be the $L^2$-Betti numbers of the symmetric space $X$ (see \ref{L2Betti} below for a quick description). Then we have: 
  \[
  \lim_{n\to\infty} \frac{ b_i(\Gamma_n\bs X)}{\vol(\Gamma_n\bs X)} = \begin{cases} \beta_i^{(2)}(X) \text{ if } i = \dim(X)/2;  \\ 0 \text{ otherwise. }\end{cases}
  \]
\end{theostar}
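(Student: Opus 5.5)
The plan is to derive Theorem \ref{betti_numbers} from Theorem \ref{Main_BSconv} together with the approximation theorem of Abért--Bergeron--Biringer--Gelander \cite[Corollary 1.4]{ABBG_bettinumbers}. That result says: if $M_n = \Gamma_n\bs X$ are finite-volume $X$-manifolds which BS-converge to $X$ and which have a \emph{uniform lower bound on the injectivity radius} (and, in the nonuniform case, satisfy their additional technical hypotheses on the thin part), then $b_i(M_n)/\vol(M_n) \to \beta_i^{(2)}(X)$. The last ingredient is the classical computation of $L^2$-Betti numbers of symmetric spaces (Borel): $\beta_i^{(2)}(X)$ vanishes unless $\delta(G)=\rk_\CC G - \rk_\RR K = 0$ and $i=\dim(X)/2$. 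This gives the two-case formula in the statement once the limit is identified.

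First, reduce both hypotheses in Theorem \ref{betti_numbers} to the setting of Theorem \ref{Main_BSconv}. For pairwise distinct torsion-free congruence lattices, I note that each conjugacy class of lattices in $G$ with bounded covolume meets only finitely many pairwise distinct members of the sequence. Conjugate lattices give isometric quotients, so after passing to subsequences I may assume the $\Gamma_n$ are pairwise non-conjugate; alternatively, infinitely many distinct lattices in one conjugacy class would force covolume to infinity inside one commensurability class, which is also covered by the congruence-cover results of \cite{7Sam}. For pairwise non-commensurable lattices, I apply the remark after Theorem \ref{Main_BSconv}: each $\Gamma_n$ lies in a maximal arithmetic lattice, these are congruence by Proposition \ref{defn_congruence}, and they are pairwise non-conjugate. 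Since BS-convergence to $X$ passes to finite-index subgroups (the thin-part proportion can only decrease, by counting preimages), I get \eqref{BSconv_univcover} for $\Gamma_n\bs X$. It suffices to prove the limit along subsequences, so these reductions are harmless.

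Second, I check the extra hypotheses of \cite{ABBG_bettinumbers}. In the torsion-free congruence case there is a uniform injectivity radius lower bound: by Lehmer-type or Margulis-lemma arguments for arithmetic groups, elements of torsion-free arithmetic lattices have translation length bounded away from $0$ (this is the uniform discreteness input, conditional on Lehmer in general). I expect this step to be the main obstacle. The first thing I would try is to use the precise form of \cite[Corollary 1.4]{ABBG_bettinumbers}, which in the negatively curved case only needs BS-convergence together with their thick–thin control, and to invoke the corresponding higher-rank statement of \cite{7Sam} for lattices that are uniformly discrete. For lattices with torsion in the non-commensurable case, I would interpret $b_i(\Gamma_n\bs X)$ as Betti numbers of the orbifold (rational group homology). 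I would pass to torsion-free normal subgroups of bounded index where possible, or apply the orbifold version of the Lück approximation argument. Under these hypotheses, the conclusion follows directly from the cited corollary.
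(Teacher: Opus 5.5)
Your route is the paper's: Theorem \ref{Main_BSconv} fed into \cite[Corollary 1.4]{ABBG_bettinumbers}. Your treatment of the non-commensurable case is also correct: you pass to maximal overlattices and use that the thin-part proportion can only decrease under finite covers. But as written the proposal does not close. You misstate what that corollary needs, and then set yourself a step that cannot be done.

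\textbf{The injectivity radius hypothesis.} You attribute to \cite{ABBG_bettinumbers} a uniform lower bound on the injectivity radius and propose to verify one ``by Lehmer-type or Margulis-lemma arguments''. For non-uniform lattices this is simply false: the quotients have cusps, where the injectivity radius tends to $0$. For cocompact lattices with $[k:\QQ]\to\infty$, the available unconditional bound, Proposition \ref{dobrowolski_bound}, decays like $c/(\log d)^3$, and a uniform bound is a Lehmer-type problem, as you note yourself. You then leave this ``main obstacle'' open.

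The obstacle is spurious. Uniform thickness is the hypothesis of the earlier result of \cite{7Sam}, and dropping it (and allowing cusps) is exactly what \cite{ABBG_bettinumbers} adds. Benjamini--Schramm convergence to $X$ is the only input, which is why the paper calls Theorem \ref{betti_numbers} an immediate corollary of Theorem \ref{Main_BSconv}. If you prefer the uniformly discrete route of \cite{7Sam}, you must split by degree as in \ref{lim_mult_intro}. That route is Matsushima's formula plus limit multiplicities, which also handles cocompact lattices with torsion. In bounded degree the cocompact lattices are uniformly discrete; for torsion-free ones this is Proposition \ref{dobrowolski_bound}. Unbounded degree instead needs the quantitative Theorem \ref{unbounded_degree} and the argument of Section \ref{s:quantbetti}.

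\textbf{The ``pairwise distinct'' reduction.} This step is wrong. Conjugate lattices have the same covolume, so ``forcing covolume to infinity'' cannot happen within a conjugacy class. A single conjugacy class can contain infinitely many distinct torsion-free congruence lattices. For example, take $g_n\Gamma g_n^{-1}$ for a fixed such $\Gamma\subset\G(k)$ and $g_n\in\G(k)$ pairwise distinct modulo $N_G(\Gamma)$. Such a sequence has no pairwise non-conjugate subsequence, $b_i/\vol$ is constant along it, and the conclusion fails. So ``pairwise distinct'' must be read as ``pairwise non-conjugate'', as in Theorem \ref{Main_BSconv}, and then no reduction is needed.

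Likewise ``non-commensurable'' has to be understood up to conjugation; otherwise conjugate a fixed lattice by $g_n$ outside its commensurator. With that reading, your claim that the maximal overlattices are pairwise non-conjugate is correct.

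Minor point: the equal-rank condition is $\rk_\CC G=\rk_\CC K$, not $\rk_\RR K$.
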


When all $\Gamma_n$ are uniform we do not need the general result of \cite{ABBG_bettinumbers} to prove this, as we can deduce it from more general results on limit multiplicities for automorphic representations. This is explained in detail in \ref{lim_mult_intro} just below. In the case where the degree of trace fields goes to infinity, we have more precise estimates on the geometric convergence from our previous work \cite[Theorem D]{FHR_complexity} (see Theorem \ref{unbounded_degree} below) and we can deduce more precise bounds on Betti numbers away from the middle degree. 

\begin{theostar} \label{quant_betti_numbers}
  Let $i\in 0,1,\ldots, \dim X$. Then 
  \[
  b_i(\Gamma\bs X)= \vol(\Gamma\bs X) \cdot \left(\beta_i^{(2)}(X) + O([k:\QQ]^{-1}) \right)
  \]
  for all uniform arithmetic lattices $\Gamma \subset G$. 
\end{theostar}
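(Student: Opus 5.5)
We will deduce Theorem \ref{quant_betti_numbers} from the quantitative geometric convergence of \cite[Theorem D]{FHR_complexity} (Theorem \ref{unbounded_degree} below), combined with a trace-formula type argument on the compact quotient $\Gamma\bs X$. Here $k$ is the trace field of $\Gamma$. If $\Gamma$ has torsion, $b_i$ is understood as the rational Betti number of the orbifold, i.e.\ the dimension of the $\Gamma$-invariants of the de Rham/Hodge cohomology. We expect the statement of Theorem \ref{unbounded_degree} to be the following: there exist constants $R_0>0$ and $C$, depending only on $G$, such that for every uniform arithmetic $\Gamma$ with trace field $k$,
\[
\vol\big((\Gamma\bs X)_{\le R_0}\big)\le C\,[k:\QQ]^{-1}\,\vol(\Gamma\bs X).
\]
We will in fact need a slightly stronger form, namely the same bound with $R_0$ replaced by $R_0[k:\QQ]$ or at least by a radius growing linearly in the degree. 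This is the kind of statement that comes from counting short closed geodesics via Lehmer-type (Dobrowolski) bounds on algebraic integers. Before proceeding, we will check which form is actually available.

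\textbf{Step 1: heat kernel expression.} We write $b_i(\Gamma\bs X)=\lim_{t\to\infty}\tr e^{-t\Delta_i}$ on $\Gamma\bs X$. By the Hodge decomposition and the spectral gap (Property (T)-free version), for uniform $\Gamma$ the nonzero spectrum of $\Delta_i$ is bounded below by a constant $\lambda_0>0$ depending only on $G$. This is the uniform spectral gap for $\Delta_i$ on $L^2\Omega^i$ away from the harmonic part, valid except in the middle degree or in degrees where $\beta_i^{(2)}$ vanishes and the trivial contributions are controlled. Granting it, we get
\[
0\le \tr e^{-t\Delta_i}-b_i\le e^{-t\lambda_0}\,\#\{\text{eigenvalues}\}\text{-weighted}\ \lesssim e^{-(t-1)\lambda_0}\,\tr e^{-\Delta_i}\lesssim e^{-(t-1)\lambda_0}\vol .
\]
We do not actually need a uniform gap: it suffices to use the standard argument of \cite{ABBG_bettinumbers} (or \cite{7Sam}) that $\tr e^{-t\Delta_i}/\vol - \beta_i^{(2)}$ is controlled by the thin part plus the spectral measure near $0$. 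To get a rate, however, we will use the spectral gap for uniform lattices coming from the Burger--Sarnak/Clozel bounds towards Ramanujan at the infinite place. Away from the middle degree this gives a uniform gap for $\Delta_i$ on non-harmonic forms, by Vogan--Zuckerman classification of cohomological representations.

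\textbf{Step 2: geometric side.} Lifting to $X$, we have $\tr e^{-t\Delta_i}=\int_{\Gamma\bs X}\sum_{\gamma\in\Gamma}\tr\big(\gamma^*k_t(x,\gamma x)\big)\,dx$. The identity term contributes $\vol(\Gamma\bs X)\,\tr k_t(x,x)$, which tends to $\beta_i^{(2)}(X)\vol$ at rate $e^{-c t}$. For the nonidentity terms we use the Gaussian decay $|k_t(x,y)|\le C_t e^{-d(x,y)^2/(5t)}$ together with the lattice-point count $\#\{\gamma : d(x,\gamma x)\le r\}\le C e^{c r}/\inj(x)^{\dim X}$ (with the obvious modification at torsion points). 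Splitting the integral into the $R$-thin part and its complement then gives, for $x$ in the $R$-thick part, a contribution $\le C_t e^{-R^2/(6t)}$. On the thin part the contribution is bounded by $C_t$ times the local multiplicity, and this is again bounded by the Margulis-lemma counts used in the proof of Theorem \ref{unbounded_degree}.

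\textbf{Step 3: choice of parameters, and the main obstacle.} We take $t$ of size $\log[k:\QQ]$ so that the spectral error $e^{-t\lambda_0}$ is $O([k:\QQ]^{-1})$. We then need $R\gg t$ so that $e^{-R^2/6t}=O([k:\QQ]^{-1})$, and this requires the thin part at radius $R\asymp\log[k:\QQ]$ to have relative volume $O([k:\QQ]^{-1})$. This is the main obstacle: it requires the form of Theorem \ref{unbounded_degree} with a radius growing at least logarithmically in the degree, together with control of the local multiplicity (number of short loops) on that thin part. We will try to get it by extracting it directly from the proof in \cite{FHR_complexity}, where the bound on the number of elements of trace-translation length $\le R$ grows like $e^{cR}$ while the saving is exponential in $[k:\QQ]$. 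In the middle degree, the absence of a spectral gap is handled because the error there is also absorbed: the $L^2$-Betti number is positive and equals the identity contribution, and we will use the Euler characteristic identity $\chi=\vol\cdot\chi^{(2)}$ (exact, by Gauss--Bonnet) together with the other degrees to control $b_{\dim X/2}$.
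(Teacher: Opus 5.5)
Your heat-kernel route is genuinely different from the paper's, and as set up it cannot reach the rate $O([k:\QQ]^{-1})$.

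\textbf{The main gap: Step~2.} You claim that $\tr k^{(i)}_t(x,x)\to\beta^{(2)}_i(X)$ at rate $e^{-ct}$. This fails in general.
\begin{itemize}
\item When $\delta(G)=\rk_\CC G-\rk_\CC K>0$, zero lies in the continuous $L^2$-spectrum of $\Delta_i$ on $X$ for $i$ in a window around $\dim X/2$. There the decay is only $t^{-\alpha_i/2}$, where $\alpha_i$ is the Novikov--Shubin invariant. For $X=\mathbb H^3$ and $i=1,2$ it is $t^{-1/2}$.
\item Your bound on the non-identity terms pits the Gaussian $e^{-r^2/Ct}$ against orbit counts growing like $e^{cr}$. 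That only works for $t\lesssim R$, and $R$ is at most of order $[k:\QQ]$.
\item Theorem~\ref{unbounded_degree} already gives radius $\eta[k:\QQ]$ with exponentially small thin part. So the ``main obstacle'' you name in Step~3 is not the real one.
\item Even with all of this, $b_i\le\tr e^{-t\Delta_i}$ only yields $b_i\ll\vol(\Gamma\bs X)\,[k:\QQ]^{-\alpha_i/2}$. For compact arithmetic hyperbolic $3$-manifolds that is $[k:\QQ]^{-1/2}$.
\end{itemize}
The loss comes from the heat operator weighting all of the spectrum near $0$. This includes the continuum of tempered non-cohomological representations accumulating at the cohomological ones.

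\textbf{Step~1 is also wrong as stated.} The theorem covers non-congruence lattices, and for these there is no uniform gap. For example, cyclic covers of a compact arithmetic hyperbolic $3$-manifold with $b_1>0$ have nonzero eigenvalues of $\Delta_0$ tending to $0$, hence also of $\Delta_1$ on exact forms. Burger--Sarnak and Clozel concern congruence subgroups only, and Vogan--Zuckerman says nothing about isolation. Away from the middle degree you do not need a gap anyway, since $b_i\le\tr e^{-t\Delta_i}$ for every $t$.

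\textbf{What the paper does instead.} It avoids the loss by isolating the finitely many cohomological $\pi$ through Matsushima's formula, then bounding each $m(\pi,\Gamma)$ for non-discrete-series $\pi$ directly.
\begin{itemize}
\item Savin's argument gives $m(\pi,\Gamma)\le\|\phi_r\|^{-2}\int_{\Gamma\bs G}N_\Gamma(x,2r)\,dx$, with $\phi_r=1_{B(r)}\langle\pi(g)v,v\rangle$.
\item The count $N_\Gamma(x,2r)\le[k:\QQ]^m$ for $r\le\frac{\eps}2[k:\QQ]$, from \cite{FHR_complexity}, together with Theorem~\ref{unbounded_degree}, gives $\int N_\Gamma\le(1+o(1))\vol(\Gamma\bs G)$.
\item The Harish-Chandra expansion shows $\|\phi_r\|^2$ grows at least linearly in $r$ for tempered $\pi$, and like $e^{a_\pi r}$ for non-tempered $\pi$.
\item Taking $r\asymp[k:\QQ]$ gives the stated rate.
\end{itemize}

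\textbf{Middle degree.} Your Euler-characteristic idea is a reasonable complement, since the paper treats only non-discrete-series $\pi$ explicitly. However, it inherits whatever rate you obtain in the other degrees. Also, when $\Gamma$ has torsion, $\sum_i(-1)^ib_i(\Gamma\bs X)$ is the naive Euler characteristic. This differs from $\vol(\Gamma\bs X)\chi^{(2)}(X)$ by contributions from centralisers of torsion elements, which you would still have to bound.
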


It is worth pointing out that this result is interesting only when $[k:\QQ]\to\infty$, which excludes non-uniform lattices.  


\subsubsection{The proof of Theorem \ref{Main_BSconv}}

We prove Theorem \ref{Main_BSconv} by first distinguishing between the cases when the arithmetic lattices are defined using number fields of bounded or unbounded degree over $\QQ$ (see \ref{prelim_lattices} below for the definition of the trace field). We dealt with the latter case in a previous paper where we established the following quantitative result. 

\begin{theostar}{\cite[Theorem D]{FHR_complexity}} \label{unbounded_degree}
  Let $G$ be a semisimple Lie group with associated symmetric space $X$. There are positive constants $c=c_X,\eta=\eta_X$ such that any arithmetic lattice $\Gamma\subset G$ with trace field $k$ satisfies 
  \[
  \vol((\Gamma\bs X)_{\le \eta[k:\QQ]}) \le \vol(\Gamma\bs X)e^{-c[k:\QQ]}.
  \]
\end{theostar}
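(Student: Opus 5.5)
This statement is quoted from our earlier work \cite[Theorem D]{FHR_complexity}, so here we only sketch how it can be proved. The plan is to bound the volume of the thin part by counting short closed geodesics, more precisely conjugacy classes of nontrivial elements $\gamma\in\Gamma$ with small displacement, weighted by the volume of their tubes. By standard geometry, a point $x\in\Gamma\bs X$ lies in the $R$-thin part only if some nontrivial $\gamma$ moves a lift $\tilde x$ by at most $2R$. Summing over $\Gamma$-conjugacy classes gives
\[
\vol((\Gamma\bs X)_{\le R})\le \sum_{[\gamma]\neq 1}\vol\bigl(\Gamma_\gamma\bs\{\tilde x: d(\tilde x,\gamma\tilde x)\le 2R\}\bigr).
\]
Each term is controlled by the covolume of the centralizer $\Gamma_\gamma$ in $G_\gamma$ times a local factor $e^{O(R)}$. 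The local factor reflects the geometry of displacement sets: exponential growth in $R$, and a blow-up when $\gamma$ is nearly unipotent, which must be handled for nonuniform lattices.

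The arithmetic input is the following. Write $\Gamma$ as commensurable with $\G(\mo_k)$ for a $k$-form $\G$, and reduce to maximal (principal arithmetic) lattices, since volumes scale with the index. For $\gamma$ semisimple with displacement at most $2R$, the eigenvalues of $\ad\gamma$ (or of $\gamma$ in a faithful representation) are algebraic integers. At the real place corresponding to $G$ they have size $e^{O(R)}$, while at the other archimedean places, where $\G$ is compact, they have absolute value $1$. Two consequences follow.

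First, an algebraic integer $\lambda\neq$ root of unity whose conjugates are all bounded by $1$ except a few bounded by $e^{O(R)}$ must satisfy a Dobrowolski/Schinzel–Zassenhaus type bound. Concretely, the Mahler measure satisfies $\log M(\lambda)\gg 1$ relative to the degree, or one uses the fact that the product of $|\lambda-1|$ over the places is at least $1$. This forces $R\ge \eta[k:\QQ]$ unless $\gamma$ is torsion. Torsion elements are handled separately: their orders are bounded in terms of $\dim G$ and the degree, and their fixed-point sets have small volume.

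Second, one counts conjugacy classes and bounds centralizer covolumes using Prasad's volume formula. The ratio $\vol(\Gamma_\gamma\bs G_\gamma)/\vol(\Gamma\bs G)$ decays exponentially in $[k:\QQ]$, because $\vol(\Gamma\bs X)\ge c^{[k:\QQ]}\cdot D_k^{\dim/2}$ with a large base. Meanwhile the number of relevant classes is at most $e^{O(R)[k:\QQ]}$, by counting integral characteristic polynomials with bounded archimedean size.

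Combining these with $R=\eta[k:\QQ]$ and $\eta$ small gives total volume at most $\vol(\Gamma\bs X)e^{-c[k:\QQ]}$. The main obstacle is the uniform bound on $\vol(\Gamma_\gamma\bs G_\gamma)$ and on the number of conjugacy classes with given characteristic polynomial. That count is a class-number type quantity, which requires careful local estimates at non-archimedean places: orbital-integral bounds for the contribution at each prime. Controlling these uniformly in the form $\G$ and the field $k$ is the delicate step. I would attack it through an adelic expression of the sum as a sum of orbital integrals and bound each local factor by a power of the discriminant of the centralizer torus.
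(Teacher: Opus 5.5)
The argument breaks at your ``first consequence''. A bound for a \emph{single} algebraic integer cannot force $R\ge\eta[k:\QQ]$. Since $\G$ is non-compact at only boundedly many archimedean places, an element of displacement $\le 2R$ has Mahler measure $O(R)$. Dobrowolski then only gives $R\gg(\log[k:\QQ])^{-3}$, which is the systole bound of Proposition \ref{dobrowolski_bound}. The product formula for $\lambda-1$ only says that $\lambda$ is not $2^{-O([k:\QQ])}$-close to $1$ at the non-compact places.

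The claim is in fact false. Salem numbers of unbounded degree accumulating at a Pisot number give closed geodesics of bounded length on arithmetic hyperbolic surfaces whose trace fields have unbounded degree. So the $\eta[k:\QQ]$-thin part genuinely contains tubes around short non-torsion geodesics. The content of the theorem is that these tubes have small volume, not that they are absent. A linear-in-degree lower bound holds only for \emph{sets} of short elements that generate a non-virtually-nilpotent group. That is the arithmetic Margulis lemma (Theorem \ref{AML}), which your proposal never uses.

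Your fallback, summing centralizer covolumes times local factors over all $\Gamma$-conjugacy classes, does not close the gap either.
\begin{itemize}
\item To get an explicit rate it needs uniform bounds for irregular semisimple classes, torsion included (e.g.\ involutions), whose centralizers are non-toral reductive groups. The present paper cannot control these even in bounded degree. It discards them through the non-effective IRS argument of Theorem \ref{general_criterion}, which gives no rate.
\item The bounded-degree inputs all have constants depending on $[k:\QQ]$: Northcott, the Ullmo--Yafaev bound for $L(1,\chi_\T)$, Brauer--Siegel, and the constants in Theorems \ref{global_orbint} and \ref{t-SmallHeigtCount}.
\item Your own bookkeeping does not close. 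With $R=\eta[k:\QQ]$, your count $e^{O(R)[k:\QQ]}=e^{O(\eta)[k:\QQ]^2}$ cannot be absorbed by a gain that is only exponential in $[k:\QQ]$; note that $\log|\Delta_k|$ may be $\asymp[k:\QQ]$.
\end{itemize}

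As the paper stresses, the proof in \cite{FHR_complexity} is not a trace-formula count. It rests on Theorem \ref{AML}, through the consequence that at every point at most $[k:\QQ]^m$ elements of $\Gamma$ have displacement $\le\eps[k:\QQ]$ (quoted in Section \ref{s:quantbetti}). Roughly, inside a ball of radius $\asymp\eps[k:\QQ]$, the $\eta[k:\QQ]$-thin part is then covered by polynomially many sublevel sets of displacement functions. These are thin neighbourhoods of proper totally geodesic subspaces, and the single-element Dobrowolski bound only controls their width. They fill an exponentially small share of the ball, and averaging over $\Gamma\bs X$ gives the theorem.
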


In this paper we deal with (most of) the remaining cases in the proof of Theorem \ref{Main_BSconv} by establishing the following result. 

\begin{theostar} \label{Main_bounded_degree}
  Let $G$ be a noncompact semisimple Lie group and let $X$ be its symmetric space. Let $\Gamma_n$ in $G$ be a sequence of pairwise distinct maximal arithmetic lattices in $G$ and assume that there exists $d \in \NN$ such that the trace field of each $\Gamma_n$ is an extension of degree $d$ of $\mathbb Q$. Then \eqref{BSconv_univcover} holds for the sequence of locally symmetric spaces $\Gamma_n \bs X$. 
\end{theostar}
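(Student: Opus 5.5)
We argue by contradiction and use compactness in the space of invariant random subgroups. Suppose \eqref{BSconv_univcover} fails. After passing to a subsequence, the IRSs $\mu_{\Gamma_n}$ (the pushforward of normalised Haar measure on $G/\Gamma_n$ under $g\mapsto g\Gamma_n g^{-1}$) converge to an IRS $\mu\neq\delta_{\{1\}}$. Since the trace fields have bounded degree $d$, and since $G$ has only finitely many real forms up to isogeny, we may pass to a further subsequence. In it, the groups $\Gamma_n$ arise from $k_n$-forms $\G_n$ of a fixed "shape": a fixed signature of archimedean places and a fixed $\RR$-form at the noncompact places. By Prasad's volume formula and Borel's finiteness results, only finitely many maximal lattices have covolume below any fixed bound. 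So $\vol(\Gamma_n\bs X)\to\infty$, and the maximality gives the explicit description of $\Gamma_n$ as normalisers of principal arithmetic subgroups attached to coherent collections of parahorics $(P_v)$.

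The core step is a counting estimate. For a fixed compact neighbourhood $\Omega$ of the identity in $G$ (large enough to see nontrivial elements of $\mu$), and for the part of $\Omega$ outside a small neighbourhood of $1$, we bound
\[
\frac{1}{\vol(\Gamma_n\bs X)}\int_{\Gamma_n\bs G} \#\{\gamma\in\Gamma_n\setminus\{1\} : g^{-1}\gamma g\in\Omega\}\,dg .
\]
We express this as a sum over $\Gamma_n$-conjugacy classes $[\gamma]$ of $\vol(\Gamma_{n,\gamma}\bs G_\gamma)\cdot \mathcal O_\gamma(1_\Omega)$, that is, orbital integrals. We then split by type of $\gamma$:
\begin{itemize}
\item For semisimple $\gamma$ with bounded archimedean displacement, the characteristic polynomial has degree bounded in terms of $d$ and $\dim G$. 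It has algebraic integer coefficients bounded at all archimedean places, so there are only finitely many possibilities. In particular $\gamma$ lies in a finite set of torsion or "short" conjugacy classes over $\overline{\QQ}$.
\item For each such class, the centraliser volume is bounded via Prasad's formula applied to the (reductive) centraliser, using the same parahoric data. The number of $\Gamma_n$-classes in the geometric class is controlled by local orbit counting. The ratio of this contribution to $\vol(\Gamma_n\bs G)$ is a product over finite places $v$ of local factors. We show these factors are uniformly $<1$ at the places where $P_v$ is not hyperspecial or where $\G_n$ is ramified, and contribute a decay like $q_v^{-c}$ whenever $\gamma$ is not central.
\item Unipotent elements (in the non-uniform case) are treated through reduction theory: cusp neighbourhoods have volume $o(\vol)$ in the maximal lattice via standard Siegel set estimates. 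Alternatively, we use the Jordan decomposition and treat the semisimple parts as above.
\end{itemize}

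Since the $\Gamma_n$ are pairwise distinct and maximal with bounded-degree fields, $\vol\to\infty$ forces either the discriminant of $k_n$, or the set of ramified or non-hyperspecial places, or the norms at those places, to go to infinity. Each of these makes the normalised sum tend to $0$. This shows that the limit IRS charges no nontrivial elements in $\Omega$, contradicting $\mu\neq\delta_{\{1\}}$.

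The main obstacle is the middle step: obtaining uniform bounds on the local orbital integrals and centraliser volumes relative to Prasad's local factors. This must hold uniformly over all parahorics and all twisted forms, for elements that are not regular semisimple. Our first attempt will be to bound local orbital integrals $\mathcal O_\gamma(1_{P_v})$ by the number of fixed points of $\gamma$ on the Bruhat--Tits building in a ball. We then compare this with $[P_v : P_v\cap \gamma\text{-centraliser}]$, aiming for a bound of the form $\ll q_v^{-\frac12\dim(\text{orbit})}$ times the Prasad factor. The central elements are handled separately, since they are killed by the projection to the adjoint group.
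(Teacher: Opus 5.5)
\textbf{There is a genuine gap: the step you call the main obstacle is left unresolved, and it is exactly the step the paper avoids.}

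Your contradiction requires showing that the limit IRS gives no mass to \emph{any} nontrivial element of $\Omega$. So you must control every non-central class, including irregular semisimple (e.g.\ torsion) elements with large reductive centralisers and, for non-uniform lattices, unipotent and mixed elements. None of the uniform inputs you need is available in that generality:
\begin{itemize}
\item Prasad's formula computes covolumes of principal arithmetic subgroups of simply connected groups. It does not apply to reductive centralisers with central tori.
\item The building bound $\mathcal O(\gamma,1_{U_v})\le|\Delta(\gamma)|_v^{-a}$ is vacuous when $\Delta(\gamma)=0$.
\item Counting $\G(k)$-classes inside a geometric class via $\Sha^1$ and compact cohomology classes uses that $\G_\gamma$ is a torus.
\item For non-uniform lattices the orbital-integral expansion of your integral is not available at all. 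Already for $\SL_2(\ZZ)$, the number of parabolic elements of displacement in a fixed interval at height $y$ in the cusp grows like $y$, which is not integrable against $dy/y^2$.
\end{itemize}
The asserted local saving ``$q_v^{-c}$ whenever $\gamma$ is not central'' is unproved, and it is not what drives the actual argument.

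The missing idea is the reduction of Theorem~\ref{general_criterion}. A nontrivial ergodic limit IRS is almost surely Zariski dense in a nontrivial normal subgroup (Theorem~\ref{semisimple_IRS}). By Prasad's argument (Lemma~\ref{suff_dense}), every Zariski-dense discrete subgroup meets the set $W$ of $\RR$-regular, strongly $m$-regular elements. Using lower semicontinuity of $\Lambda\mapsto\sum_{\lambda\in\Lambda}f(\lambda)$ and the Portmanteau theorem, it therefore suffices to show $\tr R^W_{\Gamma_n}f=o(\vol(\Gamma_n\bs X))$.

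Restricting to $W$ makes every centraliser a torus, anisotropic even in the non-uniform case once $m$ is large (Lemma~\ref{strreg}). All estimates then become tractable:
\begin{itemize}
\item torus covolumes are $\ll\Delta_k^{(\dim\T+\eps)/2}$ by Ullmo--Yafaev, using the bounded splitting discriminant from Proposition~\ref{bound_disc_splitting_field};
\item orbital integrals are $O(1)$;
\item there are $O(1)$ rational classes.
\end{itemize}

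You also never say how to handle elements of $\Gamma_n=N_G(\Gamma_U)$ lying outside $\Gamma_U$. They are not in $\G(k)$ for the simply connected $\G$, so they do not appear in the adelic expansion with $1_U$. The paper handles them in three steps:
\begin{itemize}
\item $\Gamma_n/\Gamma_U$ has exponent bounded in terms of $G$ and $d$ (Lemma~\ref{last_bound1}).
\item Pass to $m$-th powers. This is why strong $m$-regularity is imposed: $\gamma^m$ is strongly regular with the same centraliser, and $\gamma\mapsto\gamma^m$ is boundedly-many-to-one.
\item Pay the index $[\Gamma_n:\Gamma_U]\ll_\eps C^{|S_U|}N_{k/\QQ}(\Delta_{\ell/k})^{1/2+\eps}\Delta_k^{3/2+\eps}$ from Proposition~\ref{normaliser_index}.
\end{itemize}

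The conclusion then comes from a global comparison, not from local decay. Against Prasad's lower bound $\Delta_k^{\dim\G/2}N_{k/\QQ}(\Delta_{\ell/k})\prod_{v\in S_U}q_v$ one gains $\Delta_k^{(\dim\G-\dim\T)/2}$. This beats the $\Delta_k^{3/2}$ loss only because $\dim\G-\dim\T\ge 4$ outside type $A_1$; type $A_1$ needs the sharper index bound available because $\ell=k$. Your sketch contains no such quantitative bookkeeping. So even for regular elements, the claim that each degeneration ``makes the normalised sum tend to $0$'' is not yet justified.
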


From these two results and another one from \cite{7Sam} we can immediately deduce the general statement in Theorem \ref{Main_BSconv}. For convenience we detail the argument here: let $\Gamma_n$ be a sequence of congruence arithmetic lattices in $G$. Passing to a sub-sequence we can assume that 
\begin{equation} \label{cond_largethin}
  \lim_{n\to\infty}\frac{\vol((\Gamma_n\bs X)_{\le R})}{\vol (\Gamma_n\bs X)} = \ell, 
\end{equation}
for some $\ell\geq 0$. Theorem \ref{Main_BSconv} will follow once we show that any such limit $\ell$ is zero. Let $k_n$ be the trace field of $\Gamma_n$ and let $d_n=[k_n:\QQ].$ If $d_n$ are unbounded, then $\ell=0$ by Theorem \ref{unbounded_degree}. If $d_n$ are bounded, choose a sequence of maximal arithmetic lattices $\Delta_n$, such that $\Gamma_n\subset \Delta_{n}$ for every $n$. If the set of $\Delta_n$ is finite then $\ell=0$ by  \cite[Theorem 1.12]{7Sam}. Finally, if the sequence $\Delta_n$ is infinite, we can apply Theorem \ref{Main_bounded_degree} to show 
\[
\ell=\lim_{n\to\infty}\frac{\vol((\Gamma_n\bs X)_{\le R})}{\vol (\Gamma_n\bs X)}\leq \lim_{n\to\infty}\frac{\vol((\Delta_n\bs X)_{\le R})}{\vol (\Delta_n\bs X)}=0
\]
which concludes the proof of Theorem \ref{Main_BSconv}. 

\medskip

Most of the paper is dedicated to establish Theorem \ref{Main_bounded_degree}. Since \cite[Theorem D]{FHR_complexity} is much more precise than what we need for \ref{Main_BSconv}, we will also give a shorter proof of a non-quantitative statement for lattices with unbounded trace field degree (see \ref{s:cheat_mode} below). 


\subsubsection{Outline of the proof of Theorem \ref{Main_bounded_degree}}

The proof of  Theorem \ref{Main_bounded_degree} in this paper is completely different from that given in \cite{FHR_complexity} for Theorem \ref{unbounded_degree}. It follows the proof for groups of type $A_1$ given \cite{Fraczyk}, estimating the geometric side of Selberg's trace formula, though we were not able to get quantitative results for all terms in the higher-rank case. To conclude we have to resort to a non-effective argument similar to that in \cite{FraRaim}.

As in \cite{Fraczyk}, we express the volume of the thin part as the trace of a certain convolution operator. Recall that $X=G/K$, where $K$ is a maximal compact subgroup of $G$. There is a unique bi-$K$-invariant semi-metric on $G$ such the image of an $R$-ball in $G$ is precisely an $R$-ball in $X$. Let $1_R\colon G\to \RR$ be the characteristic function of the ball of radius $R$ in $G$. A point $\Gamma g K\in \Gamma\bs X$ is in the $R$-thin part if and only if $\gamma gK$ is in the $R$-ball around $gK$, for some non-central $\gamma\in \Gamma$. Therefore $\Gamma g K\in \Gamma\bs X\in (\Gamma\bs X)_{\leq R}$ if and only if $\sum_{\gamma \in \Gamma\setminus Z(\Gamma)} 1_R(g^{-1}\gamma g)>0$. Benjamini-Schramm convergence will follow once we show that the integral
\[
\frac 1 {\vol(\Gamma_n \bs G)} \int_{G/ \Gamma_n}  \sum_{\gamma \in \Gamma_n\setminus Z(\Gamma)} 1_R(g^{-1}\gamma g) dg
\]
converges to zero as $n \to \infty$. Using the habitual manipulations for the geometric side of Selberg's trace formula for compact quotient (see \cite[I.1]{Clay03}) this integral is equal to 
\[
\frac 1 {\vol(\Gamma_n \bs G)} \sum_{[\gamma]_{\Gamma_n} \subset W} \vol((\Gamma_n)_\gamma\bs G_\gamma)\mathcal O(\gamma, 1_R),
\]
where $[\gamma]_{\Gamma_n}$ is the conjugacy class of $\gamma$ in $\Gamma_n$, $O(\gamma,1_R)$ is the orbital integral
\[
\mathcal O(\gamma, f) = \int_{G_\gamma \bs G} 1_R(x\gamma x^{-1}) dx
\]
and $W$ is the set of non-central elements of $\Gamma_n$. 
 
The conjugacy classes of elements in $\Gamma_n$ are difficult to parametrise, so instead we use the adelic trace formula as in \cite{Fraczyk}(see \ref{adelic_reformulation}). The adelic trace formula replaces the sum over the conjugacy classes in $\Gamma_n$ by a sum over the rational conjugacy classes in certain semi-simple algebraic group defined over the trace field of $\Gamma_n$. The rational conjugacy classes are much easier to parametrise\footnote{For example, classifying conjugacy classes in $\SL_n(\ZZ)$ is more difficult than classifying conjugacy classes in $\SL _n(\QQ)$} and the adelic orbital integrals are in a sense better behaved. Giving bounds for this summation of integrals in the adelic setting is the main contribution from this part of our work and the technical parts take up various sections. The main ingredients are known estimates for the volumes of adelic quotients of tori (mostly from \cite{Ullmo_Yafaev}), strong bounds for local and global orbital integrals (given in Section \ref{s:orbint}) and estimates on the number of rational conjugacy classes of elements of small Weil height (given in Section \ref{s:small}). 
These bounds are quite delicate and difficult to obtain for conjugacy classes with large centralizers. Using a reduction argument adapted from \cite{FraRaim} (see Theorem \ref{general_criterion} and Lemma \ref{suff_dense} for precise statements) we can restrict $W$ to be the set of highly regular conjugacy classes and still get the conclusion about the Benjamini-Schramm convergence. This part of the argument uses the Zariski density of invariant random subgroups proved in \cite{7Sam}.

Because of the reduction step, the argument does not give an effective bound on the volume of the $R$-thin part of the $\Gamma_n\bs X$, since we completely sidestep estimating the orbital integrals and covolumes of centralisers for irregular semisimple elements. 


\subsection{$L^2$-Betti numbers} \label{L2Betti}

The general theory of $L^2$-Betti numbers is described in the treatise \cite{Lueck_book}. They are usually defined for compact Riemannian manifolds or finite CW-complexes. In the case of locally symmetric spaces there is a proportionality principle which states that for a symmetric space $X$ and a finite-volume quotient $M$ of $X$ the $L^2$-Betti number $b_i^{(2)}(M)$ (computed for a triangulation of the Borel--Serre compactification) are given by $\vol(M) \beta_i^{(2)}(X)$ for a constant $\beta_i^{(2)}(X)$. 

The $L^2$-Betti numbers $\beta_i^{(2)}(X)$ can be nonzero only when $i=\dim(X)/2$, and moreover $\beta_{\dim(X)/2}^{(2)}(X)$ can be computed explicitly: it is equal to $\chi(X^d)/\vol(X^d)$ where $X^d$ is the compact dual of $G$ endowed with the Haar measure compatible with that of $G$. A list of all simple Lie groups $G$ (up to isogeny) of non-compact type for which $\beta_{\dim(X)/2}^{(2)}(G) \not= 0$ is as follows:
\begin{itemize}
\item all unitary groups $\mathrm{SU}(n, m)$;

\item all orthogonal groups $\mathrm{SO}(n, m)$ with $nm$ even and the groups $\mathrm{SO}^*(2n)$;

\item all symplectic groups $\mathrm{Sp}(n)$ and all groups $\mathrm{Sp}(p, q)$ (isometries of quaternionic hermitian forms);

\item some exceptional groups (at least one in each absolute type). 
\end{itemize}
This is well--known, and can be recovered as follows: by \cite[Theorem 1.1]{olbrich}, a group is on the list if and only if it has a compact maximal torus. In terms of the classification by Vogan diagrams \cite[VI.8]{Knapp_book_lie} this means that the action of the Cartan involution on the underlying Dynkin diagram is trivial. The list of such diagrams is given in this book, in Figure 6.1, p.~414 and Figure 6.2, p.~416 for classical and exceptional types respectively.


\subsection{Limit multiplicities} \label{lim_mult_intro}

Using Matsushima's formula we can deduce both theorems above from the results on limit multiplicities. To explain this in detail we need to introduce some notations. If $\pi$ is a unitary representation of $G$, acting on a Hilbert space $\mathcal H_\pi$, and $\Gamma$ a lattice $G$, then $m(\pi, \Gamma)$ is the {\em multiplicity} of $\pi$ in $L^2(\Gamma \bs G)$, that is 
\[
m(\pi,\Gamma)=\dim_{\CC} \Hom(\pi, L^2(\Gamma\bs G)).
\]
If $\Gamma$ is uniform then the multiplicity $m(\pi,\Gamma)$ is finite, and nonzero for only countably many $\pi$. Moreover $L^2(\Gamma \bs G) \cong \overline\bigoplus_\pi \mathcal H_\pi^{m(\pi, \Gamma)}$ with the sum running over all isomorphism classes of unitary representations of $G$.

The problem of {\em limit multiplicities} in its most basic form asks whether for a sequence of (pairwise non-conjugated) lattices $\Gamma_n$in $G$ we have
\begin{equation} \label{limmult}
  \frac{m(\pi, \Gamma_n)}{\vol(\Gamma_n \bs G)} \xrightarrow{n \to +\infty} d(\pi)
\end{equation}
for all $\pi$, where $d(\pi)$ is the ``formal degree'' of $\pi$, which is nonzero unless $\pi$ is a discrete series \cite{Knapp2}. 
On the other hand {\em Matsushima's formula} \cite{Matsushima} states the for every $i$ there is a finite set $C_i$ of unitary representations of $G$, called the {\em cohomological representations of degree $i$}, such that for any uniform lattice $\Gamma \subset G$
\begin{equation} \label{matsushima_formula}
  b_i(\Gamma \bs X) = \sum_{\pi \in C_i} n(\pi, i) \cdot m(\pi, \Gamma) 
\end{equation}
where $n(\pi, i)=\dim \hom_K(\bigwedge^i\mathfrak p, \pi)$ and $\mathfrak p$ the representation of $K$ on the tangent space of $X$ at the identity coset. It is an integer depending only on $\pi$ and the degree $i$. It is known that cohomological representations can be discrete series only in degree $i = \dim(X)/2$, so \eqref{limmult} and \eqref{matsushima_formula} then imply the convergence in Theorem \ref{betti_numbers} (with $c(G)$ the sum of formal degrees of cohomological discrete series).

For a sequence of uniform lattices with bounded degree trace field, which are always uniformly discrete, the arguments in \cite[6.10]{7Sam} (see also \cite{DeWa79}) imply \eqref{limmult}. Using similar arguments, we give more precise estimates on the multiplicities using Theorem \ref{unbounded_degree} resulting in Theorem \ref{quant_betti_numbers}, which we do in Section \ref{s:quantbetti}. 

\medskip

On the other hand in \cite[Theorem 1.2]{7Sam} a stronger result is claimed, namely that for any open regular bounded Borel subset $S$ of the unitary dual of $G$ we have the limit
\begin{equation} \label{limmult_strong}
  \nu_{\Gamma_n}(S) := \frac 1{\vol(\Gamma_n \bs G)} \sum_{\pi \in S} m(\pi, \Gamma_n) \xrightarrow{n \to +\infty} \nu^G(S)
\end{equation}
for any uniformly discrete sequence $\Gamma_n$ such that $\Gamma_n \bs X$ is Benjamini--Schramm convergent  to $X$, where $\nu^G$ is the Plancherel measure of $G$.

The proof of \eqref{limmult_strong} rests on the two following claims:
\begin{enumerate}
\item The sequence of measures $\nu_{\Gamma_n}$ on the unitary dual converges weakly to the Plancherel measure;

\item The Plancherel measure on the unitary dual is characterised by its values on Fourier transforms of smooth, compactly supported functions on $G$ (see \cite[Proposition 6.4]{7Sam}). 
\end{enumerate}
The first claim is holds in our setting, as a formal consequence of Benjamini--Schramm convergence in the bounded-degree case and (with more work) as a consequence of Theorem \ref{unbounded_degree} in the other cases. The second claim is ``Sauvageot's density principle'' from \cite{Sauv96} and it is in this last reference that a gap has been found \cite[footnote 7 in section 24.2]{Nelson_Venkatesh}. Until this is fixed we cannot a priori claim that \eqref{limmult_strong} holds for all sequences of congruence latices.


\subsubsection{Limit multiplicities for non-cocompact lattices} \label{limmult_nonunif}

For non-uniform lattices it is much harder to prove limit multiplicities for Benjamini--Schramm convergent sequences because the Arthur trace formula, which extends the Selberg trace formula to this setting, includes terms which are not directly related to the geometry. 
For congruence subgroups in a fixed arithmetic lattice this was dealt with by Finis--Lapis in \cite{Finis_Lapid1,Finis_Lapid2} and for some lattices in groups of type $A_1$ this was dealt with by Matz \cite{Matz_SL2}. 


\subsection{Further questions}

To finish this introduction we state some open problems regarding locally symmetric spaces. 

\subsubsection{Optimal bounds for the volume of the thin part}

The best possible quantitative refinement of Theorem \ref{Main_BSconv} would be the following. 

\begin{conjecture}
  Let $G$ be a semisimple Lie group. There exists $\alpha, \beta > 0$ such that for any congruence arithmetic lattice $\Gamma$ in $G$ and $M = \Gamma \bs X$ the following holds
  \[
  \vol\left( M_{\le \beta\log\vol(M)} \right) \le \vol(M)^{1-\alpha}.
  \]
\end{conjecture}

A weaker version of this statement, where $\log\vol(M)$ is replaced by the degree of the trace field, was proved in \cite{Fraczyk} for torsion free lattices in $G = \PGL_2(\CC), \PGL_2(\RR)$. For congruence subgroups in a fixed arithmetic lattice a similar result is \cite[Theorem 1.12]{7Sam}. Our quantitative results Theorem \ref{statement_traceconv} and Theorem \ref{unbounded_degree} also go in this direction, though they are much weaker (in different ways each) than what would be needed to address this conjecture. 


\subsubsection{Non-congruence locally symmetric spaces}

As mentioned, in contrast to what happens for higher-rank spaces\footnote{Conditionally on the congruence subgroup property in some cases.}, for real and complex hyperbolic manifolds it is known that not all sequences BS-converge to hyperbolic space\footnote{For quaternionic and octonionic hyperbolic manifolds nothing is known about the possible BS-limits of finite volume locally symmetric spaces}. The following question of Shmuel Weinberger asks whether this is still true ``generically''. 

\begin{question}
  Let $G$ be $\mathrm{PO}(d, 1), d \ge 4$ or $\mathrm{PU}(d, 1), d \ge 2$ and $X$ its symmetric space. For $V > 0$ let $\mathcal M_G(V)$ be the set of $X$-orbifolds of volume at most $V$. Does
  \[
  \frac {\left|\left\{ M \in \mathcal M_G(V) : \frac{\vol(\Gamma_i \bs X)_{\le R}}{\vol (\Gamma_i \bs X)} > \eps \right\}\right|}{|\mathcal M_G(V)|}
  \xrightarrow{V\to+\infty} 0
  \]
  hold for all $\eps>0$ and $R>0$? 
\end{question}

For hyperbolic surfaces or 3--manifolds this question does not make sense because the ordering does not exist because of the failure of Wang's finiteness theorem. For surfaces one can formulate similar questions using random models and they have a positive answer for discrete models (see e.g. \cite[Appendix B]{survey}) or continuous ones (see \cite{Monk}). In these dimensions it also makes sense to restrict to arithmetic manifolds for which Wang finiteness holds \cite{Bor81}. For arithmetic  surfaces it seems very likely that the answer is positive, see \cite{MageePuder}. 


\subsection{Outline of the paper}

Section \ref{s:prelim} introduces the main objects we are interested in, the arithmetic lattices and their locally symmetric spaces, including the necessary background on Lie and algebraic groups. There we recall various standard results useful for our purposes which are not easy to find in the literature.

Sections \ref{s:BSconv}, \ref{s:galois} are preliminaries for the rest of the paper and many results therein are well known. Section \ref{s:BSconv} introduces Benjamini--Schramm convergence and gives a criterion for convergence, which we use right away to give a very short proof of Theorem \ref{Main_unbounded_degree}. In Section \ref{s:galois} we recall various definitions and facts about the Galois cohomology of semisimple algebraic groups and algebraic tori. 
Sections \ref{s:adelic} through \ref{s:small} contain the proof of Theorem \ref{Main_bounded_degree}; the first explains the proof using standard results on the arithmetic of algebraic groups together assuming certain estimates on orbital integrals the number of elements of small height. These estimates are proven in the sections \ref{s:orbint} and \ref{s:small}. 

The last section \ref{s:quantbetti} contains the (standard) arguments needed to deduce Theorem \ref{quant_betti_numbers} from Theorem \ref{unbounded_degree}. 


\subsection{Acknowledgments} 

S.~H. was supported by the Sloan Fellowship foundation. M. Fraczyk was partially supported by the Dioscuri programme initiated by the Max Planck Society, jointly managed with the National Science Centre in Poland, and mutually funded by Polish the Ministry of Education and Science and the German Federal Ministry of Education and Research. J.~R. was supported by grants AGIRA - ANR-16-CE40-0022 and ANR-20-CE40-0010 from the Agence Nationale de la Recherche.


\section{Algebraic groups and arithmetic lattices} \label{s:prelim}

\subsection{Number fields}

Throughout this paper we will use $k$ to denote a number field and $\ovl k$ an algebraic closure. The ring of integers of $k$ will be denoted by $\mo_k$. We will write $\Delta_k$ for the field discriminant of $k$. If $L/k$ is a finite extension, we write $\Delta_{L/k}$ for the relative discriminant, which is the ideal generated by the discriminants of all bases $a_1,\ldots, a_{[L:k]}\in \mo_L$, of $L$ over $k$. 

We will use $\val$ to denote the set of equivalence classes of valuations on $k$. For each $v \in \val$ we will use the same letter to denote the usual normalisation\footnote{absolute value in Archimedean places and $|\varphi|_{\frak p}=|\mo_k/\frak p|$ where $\varphi\in k_{\frak p}$ is a uniformizer.} of the equivalence class and $|\cdot|_v$ for the associated norm on $k$. The completion of $k$ at $v$ will be denoted by $k_v$. We will denote the set of Archimedean valuations by $\val_\infty$ and non-Archimedean ones by $\val_f$. If $v \in \val_f$, then $\mo_{k_v}$ will be the ring of integers in $k_v$. For non-Archimedean valuations we will use the same letter $\frak p$ for a prime ideal of $\mo_k$ and the corresponding $\frak p$-adic valuation.
If $\tau\in \Hom(k,\CC)$ we will write $|\cdot|_\tau$ for the valuation induced by the absolute value on $\CC$.

If $a \in k$ we denote by $N_{k/\QQ}(a)$ its absolute norm which is defined by: 
\[
N_{k/\QQ}(a) = \prod_{v \in \val_\infty} |a|_v = \prod_{v \in \val_f} |a|_v^{-1}.
\]
and for an ideal $I$, $N_{k/\QQ}(I) = |\mo_k/I|$. The two definitions coincide when $I=(a)$ is principal. The Weil  height of $a\in k$ is defined to be
\begin{equation} \label{defn_Weil_ht}
  h(a) = \frac{1}{[k:\QQ]}\sum_{v \in \val} \iota_v\log \max(1, |a|_v);  
\end{equation}
where $\iota_v=2$ if $k_v=\CC$ and $v=1$ otherwise. Note that if $a \in \mo_k$, then the sum can be restricted to $v \in \val_\infty$. This definition does not depend on the choice of the field $k$ containing $a$. The (logarithmic) {\em Mahler measure} is defined as 
\begin{equation} \label{defn_mahler_nb}
  m(a) = [\QQ(a):\QQ]h(a).  
\end{equation}

The adèle ring of $k$ will be denoted by $\Ade$, the infinite adèles $\prod_{v \in \val_\infty} k$ will be denoted by $k_\infty$ and the finite ones $\prod_{v \in \val_f}' k_v$ by $\Ade_f$.


\subsection{Algebraic groups and semi-simple elements}

Throughout this paper we will use $\G$ to denote a semisimple algebraic group defined over $k$; usually we will make this precise and add further hypotheses at every use of the notation. The adjoint representation of $\G$ on its $k$-Lie algebra will be denoted by $\ad$. 
The group $\G(\Ade)$ and $\G(\Ade_f)$ will denote respectively the groups $\Ade$ and $\Ade_f$ points of $\G$ (see \cite[pp. 248--249]{PlaRap}). 

Let $\gamma \in \G(k)$. We write $\G_\gamma$ for the centralizer of $\gamma$, which is an algebraic subgroup of $\G$ defined over $k$. If $\T$ is a torus in $\G$ we will denote by $X^*(\T)$ its group of characters defined over the algebraic closure $\ovl k$. It is a free abelian group of rank $\dim(\T)$ equipped with an action of the absolute Galois group of $k$ \cite{Ono1}. 

We will write $Z(\G),Z(G),Z(\Lambda)$ for the centre of an algebraic group $\G$, a Lie group $G$ or a discrete subgroup $\Lambda\subset G$.

We recall that a semisimple element $\gamma \in \G(k)$ is said to be {\em regular} if the connected component of $\G_\gamma$ is a torus. This is equivalent to the following. Let $\T \le \G$ be a maximal torus with $\gamma \in \T(k)$, and let $\Phi = \Phi(\G, \T) \subset X^*(\T)$ be the root system of $(\G, \T)$ \cite[2.1.10]{PlaRap}. Then, $\gamma$ is regular if and only if $\lambda(\gamma) \not= 1$ for all $\lambda \in \Phi$. An element $\gamma \in \G(k)$ is said to be 
{\em strongly regular} if it is regular and $\lambda(\gamma) \not= \lambda'(\gamma)$ for all $\lambda \not= \lambda' \in \Phi$.
The notion of strongly regular elements in not standard. We motivate its introduction by the following property.
\begin{proposition}\label{p-ConnectedCentralizer}
  Let $\gamma$ be a strongly regular element of $\G(k)$; then $\G_\gamma$ is a maximal torus of $\G$ which is split by the field $k[\lambda(\gamma)\,|\, \lambda\in \Phi]$, where $\Phi = \Phi(\G, \G_\gamma)$ is the root system of $\G$ relative to the torus $\G_\gamma$. 
\end{proposition}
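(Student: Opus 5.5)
Since $\gamma$ is semisimple, the plan is to work over $\ovl k$ first: pick a maximal torus $\T$ defined over $k$ with $\gamma\in\T(k)$. For semisimple elements such a torus exists, since the connected centralizer $\G_\gamma^\circ$ is reductive and defined over $k$, and any maximal $k$-torus of it is maximal in $\G$ and contains $\gamma$. Regularity gives $\G_\gamma^\circ=\T$. The task is then to show that $\G_\gamma$ itself is connected, i.e.\ that the component group $\G_\gamma/\T$ is trivial, and afterwards to identify a splitting field of $\T$.

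For connectedness, take $g\in\G_\gamma(\ovl k)$. Then $g$ normalizes $\G_\gamma^\circ=\T$, so $g$ defines an element $w$ of the Weyl group $W=N(\T)/\T$. Since $g$ commutes with $\gamma$, we get $w\cdot\gamma=\gamma$, hence $\lambda(\gamma)=(w\lambda)(\gamma)$ for every root $\lambda\in\Phi$. Strong regularity says the values $\lambda(\gamma)$, $\lambda\in\Phi$, are pairwise distinct. It follows that $w\lambda=\lambda$ for all $\lambda\in\Phi$. A Weyl group element fixing every root fixes a chamber, and $W$ acts simply transitively on chambers, so $w=1$. Therefore $g\in\T$ and $\G_\gamma=\T$ is a maximal torus defined over $k$.

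For the splitting field, let $L=k[\lambda(\gamma)\mid\lambda\in\Phi]$; these values lie in $\ovl k$ because $\T$ splits over $\ovl k$. The torus $\T$ is split over $L$ exactly when $\Gal(\ovl k/L)$ acts trivially on $X^*(\T)$. Take $\sigma\in\Gal(\ovl k/L)$. Because $\T$ and $\G$ are defined over $k$, $\sigma$ permutes $\Phi$, and since $\gamma\in\T(k)$ we have $(\sigma\lambda)(\gamma)=\sigma(\lambda(\gamma))=\lambda(\gamma)$. Distinctness of the root values again forces $\sigma\lambda=\lambda$ for all $\lambda\in\Phi$. So $\sigma$ fixes the root lattice, and hence fixes $X^*(\T)\otimes\QQ$, because $\G$ is semisimple and $\Phi$ spans this space. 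Since $X^*(\T)$ is torsion-free, $\sigma$ acts trivially on $X^*(\T)$, and $\T$ splits over $L$.

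The main point needing care is the component-group step, specifically the use that $\G_\gamma^\circ=\T$ is normalized by all of $\G_\gamma$, together with the chamber/simple transitivity argument. One also has to make sure that no central elements or non-adjoint isogeny types cause trouble. They do not: the Weyl group acts faithfully on $\Phi$ for any semisimple $\G$, and the lattice argument only needs $\Phi$ to span $X^*(\T)\otimes\QQ$.
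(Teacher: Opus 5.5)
Your proof is correct and follows essentially the same route as the paper: connectedness comes from the fact that an element of $\G_\gamma$ induces a Weyl group element fixing every root, hence trivial, and the splitting statement comes from $\Gal(\ovl k/L)$ fixing each root because the values $\lambda(\gamma)$ are distinct. Your additional step, passing from triviality on $\Phi$ to triviality on $X^*(\T)$ via spanning and torsion-freeness, fills in a detail that the paper leaves implicit.
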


\begin{proof}
  Since $\gamma$ is regular, the connected component $\T$ of $\G_\gamma$ is a maximal torus in $\G$ (\cite[2.11]{Steinberg}). We must prove that $\G_\gamma = \T$, i.e. that $\G_\gamma$ is connected. We have that $\G_\gamma \subset \mathbf J$ where $\mathbf J$ is the normaliser of $\T$ in $\G$, so that $W = \mathbf J(\ovl k)/\T(\ovl k)$ is the Weyl group of $\G$. It suffices to prove that $\gamma^w \not= \gamma$ for all $w \in W, w \not= 1$. The Weyl group $W$ acts faithfully on $\Phi$ and for any $\lambda \in \Phi$ we have $\lambda(\gamma^w) = \lambda^w(\gamma)$. Choosing a $\lambda$ with $\lambda^w \not= \lambda$ we get that $\lambda(\gamma^w) \not= \lambda(\gamma)$ by strong regularity of $\gamma$. Hence, $\gamma^w \not= \gamma$. This proves that $\G_\gamma$ is a torus.

  Now we prove that $L = k[\lambda(\gamma)\,|\, \lambda\in \Phi]$ splits $\T$. This amounts to showing that $\Gal(L)$ acts trivially on $X^*(\T)$, or on $\Phi$ \cite{Ono1}. Let $\sigma \in \Gal(L)$ and let $\lambda \in \Phi$; we have $\lambda^\sigma(\gamma) = \lambda(\gamma^\sigma) = \lambda(\gamma)$. For all $\lambda' \in \Phi \setminus \{\lambda\}$ we have $\lambda'(\gamma) \not= \lambda(\gamma)$. It follows that $\lambda = \lambda^\sigma$ must hold. Hence, $\sigma$ acts trivially on $\Phi$. 
\end{proof}


Let $\gamma \in \G(k)$ be a semisimple element. Choose a maximal torus $\T$ containing it and let $\Phi=\Phi(\G,\T)$ be the associated root system. We define the (logarithmic) Mahler measure $m(\gamma)$ by :
\begin{equation} \label{defn_mahler_mat}
  m(\gamma) = \sum_{\lambda \in \Phi} [k:\QQ]h(\lambda(\gamma)).
\end{equation}
where $h$ is the Weil height, defined by \eqref{defn_Weil_ht}. The {\em Weyl discriminant} of $\gamma$ is
\[
\Delta(\gamma) = \prod_{\lambda \in \Phi} (1 - \lambda(\gamma));
\]
if all $\lambda(\gamma) \in \mo_{\ovl k}$ then clearly $m(\Delta(\gamma)) \ll m(\gamma)$.

We say that an element $\gamma \in \G(k)$ has integral traces if $\tr \ad(\gamma^m)\in\mo_k$, for every $m \in \mathbb N$.

\begin{proposition} \label{bound_disc_splitting_field}
  For any $d \in \mathbb N$ there exists an increasing function $f_d : ]0, +\infty[ \to ]0, +\infty[$ such that the following holds. Let $k$ be a number field with $[k:\QQ] \le d$ and $\G$ a semisimple $k$-group. For any strongly regular $\gamma \in \G(k)$ which has integral traces in the adjoint representation, if $L$ is the minimal Galois splitting field for the centraliser $\G_\gamma$ and $m(\gamma) \leq R$ then
  \[
  N_{k/\QQ}(\Delta_{L/k}) \le f_d(R).
  \]
\end{proposition}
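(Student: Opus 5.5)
By Proposition \ref{p-ConnectedCentralizer}, $\T = \G_\gamma$ is a maximal torus split by $L_0 = k[\lambda(\gamma) \mid \lambda \in \Phi]$. The minimal splitting field of $\T$ is the fixed field of the kernel of the Galois action on $X^*(\T)$, so it is contained in $L_0$. Since $L_0$ is generated over $k$ by a Galois-stable set (the roots are permuted by $\Gal(\ovl k/k)$), $L_0/k$ is Galois. Hence $L \subseteq L_0$, and since relative discriminants of subextensions divide in the appropriate sense ($N(\Delta_{L/k})^{[L_0:L]}$ divides $N(\Delta_{L_0/k})$), it suffices to bound $N_{k/\QQ}(\Delta_{L_0/k})$. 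The degree $[L_0:k]$ is at most $|W|$, which is bounded in terms of $\dim \G$. The absolute rank is a priori unbounded, but I would note that $\dim \G$ is bounded in terms of $R$ or of $G$; in the paper's context $\G$ has fixed absolute type, so I will treat $|\Phi|$ as bounded. Then $[L_0:\QQ]$ is bounded in terms of $d$.

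The plan is then to exhibit $L_0$ inside a compositum of fields generated by algebraic integers of bounded degree and bounded Mahler measure, and to use the classical fact that there are only finitely many such numbers (Northcott). The eigenvalues of $\ad(\gamma)$ are $1$ (with multiplicity the rank) and the $\lambda(\gamma)$ for $\lambda \in \Phi$. The characteristic polynomial $P(x) = \det(x - \ad\gamma)$ has coefficients which are polynomials in $\tr\ad(\gamma^m)$ for $m \le \dim\G$ with rational coefficients having bounded denominators (Newton identities). Better, I would argue directly: the assumption that $\tr\ad(\gamma^m) \in \mo_k$ for all $m$ makes each power sum $\sum_\lambda \lambda(\gamma)^m$ integral. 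The generating function $\sum_m p_m x^m$ then has integral coefficients and equals a rational function whose poles are the $\lambda(\gamma)^{-1}$. By a Fatou-type argument, each $\lambda(\gamma)$ is an algebraic integer. Each $\lambda(\gamma)$ has degree at most $d\,|\Phi|!$ over $\QQ$, and its Weil height satisfies $h(\lambda(\gamma)) \le m(\gamma)/[k:\QQ] \le R$. By Northcott's theorem, the set $S_{d,R}$ of algebraic integers of degree at most $D(d)$ and height at most $R$ is finite, and it increases with $R$.

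The field $L_0$ is contained in the compositum $k(\lambda(\gamma) : \lambda \in \Phi)$, and any prime $\mathfrak p$ of $k$ ramifying in $L_0$ must ramify in some $k(\lambda(\gamma))$. Hence it must divide $\mathrm{disc}$ of the minimal polynomial of some $\lambda(\gamma)$ over $\QQ$. These discriminants are nonzero integers bounded by a function of $d$ and $R$: for a degree-$D$ integer $\alpha$, the quantity $|\mathrm{disc}(\alpha)|$ is at most $D^{D} e^{2(D-1) D h(\alpha)}$ by Mahler's inequality. This bounds the ramified primes. The exponents in the different are bounded in terms of the local degree $[L_0:k] \le |W|$ and the residue characteristic (at most $v_{\mathfrak p}(e) + 1 \le$ something bounded by $|W|(1 + v_p(|W|))$). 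Combining these gives $N(\Delta_{L_0/k}) \le f_d(R)$. Alternatively, and more cleanly, $L_0$ is the compositum of at most $|\Phi|$ fields $\QQ(\alpha)$ with $\alpha \in S_{d,R}$, together with $k$. For $k$ itself, $|\Delta_k|$ is controlled because $k$ is generated by traces of $\ad\gamma$, which are integers of bounded height. Alternatively, relative discriminants only need $L_0 \subset k\cdot\QQ(\alpha_i)$, and then $\Delta_{L_0/k}$ divides the product of the $\mathrm{disc}(\alpha_i)^{[L_0:k]}$. Finiteness of $S_{d,R}$ then yields a finite maximum, which I take as $f_d(R)$ after forcing monotonicity by taking a supremum over $R' \le R$.

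The main obstacle I expect is the integrality and height bookkeeping. The Mahler measure $m(\gamma)$ bounds the heights of the root values $\lambda(\gamma)$, but establishing that they are algebraic integers from integrality of the adjoint traces needs the Fatou-type argument above, or the observation that the eigenvalues of the integral-trace matrices $\ad\gamma^m$ are integral. The uniform bound on $|\Phi|$ is the other point to check.
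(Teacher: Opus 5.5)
Your proof is correct in substance but runs differently from the paper's. The paper picks one $k$-basis $\theta_1,\dots,\theta_m$ of $L$ inside $\Sigma=\{\lambda(\gamma)^\ell\}$. It notes that $\Delta_{L/k}$ lies in the ideal generated by the discriminant of this basis. It then bounds the norm of that element directly as a product over the at most $d$ complex embeddings, each conjugate $\lambda(\gamma)^{\sigma\tau}$ having absolute value $e^{O(R)}$ because $m(\gamma)\le R$.

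You instead treat each root value $\alpha=\lambda(\gamma)$ separately. Its minimal polynomial $F_\alpha\in\ZZ[t]$ has bounded degree and bounded Mahler measure, so $|\mathrm{disc}(F_\alpha)|$ is bounded. Mahler's inequality already gives this, so Northcott is unnecessary and the bound stays effective. You then pass to $L$ either through ramified primes and the bound on different exponents, or through $\Delta_{k(\alpha)/k}\mid\mathrm{disc}(g_\alpha)\mid\mathrm{disc}(F_\alpha)$ in $\mo_k$ (with $g_\alpha$ the minimal polynomial over $k$) combined with divisibility of the discriminant of a compositum. Both endgames work.

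The two routes rest on the same inputs: integrality of the $\lambda(\gamma)$ and the archimedean bound from $m(\gamma)$. Yours costs a standard compositum fact but never needs a $k$-basis of $L$ made of powers of single root values. That is a real advantage, because such powers need not span $L$. Already for $\G=\SL_2\times\SL_2$ with $\T=\T_1\times\T_2$ split by $k(\sqrt a)$ and $k(\sqrt b)$, the $k$-span of $\Sigma$ is $k(\sqrt a)+k(\sqrt b)$, of dimension $3<4=[L:k]$. Similar failures occur for absolutely simple types such as $C_2$ with Galois image $\Aut(\Phi)$. So the paper's $\Sigma$ must be enlarged to monomials in the $\lambda(\gamma)$ for its basis to exist; your route avoids this issue.

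A few slips to fix:
\begin{itemize}
\item The Galois image lies in $\Aut(\Phi)$, not in $W$ (think of outer forms such as unitary groups). It is still bounded, so nothing changes.
\item Your aside that $|\Delta_k|$ is controlled because $k$ is generated by the traces of $\ad\gamma$ is false. Those traces generate only a subfield of $k$, and $\Delta_k$ is unbounded in the paper's application. It is also irrelevant, since only the relative discriminant matters, as your final version correctly uses; delete it.
\item Nothing you say justifies bounding $\dim\G$ in terms of $R$. As in the paper, simply let the bound depend on the absolute type of $\G$.
\item $L_0=L$: if $\sigma$ fixes $L$ then $\lambda(\gamma)^\sigma=\lambda^\sigma(\gamma)=\lambda(\gamma)$. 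Your tower reduction is therefore harmless but unnecessary.
\end{itemize}
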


\begin{proof}
  Let $m = [L:k]$. By Proposition \ref{p-ConnectedCentralizer} the field $L$ is contained in $k[\lambda(\gamma)\,|\, \lambda\in \Phi]$. The degree $k[\lambda(\gamma)\,|\, \lambda\in \Phi]:k]$ is at most the degree of the characteristic polynomial of $\ad(\gamma)$, so $m$ is bounded in terms of $\G$. Since $\gamma$ has integral traces, $\lambda(\gamma)$ are algebraic integers.
  Let \[\Sigma:=\{\lambda(\gamma)^\ell| \lambda\in \Phi, \ell=0,1,\ldots, m-1\}\subset \mo_L.\] There exist $\theta_1, \ldots, \theta_m \in \Sigma$ which form a basis of $L$ over $k$. By definition of the relative discriminant \cite{LangANT}, $\Delta_{L/k}$ belongs to the principal ideal $I$ of $\mo_k$ generated by $\det((\theta_i^{\sigma_j})_{1\le i,j \le m})$ where $\Gal(L/k) = \{\sigma_1, \ldots, \sigma_m\}$. In particular, $N_{k/\QQ}(\Delta_{L/k}) \le N_{k/\QQ}(I)$. 

  Let $\tau_1,\ldots,\tau_{[k:\QQ]}$ be the set of all embeddings $k\to\CC$. We extend each $\tau_\ell$ to $L$ in an arbitrary way. We have 
  \[N_{k/\QQ}(\Delta_{L/k})=\prod_{\ell=1}^{[k:\QQ]}\left|\det(\theta_i^{\sigma_j})_{1\le i,j \le m})\right|_{\tau_\ell}=\prod_{\ell=1}^{[k:\QQ]}\left|\det((\theta_i^{\sigma_j\tau_\ell})_{1\le i,j \le m})\right|.\]
  Since $m(\gamma)$ is bounded by $R$, each absolute value $\left| \lambda(\gamma)^{\sigma_j\tau_\ell} \right|$ is bounded by $e^mR$ and consequently $|\theta_i^{\sigma_j\tau_\ell}|\le e^{m^2R}$. Therefore, the absolute value of the product is bounded by a function depending only on $R$ and $d$. 
\end{proof}


A $k$-torus $\T$ is said to be {\em $k$-anisotropic} if $X^*(\T)^{\Gal(k)} = 0$. In other words, it has no nontrivial character defined over $k$. This is equivalent to the quotient $\T(k) \bs \T(\Ade)$ being compact \cite[Theorem 4.11]{PlaRap}. A reductive $k$-group $\G$ is said to be $k$-anisotropic if all its $k$-tori are anisotropic. 


\subsection{Normalisation of measure}\label{sec:MeasureNorm}

\subsubsection{Lie groups}

We use the same normalisation of measure for reductive Lie groups as in \cite{PrasVol} and \cite{Ullmo_Yafaev}. Let $G$ be a real reductive group. The space of right $G$-invariant real differential $(\dim G)$--forms is a one dimensional vectors space. Any such non-zero form will $\omega$ gives rise to a right $G$-invariant measure $\mu_\omega$ on $G$ via the procedure described in \cite[p. 167]{PlaRap}. 

If $G$ is a semisimple real Lie group, there exists a unique compact semisimple real Lie group $\check{G}$, such that the complexifications $G_\CC$ and $\check{G}_\CC$ are isomorphic. Let $\check{\omega}$ be the unique, up to sign, real differential form on $\check{G}$, such that $\mu_{\check{\omega}}$ is the Haar probability measure. We fix a (complex) isomorphism $\iota: G_\CC\to \check{G}_\CC$ and define $\omega:=\iota^*(\check{\omega})$. The form $\omega$ turns out to be $G$--invariant, defined over $\RR$ and independent of the choice of $\iota$. We fix our choice of Haar measure on $G$ to be $\mu_{\omega}$.   

If $\T$ is a $\RR$-torus, the connected component of $\T(\RR)$ 
is isomorphic to a product $\SO(2)^a \times (\RR^\times)^b$. We normalise the Haar measure by taking the probability Haar measure on the $\SO(2)$ factors and $dt/|t|$ on the $\RR^\times$ factors. 

If $\T$ is a torus over a non-Archimedean local field $k_v$ then we normalise the Haar measure on $\T(k_v)$ so that the unique maximal compact subgroup has measure 1. If $\G$ is a semisimple group over $k_v$, then we will normalise the Haar measure on $\G(k_v)$ by choosing a compact-open subgroup of mass $1$ at each instance. 


\subsubsection{Homogeneous spaces}

We recall how the quotient measures on homogeneous spaces are defined: if $G$ is a locally compact group and $H$ a closed unimodular subgroup, then for any choices of two of Haar measures $dg$ on $G$, $dh$ on $H$ and a $G$-invariant measure $dx$ on $G/H$ the third is uniquely determined so that the equality
\[
\int_G f dg = \int_{G/H} \int_H f(xh) dh dx
\]
holds for any compactly supported function $f$ on $G$ (note that $\int_H f(xh) dh$ is a right-$H$-invariant function with compact support in the quotient). 


\subsection{Symmetric spaces}

If $G$ is a semisimple Lie group then the associated Riemannian symmetric space is $X = G/K$ where $K$ is a maximal compact subgroup, endowed with a $G$-invariant Riemannian metric. We use $d_X$ to denote the distance function on $X$. 
To normalize this metric we use the measure normalizations described above: if $G$ is (implicitly) endowed with a Haar measure we choose the Riemannian metric on $X$ so that its volume form co\"incides with the quotient measure where $\Vol(K) = 1$. 


\subsubsection{Semisimple isometries}

If $g \in G$ is semisimple and does not generate a bounded subgroup then we will call it {\em noncompact}. Such an element admits a unique totally geodesic subspace $\min(g)\subset X$ on which it acts with the minimal translation length. The distance $d_X(x, gx)$ is constant for $x \in \min(g)$; we call it the minimal translation length and denote by $\ell(g)$. The following proposition is well-know, see for instance \cite[proposition 2.2]{FHR_complexity}. 

\begin{proposition}\label{p-MahlerDisp}
  Let $X$ be the symmetric space of a semisimple Lie group $G$. There are constants $a_X, A_X > 0$ such that for any number field $k$, any semisimple $k$-group $\G$ such that the symmetric space of $\G(k_\infty)$ is isometric to $X$ (i.e. $\G(k_\infty)$ is isogeneous to $G$ times a compact group) and any semisimple $\gamma \in \G(k)$ which is non-compact in $\G(k_\infty)$ and has integral traces we have
  \begin{equation}\label{eq-MahlerDisp}
    a_Xm(\gamma) \le \ell(\gamma) \le A_X m(\gamma). 
  \end{equation}
\end{proposition}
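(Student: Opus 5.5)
The strategy is to compare both sides with the quantity $\sum_{\lambda\in\Phi}\sum_{v\in\val_\infty}\iota_v\log\max(1,|\lambda(\gamma)|_v)$. Because $\gamma$ has integral traces, every $\lambda(\gamma)$ is an algebraic integer, and this sum equals $m(\gamma)$ up to the normalisation conventions in \eqref{defn_Weil_ht} and \eqref{defn_mahler_mat}.

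At each Archimedean place $v$, the element $\gamma$ embeds in $\G(k_v)$. We take its Jordan decomposition $\gamma=\gamma_e\gamma_h$ into commuting elliptic and hyperbolic parts, and conjugate so that $\gamma_h=\exp(H_v)$ with $H_v$ in a Cartan subspace $\liea_v$. For a root $\lambda$ we have $\log|\lambda(\gamma)|_v=\lambda(H_v)$, since elliptic parts have modulus one. Because $\Phi=-\Phi$,
\[\sum_{\lambda\in\Phi}\log\max(1,|\lambda(\gamma)|_v)=\tfrac12\sum_{\lambda\in\Phi}|\lambda(H_v)|.\]
The right-hand side is a norm on $\liea_v$ modulo the centre, which is trivial for semisimple groups. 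It is therefore comparable, with constants depending only on the root datum of $\G_{k_v}$, to the Killing norm $\|H_v\|$. There are only finitely many root data of a given dimension, so these constants are uniform.

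Now consider the places where $\G(k_v)$ is compact. Since $\G(k_\infty)$ is $G$ times a compact group, at those places $\gamma$ generates a bounded subgroup, so $|\lambda(\gamma)|_v=1$ for all $\lambda$ and they contribute nothing. At the remaining places, which give the factors of $G$ (up to isogeny), the symmetric space is $X=\prod X_v$. The minimal translation length of a semisimple element is governed by its hyperbolic part: $\ell(\gamma)^2=\sum_v c_v\|H_v\|^2$, where $c_v$ are fixed constants coming from the normalisation of the metric on $X$. This holds because $\min(\gamma)$ contains a flat through which $\gamma_h$ translates by $\exp(H_v)$, while $\gamma_e$ fixes it pointwise.

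Combining the two steps, $\ell(\gamma)$ is comparable to $\sum_v\|H_v\|$ by equivalence of the $\ell^1$ and $\ell^2$ norms on a fixed number of factors. That sum is in turn comparable to the Archimedean sum of $\log\max(1,|\lambda(\gamma)|_v)$, which is $m(\gamma)$. The non-compactness hypothesis ensures both sides are positive, though the inequalities hold trivially in any case.

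The main obstacle is uniformity: the constants must depend only on $X$, not on $k$ or $\G$. The number of noncompact places is bounded by the number of factors of $G$, and at compact places the contributions vanish identically. So the only remaining concern is the normalisation factors $\iota_v$ and $[k:\QQ]$, which cancel exactly in $m(\gamma)$.
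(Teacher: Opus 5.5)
Your argument is correct and is the standard proof of this fact; the paper gives no proof of its own and only refers to \cite[Proposition 2.2]{FHR_complexity}. The argument writes $m(\gamma)=\sum_{v\in\val_\infty}\iota_v\sum_{\lambda\in\Phi}\log\max(1,|\lambda(\gamma)|_v)$, drops the compact places, and compares the norm $H\mapsto\frac12\sum_{\lambda}|\lambda(H)|$ on $\liea$ with $\ell(\gamma)=\|H\|$.

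Two details need tightening. First, in general $\gamma_e$ fixes pointwise only an axis of $\gamma_h$, not a whole flat; this is still enough to get $\ell(\gamma)=\|H\|$ via the usual asymptotic-displacement argument. Second, uniformity of the constants is best obtained from the identity $m(\gamma)=\sum_\mu\log\max(1,|\mu|)$ over the eigenvalues $\mu$ of $\ad(\gamma)$ on $\mathrm{Lie}(G)$. This attaches all constants to the simple noncompact factors of $G$, rather than to places or to the root datum of $\G$, whose dimension is not bounded (e.g.\ for restrictions of scalars).
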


A stronger condition than being noncompact is $\RR$-regularity. An element $g \in G$ is said to be {\em $\RR$-regular} if $\ad(g)$ has the maximal number of eigenvalues of absolute value $\not= 1$. 
These elements can also be characterised by the following result, which is \cite[Lemma 1.5]{Prasad_Raghunathan}. 

\begin{lemma}\label{lem:Rregular}
Let $\gamma$ be a semi-simple regular element of $G$. If $\gamma$ is $\RR$-regular then its centralizer $G_\gamma$ contains a maximal $\RR$-split torus of $G$.
\end{lemma}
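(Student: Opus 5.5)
We argue with the restricted root decomposition and the fact that a semisimple element lies in a maximal torus of $G$ which is $\theta$-stable for a suitable Cartan involution $\theta$. Let $\gamma\in G$ be semisimple and regular, and let $T=G_\gamma^\circ$, which is a Cartan subgroup, with Lie algebra $\mathfrak t=\mathfrak g_\gamma$ (regularity means $\mathfrak g_\gamma$ is a Cartan subalgebra). After conjugating, we choose a Cartan involution $\theta$ preserving $\mathfrak t$. Then $\mathfrak t=\mathfrak t_c\oplus\mathfrak a_\gamma$, where $\mathfrak t_c=\mathfrak t\cap\mathfrak k$ and $\mathfrak a_\gamma=\mathfrak t\cap\mathfrak p$. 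Correspondingly $\gamma=\gamma_e\gamma_h$ splits into a commuting elliptic part and a hyperbolic part, with $\gamma_h=\exp(H)$ for some $H\in\mathfrak a_\gamma$. The moduli of the eigenvalues of $\ad(\gamma)$ are the numbers $e^{\alpha(H)}$, where $\alpha$ runs over the restricted roots of $\mathfrak g$ relative to any maximal abelian subspace $\mathfrak a\subset\mathfrak p$ containing $H$, counted with multiplicities, together with the value $1$ on the centraliser $\mathfrak g_0=\mathfrak m\oplus\mathfrak a$.

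The first step is to identify the maximal number of eigenvalues of absolute value $\neq 1$. That number is attained exactly when $\alpha(H)\neq 0$ for every restricted root $\alpha$, in which case it equals $\dim\mathfrak g-\dim\mathfrak g_0$; for any other $H$ it is strictly smaller. So $\RR$-regularity of $\gamma$ means that $H$ is a regular element of some maximal abelian $\mathfrak a\subset\mathfrak p$. In that case $\mathfrak z_{\mathfrak g}(H)=\mathfrak m\oplus\mathfrak a$.

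The second step uses that $\mathfrak a_\gamma$ is abelian and contains $H$, so $\mathfrak a_\gamma\subset\mathfrak z_{\mathfrak p}(H)=\mathfrak a$. Conversely, $\mathfrak a$ commutes with $\gamma_h=\exp H$. We also need $\mathfrak a$ to commute with $\gamma_e$. Since $\gamma_e$ commutes with $H$, $\Ad(\gamma_e)$ preserves $\mathfrak z_{\mathfrak g}(H)\cap\mathfrak p=\mathfrak a$. The key point is that $\mathfrak a\subset\mathfrak z(\mathfrak t)$ after all. Indeed, $\mathfrak t$ is abelian and contains $H$, so $\mathfrak t\subset\mathfrak z_{\mathfrak g}(H)=\mathfrak m\oplus\mathfrak a$, and $\mathfrak a$ is central in $\mathfrak m\oplus\mathfrak a$. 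Hence $\mathfrak t+\mathfrak a$ is abelian and consists of semisimple elements. By maximality of the Cartan subalgebra $\mathfrak t$ we get $\mathfrak a\subset\mathfrak t=\mathfrak g_\gamma$.

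Therefore $A=\exp\mathfrak a$ lies in $G_\gamma^\circ$. Since $A$ is a maximal $\RR$-split torus (its Lie algebra is a maximal abelian subspace of $\mathfrak p$), this gives the claim.

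The main obstacle is the first step: checking that "maximal number of eigenvalues off the unit circle" corresponds exactly to $H$ being regular in a maximal $\mathfrak a$. This requires comparing different maximal abelian subspaces, which is done by $K$-conjugacy: all such subspaces are conjugate, and any $H$ lies in one of them. The count $\dim\mathfrak g-\dim\mathfrak z_{\mathfrak g}(H)$ is minimised over singular $H$ by a strictly smaller value, since root spaces vanish on walls. The maximality step for the Cartan subalgebra is standard.
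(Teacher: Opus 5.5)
Your argument is correct. The paper does not prove this lemma itself. It quotes it from Prasad--Raghunathan \cite[Lemma 1.5]{Prasad_Raghunathan}, so your proposal replaces a citation with a self-contained Lie-algebraic proof. The mechanism runs as follows:
\begin{itemize}
\item Put $\mathfrak t=\mathfrak g_\gamma$ in $\theta$-stable position.
\item $\RR$-regularity says exactly that the hyperbolic part $H\in\mathfrak t\cap\mathfrak p$ lies off all walls of a maximal abelian $\mathfrak a\subset\mathfrak p$. By $K$-conjugacy of such subspaces, the count $\dim\mathfrak g-\dim\mathfrak z_{\mathfrak g}(H)$ is maximal precisely there.
\item Hence $\mathfrak z_{\mathfrak g}(H)=\mathfrak m\oplus\mathfrak a$ with $\mathfrak a$ central, so $\mathfrak a$ centralises $\mathfrak t\subset\mathfrak z_{\mathfrak g}(H)$.
\item A Cartan subalgebra of a semisimple Lie algebra is self-centralising, so $\mathfrak a\subset\mathfrak g_\gamma$.
\end{itemize}
The citation buys brevity and stays in the algebraic-group language used elsewhere in the paper. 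Your version makes visible exactly where $\RR$-regularity enters.

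Two small points for a write-up. First, $\gamma$ need not lie in $G_\gamma^\circ$; for example $-\mathrm{diag}(2,1/2)\in\SL_2(\RR)$ does not. So the decomposition $\gamma=\gamma_e\exp(H)$, with $\operatorname{Ad}(\gamma_e)$ having unimodular eigenvalues and $H\in\mathfrak t\cap\mathfrak p$, should be justified. Use the structure $Z_G(\mathfrak t)=(Z_G(\mathfrak t)\cap K)\exp(\mathfrak t\cap\mathfrak p)$ of a $\theta$-stable Cartan subgroup, working in $\operatorname{Ad}(G)$ if the centre is a nuisance.

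Second, the remark that $\operatorname{Ad}(\gamma_e)$ preserves $\mathfrak a$ is superfluous once you use self-centralisation. You could instead complete that line: $\gamma_e\in K$ normalises $\mathfrak a$ and fixes the regular element $H$, so it acts on $\mathfrak a$ by a Weyl group element fixing a regular point, hence trivially. That variant does not use that $\mathfrak g_\gamma$ is a Cartan subalgebra. It only needs $\gamma=\gamma_e\exp(H)$ with $\gamma_e\in K$ commuting with $H\in\mathfrak p$.
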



\subsubsection{Injectivity radius and the thin part} \label{thinpart}

Let $d(g,h):=d_X(gK,hK)$. It is a bi--$K$--invariant, left $G$--invariant semi-metric on $G$. Let 
\[
\B(R) = \{ g\in G : d_X(K,gK) \leq R \}=\{g\in G : d(g,1)\leq R\}.
\]
One can think of $\B(R)$ as a ball in $G$, although it only corresponds to the semi-metric $d$ on $G$. Let $\Lambda\subset G$ be a discrete subgroup of $G$ and let $x = \Lambda g K\in \Lambda\bs X$ be a point in the locally symmetric space $M = \Lambda\bs X$. The {\em injectivity radius} of $M$ at $x$ is defined as 
\begin{align*}
  \InjRad_{\Lambda\bs X}(x) &= \sup\left\{ R\geq 0\,|\, d_X(gK, \gamma gK)\geq \frac R 2 \textrm{ for every } \gamma\in \Lambda \setminus Z(\Lambda) \right\} \\
  &=\sup\{R\geq 0\,|\, \Lambda^g\cap \B(2R)=Z(\Lambda)\}.
\end{align*}
If $M$ is a manifold (that is, if $\Lambda$ is torsion-free) then this is the same as the usual definition of injectivity radius. In general, this definition also takes into account the singular locus of the orbifold $M$. We also recall that the global injectivity radius is defined by $\InjRad_M := \inf_{x \in M} \InjRad_M(x)$. 

For $R>0$ the {\em $R$-thin} part of the locally symmetric space $M$ is the set 
\[
M_{\leq R} = \{x\in M \,|\, \InjRad_M(x)\leq R\}.
\]
The $R$-thick part $(\Lambda\bs X)_{\ge R}$ can be then defined as the closure of the complement of the $R$-thin part. 


\subsection{Arithmetic lattices} \label{prelim_lattices}

Let $G$ be a semisimple Lie group. A lattice $\Gamma\subset G$ is called arithmetic if there exists a $\QQ$-group $\mathbf H$ and a surjective morphism $\pi :\: \mathbf H(\RR) \to G$ with compact kernel such that $\Gamma$ and  $\pi (\mathbb H(\ZZ))$ are commensurable (i.e. have a common finite index subgroup). An {\em arithmetic locally symmetric space} is the quotient of a symmetric space $X = G/K$ by an arithmetic lattice in $G$. 

Alternatively, arithmetic lattices can be given a more constructive definition using the language of ad\`eles. This point of view is particularly well adapted to writing down trace formulas so that is what we are going to use. Let $k$ be a number field with ad\`ele ring $\Ade$ and let $\G$ be an adjoint semi-simple linear algebraic group over $k$. We will say that the couple $(\G,k)$ is {\em admissible} (for $G$) if $\G(k\otimes_\QQ \RR)\simeq G\times K$ with $K$ compact. Let $U$ be an open compact subgroup of $\G(\Ade_f)$. Then 
\[
\Gamma_U := \G(k)\cap (\G(k_\infty)\times U)
\]
is a lattice in $(\G(k_\infty)\times U)$ (see \cite[Chapter 5]{PlaRap}). By abuse of notation we will also write $\Gamma_U$ for the projection of $\Gamma_U$ to $G$, which is still a lattice since $U$ is compact. 

In general, an arithmetic lattice can be defined as any lattice which is commensurable to some $\Gamma_U$, and a {\em congruence arithmetic lattice} is one that contains some $\Gamma_U$. We note that the Borel density theorem and the commensurability invariance of the trace field (see below) imply that the commensurability class of $\Gamma_U$ is uniquely determined by the pair $(\G,k)$ and vice-versa. We recall that the lattice $\Gamma_U$ is uniform if and only if the $k$-group $\G$ is $k$-anisotropic \cite[Theorem 4.12]{PlaRap}.

The {\em (adjoint) trace field} of an arithmetic lattice $\Gamma$ is defined as the field $k$ where $(\G, k)$ is the unique admissible pair for which some or any $\Gamma_U$ is commensurable to $\Gamma$. Alternatively, the trace field is the field generated by the traces of elements of $\Gamma$ in the adjoint representation: 
\[
k = \QQ(\tr\ad(\gamma), \gamma\in \Gamma)
\]
That both definitions coincide is not obvious and follows from theorems of Vinberg. 
More precisely, let $\Gamma'$ be an arithmetic lattice in $\G(k)$ which is commensurable to $\Gamma$. Then we have obviously that $\ad(\Gamma') \subset \ad(\G(k))$ so $\QQ(\tr\ad(\Gamma') \subset k$; on the other hand we have $\QQ(\tr(\Gamma)) = \QQ(\tr(\Gamma'))$ as follows from \cite[Theorem 3]{Vinberg_trace}.  
For the reverse inclusion $k \subset \QQ(\tr\ad(\gamma), \gamma\in \Gamma)$ we use Vinberg's theorem: let $l$ be the trace field, by \cite[Theorem 1]{Vinberg_trace} $\ad(\Gamma)$ is contained in $\ad(\G(l_\infty))$. If $l \subsetneq k$, then the image of $\G(l_\infty)$ in $G$ is a proper subgroup of $G$ and this contradicts the fact that $\Gamma$ is Zariski-dense in the Zariski topology on $\G(k_\infty)$. 

The following proposition, the proof of which also uses Vinberg's results,  will also be of use later; in particular it implies that if $\Gamma$ is an arithmetic lattice then we can apply \eqref{eq-MahlerDisp} to any semisimple $\gamma \in \Gamma$. 

\begin{proposition}
  Let $\Gamma \subset G$ be an arithmetic lattice and let $\gamma \in \Gamma$ be a semisimple element. Let $T$ be a maximal torus containing $\gamma$. Let $\Phi$ be the roots associated to $T$ (over $\CC$). Then $\lambda(\gamma) \in \ovl\ZZ$ for all $\lambda \in \Phi$. 
\end{proposition}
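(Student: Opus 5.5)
The plan is to reduce to the integrality of traces in the adjoint representation, and then to use the fact that the roots $\lambda(\gamma)$ are the eigenvalues of $\ad(\gamma)$.

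\emph{Step 1: eigenvalues.} Since $\gamma \in T$ is semisimple, the adjoint action of $\gamma$ on $\mathfrak g_\CC = \mathfrak t_\CC \oplus \bigoplus_{\lambda\in\Phi} \mathfrak g_\lambda$ is diagonal. It acts trivially on $\mathfrak t_\CC$ and by the scalar $\lambda(\gamma)$ on each root space $\mathfrak g_\lambda$. So the numbers $\lambda(\gamma)$, $\lambda \in \Phi$, together with $1$, are exactly the roots of the characteristic polynomial $\chi(x) = \det(x - \ad(\gamma))$. It therefore suffices to show that $\chi$ has coefficients in $\ovl\ZZ$, since a root of a monic polynomial with algebraic integer coefficients is an algebraic integer.

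\emph{Step 2: integrality of traces.} Every coefficient of $\chi$ is a polynomial with rational coefficients in the power sums $\tr\ad(\gamma^m)$, $m = 1,\ldots,\dim \mathfrak g$, by Newton's identities. Denominators appear there, so I would avoid Newton's identities. Instead I would show directly that $\ad(\gamma)$ preserves a lattice. Take a finite-index subgroup $\Gamma' \subset \Gamma$ which is contained in $\Gamma_U$ for some admissible pair $(\G,k)$ and open compact $U$. This is possible by the definition of arithmetic lattices, which are commensurable to some $\Gamma_U$. Some power $\gamma^N$ then lies in $\Gamma'$.

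The group $\Gamma_U$ preserves an $\mo_k$-lattice $\Lambda$ in the $k$-Lie algebra $\mathfrak g(k)$, via $\ad$. Concretely, $U$ stabilises a lattice at almost all places, and at the remaining places one can average over the compact group $U_v$; the resulting lattices glue to a global lattice $\Lambda$. So $\ad(\gamma^N)$ is conjugate to a matrix in $\GL(\Lambda)$, and its characteristic polynomial has coefficients in $\mo_k$. Hence $\lambda(\gamma)^N = \lambda(\gamma^N)$ is an algebraic integer, and therefore so is $\lambda(\gamma)$, because $\ovl\ZZ$ is integrally closed.

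One subtlety remains. The ambient adjoint group is $\G(k_\infty) \simeq G\times K$, and $\gamma$ is an element of $G$ rather than of $\G(k)$. Passing to the adjoint representation of $G$ identifies $\mathfrak g$ with a factor of $\mathfrak g(k)\otimes_\QQ \RR$. The eigenvalues of $\ad(\gamma)$ on that factor are a subset of the eigenvalues on the whole space, so integrality is inherited.

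\emph{Main obstacle.} The delicate point is that $\gamma$ itself lies in $\Gamma$, which need not be contained in $\ad(\G(k))$: $\Gamma$ is only commensurable to $\Gamma_U$. The passage to the power $\gamma^N$ handles this, since $\gamma$ normalises a finite-index subgroup. Alternatively one could invoke Vinberg's theorem \cite[Theorem 1]{Vinberg_trace} to place $\ad(\Gamma)$ inside $\ad(\G(k_\infty))$ with traces in $k$. I would use the power trick as the primary route, because it needs only that $\Gamma\cap\Gamma_U$ has finite index in $\Gamma$, which holds by commensurability.
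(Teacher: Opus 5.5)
Your argument is correct. It differs from the paper's only in how it passes from a finite-index subgroup to all of $\Gamma$.

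\textbf{The paper's route.} The starting point is the same as yours: a finite-index $\Gamma_1\subset\Gamma$ whose image under $\ad$ preserves an $\mo_k$-lattice, so that $\tr\ad(\Gamma_1)\subset\mo_k$. The paper then invokes Vinberg's Theorem 3 to get $\tr\ad(\gamma)\in\mo_k$ for \emph{every} $\gamma\in\Gamma$. Finally it treats integrality of the eigenvalues as immediate from integrality of the traces of all powers.

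\textbf{Your route.} You replace Vinberg by the elementary observation that $\gamma^N\in\Gamma'$ for some $N\ge 1$, and that $\lambda(\gamma)$ is then a root of $x^N-\lambda(\gamma^N)$. Because you obtain the characteristic polynomial of $\ad(\gamma^N)$ directly from lattice stability, you also avoid the step ``power sums integral $\Rightarrow$ eigenvalues integral''. That step is true but not trivial, since Newton's identities introduce denominators.

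\textbf{What the paper's route buys.} It proves more than the proposition states: the characteristic polynomial of $\ad(\gamma)$ itself lies in $\mo_k[x]$. In other words, every $\gamma\in\Gamma$ has integral traces in the paper's sense. The paper uses this immediately afterwards in two places:
\begin{enumerate}
\item to justify applying \eqref{eq-MahlerDisp} to arbitrary semisimple $\gamma\in\Gamma$;
\item in Proposition \ref{dobrowolski_bound}, which needs $\lambda(\gamma)$ to have degree at most $d\cdot\dim G$ uniformly.
\end{enumerate}
Your method only shows that $\lambda(\gamma)$ is an $N$-th root of such a number. Here $N\le[\Gamma:\Gamma']$ is not uniform, so the degree bound degrades with $N$.

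\textbf{Two small points of phrasing.}
\begin{enumerate}
\item At the finitely many bad places, do not ``average''. Instead take the $\mo_{k_v}$-span of the $\ad(U_v)$-orbit of the lattice; compactness guarantees this is again a lattice.
\item $\Lambda$ need not be free over $\mo_k$, so ``conjugate to a matrix in $\GL(\Lambda)$'' is slightly off. However, $\Lambda$ is a free $\ZZ$-module of full rank, and that is all you need for the characteristic polynomial to have algebraic-integer coefficients.
\end{enumerate}
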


\begin{proof}
  By definition of an arithmetic lattice, there exists a finite-index subgroup $\Gamma_1 \subset \Gamma$ and a $\mo_k$-lattice $L$ in the $k$-Lie algebra of $\G$ such that $\ad(\Gamma_1) L \subset L$. It follows that the ring generated by the traces $\tr(\ad(\gamma)), \gamma \in \Gamma_1$ is contained in $\mo_k$. By \cite[Theorem 3]{Vinberg_trace} so is the ring generated by the $\tr(\ad(\gamma)), \gamma \in \Gamma$. In particular all eigenvalues of elements in $\ad(\Gamma)$ are algebraic integers, which implies that the $\lambda(\gamma)$ must be an algebraic integer as well. 
\end{proof}

We use this result to give a lower bound on the systole of an arithmetic locally symmetric space using a lower bound on Mahler measure due to E.~Dobrowolski. Similar estimates have been obtained earlier in \cite{Belolipetsky2020ABF} and \cite{GelaVol}.

\begin{proposition} \label{dobrowolski_bound}
  There exists a constant $c$ depending only on $G$ such that if $\Gamma$ is an arithmetic lattice in $G$ with adjoint trace field $k$ and $d = [k:\QQ] \geq 2$, then for any semisimple non-compact $\gamma \in \Gamma$ we have
  \[
  \ell(\gamma) \ge \frac{c}{(\log d)^3}.
  \]
\end{proposition}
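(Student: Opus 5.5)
By Proposition \ref{p-MahlerDisp}, together with the preceding proposition guaranteeing that every $\lambda(\gamma)$ is an algebraic integer, it suffices to bound $m(\gamma)$ from below. We have $\ell(\gamma) \ge a_X m(\gamma)$, and
\[
m(\gamma) = \sum_{\lambda\in\Phi}[k:\QQ]h(\lambda(\gamma)) \ge [k:\QQ]\,h(\lambda_0(\gamma))
\]
for any single root $\lambda_0$. So we must find one root $\lambda_0$ for which $\alpha := \lambda_0(\gamma)$ is not a root of unity, and then bound $[k:\QQ]h(\alpha)$ below by $c/(\log d)^3$.

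First we produce $\lambda_0$. Since $\gamma$ is non-compact in $\G(k_\infty)$, its image in $G$ does not generate a bounded subgroup. For a semisimple element this means $\ad(\gamma)$ has some eigenvalue $\alpha$ of absolute value $\neq 1$ at the real place corresponding to $G$, and such an eigenvalue is some $\lambda_0(\gamma)$. Hence $\alpha$ is a nonzero algebraic integer which is not a root of unity. (The root system is used only up to the choice of torus, which is harmless.)

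Next we apply Dobrowolski. Let $D = [\QQ(\alpha):\QQ]$. Then $\alpha$ lies in the splitting field of the characteristic polynomial of $\ad(\gamma)$ over $k$, so $D \le d \cdot C_G$, where $C_G$ depends only on $\dim G$ (for instance $C_G = (\dim\G)!$, or better, the order of the Weyl group times $\dim \G$). Dobrowolski's theorem gives
\[
m(\alpha) = D\,h(\alpha) \ge c_0\left(\frac{\log\log D}{\log D}\right)^3 \gg \frac{1}{(\log D)^3}
\]
for $D\ge 3$; when $D\le 2$ a trivial positive lower bound holds, because only finitely many Mahler measures below any bound occur in bounded degree. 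Therefore
\[
[k:\QQ]h(\alpha) = \frac{d}{D}\,m(\alpha) \ge \frac{1}{C_G}\,m(\alpha) \gg \frac{1}{(\log D)^3} \gg_G \frac{1}{(\log d)^3},
\]
since $\log D \le \log d + \log C_G \ll_G \log d$ for $d\ge 2$. This last step is exactly where the hypothesis $d \ge 2$ is used.

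Combining the two steps gives $\ell(\gamma) \ge a_X m(\gamma) \ge c/(\log d)^3$. The only delicate point is the degree bookkeeping: $h$ is normalized absolutely, whereas $m(\gamma)$ carries the factor $[k:\QQ]$ rather than $[\QQ(\alpha):\QQ]$. Because $D$ could be either larger or smaller than $d$, the ratio $d/D$ must be bounded below by $1/C_G$ uniformly, and this requires $[\QQ(\alpha):k]$ to be bounded in terms of $G$ alone. That bound holds because $\alpha$ is a root of the characteristic polynomial of $\ad(\gamma)$, which has coefficients in $k$ and degree $\dim G$.
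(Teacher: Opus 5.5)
Your proof is correct and follows the paper's argument. Non-compactness gives an eigenvalue $\lambda(\gamma)$ of $\ad(\gamma)$ that is not a root of unity, of degree at most $d\dim\G$; Dobrowolski bounds its Mahler measure below by $\gg (\log d)^{-3}$; and Proposition \ref{p-MahlerDisp} converts this into the bound on $\ell(\gamma)$. Your explicit handling of the factor $d/[\QQ(\alpha):\QQ]\ge 1/\dim\G$, which the paper passes over silently when it writes $m(\gamma)\ge m(\lambda(\gamma))$, is welcome extra care but does not change the route.
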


\begin{proof}
  The proof of the previous proposition shows that the minimal polynomial of $\ad(\gamma)$ belongs to $\mo_k$, so its roots $\lambda(\gamma)$ are algebraic integers of degree at most $d \cdot \dim(G)$. By \cite[Theorem 1]{Dobr79} it follows that $m(\lambda(\gamma)) \ge \frac 1{2(\log(d)+\log\dim(G))^3}$ for all $\lambda$ such that $\lambda(\gamma)$ is not a root of unity. Since $\gamma$ is non-compact there exists at least one such $\lambda$ and it follows that
  \[
  m(\gamma) \ge \frac 1{2(\log(d)+\log\dim(G))^3} \gg_G \frac 1{\log(d)^3}, 
  \]
  and the result follows by \eqref{eq-MahlerDisp}. 
\end{proof}


\subsection{Orbital integrals} \label{sec_orbint}

The following proposition is well-known (see for instance \cite[Theorem 27 on p. 215, Theorem 17 on p. 211]{Varadarajan}). 

\begin{proposition} \label{convergence_orbint}
  Let $\gamma$ be a semisimple element in a semisimple Lie group $G$. Then the map from $G/G_\gamma$ to $G$ defined by $g \mapsto g\gamma g^{-1}$ is proper. 
\end{proposition}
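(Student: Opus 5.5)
The plan is to show that the conjugacy orbit $\mathcal C = \{g\gamma g^{-1}\}$ of a semisimple $\gamma$ is closed in $G$, and that the orbit map $G/G_\gamma \to \mathcal C$ is a homeomorphism onto it. Properness then follows, because a closed embedding is proper: if $C\subset G$ is compact, then $C\cap \mathcal C$ is compact in $\mathcal C$, and its preimage is its image under the inverse homeomorphism, hence compact in $G/G_\gamma$.

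\emph{Step 1: the orbit is closed.} I will realise $G$ (up to finite covering and centre, which do not affect properness) as a finite-index subgroup of the real points $\mathbf G(\RR)$ of a linear algebraic group. Since $\gamma$ is semisimple, the $\mathbf G(\CC)$-conjugacy class of $\gamma$ is Zariski closed; this is the classical fact that a semisimple orbit is determined by the values of the invariant polynomials, i.e.\ the characteristic polynomial of a faithful representation together with the Steinberg map. Its real points therefore form a closed subset of $\mathbf G(\RR)$. By Whitney's finiteness theorem, this real algebraic set splits into finitely many $\mathbf G(\RR)$-orbits, each of which is open in the set (by a dimension count, since all orbits have the same dimension as the complex orbit). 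Each orbit is therefore also closed, being the complement of the union of the others. Passing to the finite-index subgroup $G$ again gives finitely many orbits of equal dimension, so the same argument applies. Hence $\mathcal C$ is closed in $G$.

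\emph{Step 2: the orbit map is a homeomorphism.} The map $G/G_\gamma\to\mathcal C$ is a continuous bijective immersion. Since $\mathcal C$ is locally closed in $G$, it is locally compact Hausdorff, and $G$ is $\sigma$-compact. The standard open mapping theorem for transitive actions (Baire category) then shows that the map is open, hence a homeomorphism.

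I expect Step 1 to be the main obstacle, and in particular the precise passage from closedness of the complex orbit to closedness of the real orbit. An alternative I would keep in reserve is the more hands-on route in the spirit of Varadarajan. Write $\gamma = ke$ with $k$ elliptic and $e$ hyperbolic commuting, use a Cartan decomposition $G=K\exp(\mathfrak p)$, and show directly that $\|\ad(g\gamma g^{-1})\|\to\infty$ whenever $gG_\gamma\to\infty$ in $G/G_\gamma$. The key input there is that $\ad(\gamma)$ acts semisimply, so $\mathfrak g = \mathfrak g_\gamma\oplus \mathrm{Im}(\ad\gamma - 1)$, and conjugating by $\exp(X)$ with $X$ transverse to $\mathfrak g_\gamma$ moves $\gamma$ by an amount growing with $\|X\|$.
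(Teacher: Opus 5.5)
Your proof is correct, but it takes a genuinely different route from the paper. You argue algebraically, in three steps:
\begin{itemize}
\item the $\mathbf G(\CC)$-class of a semisimple element is Zariski closed (Steinberg);
\item each $G$-orbit in its real points $\mathbf O(\RR)$ is open by the submersion/dimension count, and is therefore also closed;
\item the open mapping theorem upgrades the orbit map to a closed embedding $G/G_\gamma\hookrightarrow G$.
\end{itemize}
The paper instead cites Varadarajan for the general statement. It proves in detail only the case where $G_\gamma$ is a non-compact torus. There an Iwasawa decomposition $G=MNK$ with $M=G_\gamma$ reduces properness to that of $n\mapsto n^{-1}\gamma^{-1}n$ on $N$. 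By Lemma \ref{constantjacobian} this map is a diffeomorphism onto the closed set $\gamma^{-1}N$ with constant Jacobian $\prod_{\lambda}(1-\lambda(\gamma))$. The general non-compact case (via a parabolic $P$ with $P\cap M$ cocompact in $M$) and the compact case (via $KAK$) are only sketched.

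Your argument is uniform in $\gamma$: elliptic, irregular, or with disconnected centraliser, all with no case analysis. The price is that it rests on Steinberg's closedness theorem and gives no quantitative information. The paper's hands-on route is chosen because exactly this commutator/Jacobian computation yields the bound $\mathcal O(\gamma,f)\ll\|f\|_\infty|\Delta^+(\gamma)|$ in Proposition \ref{est_orbint_a}, and, via Lemma \ref{comm_ss_unip}, the $v$-adic orbital-integral estimates.

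Two small remarks on your write-up. First, the fact that semisimple classes are determined by the invariant polynomials does not by itself give closedness. You also need that a boundary point $su$, with $u\in\mathbf G_s$ unipotent, would have a class of dimension at least that of $s$; alternatively, cite Steinberg's closedness theorem directly. Second, Whitney's finiteness theorem is superfluous: openness of every orbit in $\mathbf O(\RR)$ already makes each orbit closed, as the complement of the union of the others.
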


It follows immediately from this proposition that for any continuous compactly supported function $f$ the {\em orbital integral} $\mathcal O(\gamma, f)$ defined by
\[
\mathcal O(\gamma, f) = \int_{G_\gamma \bs G} f(x^{-1}\gamma x) dx
\]
is convergent.

Below we give a complete proof of Proposition \ref{convergence_orbint} in the case where $\gamma$ is regular non-compact, which uses several results that will be of use later, when we study further the orbital integral themselves both in Archimedean and non-Archimedean contexts.


\subsubsection{Commutators}

In this section $k_v$ is a local field, possibly non-Archimedean\footnote{The computation works over any field but we will use it only for these cases. }, and $\G$ is a $k_v$-group. 

\begin{lemma} \label{comm_ss_unip}
  Let $\T$ a maximal split torus in $\G$ and $\Phi = \Phi_{k_v}(\G, \T)$ the associated relative root system. Let $\N$ be a $k_v$-unipotent subgroup normalised by $\T$ and $\Phi_\N^+ \subset \Phi$ the subset of roots in $\N$. We denote by $t \mapsto n_\lambda(t)$ an isomorphism from $k_v$ to the root subgroup associated with $\lambda$. 

  We fix a linear order on $\Phi_\N^+$ so that any root cannot be decomposed as a sum of larger roots. Then there are constants $c_{\underline\nu}^\lambda \in k_v$ (depending only on the choice of isomorphisms $n_\lambda$) and $\eps_{\underline\nu, \nu}^\lambda \in \{0, 1\}$ (depending only on the chosen order on $\Phi$) for $\nu \subset \Phi_\N^+, \nu \in \underline\nu, \lambda \in \Phi_\N^+$ such that for all $\gamma \in \T(k_v)$ and $n=\prod_{\lambda\in\Phi_\N^+}n_\lambda(t_\lambda) \in \N(k_v)$ we have : 
  \[
  \gamma n^{-1} \gamma^{-1} n = \prod_{\lambda \in \Phi_\N^+} n_\lambda\left( (1-\lambda(\gamma))t_\lambda + \sum_{\underline\nu\subset \Phi_\N^+ : \sum_{\underline\nu} \nu = \lambda} c_{\underline\nu}^\lambda \prod_{\nu\in\underline\nu} \nu(\gamma)^{\eps_{\underline\nu, \nu}^\lambda} t_\nu \right). 
  \]
\end{lemma}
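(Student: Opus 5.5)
We argue by induction on the number of roots in $\Phi_\N^+$, processing the product $n=\prod_{\lambda}n_\lambda(t_\lambda)$ in the fixed linear order. The two basic inputs are the standard facts about root subgroups.
\begin{itemize}
\item[(a)] Conjugation by the torus acts on a root subgroup by the character: $\gamma n_\lambda(t)\gamma^{-1}=n_\lambda(\lambda(\gamma)t)$.
\item[(b)] The Chevalley commutator formula: for roots $\alpha\ne\beta$ in $\Phi_\N^+$, the commutator $[n_\alpha(s),n_\beta(t)]$ is a product over roots $i\alpha+j\beta\in\Phi_\N^+$ ($i,j\ge1$) of factors $n_{i\alpha+j\beta}(C_{ij}^{\alpha\beta}s^it^j)$. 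The constants $C_{ij}^{\alpha\beta}$ depend only on the normalisation of the $n_\lambda$.
\end{itemize}
In the relative (possibly non-reduced) setting one has to be slightly careful: one uses the relative root groups $\mathbf U_{(\lambda)}$ and their parametrisations. We treat $n_\lambda$ as given isomorphisms as in the statement and use the analogous commutator relations, which hold in the same polynomial form.

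Write $\gamma n^{-1}\gamma^{-1}n=(\gamma n\gamma^{-1})^{-1}n$. By (a),
\[
\gamma n\gamma^{-1}=\prod_\lambda n_\lambda(\lambda(\gamma)t_\lambda),
\]
so the quantity to compute is
\[
\Bigl(\prod_\lambda n_\lambda(\lambda(\gamma)t_\lambda)\Bigr)^{-1}\prod_\lambda n_\lambda(t_\lambda).
\]
We now bring this expression to the normal form $\prod_\lambda n_\lambda(u_\lambda)$ in the fixed order by repeatedly moving factors past each other. Each transposition creates, via (b), new factors only in root groups $i\alpha+j\beta$, and these roots are strictly larger than $\alpha$ and $\beta$ in the sense of the chosen order: since a root is never a sum of larger roots, a sum root is placed after its summands. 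Hence the reordering process terminates (finitely many roots, each step pushes mass to higher roots).

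The coefficient $u_\lambda$ is then determined as follows. For the lowest-order roots there is no correction, and the contribution is $-\lambda(\gamma)t_\lambda+t_\lambda=(1-\lambda(\gamma))t_\lambda$. For general $\lambda$, $u_\lambda$ equals $(1-\lambda(\gamma))t_\lambda$ plus terms produced by commutators of factors whose roots sum to $\lambda$. Each such factor carries a parameter $t_\nu$ or $\lambda'$-twisted $\nu(\gamma)t_\nu$, depending on whether it came from $n$ or from $\gamma n\gamma^{-1}$. This is exactly the source of the exponents $\eps\in\{0,1\}$, which depend only on the position of the factor, i.e.\ on the order.

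To get precisely the stated shape (sums over subsets $\underline\nu$ with $\sum\nu=\lambda$, each $t_\nu$ appearing to the first power), we would argue inductively on the height in the order. We first clear the lowest root $\lambda_1$ by pulling all $n_{\lambda_1}$ factors to the front; this is possible since $\lambda_1$ is not a sum of other roots in $\Phi_\N^+$. We then pass to the quotient by the normal subgroup generated by the root groups beyond the current one. The main obstacle is this bookkeeping: a priori the Chevalley formula produces monomials $s^it^j$ and compositions of commutators, which could yield higher powers of some $t_\nu$ or repeated roots. We would handle it either by a direct multilinearity argument in the graded Lie algebra $\mathrm{gr}\,\N$, or, failing that, by allowing $\underline\nu$ to be multisets. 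Since only the polynomial structure with constants independent of $\gamma$ and $n$ is used later, the latter relaxation is harmless.
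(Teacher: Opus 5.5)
Your proposal is essentially the paper's proof: one writes $\gamma n^{-1}\gamma^{-1}n=\prod_{i=m}^{1}n_{\lambda_i}(-\lambda_i(\gamma)t_i)\prod_{i=1}^{m}n_{\lambda_i}(t_i)$ and moves the factors of the first product into place using the commutator relations, so all constants come from the $[n_{\lambda_i}(1),n_{\lambda_j}(1)]$. Your fallback to multisets is necessary, not merely harmless. In $B_2$ the $s^2t$ term of $[n_\alpha(s),n_\beta(t)]$ puts $t_\alpha^2t_\beta$ into the $(2\alpha+\beta)$-coordinate. Already for $\SL_3$, with elementary-matrix parametrisations and the order $\alpha<\beta<\alpha+\beta$, the $(\alpha+\beta)$-coordinate is $(1-(\alpha+\beta)(\gamma))t_{\alpha+\beta}+\beta(\gamma)(1-\alpha(\gamma))t_\alpha t_\beta$, so several $\eps$-patterns per $\underline\nu$ are needed as well. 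This relaxed form (repeated indices, a sum of monomials) is the one the paper actually uses in \eqref{coefficients_unip}.
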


\begin{proof}
  Let $\lambda_1, \ldots, \lambda_m$ be the chosen linear ordering. 
  We have
  \[
  \gamma n^{-1} \gamma^{-1} n  = \prod_{i=m}^1 n_{\lambda_i}(-\lambda_i(\gamma)t_i) \prod_{i=1}^m n_{\lambda_i}(t_i).
  \]
  In the right-hand side, we move all the components of the first product one by one starting with $i=m$ to their rightful place in the second product. After this is finished we arrive at the final formula, where constants depend only on the commutators $[n_{\lambda_i}(1), n_{\lambda_j}(1)]$. 
\end{proof}


\subsubsection{Commutators at Archimedean places}

Here $k_v = \RR$ or $\CC$. If $H, L$ are Lie groups with Lie algebras $\mathfrak h, \mathfrak l$ then their tangent bundles admit a canonical isomorphism to $H \times \mathfrak h, L \times \mathfrak l$ respectively and so the Jacobian of a differentiable map $H \to L$ is just a map $H \to \hom(\mathfrak h, \mathfrak l)$. 

\begin{lemma} \label{constantjacobian}
  In the situation of Lemma \ref{comm_ss_unip} assume in addition that $\gamma$ acts non-trivially on every root subspace $\N_\lambda$, $\lambda \in \Phi_\N^+$. Then the map
  \[
  n \mapsto \gamma n^{-1}\gamma^{-1} n
  \]
  is a diffeomorphism of $\N$ onto itself and its Jacobian determinant is constant. 
\end{lemma}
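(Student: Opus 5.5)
We will use the explicit formula of Lemma \ref{comm_ss_unip}, which shows that in the coordinates $n = \prod_{\lambda\in\Phi_\N^+} n_\lambda(t_\lambda)$ the map $\phi(n) = \gamma n^{-1}\gamma^{-1}n$ is "triangular" with respect to the chosen linear order. Concretely, write $\phi(n) = \prod_\lambda n_\lambda(s_\lambda)$. Then
\[
s_\lambda = (1-\lambda(\gamma))\,t_\lambda + P_\lambda\bigl((t_\nu)_{\nu<\lambda}\bigr),
\]
where $P_\lambda$ is a polynomial that only involves the coordinates $t_\nu$ for roots $\nu$ occurring in decompositions $\lambda = \sum_{\underline\nu}\nu$ with $|\underline\nu|\ge 2$.

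The first point is to check that each such $\nu$ is strictly smaller than $\lambda$ in the chosen order. This follows from the defining property of the order: $\lambda$ cannot be a sum of larger roots. Since all roots lie in $\Phi_\N^+$, which is contained in a positive system, a decomposition of $\lambda$ into at least two roots of $\Phi_\N^+$ must therefore contain a smaller root. If needed, we can reduce to orders compatible with height, so that every summand is of smaller height and hence smaller; we would adjust the order to ensure this.

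Given triangularity, the map $(t_\lambda) \mapsto (s_\lambda)$ on $k_v^{|\Phi_\N^+|}$ (or on the root spaces, when these are higher dimensional) is inverted recursively:
\[
t_\lambda = (1-\lambda(\gamma))^{-1}\bigl(s_\lambda - P_\lambda((t_\nu)_{\nu<\lambda})\bigr).
\]
This is legitimate because the hypothesis that $\gamma$ acts nontrivially on each $\N_\lambda$ means that $1-\lambda(\gamma)$, or more generally $1-\ad(\gamma)|_{\N_\lambda}$, is invertible. Thus $\phi$ is a bijection whose inverse is polynomial, so it is a diffeomorphism; the coordinate map $n\mapsto (t_\lambda)$ is itself a polynomial diffeomorphism $\N\cong k_v^N$.

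For the Jacobian, in these coordinates the derivative matrix is block lower-triangular, with diagonal blocks $(1-\lambda(\gamma))\,\mathrm{Id}$. Hence its determinant is $\prod_\lambda (1-\lambda(\gamma))^{\dim \N_\lambda}$, which is independent of $n$.

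It remains to pass from coordinates to the intrinsic Jacobian relative to the left-invariant trivialization. The coordinate map sends Haar measure on $\N$ to Lebesgue measure, because unipotent groups in such coordinates have Lebesgue Haar measure, by the same triangular argument applied to left translation. Consequently the intrinsic Jacobian determinant differs from the coordinate one by a ratio of constant densities and is still constant.

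The main obstacle is the ordering/triangularity bookkeeping, including the non-reduced case where $2\lambda$ is a root and root spaces are higher dimensional. There I would work with the Lie algebra grading by height and argue on the successive quotients $\N_{\ge h}/\N_{\ge h+1}$.
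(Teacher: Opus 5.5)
Your proof is correct and rests on the same key fact as the paper's: by Lemma \ref{comm_ss_unip}, the map $n\mapsto\gamma n^{-1}\gamma^{-1}n$ is triangular in root coordinates, with constant diagonal entries $1-\lambda(\gamma)$, which are nonzero by hypothesis.

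You do depart from the paper on bijectivity, and your version is more complete. The paper only says that $\N$ is simply connected. Simple connectedness alone does not turn a local diffeomorphism with constant Jacobian into a global one; it is the triangular structure that does. Your recursive polynomial inversion supplies this directly, and it also makes sense at non-Archimedean places.

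Your switch to a height-compatible order is also the right fix. The defining property of the order in Lemma \ref{comm_ss_unip} only guarantees that \emph{some} summand of a decomposition of $\lambda$ precedes $\lambda$, not all of them, so your first justification alone would not give triangularity.

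Your step from the coordinate Jacobian to the intrinsic one, via Lebesgue measure being Haar measure in these coordinates, is a detail the paper leaves implicit, and you handle it correctly. The discussion of higher-dimensional and non-reduced root spaces is unnecessary here, since the setting of Lemma \ref{comm_ss_unip} parametrises each root group by $k_v$.
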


\begin{proof}
  Since $\N(k_v)$ is simply connected, it suffices to prove the second assertion. It follows from Lemma \ref{comm_ss_unip} that the Jacobian matrix is upper triangular in the basis of $\mathfrak n$ coming from the chosen order on roots. Moreover the diagonal terms are constantly equal to  $1 - \lambda(\gamma)$, $\lambda \in \Phi_\N^+$ and the additional hypothesis on $\gamma$ ensures that these are all nonzero. 
\end{proof}


\subsubsection{Proof of Proposition \ref{convergence_orbint}}

Let $M := G_\gamma$, consider first the case where $M$ is a non-compact torus. In this case, $M$ is contained in a proper parabolic subgroup of $G$ so one can find a unipotent subgroup $N$ normalised by $M$ and maximal for this property. Then $MN$ is a parabolic subgroup of $G$ and we can choose a maximal compact subgroup $K$ of $G$ such that the Iwasawa decomposition $G = MNK$ holds (see \cite[Proposition 7.31]{Knapp_book_lie}). Since $K$ is compact, it suffices to prove that the map $N \to G$, $n \mapsto n^{-1} \gamma^{-1} n$ is proper. By Lemma \ref{constantjacobian} it is a diffeomorphism onto the closed subset $\gamma^{-1} N$ of $G$, so we are finished. 

In general $M$ might not be contained in a proper parabolic. However, if $M$ is non-compact then it contains a non-compact torus and we can always find a proper parabolic $P$ such that $P \cap M$ is cocompact in $M$ and a Levi component of $P$ is contained in $M$. Then considering the sequence of maps $K \times P \to K \times P/(P\cap M) \to G/M$ we need to prove that the map $N/(N\cap M) \to G$, $n \mapsto n\gamma n^{-1}$, is proper, where $N$ is the unipotent radical of $P$. This can be achieved by the same argument as above, using an appropriate modification of Lemma \ref{constantjacobian}. 

Finally, if $M$ is compact then $\gamma$ is compact and does not commute with any semi-simple non-compact element of $G$ and it is possible to prove the proposition using the $KAK$-decomposition.




\section{Benjamini--Schramm convergence} \label{s:BSconv}

In this section $G$ is an adjoint semisimple Lie group on which we fix a Haar measure (our statements will be independent of this choice). In the questions related to Benjamini-Schramm convergence we care only about the image of $G$ in the group of isometries of $X$, so passing to the adjoint group does not lose any information.

\subsection{Benjamini--Schramm convergence} \label{ss:BS_conv}

\subsubsection{Benjamini-Schramm convergence to the universal cover}

The Benjamini-Schramm topology is a topology defined on a subset of locally symmetric spaces, including all finite volume locally symmetric spaces and the space $X$ itself. In this paper we shall care only about the convergence of a sequence of finite volume spaces $(\Gamma_n\bs X)_{n\in\mathbb N}$ to $X$. We say the  $(\Gamma_n\bs X)_{n\in\mathbb N}$ {\em Benjamini-Schramm converges to $X$} if for every $R>0$ we have 
\[
\lim_{n\to\infty} \frac{\vol((\Gamma_n\bs X)_{\leq R})}{\vol(\Gamma_n\bs X)}=0.
\]
In more intuitive terms this means that the injectivity radius around a typical point of $\Gamma_n\bs X$ gets very large as $n$ goes to infinity. 


\subsubsection{The space of subgroups and invariant random subgroups}

Let $\sub(G)$ be the space of closed subgroups of $G$ equipped with the topology of Hausdorff convergence on compact sets. It is a compact $G$-space, often called the Chabauty space. The group $G$ acts on $\sub(G)$ by conjugation. By a slight abuse of notation, we can also speak about the injectivity radius of a subgroup of $G$:
\[
\inj(\Gamma):=\inf\{ d(\gamma K, K)\,|\, \gamma\in\Gamma\setminus Z(\Gamma)\}.
\]
Note that the injectivity radius of a non-discrete group is always $0$ but a discrete group might also have injectivity radius $0$, if it intersects $K$ non-trivially. An {\em invariant random subgroup} of $G$ is a $G$-invariant probability measure on $\sub(G).$ To every lattice $\Gamma\subset G$ we associate an invariant random subgroup $\mu_\Gamma$ supported on the conjugacy class of $\Gamma$ as follows : 
\[
\mu_\Gamma:= \frac{1}{\vol(\Gamma\bs G)}\int_{\Gamma\bs G} \delta_{\Gamma^g}dg.
\]
We can use the weak-$\ast$ topology on the space of probability measures on $\sub(G)$ to give an equivalent characterization of Benjamini-Schramm convergence as follows (see \cite[Proposition 3.6]{7Sam}).

\begin{lemma} \label{BS_topology}
  Assume that $G$ is an adjoint group. A sequence of locally symmetric spaces $\Gamma_n\bs X$ Benjamini-Schramm converges to $X$ if and only if the sequence of measures $\mu_{\Gamma_n}$ converges to $\delta_{\{1\}}$ in weak-$\ast$ topology.
\end{lemma}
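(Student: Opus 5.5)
We prove both directions by comparing, for a fixed $R>0$, the proportion of the $R$-thin part with the $\mu_{\Gamma_n}$-mass of a suitable subset of $\sub(G)$. Set
\[
\mathcal U_R=\{H\in\sub(G) : H\cap \B(2R)\subset Z(H)\}.
\]
Because $G$ is adjoint, its center is trivial. For a lattice $\Gamma$, a point $\Gamma gK$ lies in the $R$-thin part exactly when $\Gamma^{g^{-1}}\cap\B(2R)\not\subset Z(\Gamma)$; the precise convention, $g$ versus $g^{-1}$, is harmless. Since the pushforward of $dg$ on $\Gamma\bs G$ to $\Gamma\bs X$ is the volume measure with $\vol(K)=1$, we get
\[
\frac{\vol((\Gamma\bs X)_{\le R})}{\vol(\Gamma\bs X)}=\mu_\Gamma\bigl(\sub(G)\setminus \mathcal U_R'\bigr),
\]
where $\mathcal U'_R$ is defined like $\mathcal U_R$ with a strict inequality. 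Up to these boundary effects, Benjamini--Schramm convergence therefore says that $\mu_{\Gamma_n}$ of the set of subgroups meeting $\B(2R)$ nontrivially (or non-centrally) tends to $0$ for every $R$.

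For the direction ``$\mu_{\Gamma_n}\to\delta_{\{1\}}$ implies BS-convergence'', we use two facts.

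First, the set $\mathcal C_R=\{H : H\cap \B(2R)\neq\{1\}\}$ is essentially Chabauty-closed. If $H_i\to H$ and $h_i\in H_i\cap\B(2R)$ with $h_i\neq 1$, then a limit of the $h_i$ lies in $H\cap \B(2R)$, but it might be $1$.

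Second, to handle this, we fix an identity neighbourhood $V$ that is small enough, and split the thin condition into two parts. Either some element lies in $\B(2R)\setminus V$, which is a closed condition giving a closed set not containing $\{1\}$. Or some nontrivial element lies in $V$. For discrete subgroups $H$ this second case is excluded by applying Zassenhaus/Kazhdan--Margulis: a nontrivial element of $H$ in $V$ forces, by taking powers (commutators, or powers in a torus, or unipotents), a nontrivial element of $H$ in the compact shell $\B(2R)\setminus V$ for $R$ large. This step is the main obstacle; the trick is to use that $\B(2R)$ contains $K$. Powers of a nontrivial element near $1$ either escape $V$ while staying in a larger ball, or generate a nontrivial compact group, which leaves $V$ inside $K\subset\B(2R)$. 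Thus, for discrete $H$, the thin condition is equivalent to meeting a compact set $C\not\ni 1$ (for slightly enlarged $R$). Portmanteau applied to the closed set $\{H: H\cap C\neq\emptyset\}$ then gives $\limsup_n \mu_{\Gamma_n}\le\delta_{\{1\}}(\cdot)=0$.

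For the converse, suppose BS-convergence holds and let $\mu$ be any weak-$*$ limit point; this exists by compactness of $\sub(G)$. Every $H\neq\{1\}$ either meets some shell $\B(2R)\setminus V$, or meets $V\setminus\{1\}$. In the latter case $H$ again meets such a shell, by the same power argument, or by non-discreteness, since a closed subgroup contained near $1$ with $V$ small is trivial by the no-small-subgroups property. The set $\{H: H\cap \mathrm{int}(\B(2R)\setminus \bar V)\neq\emptyset\}$ is open, so by Portmanteau its $\mu$-mass is at most $\liminf_n$ of the $\mu_{\Gamma_n}$-mass, which is bounded by the thin proportion and tends to $0$. Taking a countable union over $R$ shows $\mu(\sub(G)\setminus\{\{1\}\})=0$, hence $\mu=\delta_{\{1\}}$.
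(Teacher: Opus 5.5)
The paper does not prove this lemma; it quotes \cite[Proposition 3.6]{7Sam}. Your scheme is the standard one and is sound in outline: express the thin proportion as the $\mu_{\Gamma_n}$-mass of a set of subgroups, then use the closed-set half of Portmanteau for one direction and the open-set half plus a countable union for the other. However, the step you single out as the main obstacle is justified incorrectly.

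Zassenhaus, Kazhdan--Margulis and commutators play no role there. Worse, the claim that a small element generating a compact group ``leaves $V$ inside $K\subset\B(2R)$'' is false, because $\langle h\rangle$ fixes a point of $X$ that may be far from the base point $K$. For instance, in $\PGL_2(\RR)$ an elliptic element of order $N$ fixing a point at distance $D$ from the base point is within $O(e^{D}/N)$ of $1$. Yet some of its powers move the base point by almost $2D$, so $\langle h\rangle$ lies neither in $K$ nor in $\B(2R)$ once $D>R$.

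The correct argument is a uniform first-exit argument that uses only the absence of small subgroups in $G$. Choose a relatively compact identity neighbourhood $V$ such that $\ovl V$ contains no nontrivial subgroup and $V\cdot V\subset\B(2R)$; this is possible for every $R>0$, since $\B(2R)$ is a neighbourhood of $1$. If $h\in V\setminus\{1\}$, some power of $h$ leaves $V$. If $m$ is the least such exponent, then $h^m=h^{m-1}h\in V\cdot V$, so $h^m$ lies in the compact set $C=\B(2R)\setminus V$, which avoids $1$. Hence for every closed subgroup $H$ we have $H\cap\B(2R)\neq\{1\}$ if and only if $H\cap C\neq\emptyset$. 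The thin condition is therefore exactly a Chabauty-closed condition, and your Portmanteau step goes through without enlarging $R$ and without restricting to discrete $H$.

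Two smaller points. First, in the converse direction the power argument is unnecessary. Every $h\neq 1$ already lies in $\mathrm{int}\,\B(2R)\setminus\ovl V$ for some $R$ and some $V$ from a countable basis at $1$, so the open sets you use already cover all nontrivial subgroups.

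Second, you pass from `non-central' to `nontrivial' by noting $Z(G)=\{1\}$. What is actually needed is $Z(\Gamma_n)=\{1\}$, and it is needed precisely in the converse direction, where BS-convergence must bound the $\mu_{\Gamma_n}$-mass of the set of subgroups meeting a shell around $1$. This follows from Borel density: for $G$ adjoint without compact factors, the centraliser of the Zariski-dense subgroup $\Gamma_n$ is contained in $Z(G)$, which is trivial.
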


Using this lemma, the question of whether a sequence of locally symmetric spaces Benjamini-Schramm converges to $X$ can be often answered by studying the possible limits of $\mu_{\Gamma_n}$. One result which often makes that possible is the Zariski density for invariant random subgroups; see the remark following \cite[Theorem 2.9]{7Sam} and \cite[Theorem 1.9]{Gelander_Levit_boreldensity}. 

\begin{theorem} \label{semisimple_IRS}
  Let $G$ be an adjoint semisimple Lie group without compact factors. Let $\mu$ be an ergodic invariant random subgroup of $G$. Then, there exists a normal subgroup $G_1$ of $G$ such that $\mu(\sub(G_1))=1$ and $\mu$-almost all subgroups are Zariski dense in $G_1$.  
\end{theorem}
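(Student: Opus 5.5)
The plan is to deduce the statement from the Borel density theorem for invariant random subgroups (Abért et al. / Gelander--Levit), combined with the classification of closed subgroups of $G$ with Zariski dense interaction. Let $\mu$ be an ergodic invariant random subgroup of the adjoint group $G$ without compact factors. We decompose $G = G_1\times\cdots\times G_r$ into simple factors (adjoint, so the product is direct and centre-free).

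The first step is the case where $\mu$-almost every $H$ is discrete. For each subset $I\subset\{1,\dots,r\}$, set $G_I=\prod_{i\in I}G_i$. The events ``the projection of $H$ to $G_I$ has Zariski closure containing $G_I$'' and ``$H\subset G_I$'' are $G$-invariant measurable events. By ergodicity, each has measure $0$ or $1$.

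We would take $G_1:=G_I$ with $I$ minimal such that $\mu(\sub(G_I))=1$. Such an $I$ exists because intersections of full-measure events of this form are again of this form: $H\subset G_I$ and $H\subset G_J$ imply $H\subset G_{I\cap J}$. Now $\mu$ pushed to $\sub(G_I)$ is an invariant random subgroup of $G_I$, and $G_I$ has no compact factors.

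The second step is to apply the Borel density theorem for invariant random subgroups to $G_I$. It says that for an IRS $\nu$ of a semisimple group without compact factors, almost every subgroup $H$ has $\overline{H}^Z$ containing the projections of $H$ to some factors in a structured way. Concretely, the Zariski closure of $H$ is almost surely normalised by $G_I$, via the invariance of the Zariski closure map and a Poincaré-recurrence argument on the variety of algebraic subgroups. Hence the Zariski closure is a normal algebraic subgroup, so it equals $G_J$ for some $J\subset I$. Since $H\subset G_J$, minimality forces $J=I$ almost surely. In other words, $H$ is Zariski dense in $G_1$.

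The key step, and the main obstacle, is showing that the Zariski closure map $H\mapsto\overline H^Z$ has almost surely normal image. For that I would follow the standard argument. The map is measurable and $G$-equivariant into the space of algebraic subgroups of $G$, which is a countable union of $G$-varieties with algebraic (hence smooth, tame) $G$-action. By the Borel density theorem for finite invariant measures, any $G$-invariant probability measure on such a variety is supported on $G$-fixed points. Fixed points are normal algebraic subgroups, and in an adjoint group with no compact factors these are exactly the products $G_J$ together with their finite central extensions, which are trivial here. Non-discrete subgroups need no separate treatment, since Zariski closure makes sense for any closed subgroup. The only care needed is the measurability of the closure map on the Chabauty space and that the pushforward measure lives on finitely many components. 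I would cite \cite[Theorem 1.9]{Gelander_Levit_boreldensity} for precisely this point rather than reprove it.
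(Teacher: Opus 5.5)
Your proposal is correct and takes essentially the paper's route: the paper gives no independent proof and simply cites the Borel density theorem for invariant random subgroups (the remark after \cite[Theorem 2.9]{7Sam} and \cite[Theorem 1.9]{Gelander_Levit_boreldensity}). Your sketch — the equivariant map $H \mapsto \overline H^Z$, invariant probability measures on algebraic $G$-spaces being carried by fixed points, normal algebraic subgroups of an adjoint group being products of simple factors, and ergodicity making the factor set constant — is the content of that citation, and you cite the same result for the key step. One cosmetic point: the announced ``discrete case'' first step is never used, since, as you note yourself, the argument applies verbatim to arbitrary closed subgroups.
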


In particular, if $G$ is simple and $\mu$ is an ergodic IRS, then either $\mu$ is trivial or it is Zariski dense almost surely.


\subsection{Main criterion for convergence}

Let $f$ be a smooth compactly supported function on $G$ (that is, $f \in C_c^\infty(G)$). Let $W$ be an open subset in $G$ which contains only semisimple elements and is invariant under conjugation. We define, for a lattice $\Gamma$ 
\begin{equation} \label{tf_conv}
  \tr R_\Gamma^W f := \sum_{[\gamma]_\Gamma \subset W} \vol(\Gamma_\gamma \bs G_\gamma) \mathcal O(\gamma, f),
\end{equation}
where the sum is over all conjugacy classes $[\gamma]_\Gamma$ of elements in $\Gamma\cap W$.
Let us address briefly issues of convergence in \eqref{tf_conv}. First we need that for all $\gamma \in W$ the covolume $\vol(\Gamma_\gamma \bs G_\gamma)$ is finite, equivalently that $\Gamma_\gamma \bs G_\gamma$ is compact. If $\Gamma$ is uniform, this is always the case. On the other hand, if it is non-uniform, then we must ensure that the co-volume is finite of all classes in $W$. The orbital integrals $\mathcal O(\gamma, f)$ are always finite (see Proposition \ref{convergence_orbint}). Finally, since the support of $f$ intersects only finitely many conjugacy classes of $\Gamma$ there is no issue with the convergence of the sum. 

We say that the subset $W$ is {\em sufficiently dense} in $G$, if for every discrete Zariski-dense subgroup $\Lambda$ in $G$, we have $\Lambda \cap W \not= \{1\}$. 

\begin{theorem} \label{general_criterion}
  Let $G$ be a semisimple adjoint Lie group. Let $W$ be a sufficiently dense subset of $G$. Let $(\Gamma_n)$ be a sequence of lattices in $G$. Assume either that they are all uniform or that $(\Gamma_n)_\gamma \bs G_\gamma$ is compact for all $n$ and $\gamma \in \Gamma_n \cap W$. 

  If $G$ is simple and the following condition holds:
  \begin{equation}\label{partialtraceconv}
    \forall f \in C_c^\infty(G) \: \lim_{n \to +\infty} \left(\frac 1{\vol(\Gamma_n\bs X)} \tr R_{\Gamma_n}^W f \right) = f(1_G), 
  \end{equation}
  then the sequence of locally symmetric spaces $\Gamma_n \bs X$ is Benjamini--Schramm convergent to $X$. 

  If $G$ is not necessarily simple then the same holds assuming that for every non-trivial proper normal subgroup $H \le G$ there exists a subset $F$ of $\sub_H$ such that $\Lambda \not\in F$ for all Zariski-dense subgroups $\Lambda \subset H$ and a neighbourhood $U_H$ of $\{\Lambda\in \sub_H :\:  \Lambda \not\in F\}$ in $\sub_G$ such that $g \Gamma_n g^{-1} \not\in U_H$ for all $n$ and $g \in G$.
\end{theorem}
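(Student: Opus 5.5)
We argue by compactness in the space of invariant random subgroups, using Lemma \ref{BS_topology}. Since $\sub(G)$ is compact, so is the space of probability measures on it. It therefore suffices to show that every weak-$\ast$ limit point $\mu$ of $(\mu_{\Gamma_n})$ equals $\delta_{\{1\}}$. Pass to a subsequence with $\mu_{\Gamma_n}\to\mu$. Then $\mu$ is an invariant random subgroup, and we will show it gives mass zero to nontrivial subgroups.

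The first step is to rewrite $\tr R^W_{\Gamma_n} f$ as an integral against $\mu_{\Gamma_n}$. Unfolding the orbital integrals in the usual way, for $f\ge 0$ we get
\[
\frac{1}{\vol(\Gamma_n\bs G)}\tr R^W_{\Gamma_n} f=\int_{\sub(G)} \sum_{\gamma\in\Lambda\cap W} f(\gamma)\, d\mu_{\Gamma_n}(\Lambda).
\]
This uses the compactness of $(\Gamma_n)_\gamma\bs G_\gamma$ for $\gamma\in W$. It also uses that $W$ is conjugation invariant, so that $\Gamma_n^g\cap W=(\Gamma_n\cap W)^g$. The normalisation by $\vol(\Gamma_n\bs X)$ differs from this only by the constant $\vol(K)=1$.

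Now fix $f\in C_c^\infty(G)$ with $f\ge0$, supported in a small neighbourhood $V$ of $1$, and with $f(1)>0$. Since $1$ is central it may or may not lie in $W$; we handle the identity term separately, as it contributes exactly $f(1)$ when $1\in W$. Hypothesis \eqref{partialtraceconv} then says that the contribution of nontrivial elements of $\Lambda\cap W$ tends to $0$. Next choose $h\ge0$ continuous with compact support inside the open set $W\setminus\{1\}$ (or any open subset of it). The function $\Lambda\mapsto\sum_{\gamma\in\Lambda}h(\gamma)$ is lower semicontinuous on $\sub(G)$, and it is continuous on the discrete subgroups near which the sum is finite. Dominating $h$ by such $f$'s, or more precisely using $f$ with support anywhere in $G$ and subtracting the identity term, \eqref{partialtraceconv} gives
\[
\int \sum_{\gamma\in\Lambda\cap W\setminus\{1\}} h(\gamma)\,d\mu_{\Gamma_n}\to 0.
\]
By the portmanteau theorem for lower semicontinuous functions, we conclude that $\int \sum_{\gamma\in\Lambda\cap W\setminus\{1\}}h(\gamma)\,d\mu(\Lambda)=0$ for all such $h$. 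Hence $\mu$-almost every $\Lambda$ satisfies $\Lambda\cap W\subset\{1\}$.

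Finally we use sufficient density. Decompose $\mu$ into ergodic components. By Theorem \ref{semisimple_IRS}, when $G$ is simple each component is either $\delta_{\{1\}}$ or is supported on Zariski-dense subgroups. We must also rule out non-discrete limits. Here we use that $\mu$-a.e. $\Lambda$ misses $W\setminus\{1\}$, where $W$ is open and nonempty. A nondiscrete closed subgroup has a nontrivial identity component, and its conjugates come near every element of $G$ in a neighbourhood; an IRS argument, again via Theorem \ref{semisimple_IRS}, makes such $\Lambda$ hit $W$. So $\mu$-a.e. $\Lambda$ is either trivial or a discrete Zariski-dense subgroup. By the definition of sufficient density, a discrete Zariski-dense $\Lambda$ meets $W$ nontrivially, which contradicts the previous step. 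Hence $\mu=\delta_{\{1\}}$.

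For general semisimple $G$, an ergodic component is instead supported on $\sub(G_1)$ and Zariski dense in $G_1$ for some normal $G_1$. The case $G_1=G$ is handled as above. For a proper nontrivial $G_1=H$, the extra hypothesis applies: the sets $U_H$ are open neighbourhoods avoided by all $g\Gamma_n g^{-1}$, so $\mu(U_H)=0$ by portmanteau, while such components would be supported on non-$F$ groups lying in $U_H$, a contradiction.

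The main obstacle is the analytic passage to the limit. The counting functions $\Lambda\mapsto\sum_{\gamma\in\Lambda\cap W}f(\gamma)$ are not continuous on $\sub(G)$, and the limit $\mu$ may charge non-discrete subgroups. Handling this through lower semicontinuity and the structure given by Theorem \ref{semisimple_IRS} is the delicate point.
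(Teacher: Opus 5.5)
Your proposal is correct and follows essentially the same route as the paper. Both arguments write $\tr R^W_{\Gamma_n}$ as the integral of a lower semicontinuous counting function against $\mu_{\Gamma_n}$, use Portmanteau (with a countable family of test functions) to get that $\mu$-a.e.\ $\Lambda$ misses $W\setminus\{1\}$, conclude by sufficient density and Theorem \ref{semisimple_IRS}, and treat the non-simple case by Portmanteau on the open sets $U_H$.

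Two steps are looser than they should be. First, the detour through $f$ with $f(1)>0$ is unnecessary: only test functions supported in $W\setminus\{1\}$, where $f(1)=0$, are needed. Second, for non-discrete limits the clean argument is this: if $\Lambda$ is closed, non-discrete and Zariski dense in simple $G$, then $\mathrm{Lie}(\Lambda^0)$ is a nonzero $\ad(G)$-invariant ideal. Hence $\Lambda\supseteq G^0$, which contains a Zariski-dense lattice and therefore meets $W\setminus\{1\}$ by sufficient density.
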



\subsubsection{Proof when $G$ is simple}
  

Let $\mu_n$ be the invariant random subgroup of $G$ supported on the conjugacy class of $\Gamma_n$. We want to prove that any weak limit $\mu$ of a subsequence of $(\mu_n)$ is equal to the trivial IRS $\delta_{\{1\}}$. Since $G$ is simple, by Theorem \ref{semisimple_IRS} it suffices to prove that $\mu$ is supported on non-Zariski-dense subgroups to deduce that it must be trivial. 

To this end we choose a covering $W = \bigcup_{C \in \mathcal C} C$ of $W$ where $\mathcal C$ is countable and every $C \in \mathcal C$ is compact. We can do this since $\sub_G$ is metrizable \cite[Proposition 2]{dlHarpe1}. Let $Q_C = \{\Lambda : \Lambda \cap C \not= \emptyset \}$. This is a Chabauty-closed subset of $\sub_G$. If $\nu$ is a nontrivial IRS, then it is Zariski dense with positive probability so there exists $C \in \mathcal C$ such that $\nu(Q_C) > 0$. We want to prove the opposite for $\mu$, which amounts to the following : for every $C$, there exists a non-negative Borel function $F$ on $\sub_G$ which is positive on $Q_C$ and such that $\int_{\sub_G} F(\Lambda) d\mu(\Lambda) = 0$. 

Let us fix $C \in \mathcal C$ and prove this. There exists an open relatively compact subset $V$ with $C \subset V$ and  $\ovl V\subset W$. Choose any $f \in C^\infty(G)$ such that $f > 0$ on $C$ and $f = 0$ on $G \setminus V$ and define :
\[
F(\Lambda) = \begin{cases} \sum_{\lambda\in\Lambda} f(\lambda) &\text{if } \Lambda \text{ is discrete,} \\
  1 & \text{if } \Lambda \text{ is not discrete and intersects } C,\\ 
  0 & \text{otherwise.} \end{cases}
\] 
$F$ is lower semi-continuous on $\sub_G$, non-negative and positive on $Q_C$. On the other hand, we have :
\begin{align*}
  \int_{\sub_G} F(\Lambda) d\mu_n(\Lambda) &= \frac 1 {\vol(\Gamma_n \bs G)} \int_{G/ \Gamma_n} \sum_{\gamma \in g\Gamma_n g^{-1}} f(\gamma) dg \\
  &= \frac 1 {\vol(\Gamma_n \bs G)} \sum_{[\gamma]_{\Gamma_n} \subset W} \vol((\Gamma_n)_\gamma\bs G_\gamma)\mathcal O_\gamma(f). 
\end{align*}
By the so-called ``Portemanteau theorem'' \cite[Theorem 13.16]{klenke} the limit inferior of the left-hand side is larger or equal to $\int_{\sub_G} F(\Lambda) d\mu(\Lambda)$. Finally, the hypothesis \eqref{partialtraceconv} implies that the right-hand side converges to 0. It follows that
\[
\int_{\sub_G} F(\Lambda) d\mu(\Lambda) = 0
\]
which finishes the proof. 


\subsubsection{Proof in general}

Let $\mu_n$ be the invariant random sugroup supported on the $G$-orbit of $\Gamma_n$ and assume that $\mu$ is not supported on the trivial subgroup. Then by Theorem \ref{semisimple_IRS} there is a morphism $\pi :  G \to H$ with kernel $\ker(\pi) \not= \{1\}$ and
\[
\mu(\{ \Lambda \in \sub_{\ker(\pi)} : \Lambda \text{ is Zariski-dense in } \ker(\pi) \}) > 0. 
\]
If $\pi$ is trivial (that is $\ker(\pi) = G$) then the same arguments as in the proof in the simple case apply to give a contradiction. On the other hand, if $\ker(\pi)$ is a proper subgroup of $G$ then there exists a Zariski-dense subgroup $\Lambda \subset \ker(\pi)$ such that $\mu(C) > 0$ for every Chabauty neighbourhood $C$ of $\Lambda$. Since $\Lambda \not \in F$, we may choose $C$ such that $C \subset U_H$, and the hypothesis that $g\Gamma_n g^{-1} \not\in U_H$ for all $n$ and $g \in G$ then gives that $\mu_n(C) = 0$ for all $H$. A standard argument then shows that this contradicts the weak convergence of $\mu_n$ to $\mu$.

\subsection{Short proof of a non-effective result for unbounded degree} \label{s:cheat_mode}
This short subsection is dedicated to the proof of the following result, which is the ``second half'' of Theorem \ref{Main_BSconv}. It is strictly weaker than Theorem \ref{unbounded_degree} but since the argument is very short we feel it deserves to be included. 

\begin{theorem} \label{Main_unbounded_degree}
  Let $G$ be a noncompact semisimple Lie group and let $X$ be its symmetric space. Let $\Gamma_n$ in $G$ be a sequence of  arithmetic lattices in $G$ and assume that the degree of the trace field of $\Gamma_n$ over $\QQ$ goes to infinity with $n$. Then, \eqref{BSconv_univcover} holds for the sequence of locally symmetric spaces $\Gamma_n \bs X$. 
\end{theorem}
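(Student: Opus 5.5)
We argue by contradiction with invariant random subgroups, using Lemma \ref{BS_topology} and the Zariski density result Theorem \ref{semisimple_IRS}, combined with the systole bound of Proposition \ref{dobrowolski_bound}. Since $\inf\Gamma_n$ depends only on the image in the isometry group, we may assume $G$ is adjoint. Pass to a subsequence such that $\mu_{\Gamma_n}$ converges weakly-$*$ to some IRS $\mu$ on the compact space $\sub(G)$. It suffices to show $\mu=\delta_{\{1\}}$. By ergodic decomposition we may further assume, after replacing $\mu$ by an ergodic component charged with positive mass, that $\mu$ is ergodic. Then Theorem \ref{semisimple_IRS} gives a normal subgroup $G_1$ with $\mu$-almost every $\Lambda$ Zariski dense in $G_1$. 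If $G_1$ is trivial we are done, so we suppose $G_1\neq 1$.

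The key observation is that a nontrivial $G_1$ forces positive mass on subgroups containing non-compact semisimple elements of bounded translation length. A Zariski dense subgroup $\Lambda$ of the noncompact semisimple group $G_1$ contains a semisimple noncompact element (indeed $\mathbb R$-regular ones, by Benoist/Prasad type results, or simply because Zariski dense subgroups are not relatively compact and contain non-elliptic semisimple elements). We first need to rule out the case where $\mu$ is supported on non-discrete subgroups. Closed subgroups whose identity component is nontrivial, or discrete subgroups meeting small compact neighbourhoods, would be handled by the same mechanism: Zariski density provides elements $g$ with $0<\ell(g)<L$ for some $L$.

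Consequently there are $L>0$ and $\varepsilon>0$, and a compact set $C$ of semisimple noncompact elements with $\ell\in[\varepsilon,L]$ (or alternatively $\ell\le L$), such that $\mu(\{\Lambda:\Lambda\cap C\neq\emptyset\})>0$. Since this set is Chabauty closed, weak convergence (Portemanteau, used in the closed-set direction) gives
\[
\limsup_n \mu_{\Gamma_n}(\{\Lambda:\Lambda\cap C'\neq\emptyset\})>0
\]
for a slightly larger compact neighbourhood $C'$. For this we choose $C'$ as an open neighbourhood on which $\ell$ remains bounded above by $2L$ and the elements remain non-compact.

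The main obstacle, and the step I would focus on, is converting this into a contradiction with the growth of degree. Proposition \ref{dobrowolski_bound} only gives lower bounds $\ell(\gamma)\ge c/(\log d)^3$, which tend to $0$ and so do not directly forbid short elements. Instead, use Proposition \ref{p-MahlerDisp}: any semisimple non-compact $\gamma\in\Gamma_n$ with $\ell(\gamma)\le 2L$ has $m(\gamma)\le 2L/a_X$. Then pick a root $\lambda$ with $\lambda(\gamma)$ not a root of unity. This $\lambda(\gamma)$ is an algebraic integer with Mahler measure bounded by $2L/a_X$, and it is bounded away from roots of unity because it lies in a compact set of non-compact elements. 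I would try to argue that the degree of $\lambda(\gamma)$ over $\mathbb Q$ is at least comparable to $d_n/\dim G$. This should follow because the traces $\tr\ad(\gamma)$ for $\gamma$ ranging in the subgroup generated by elements of $\Gamma_n\cap C'$ near a Zariski dense set must generate the full trace field $k_n$, by Vinberg's theorem applied to a Zariski-dense subgroup, as in \ref{prelim_lattices}. Meanwhile the archimedean conjugates at the compact places of $k_n$ are bounded by $1$ in modulus. An algebraic integer of large degree $D$ with all conjugates in modulus at most $1$, except for boundedly many, whose nontrivial absolute values stay in a fixed compact set away from $1$, is impossible. Writing the norm as an integer $\ge1$ and using equidistribution à la Bilu for small height forces $h\to0$ while the noncompact places contribute a fixed positive amount divided by $D$. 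I expect the precise version is that the Weil height is then $\ll 1/D$ with conjugates equidistributing on the unit circle, contradicting the fixed displacement at the real place. Making this rigorous, especially ensuring the generated field really has growing degree, is where I anticipate difficulty.
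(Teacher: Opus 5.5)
\textbf{There is a genuine gap: the step meant to produce the contradiction is false, not merely hard to make rigorous.}

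Your framework matches the paper's: pass to a weak limit $\mu$ of the $\mu_{\Gamma_n}$, then use Lemma \ref{BS_topology} and Theorem \ref{semisimple_IRS}. The problem is what comes next. You want to show that, for $d_n=[k_n:\QQ]$ large, $\Gamma_n$ cannot contain a non-compact semisimple element of bounded translation length lying in a fixed compact set. Such elements do exist in arithmetic lattices whose trace fields have arbitrarily large degree.

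Here is an explicit family.
\begin{itemize}
\item Let $\lambda\in(1,2)$ be a Salem number of degree $2d$. By Salem's construction these accumulate at the golden ratio, and by Northcott their degrees are unbounded.
\item Set $t=\lambda+\lambda^{-1}$. It generates a totally real field $k$ of degree $d$, with $t>2$ at one real place $v_0$ and $|t|<2$ at all other real places.
\item Take a quaternion algebra over $k$ split at $v_0$ and ramified at the other real places, plus one finite place inert in $k(\lambda)$ if needed for parity. It contains $k(\lambda)$.
\item $\lambda$ is a unit of relative norm $\lambda\lambda^{-1}=1$, so it lies in $\mathcal O^1$ for a maximal order $\mathcal O$.
\end{itemize}
This gives arithmetic lattices in $\PGL_2(\RR)$ with trace field of degree $d\to\infty$, each containing a hyperbolic element of translation length $2\log\lambda<2\log 2$.

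Your number-theoretic claim is contradicted by $\lambda$ itself. All its conjugates except $\lambda^{\pm1}$ lie on the unit circle, $\lambda$ stays in a compact set away from $1$, and the degree is unbounded. Bilu equidistribution gives no contradiction, because a single conjugate off the circle carries weight $1/D$ in both the height and the limiting measure. The degree-growth argument via Vinberg does not help either, since in this example the degree does grow. So the smallness of the bound in Proposition \ref{dobrowolski_bound} is no accident.

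\textbf{What is missing.} Theorem \ref{Main_unbounded_degree} is a statement about volume proportions, so it cannot be proved by excluding individual short elements. The contradiction must come from the structure of the group generated by all short elements at a point. The paper's ingredient is the Arithmetic Margulis Lemma, Theorem \ref{AML}. For every $x\in X$, the elements of $\Gamma_n$ moving $x$ by at most $\varepsilon_G d_n$ generate a virtually nilpotent group. Lemma \ref{bdd_index_abelian} upgrades this to an abelian subgroup of index at most $A$, with $A$ independent of $n$ and $R$.

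A bounded index passes to Chabauty limits. If $g_n\Gamma_n g_n^{-1}\to\Lambda$, then for each $R$ the group $\langle\Lambda\cap\B(R)\rangle$ has an abelian subgroup of index at most $A$. A diagonal compactness argument over $R$ then makes $\Lambda$ virtually abelian. A virtually abelian group cannot be Zariski dense in a nontrivial normal subgroup $G_1$, which contradicts Theorem \ref{semisimple_IRS}. Alternatively, the quantitative Theorem \ref{unbounded_degree} gives the result directly.

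\textbf{A smaller point.} The closed-set direction of the Portemanteau theorem gives $\limsup_n\mu_{\Gamma_n}(F)\le\mu(F)$, which is an upper bound and useless here. To transfer positive mass from $\mu$ to the $\mu_{\Gamma_n}$ you need the open-set direction, $\liminf_n\mu_{\Gamma_n}(O)\ge\mu(O)$, with $O=\{\Lambda:\Lambda\cap C'\neq\emptyset\}$ for an open $C'$.
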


The main ingredient for the proof is the following results \cite[Theorem C]{FHR_complexity}. 

\begin{theorem} \label{AML}
  Let $G$ be a real semi-simple Lie group. There exists a constant $\varepsilon_G>0$ with the following property. Let $\Gamma\subset G$ be an arithmetic lattice with trace field $k$. Let $x\in X$. Then, the subgroup generated by the set 
  \[
  \{\gamma\in\Gamma\, |\, d(x,\gamma x)\leq \varepsilon_G [k:\QQ]\}
  \] 
  is virtually nilpotent.
\end{theorem}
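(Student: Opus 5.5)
The plan is to run a height-gap argument in the adjoint representation and to use the normalisation of the Weil height by $[k:\QQ]$ to absorb the factor $[k:\QQ]$ in the displacement bound. Fix $x \in X$, let $d = [k:\QQ]$, let $S = \{\gamma \in \Gamma : d(x,\gamma x) \le \varepsilon d\}$, and suppose $\Delta = \langle S \rangle$ is not virtually nilpotent; we will reach a contradiction once $\varepsilon = \varepsilon_G$ is small. Conjugating, we may take $x = K$. Through $\ad$ we view $\Gamma \subset \G(k) \subset \GL_N(k)$ with $N = \dim G$. By the proof of the integrality proposition in \ref{prelim_lattices} (Vinberg's theorem), traces of $\ad(\Gamma)$ lie in $\mo_k$. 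Hence, after conjugating by an element of $\GL_N(k)$ whose height is controlled by $G$ only, the group $\ad(\Gamma)$ is bounded at every finite place $v$ and preserves an $\mo_{k_v}$-lattice. This is the standard fact that a group with integral traces is contained in a maximal compact subgroup of $\GL_N(k_v)$.

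The key step is to bound the normalised joint height
\[
\hat h(S) = \frac1{d}\sum_{v\in \val} \iota_v\log^+ \max_{s\in S}\|s\|_v ,
\]
where norms are taken in a conjugate of the representation that is optimal at each place. At finite places the contribution is $0$ by the previous paragraph. At Archimedean places $v$ where $\G(k_v)$ is compact, $\ad(\Gamma)$ lands in a compact group and preserves a Hermitian form, so the contribution is again $0$. There are at most $\dim G$ Archimedean places at which $\G(k_v)$ is noncompact; this is bounded in terms of $G$ alone, since $(\G,k)$ is admissible. At such a place, $\log\|\ad(s)\|_v \le C_G\, d_X(K, sK) \le C_G\varepsilon d$ for the Cartan-adapted norm. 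Summing gives $\hat h(S) \le C'_G \varepsilon$, independently of $d$. We then invoke Breuillard's height gap theorem: there is $\eta_N > 0$ such that any finite $S \subset \GL_N(\ovl\QQ)$ generating a non-virtually-solvable subgroup has $\hat h(S) \ge \eta_N$. Choosing $\varepsilon < \eta_N/C'_G$ forces $\Delta$ to be virtually solvable.

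It remains to pass from virtually solvable to virtually nilpotent, and I expect this to be the main obstacle, because a pure Lehmer-type bound on individual elements is insufficient. Dobrowolski gives only $m(\gamma) \gg (\log d)^{-3}$ (Proposition \ref{dobrowolski_bound}), which is compatible with $m(\gamma) \le \varepsilon d$. My plan is as follows. Let $\Delta_0$ be a finite-index solvable subgroup. By Lie--Kolchin on the Zariski closure of $\Delta_0$ (of bounded index, bounded in terms of $N$), if $\Delta_0$ is not virtually nilpotent, there exist a semisimple $\gamma \in \Delta_0$ and a nontrivial unipotent $u \in \Delta_0$ such that $\gamma$ acts on the Lie algebra of the unipotent radical with some weight $\lambda(\gamma)$ that is not a root of unity. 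Both $\gamma$ and $u$ can be chosen as words of bounded length in $S$, so their displacements are $\le C\varepsilon d$. Write $\log u$ in the corresponding weight space. Its coordinates are nonzero elements of $\mo_k$ (up to bounded denominators), so by the product formula the product over Archimedean places of their absolute values is $\ge c_G$. Meanwhile, $\gamma^{n} u \gamma^{-n}$ lies in $\Delta$, has displacement $\le C\varepsilon d$ for $|n| \le n_0$, and its coordinates are multiplied by $\lambda(\gamma)^{\pm n}$. Choosing the sign of $n$ so that $\lambda$ contracts at a place where $|\lambda(\gamma)|_v \ne 1$, and using that a contracted unipotent element has displacement comparable to the logarithm of its coordinates, we aim to show that $\log\|\log u\|$ summed over noncompact places is forced to be both $\ge -C\varepsilon d$ and $\le -c\, n\, m(\lambda(\gamma)) + C\varepsilon d$. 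Alternatively, we can apply Breuillard's height gap in its amenable form, via the version for sets generating non-virtually-nilpotent groups obtained by combining the height gap with the Margulis lemma at each place. In either case the target is a lower bound $\hat h(S) \ge \eta'_N$ for non-virtually-nilpotent $S$ with integral traces, which contradicts $\hat h(S) \le C'_G\varepsilon$ and completes the proof with $\varepsilon_G = \min(\eta_N,\eta'_N)/C'_G$.
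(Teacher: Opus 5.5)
Your first half is sound. Use operator norms adapted at each place: to the $\ad(\Gamma)$-stable $\mo_{k_v}$-lattice at finite places, to an invariant Hermitian form at compact Archimedean places, and to $K_x$ at the boundedly many noncompact places. This gives a height $\le C'_G\varepsilon$, and that quantity dominates Breuillard's normalised height $\lim_n \frac1n h(S^n)$. Hence $\langle S\rangle$ is virtually solvable, with a constant independent of $d$.

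One small point: your claim of a global conjugator in $\GL_N(k)$ of height controlled by $G$ alone is unjustified. It is also unnecessary, since the normalised height does not change when you change norms at finitely many places.

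The genuine gap is the passage from virtually solvable to virtually nilpotent, which you leave as a plan. Your ``alternative'' target is false: there is no bound $\hat h(S)\ge\eta'_N$ for sets with integral traces generating non-virtually-nilpotent groups. Take a unit $\lambda$ of degree $n$ whose Mahler measure is bounded independently of $n$, e.g.\ a root of $x^n-x-1$. Then $S=\{\mathrm{diag}(\lambda,1)^{\pm1},\left(\begin{smallmatrix}1&1\\0&1\end{smallmatrix}\right)^{\pm1}\}$ generates a solvable, non-virtually-nilpotent group with integral traces. Yet $\hat h(S)\le 2h(\lambda)=2m(\lambda)/n\to 0$. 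This is exactly the Lehmer-type obstruction you noticed, so no argument using only heights and integrality can close the step.

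The contraction argument does not close it either. The coordinates of $\log u$ in an eigenbasis of $\gamma$ lie in the splitting field of $\gamma$, with denominators depending on $\gamma$. So the product formula gives no uniform lower bound $c_G$. You also never say which inequality is contradicted.

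The missing idea is that the lattice makes this step easy, through a uniform/non-uniform dichotomy. The paper only quotes the theorem from \cite{FHR_complexity}, but the surrounding ingredients point to this argument: Lemma \ref{bdd_index_abelian}, and the remark in the proof of Theorem \ref{Main_unbounded_degree} that lattices with trace field of degree $>\dim G$ are uniform.

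You already observed that a non-abelian solvable $\Delta_0$ with connected Zariski closure contains nontrivial unipotents, because $[\Delta_0,\Delta_0]$ lies in the unipotent radical.
\begin{itemize}
\item If $\Gamma$ is uniform, $\G$ is $k$-anisotropic and $\Gamma$ has no nontrivial unipotent elements. So $\ad(\Delta_0)$ is abelian, and $\langle S\rangle$ is virtually a central extension of an abelian group, hence virtually nilpotent.
\item If $\Gamma$ is non-uniform, $\G$ is $k$-isotropic, so $\G(k_v)$ is noncompact at every Archimedean place. Hence $[k:\QQ]\le 2\cdot\#\{\text{simple factors of }G\}\le\dim G$. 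Choosing $\varepsilon_G\le\varepsilon_M/\dim G$, with $\varepsilon_M$ the classical Margulis constant of $X$, the statement reduces to the ordinary Margulis lemma.
\end{itemize}
With $\varepsilon_G=\min(\eta_N/C'_G,\varepsilon_M/\dim G)$, your height-gap step plus this dichotomy proves the theorem. No estimate on contracted unipotents is needed.
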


\begin{proof}
We want to prove that \eqref{BSconv_univcover} holds for the locally symmetric spaces $\Gamma_n \bs X$. According to \ref{BS_topology}, this is equivalent to proving that the invariant random subgroups $\mu_{\Gamma_n}$ converge in weak topology to the Dirac mass $\delta_{\{1\}}$ supported on the trivial subgroup. Since the space of invariant random subgroups on $G$ is compact it suffices to prove that any accumulation point of a sequence $(\mu_{\Gamma_n})$ is equal to $\delta_{\{1\}}$. The idea of the proof is quite simple. We show that Theorem \ref{AML} force any limit to be supported on the set of virtually abelian subgroups of $G$. Then we use Zariski density for IRSs to deduce that such a limit must be concentrated on the trivial subgroup.

Suppose to the contrary that $\mu_{\Gamma_n}$ accumulates at a non-trivial IRS $\mu$. This means that there exists a closed subgroup $\Lambda \subset G$, $\Lambda \not=\{1\}$ in the support of $\mu$, so that $\mu(V) > 0$ for any Chabauty-neighbourhood $V$ of $\Lambda$. It follows that $V$ intersects the conjugacy class of $\Gamma_n$ for infinitely many $n$. In particular, $\Lambda$ is a limit of a sequence $g_n\Gamma_{k_n} g_n^{-1}$ in the Chabauty topology. We will deduce from this and the Arithmetic Margulis Lemma \ref{AML} that $\Lambda$ must be virtually abelian. This contradicts the Borel density for IRS (Theorem \ref{semisimple_IRS}) and suffices to establish the theorem. 

For this we need the following lemma \cite[Lemma 5.2]{FHR_complexity}. 

\begin{lemma} \label{bdd_index_abelian}
  For any $m$ there exists an $A$ with the following property. Let $\Delta$ be a finitely generated subgroup of $\mathrm{GL}_m(\CC)$ which contains an abelian subgroup of finite index 
  composed only of semisimple elements. Then, $\Delta$ has an abelian subgroup of index at most $A$. 
\end{lemma}

We go back to proving that $\Lambda$ must be virtually abelian. To simplify notation assume that the sequence $\Gamma_n$ actually converges to $\Lambda$ and that all $\Gamma_n$ are uniform. Almost all of them are uniform anyways, since any lattice with the trace field of degree $>\dim(G)$ must be uniform. Let $\eps_G$ be the constant given by Theorem \ref{AML} and recall that $k_{\Gamma_n}$ is the trace field of $\Gamma_n$. Fix $R > 0$. Since $[k_{\Gamma_n} : \QQ] \to +\infty$, we can assume that $[k_{\Gamma_n} : \QQ] \cdot \eps_G > R+10^{-2}$ for all $n$. Let
\[
\Lambda_{n, R} = \left\langle \Gamma_n \cap \B(R+10^{-2}) \right\rangle, \, \Lambda_R = \langle \Lambda \cap \B(R) \rangle.
\]
It follows from Theorem \ref{AML} and Lemma \ref{bdd_index_abelian} that there exists $A > 0$ and a sequence $H_{n, R} \subset \Lambda_{n, R}$ of abelian subgroups such that $[\Lambda_{n, R} : H_{n, R}] \le A$ for all $n$. The sequence $H_{n, R}$ converges to an abelian subgroup $H_R \subset \Lambda$; moreover (since any element of $\Lambda \cap \B(R)$ is a limit of a sequence  $\gamma_n \in \Gamma_n \cap \B(R+10^{-2})$) we see that $\Lambda_R \cap H_R$ has index at most $A$ in $\Lambda_R$.

In the previous paragraph we proved that for every $R>0$, the set $\mathcal A_R$ of abelian subgroups contained in $\Lambda_R$ with index at most $A$ is nonempty. Each set $\mathcal A_R$ is finite since $\Lambda_R$ is finitely generated and whenever $R < R'$ and $H \in \mathcal A_{R'}$ we have $H \cap \Lambda_R \in \mathcal A_R$, so we can pick a $H_1 \in \mathcal A_1$ such that for infinitely many $N \in \mathbb N$ there exists a $H_N \in \mathcal A_N$ with $H_1 \subset H_N$. We can then pick an integer $N_2 > 1$ and $H_{N_2} \in \mathcal A_{N_2}$ such that $H_1 \subset H_{N_2}$ and for infinitely many $N \in \mathbb N$ there exists a $H_N \in \mathcal A_N$ with $H_{N_2} \subset H_N$. Iterating this, we construct an increasing sequence $N_i \in \NN$ and $H_{N_i} \in \mathcal A_{N_i}$ which satisfy $H_{N_i} \subset H_{N_i+1}$. It follows that $H = \bigcup_{i \in \mathbb N} H_{N_i}$ is an abelian subgroup of $\Lambda = \bigcup_{i \in \mathbb N} \Lambda_{N_i}$ of index at most $A$. 
\end{proof}


\section{Galois cohomology} \label{s:galois}

In the remainder of the paper we will make intensive use of Galois cohomology of number fields and local fields. We regroup here various facts (some standard and some new) that we will use repeatedly in the later sections.

\subsection{Notation}

We let $\Gal(k)$ be the absolute Galois group of $\ovl k$ over $k$. If $L/k$ is a finite normal extension then $\Gal(L/k)$ denotes the corresponding finite Galois group.

\subsubsection*{Non-abelian cohomology}

If $\mathbf H$ is a $k$-group then $\mathbf H(\ovl k)$ (respectively $\mathbf H(L)$) is a $\Gal(k)$-group (respectively $\Gal(L/k)$-group) and we denote by $H^i(k, \mathbf H)$ (respectively $H^i(L/k, \mathbf H)$) the resulting non-abelian cohomology sets (for $i=0, 1$) which are defined in \cite[pp. 45--46]{Serre_galois}. In the case where $\mathbf H$ is abelian, the higher cohomology groups are defined as well. If $\mathbf H$ is non-abelian then $H^0(k, \mathbf H) = \mathbf{H}(k) = H^0(L/k, \mathbf H)$ and $H^1(k, \mathbf H)$ is the quotient set of
\[
Z^1(k, \mathbf H) = \{c :\: \Gal(k) \to \mathbf H(\ovl k),\, c(\sigma\tau) = c(\sigma)\cdot c(\tau)^\sigma \}
\]
by the equivalence relation $b \sim c$ if $c(\sigma) = v^{-1} b(\sigma) v^{\sigma}$ for some $v \in \mathbf H(\ovl k)$. The definition for $H^1(L/k, \mathbf H)$ is the same.

These sets are not groups unless $\mathbf H$ is abelian, however they have a distinguished point which is the class of the trivial morphism from $\Gal(k)$ to $\mathbf H(\ovl k)$. 


\subsubsection{Exact sequences}

If $\mathbf H$ is a $k$-subgroup of a $k$-group $\G$, then $\V = \G/\mathbf H$ is a $k$-variety. We can define $H^0(k, \V) := \V(k)$ and there is a long exact sequence
\begin{equation} \label{e-longcohomo}
  1 \to \mathbf H(k) \to \G(k) \to \V(k) \to H^1(k, \mathbf H) \to H^1(k, \G)
\end{equation}
(here exact means that the preimage of the distinguished point is the image of the preceding map); see \cite[Proposition 36]{Serre_galois}.

If $\mathbf H$ is normal in $\mathbf G$, then $\G/\mathbf H$ is a $k$-group and the sequence extends to a further term $H^1(k, \G/\mathbf H)$; if $\mathbf H$ is central in $\G$, then it extends to $H^2(k, \mathbf H)$ (see loc. cit., Propositions 38, 43).


\subsubsection*{Shafarevich--Tate groups}
The Shafarevich--Tate group $\Sha^1(\H)$ is the kernel of the map
\[
H^1(k, \H) \to \prod_{v \in \val} H^1(k_v, \H).
\]
It is always a finite group (see \cite[p. 284]{PlaRap}). 


\subsection{Galois cohomology of tori over local fields}

\subsubsection{Nakayama--Tate}


If $\T$ is a torus, then its character group $X^*(\T)$ is a $\Gal(k)$-module. If $L$ is a splitting field of $\T$, then the action of $\Gal(k)$ factors through $\Gal(L/k)$, so $X^*(\T)$ is also a $\Gal(L/k)$-module. The resulting cohomology groups are the same for both Galois groups, which follows from the inflation-restriction exact sequence. We have the following corollary of the local Nakayama--Tate theorem, see \cite[Theorem 6.2]{PlaRap}. 

\begin{theorem} \label{t-LocalNakayamaTate}
  Let $\mathbf T$ be an algebraic torus defined over a local field $k_v$ and split over a Galois extension $L_v/k_v$. Then there is an isomorphism $H^1(L_v/k_v,\mathbf T)\simeq  H^1(L_v/k_v, X^*(\mathbf T))$.
\end{theorem}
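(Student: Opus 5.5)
The plan is to deduce the statement from the local Tate--Nakayama theorem, as in \cite[Theorem 6.2]{PlaRap}, combined with duality in the Tate cohomology of finite groups. Throughout, $G_v=\Gal(L_v/k_v)$ is finite. Let $X_*(\T)=\Hom(X^*(\T),\ZZ)$ be the cocharacter lattice. Since $L_v$ splits $\T$, there is a $G_v$-equivariant isomorphism $\T(L_v)\simeq X_*(\T)\otimes_\ZZ L_v^\times$.

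The first step is the local Tate--Nakayama theorem. Local class field theory provides a fundamental class $u_{L_v/k_v}\in H^2(G_v,L_v^\times)$. Cup product with this class gives isomorphisms
\[
\hat H^i(G_v, X_*(\T)) \xrightarrow{\ \sim\ } \hat H^{i+2}(G_v, X_*(\T)\otimes L_v^\times)=\hat H^{i+2}(G_v,\T(L_v))
\]
for all $i\in\ZZ$. These hold because $X_*(\T)$ is a $\ZZ$-free $G_v$-module of finite rank and $L_v^\times$ is a class formation module. Taking $i=-1$ yields $H^1(L_v/k_v,\T)\simeq \hat H^{-1}(G_v,X_*(\T))$.

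The second step passes from cocharacters to characters by duality for finite groups (for example \cite[VI.7]{Brown}). If $M$ is a finitely generated $\ZZ$-free $G_v$-module and $M^\vee=\Hom(M,\ZZ)$, the cup product pairing
\[
\hat H^{i}(G_v,M^\vee)\times \hat H^{-i}(G_v,M)\to \hat H^0(G_v,\ZZ)=\ZZ/|G_v|
\]
is perfect. Apply this with $M=X^*(\T)$, so $M^\vee=X_*(\T)$, and $i=-1$. This identifies $\hat H^{-1}(G_v,X_*(\T))$ with the Pontryagin dual of $H^1(G_v,X^*(\T))$. All groups involved are finite abelian, being killed by $|G_v|$ and finitely generated, so each is isomorphic (non-canonically) to its dual. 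This gives $H^1(L_v/k_v,\T)\simeq H^1(L_v/k_v,X^*(\T))$.

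The statement only asks for an abstract isomorphism, and it is used later essentially for counting, so the non-canonical final identification is harmless. The main point requiring care is getting the degree shifts right. An alternative route is to use Tate local duality $H^1(k_v,\T)^\vee\simeq H^1(k_v,X^*(\T))$ directly, together with inflation-restriction to pass between $\Gal(k_v)$ and $G_v$ (legitimate because $X^*(\T)$ is unramified above $L_v$ and $H^1(L_v,X^*(\T))=\Hom(\Gal(L_v),\ZZ^r)=0$). I would fall back on that route if the finite-group duality bookkeeping proves awkward. The Archimedean case $k_v=\RR$ is covered by the same argument, using Tate cohomology and the fundamental class of $\CC/\RR$.
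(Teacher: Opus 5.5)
Your argument is correct and is essentially the paper's route: the paper simply invokes the local Tate--Nakayama theorem \cite[Theorem 6.2]{PlaRap}, i.e.\ the nondegenerate cup-product pairing between $H^1(L_v/k_v,\T)$ and $H^1(L_v/k_v,X^*(\T))$ with values in $H^2(L_v/k_v,L_v^\times)$, and your combination of the Nakayama--Tate shift by the fundamental class with finite-group duality between $X_*(\T)$ and $X^*(\T)$ is the standard way that pairing is obtained. As you note, the resulting isomorphism is non-canonical (a finite abelian group against its Pontryagin dual), which is all the paper needs, since it only uses the statement to bound cardinalities.
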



\subsubsection{Bounds on Shafarevich--Tate groups}

\begin{lemma}\label{l-TateShafarevichBound}
  Let $\mathbf T$ be an algebraic torus of dimension $r$ defined over $k$ and split over a Galois extension $L/k$. Then 
  \[
  |\Sha^1(\mathbf T)|\leq |H^2(L/k,X^*(\mathbf T))|\leq [L:k]^{r[L:k]^2}.
  \]
  If $\mathbf T$ is a maximal torus of a semisimple algebraic group $G$ then we can find $L$ with $[L:k]\leq (\dim \mathbf G-\rk \mathbf G)!$. In particular we have then that $|\Sha^1(\mathbf T)|=O_{\dim \mathbf G}(1)$.
\end{lemma}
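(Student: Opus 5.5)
We will bound $\Sha^1(\T)$ through the Tate--Nakayama duality, and then bound the relevant Galois cohomology group by crudely counting cocycles. The plan has four steps.

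\textbf{Step 1: reduction to $H^2$ of the character module.} By global Tate--Nakayama duality (see \cite[Theorem 6.22]{PlaRap}), $\Sha^1(\T)$ is dual to $\Sha^2(X^*(\T))$. Here $\Sha^2(X^*(\T))$ is the kernel of
\[
H^2(k, X^*(\T)) \to \prod_v H^2(k_v, X^*(\T)).
\]
In particular $|\Sha^1(\T)| \le |\Sha^2(X^*(\T))|$.

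Next we show that $\Sha^2(X^*(\T))$ lies in the image of the inflation map $H^2(L/k, X^*(\T)) \to H^2(k,X^*(\T))$. The kernel of restriction to $\Gal(L)$ is exactly the inflation image. This uses the inflation--restriction sequence, which holds in degree $2$ because $H^1(L, X^*(\T)) = \Hom(\Gal(L),\ZZ^r)=0$, as $X^*(\T)$ is a trivial torsion-free $\Gal(L)$-module. An element of $\Sha^2$ restricts to an element of $\Sha^2$ of the split torus over $L$, namely $\Sha^2(L,\ZZ)^r$. Since $H^2(\cdot,\ZZ)\cong H^1(\cdot,\QQ/\ZZ)$, this group consists of characters of $\Gal(L)$ that are locally trivial everywhere. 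By Chebotarev such a character is trivial. Hence the restriction vanishes and the element comes from $H^2(L/k, X^*(\T))$. This gives the first inequality. (Alternatively one can quote \cite{PlaRap} directly for $\Sha^1(\T)\subset H^1(L/k,\T)$ and bound $H^1(L/k,\T)$ via the global theory; I would use whichever reference is cleanest.)

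\textbf{Step 2: counting bound.} Put $n=[L:k]$ and $M = X^*(\T)\cong \ZZ^r$. The group $H^2(\Gal(L/k), M)$ is killed by $n$. Therefore $H^2 = H^2[n]$, and from the sequence $0\to M\xrightarrow{n} M\to M/nM\to 0$ the group $H^2(L/k,M)$ is a quotient of $H^1(L/k, M/nM)$. A $1$-cochain is a function $\Gal(L/k)\to M/nM$, so there are at most $(n^r)^n = n^{rn}$ of them. This is already at most $n^{rn^2}$. If the connecting-map argument is awkward, one can instead bound $2$-cocycles modulo $n$ directly. Indeed $H^2(L/k,M)$ injects into $H^2(L/k,M/nM)$ modulo the image of $H^1$, and the number of maps $\Gal(L/k)^2\to M/nM$ is $n^{rn^2}$. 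This gives exactly the stated bound.

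\textbf{Step 3: degree of the splitting field.} Now let $\T$ be a maximal torus of $\G$. The action of $\Gal(k)$ on $X^*(\T)$ is determined by its action on the root system $\Phi$, which spans $X^*(\T)\otimes\QQ$ up to the central part. This action is faithful enough once $\G$ is semisimple, since $\Phi$ spans a finite-index sublattice. So the image of $\Gal(k)$ embeds in the permutation group of $\Phi$. Since $|\Phi| = \dim\G-\rk\G$, the minimal splitting field $L$ satisfies $[L:k]\le (\dim\G-\rk\G)!$.

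\textbf{Step 4: conclusion.} Plugging this bound, together with $r=\rk\G\le\dim\G$, into Step 2 gives $|\Sha^1(\T)|=O_{\dim\G}(1)$.

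The main obstacle is Step 1, where the duality has to be identified precisely. The counting is routine.
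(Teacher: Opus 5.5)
Your proof is correct and follows the paper's argument: Tate--Nakayama duality reduces $\Sha^1(\T)$ to $H^2(L/k,X^*(\T))$, this group is killed by $[L:k]$ so a cochain count bounds it, and the Galois image acts faithfully on $\Phi$, which bounds $[L:k]$. The differences are minor. You derive the finite-level statement from Poitou--Tate duality, inflation--restriction and Chebotarev instead of quoting \cite{PlaRap}, and your connecting-map argument gives the sharper bound $[L:k]^{r[L:k]}$. Discard the parenthetical alternative in Step 1, however: $H^1(L/k,\T)$ is in general infinite over a number field. For example, for the norm-one torus of a quadratic $L/k$ it is $k^\times/N_{L/k}(L^\times)$, so it cannot be bounded directly.
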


\begin{proof}
  By \cite[Proof of Prop. 6.9 p. 306]{PlaRap} there is a surjective map $H^2(L/k,X^*(\mathbf T))\twoheadrightarrow \Sha^1(\mathbf T)$, which explains the first inequality. The second inequality is a standard bound on the size of cohomology groups of finite groups, it follows for instance from the fact that $H^2(L/k,X^*(\mathbf T))$ has exponent dividing $[L:k]$ \cite[Corollary 10.2, p.~84]{Brown_cohomology_book}, and there is a presentation complex for $\Gal(L/k)$ with at most $[L:k]^2$ two-cells (given by the multiplication table). 

  Now we prove the second statement. If $\mathbf T$ is a maximal torus in a semisimple algebraic group $\mathbf G$ we have an action on the root system $\pi:\Gal(k)\to \Aut(\Phi(\mathbf G,\mathbf T))$. Let us write $\rho:\Gal(k)\to \GL(X^*(\mathbf T))$ for the action of the Galois group on the character module of $\mathbf T$. The roots in $\Phi(\mathbf G,\mathbf T)$ span a subgroup of $X^*(\mathbf T)$ of full rank so $\ker \pi=\ker \rho$. In particular $[\Gal(k):\ker \rho]\leq |\Aut(\Phi)|\leq (\dim \mathbf G-\rk \mathbf G)!$ so the field ${\overline k}^{\ker\rho}$, which is the minimal splitting field of $\mathbf T$, is an extension of $k$ of degree at most $(\dim \mathbf G-\rk \mathbf G)!$. 
\end{proof}


\subsubsection{Bounds on $H^1$}

\begin{lemma} \label{bound_H1_torus}
  Let $k_v$ be a local field and let $\T$ be a $k_v$-torus. Then $\left| H^1(k_v, \T) \right|$ is bounded by a constant depending only on $\dim(\T)$. 
\end{lemma}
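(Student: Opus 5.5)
The plan is to reduce $H^1(k_v,\T)$ to group cohomology of a finite Galois group acting on a lattice, using the local Nakayama--Tate isomorphism (Theorem \ref{t-LocalNakayamaTate}), and then bound that cohomology in terms of $r=\dim\T$ alone.

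First, let $L_v/k_v$ be the minimal splitting field of $\T$. The Galois group acts on $X^*(\T)\cong\ZZ^r$ through a finite subgroup of $\GL_r(\ZZ)$, and $\Gal(L_v/k_v)$ is isomorphic to that image. By Minkowski's theorem, the order of a finite subgroup of $\GL_r(\ZZ)$ is bounded by a constant $M(r)$; for instance, such a subgroup injects into $\GL_r(\ZZ/3\ZZ)$, so $M(r)\le 3^{r^2}$. Hence $[L_v:k_v]\le M(r)$.

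Second, since every element of $\T(\ovl{k_v})$ is defined over a finite extension split by $L_v$, inflation--restriction gives $H^1(k_v,\T)=H^1(L_v/k_v,\T)$. Here we use that $\T$ is split over $L_v$, so Hilbert 90 for $\mathbb{G}_m^r$ kills $H^1(L_v,\T)$. By Theorem \ref{t-LocalNakayamaTate}, $H^1(L_v/k_v,\T)\simeq H^1(L_v/k_v,X^*(\T))$. In the non-Archimedean case this is exactly the cited form. In the Archimedean case it is either trivial ($k_v=\CC$) or a direct computation with $\Gal(\CC/\RR)$ of order 2, and the same bound below applies.

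Third, bound $|H^1(\Gamma,X)|$ for a finite group $\Gamma$ of order $n\le M(r)$ acting on $X\cong\ZZ^r$. The group is annihilated by $n$. A $1$-cocycle is determined by its values on a generating set of $\Gamma$, which has at most $\log_2 n$ elements. So $H^1$ is a quotient of a subgroup of $(X/nX)^{\log_2 n}$ modulo coboundaries: since $nZ^1\subset B^1$ (because $n\,c(\sigma)=(\sigma-1)\sum_\tau c(\tau)$ up to sign), $H^1$ is a quotient of $Z^1/nZ^1$, and $Z^1$ embeds in $X^{\log_2 n}$, a free module of rank at most $r\log_2 n$. This gives $|H^1|\le n^{r\log_2 n}$, which is bounded in terms of $r$ alone. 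Alternatively, we can argue as in the proof of Lemma \ref{l-TateShafarevichBound}, using at most $[L_v:k_v]$ one-cells of a presentation.

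I expect the main obstacle to be only the bookkeeping in the last step, namely making sure that $Z^1/nZ^1$ is a quotient of a finitely generated free abelian group of controlled rank. Here $Z^1$ is a subgroup of $X^{\Gamma}\cong\ZZ^{rn}$, so its rank is at most $rn$, and therefore $|Z^1/nZ^1|\le n^{rn}\le M(r)^{rM(r)}$. This closes the argument.
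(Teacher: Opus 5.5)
Your proposal is correct and follows the paper's proof. You bound the order of the image of $\Gal(L_v/k_v)$ in $\GL(X^*(\T))$ in terms of $\dim\T$, pass to $H^1(L_v/k_v, X^*(\T))$ via Theorem \ref{t-LocalNakayamaTate}, and bound the cohomology of a finite group acting on a lattice; your final estimate $n^{rn}$ is exactly the paper's $|J|^{|J|\dim\T}$. You also spell out the inflation--restriction/Hilbert 90 reduction and the inclusion $nZ^1\subset B^1$, which the paper leaves implicit.
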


\begin{proof}
  Let $L_v$ be the minimal splitting field of $\T$. The Galois group $\Gal(L_v/k_v)$ acts on the character module $X^*(\T)$ by automorphisms. Write $J$ for the image of $\Gal(L_v/k_v)$ in $\GL(X^*(\T))$. Since $\GL_d(\ZZ)$ has only finitely many finite subgroups, there is a bound $A$ on $|J|$ depending only on $\dim(X^*(\T) \otimes\QQ) = \dim(\T)$. By Theorem \ref{t-LocalNakayamaTate}, we have $H^1(L_v/k_v,\T)\simeq H^1(L_v/k_v, X^*(\T))$. On the other hand we have
  \[
  |H^1(J, X^*(\T))|\leq |J|^{|J|\dim\T}\leq A^{A\dim\T},
  \]
  which finishes the proof. 
\end{proof}


\subsection{Compact cohomology classes in tori}

If $\mathbf T$ is defined over a local field $k_v$ and $L_v/k_v$ is a finite extension, we write $\mathbf{T}(L_v)^b$ for the unique maximal compact subgroup of $\mathbf T(L_v)$. We introduce the notion of compact cohomology classes which will be crucial in the proof of Proposition \ref{p-GConjugacyLocalUnr}.

\begin{definition}
  Let $\mathbf T$ be an algebraic torus defined over $k_v$, let $L_v$ be the minimal Galois extension of $k_v$ splitting $\T$. We say that a class $\alpha \in H^1(k_v,\mathbf T)$ is compact if it has a representative cocycle $c\in Z^1(L_v/k_v,\mathbf{T}(L_v)^b)$. 
\end{definition}

\begin{proposition}\label{p-CompactClassTriv}
  Let $\T$ be an algebraic torus defined over $k_v$ split by an unramified extension of $k_v$. Then every compact cohomology class in $H^1(k_v,\T)$ is trivial. 
\end{proposition}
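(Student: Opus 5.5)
The plan is to show that the compact part of $\T(L_v)$ has trivial first cohomology, using the fact that $\T$ has a good integral model over an unramified splitting field. Write $L_v/k_v$ for the minimal splitting field, which is unramified by hypothesis. Let $\Gamma=\Gal(L_v/k_v)$; it is cyclic, generated by Frobenius. Since $\T$ splits over $L_v$ we have a $\Gamma$-equivariant identification
\[
\T(L_v)\simeq X_*(\T)\otimes_\ZZ L_v^\times ,
\]
with $\Gamma$ acting diagonally. Under it,
\[
\T(L_v)^b = X_*(\T)\otimes \mo_{L_v}^\times ,
\]
which is $\Gamma$-stable. A compact class is represented by a cocycle $c\in Z^1(L_v/k_v,\T(L_v)^b)$. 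It therefore suffices to prove
\[
H^1\bigl(\Gamma, X_*(\T)\otimes \mo_{L_v}^\times\bigr)=0,
\]
since a coboundary in $\T(L_v)^b$ is in particular a coboundary in $\T(L_v)$. The statement for $H^1(k_v,\T)$ then follows, because the inflation map from $H^1(L_v/k_v,\T)$ is injective.

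First I filter $\mo_{L_v}^\times$ by the groups $U^0=\mo_{L_v}^\times\supset U^1\supset U^2\supset\cdots$ of higher units. This gives a $\Gamma$-stable filtration $M_n=X_*(\T)\otimes U^n$ of $M=\T(L_v)^b$. Let $\mathbb F_L/\mathbb F_k$ be the residue field extension; because $L_v/k_v$ is unramified, $\Gamma\simeq\Gal(\mathbb F_L/\mathbb F_k)$.

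Next I compute the graded pieces. We have $M_0/M_1=X_*(\T)\otimes\mathbb F_L^\times=\bar\T(\mathbb F_L)$, where $\bar\T$ is the torus over $\mathbb F_k$ with the same character module. For $n\ge1$, $U^n/U^{n+1}\simeq\mathbb F_L$ as a Galois module, so $M_n/M_{n+1}\simeq X_*(\T)\otimes\mathbb F_L$ with semilinear $\Gamma$-action.

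For $M_0/M_1$, Lang's theorem for the connected group $\bar\T$ over the finite field gives $H^1(\Gamma,\bar\T(\mathbb F_L))=0$. For $n\ge1$, the module $X_*(\T)\otimes\mathbb F_L$ is an $\mathbb F_L$-vector space with semilinear $\Gamma$-action. By Hilbert 90 for $\GL_r$ it is isomorphic to $\mathbb F_L^r$ with the standard action. By the normal basis theorem that module is induced, so its $H^1$ vanishes.

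Finally I pass to the limit by successive approximation. Given $c$ with values in $M=M_0$, I kill its image in $M_0/M_1$ by modifying $c$ by a coboundary from $M_0$, so that it takes values in $M_1$. Then I kill its image in $M_1/M_2$ by a coboundary from $M_1$, and so on. The correcting elements $v_n\in M_n$ tend to $1$. Since $\Gamma$ is finite and $M$ is complete for the filtration, $v=\prod_n v_n$ converges and gives $c(\sigma)=v^{-1}v^\sigma$.

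The main obstacle is bookkeeping rather than depth. One must justify that the maximal compact subgroup equals $X_*(\T)\otimes\mo_{L_v}^\times$; this holds because the valuation map $X_*\otimes L_v^\times\to X_*$ has kernel exactly this compact open subgroup and a torsion-free image. One must also check convergence in the last step; this is routine, since the $M_n$ form a basis of neighbourhoods of $1$ in the compact group $M$.
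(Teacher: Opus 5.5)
Your proof is correct, but it takes a different route from the paper.

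\textbf{The paper's route.} The paper argues by duality. Tate--Nakayama gives a non-degenerate cup-product pairing
\[
H^1(L_v/k_v,\T)\times H^1(L_v/k_v,X^*(\T))\to H^2(L_v/k_v,L_v^\times).
\]
Characters send $\T(L_v)^b$ into $\mo_{L_v}^\times$. So for a compact class $\alpha$, the form $\langle\alpha,\cdot\rangle$ factors through $H^2(L_v/k_v,\mo_{L_v}^\times)$. Lemma \ref{l-Unr2Cohomo} shows this group vanishes for unramified $L_v/k_v$, using inflation--restriction and the cohomological triviality of units in the maximal unramified extension. Hence $\alpha=0$.

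\textbf{Your route.} You prove the stronger statement $H^1(\Gal(L_v/k_v),\T(L_v)^b)=0$ directly. You filter $\T(L_v)^b\simeq X_*(\T)\otimes\mo_{L_v}^\times$ by the higher unit groups. The bottom graded piece is handled by Lang's theorem for $\bar\T(\mathbb F_L)$. The higher pieces are handled by Galois descent plus additive Hilbert 90 (normal basis). This is essentially the standard proof that $\mo_{L_v}^\times$ is cohomologically trivial in unramified extensions, run with coefficients twisted by $X_*(\T)$.

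\textbf{Comparison.} Your argument avoids the non-degeneracy of the Tate--Nakayama pairing and is self-contained. It also shows that the cocycle is already a coboundary with compact coefficients, not merely after pushing into $\T(L_v)$. The paper's route is shorter once those black boxes are accepted. In the paper, unramifiedness enters through the vanishing of $H^2(\mo_{L_v}^\times)$. In yours, it enters through two places: the equivariant isomorphism $U^n/U^{n+1}\simeq\mathbb F_L$ (using a uniformizer from $k_v$), and the identification of $\Gal(L_v/k_v)$ with the residue Galois group.

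One minor point: your last step does not need injectivity of inflation. A class that is trivial in $H^1(L_v/k_v,\T)$ inflates to the trivial class in $H^1(k_v,\T)$ under any map of pointed sets.
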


\begin{proof}
  We compute all the Galois cohomology groups using the minimal splitting extension $L_v/k_v$ which is unramified by hypothesis. Let $\mathbf M$ be the multiplicative group over $k_v$, then there is a non-degererate pairing
  \[
  \langle \cdot, \cdot\rangle : H^1(L_v/k_v, \T) \times H^1(L_v/k_v, X^*(\T)) \to H^2(L_v/k_v, \mathbf M)
  \]
  induced by the non-degenerate pairing $\T \times X^*(\T) \to \mathbf M$ (the non-degeneracy of $\langle \cdot, \cdot\rangle$ is established in \cite[p. 303]{PlaRap}). A character $\chi \in X^*(\T)$ takes integer values on $\T(L_v)^b$, so if $\alpha \in H^1(L_v/k_v,\T)$ is compact, then the linear form $\langle \alpha, \cdot\rangle$ on $H^1(L_v/k_v, X^*(\T))$ takes values in the image of $H^2(L_v/k_v, \mo_{L_v}^\times)$. Since $L_v$ is unramified, the latter group is trivial by the Lemma \ref{l-Unr2Cohomo} below. We get that  $\langle \alpha, \cdot\rangle$ is trivial. Hence, $\alpha=0$, by the non-degeneracy of the pairing. 
\end{proof}

\begin{lemma}\label{l-Unr2Cohomo}
  The group $H^2(L_v/k_v,\mo_{L_v}^\times)$ vanishes for every unramified Galois extension $L_v/k_v$.
\end{lemma}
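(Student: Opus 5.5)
We use the standard facts on units in unramified extensions of local fields. Write $G = \Gal(L_v/k_v)$, which is cyclic of order $n$ generated by the Frobenius $\phi$. Since $G$ is cyclic, Tate cohomology is $2$-periodic, so
\[
H^2(L_v/k_v, \mo_{L_v}^\times) \cong \hat H^0(G, \mo_{L_v}^\times) = \mo_{k_v}^\times / N_{L_v/k_v}(\mo_{L_v}^\times).
\]
The statement therefore reduces to the classical fact that in an unramified extension every unit of $k_v$ is a norm of a unit of $L_v$.

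To prove surjectivity of the norm on units, we filter $\mo_{L_v}^\times$ by $U^0 = \mo_{L_v}^\times \supset U^1 \supset U^2 \supset \cdots$, where $U^i = 1+\mathfrak p_{L_v}^i$, and do the same on $\mo_{k_v}^\times$. Since $L_v/k_v$ is unramified, a uniformizer $\varpi$ of $k_v$ is also a uniformizer of $L_v$. The residue fields are $\ell/\kappa$, with Galois group $G$. The graded pieces are then:
\begin{itemize}
\item $U^0/U^1 \cong \ell^\times$, on which the norm induces the finite-field norm $\ell^\times \to \kappa^\times$. This map is surjective because it is $x \mapsto x^{(q^n-1)/(q-1)}$ on a cyclic group.
\item $U^i/U^{i+1} \cong \ell$ for $i \ge 1$, via $1+\varpi^i x \mapsto \bar x$. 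Here the norm induces the trace $\ell \to \kappa$, which is surjective since finite extensions are separable.
\end{itemize}
Surjectivity on each graded piece then gives surjectivity on the whole group by successive approximation. Given $u \in \mo_{k_v}^\times$, we choose $x_0$ with $N(x_0) \equiv u \bmod U^1_{k_v}$. We then correct the error in $U^1$ step by step, producing $x_0x_1x_2\cdots$ with $x_i \in U^i_{L_v}$. This infinite product converges by completeness and compactness of $\mo_{L_v}^\times$, and the norm is continuous, so $N(\prod x_i) = u$.

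An alternative would be to show directly that $\mo_{L_v}^\times$ is cohomologically trivial, using $H^q(G, \ell^\times) = H^q(G,\ell) = 0$ (Hilbert 90 and the normal basis theorem) together with a limit argument. I would use the norm argument above, since it needs only $\hat H^0$.

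The only delicate point is the limit step: passing from surjectivity on each graded piece to surjectivity on the complete filtered group. This is a standard compactness argument, so I do not expect a real obstacle. The result is also textbook material (Serre, \emph{Local Fields}, V.2), so one could simply cite it.
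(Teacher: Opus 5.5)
Your argument is correct, but it takes a different route from the paper. The paper never uses that $\Gal(L_v/k_v)$ is cyclic, and it never computes norms. Instead it passes to the maximal unramified extension $k_v^u$ via the inflation--restriction sequence
\[
H^1(k_v^u/L_v,\mo_{k_v^u}^\times)^{\Gal(L_v/k_v)} \to H^2(L_v/k_v,\mo_{L_v}^\times) \to H^2(k_v^u/k_v,\mo_{k_v^u}^\times).
\]
It kills the first term with a cited vanishing result for $H^1$ of units over unramified extensions \cite[Theorem~6.8]{PlaRap}. For the vanishing of the last term it invokes the argument of \cite[Proposition~6.7]{PlaRap}, applied to $\mathbf G_m$.

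You stay at finite level throughout. Cyclicity and periodicity identify the group with $\mo_{k_v}^\times/N_{L_v/k_v}(\mo_{L_v}^\times)$. You then show directly that units are norms, using the unit filtration and successive approximation: the norm induces the residue norm on $U^0/U^1$ and the trace on $U^i/U^{i+1}$, the latter because $\varpi\in k_v$ is Galois-fixed and $2i\ge i+1$.

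Your version is self-contained and avoids profinite cohomology altogether. The paper's version is shorter on the page and phrased in the same torus/group-scheme language as the other results it borrows, but its substance sits entirely in the citations. Indeed, $H^2(k_v^u/k_v,\mo_{k_v^u}^\times)$ is the direct limit of the groups $H^2(L_v/k_v,\mo_{L_v}^\times)$. So your finite-level statement, applied to all finite unramified $L_v$, is exactly what yields the vanishing the paper cites.

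One small point: in your limit step only completeness of $\mo_{L_v}$ is needed, not compactness.
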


\begin{proof}
  Let $k_v^u$ be the unramified closure of $k_v$. Using the inflation-restriction long exact sequence (also known as Hochschild-Serre sequence) we get
  \[
  H^1(k_v^{u}/L_v,\mo_{k_v^u}^\times)^{\Gal(L_v/k_v)} \to H^2(L_v/k_v,\mo_{L_v}^\times) \to H^2(k_v^{u}/k_v,\mo_{k_v^u}^\times). 
  \]
  The first term vanishes by \cite[Theorem 6.8]{PlaRap} so the map  $H^2(L_v/k_v,\mo_{L_v}^\times)\to H^2(k_v^u/k_v,\mo_{k_v^u}^\times)$ is injective. To finish the proof it is enough to show that $H^2(k_v^u/k_v,\mo_{k_v^u}^\times)=0$. This follows from the argument in the proof of \cite[p. 299, Propostion 6.7]{PlaRap}, applied to the multiplicative torus $\bG_m$. 
\end{proof}


\section{Benjamini--Schramm convergence in bounded degree} \label{s:adelic}

In this section $G$ is a semisimple, simply connected Lie group. The reason to change the setting from the adjoint groups to simply connected ones is the Prasad volume formula, where many factors have been explicitly worked out only in the simply connected case.

\subsection{Finer description of arithmetic lattices} \label{defn_ppal_lattice}

\subsubsection{Bruhat--Tits buildings and parahoric subgroups} \label{BT_buildings}

We will describe some results of Bruhat--Tits theory that will be useful later. Let $\G$ be a simply connected, semisimple $k$-group and $v \in \val_f$. To the $v$-adic group $\G(k_v)$ there is an associated Euclidean building (see \cite[2.1]{Tits_Corvallis}) which we will denote by $X = X(\G, k_v)$. 

If $\T$ is a maximal $k_v$-split torus of $\G$, there is a unique apartment $A \subset X$ which is preserved by $\T(k_v)$. If $C$ is a chamber (i.e. a maximal simplex) of $A$, the associated Iwahori subgroup $I$ is the stabiliser of $C$ in $\G(k_v)$. Facets of $C$ are in 1-1 correspondence with the subsets of a basis of affine roots for $\G, \T$; the stabiliser of a facet (i.e. any compact subgroup containing the Iwahori subgroup) is called a {\em parahoric subgroup} (see \cite[0.5]{PrasVol}). 

Let $\Phi_a(\G, \T)$ be the affine root system over $k_v$ and let $\Delta_{a,v}$ the associated Dynkin diagram. The vertices of the diagram correspond to the top dimensional facets of the boundary of $C$. For $\Theta_v \subset \Delta_{a,v}$ we denote by $C_{\Theta_v}$ the associated facet of $C$, which is a simplex in $X$. By the previous paragraph, any parahoric subgroup $U_v$ of $\G(k_v)$ is conjugated to the stabiliser of a unique simplex $C_{\Theta_v}$, and we call $\Theta_v$ the {\em type} of $U_v$. 

The group $U_v$ of type $\Theta_v$ is said to be {\em special} if $\Delta_a \setminus \Theta_v$ is isomorphic to the Dynkin diagram of $\G$ over $k_v$. It is {\em hyperspecial} if and only if this remains true over any unramified finite extension of $k_v$ (see \cite[0.5, 0.6]{PrasVol}). 


\subsubsection{Principal and maximal arithmetic lattices}

A lattice of $\Gamma$ of $G$ is said to be \emph{maximal} if it is maximal with respect to inclusion among lattices.

Let $k$ be a number field and $\G$ a $k$-group such that there is a set of places $S_\infty \subset \val_\infty$ with $\prod_{v \in S_\infty} \G(k_v) \cong G$ and $\G(k_v)$ is compact for all $v \in \val_\infty \setminus S_\infty$. Note that then $\G$ must be semisimple and simply connected. By \ref{prelim_lattices}, any congruence subgroup $\Gamma_U \subset \G(k)$ is an arithmetic lattice in $G$. 

If $U_v, v \in \val_f$ are parahoric subgroups such that $U_v$ is hyperspecial for almost all $v$, and $U = \prod_{v \in \val_f} U_v$ then the lattice $\Gamma_U$ is called a {\em principal} arithmetic subgroup of $G$. The following proposition is contained in \cite[Proposition 1.4]{Borel_Prasad}.

\begin{proposition} \label{defn_congruence}
  A maximal arithmetic lattice in $G$ is the normalizer in $G$ of some principal arithmetic lattice. 
\end{proposition}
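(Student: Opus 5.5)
The plan is to put the maximal lattice inside the rational points of the adjoint group, find a $\Gamma$-invariant facet in each Bruhat--Tits building, and read off a principal arithmetic lattice that $\Gamma$ normalizes. This is the argument behind \cite[Proposition 1.4]{Borel_Prasad}, which I would reconstruct as follows.

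\emph{Step 1: placing $\Gamma$ inside $\ovl\G(k)$.} Let $\Gamma\subset G$ be a maximal arithmetic lattice, and let $(\G,k)$ be the simply connected group attached to its commensurability class as in \ref{prelim_lattices}. Let $\ovl\G$ be the adjoint group and $\pi\colon\G\to\ovl\G$ the central isogeny. Then $\Gamma$ is commensurable with some $\Gamma_U$, so $\Gamma$ lies in the commensurator of $\Gamma_U$. By the standard description of commensurators of arithmetic groups (Borel density plus the fact that $\ovl\G$ is defined over $k$), $\ad(\Gamma)$ lies in $\ovl\G(k)$, viewed inside $\ovl\G(k_\infty)$. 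In particular $\ovl\G(k)$ acts on every building $X(\G,k_v)$, $v\in\val_f$, by automorphisms, and conjugation by $\Gamma$ preserves $\G(k)$.

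\emph{Step 2: a $\Gamma$-invariant facet at every finite place.}
\begin{itemize}
\item Almost all $v$: the group $\Gamma$ is finitely generated and commensurable with $\Gamma_U$ where $U=\prod U_v$ with $U_v$ hyperspecial for almost all $v$. Hence for almost all $v$ every generator of $\Gamma$ lies in $\ovl\G(\mo_{k_v})$, so $\Gamma$ stabilises the hyperspecial vertex fixed by $U_v$.
\item Remaining finitely many $v$: the closure of $\Gamma$ in $\ovl\G(k_v)$ contains the image of $\Gamma_U\cap\Gamma$ with finite index. Its closure is therefore compact, being a finite extension of an open subgroup of the compact group $U_v$. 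By the Bruhat--Tits fixed point theorem, $\Gamma$ fixes a point $x_v\in X(\G,k_v)$, and so stabilises the unique facet $F_v$ containing $x_v$ in its interior.
\end{itemize}
Let $P_v\subset\G(k_v)$ be the stabiliser of $F_v$, taking $P_v=U_v$ hyperspecial at the almost all places. Since $\G$ is simply connected, stabilisers of facets in $\G(k_v)$ are parahoric subgroups. Hence $\Lambda=\G(k)\cap(\G(k_\infty)\times\prod_v P_v)$ is a principal arithmetic lattice.

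\emph{Step 3: conclusion.} Since $\gamma F_v=F_v$ for every $\gamma\in\Gamma$ and every $v$, conjugation by $\gamma$ preserves each $P_v$ and also preserves $\G(k)$. Therefore $\Gamma$ normalises $\Lambda$, i.e. $\Gamma\subset N_G(\Lambda)$. Next, $N_G(\Lambda)$ is itself a lattice. It is discrete because $\Lambda$ is finitely generated and Zariski dense, so the centraliser of $\Lambda$ is central; hence $N_G(\Lambda)$ acts on the finite generating set through a discrete set of conjugations. It contains the lattice $\Lambda$, so it has finite covolume. By maximality of $\Gamma$ we get $\Gamma=N_G(\Lambda)$.

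The main obstacle is Step 2. One must check carefully that stabilisers in $\G(k_v)$ of facets, as opposed to fixers, are exactly parahoric subgroups; this uses simple connectedness. One must also handle the fact that $\Gamma$ lives in the adjoint group, which may act on the building by non-type-preserving automorphisms. Taking the facet containing a fixed point, rather than a vertex, is what makes the invariance argument work despite this.
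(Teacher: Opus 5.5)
Your proposal is correct and is essentially the argument behind \cite[Proposition 1.4]{Borel_Prasad}, which the paper cites without reproducing. You place $\Gamma$ in $\ovl\G(k)$ via the commensurator, use the Bruhat--Tits fixed point theorem to get a $\Gamma$-stable facet at each finite place (the hyperspecial vertex at almost all places), and conclude from maximality together with discreteness of the normalizer.
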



\subsubsection{Classification} \label{data_aritlattices}

The conjugacy class in $G$ of the principal arithmetic lattice $\Gamma_U$ is determined by the number field $k$, the isomorphism class of the $k$-group $\G$, and the local groups $U_v, v \in \val_f$ (equivalently the types $\Theta_v$). Regarding the group $\G$, we will only be interested in the following data\footnote{Note that while it does not completely characterise the $k$-isomorphism class of $\G$ it does so up to finite ambiguity. }:
\begin{enumerate}
\item \label{innerfield} The minimal field extension $\ell/k$ such that $\G/\ell$ is an inner form of its split form, except when $\G$ is of type ${}^6D_4$. In the latter case $\ell$ is defined as a subfield of this field with $\ell/k$ of degree 3 (see \cite[0.2]{PrasVol}). In any case $\ell$ is uniquely determined up to Galois conjugacy.

\item \label{notqs_places} The finite set of places $v \in \val_f$ such that $\G$ is not quasi-split over $k_v$. 
\end{enumerate}
Regarding the compact-open subgroup $U$ we will be using the following information:
\begin{enumerate}[resume]
\item \label{notspecial_places} The finite set of places $v \in \val_f$ such that $U_v$ is not special;

\item \label{nothyperspecial_places} and those where $\G$ splits over an unramified extension of $k_v$ but $U_v$ is not hyperspecial. 
\end{enumerate}
We will denote by $S_U$ the set of all "bad places" $v \in \val_f$ which fall in one of the sets in \ref{nothyperspecial_places}, \ref{notspecial_places} or \ref{notqs_places}. 


\subsection{Volume formula}

\subsubsection{Strong approximation} \label{iso_adelic_archimedean}

Since $G$ is noncompact and $\G$ is simply connected, the latter satisfies the strong approximation property with respect to infinite places \cite[Theorem 7.12]{PlaRap}. That is, $\G(k)$ is dense in $\G(\Ade_f)$. Let $dg_\infty$ be the standard Haar measure on $\G(k_\infty)$. Let  $dg_f$ the Haar measure on $\G(\Ade_f)$ normalised so that $\vol_{dg_f}(U) = 1$. Then, we have an isomorphism \[\Psi\colon (\G(k) \bs \G(\Ade) / U, dg_\infty\otimes dg_f)\to (\Gamma_U \bs G, dg_\infty)\] of measured spaces, where we define quotient measures by taking counting measure on the discrete subgroups $\G(k), \Gamma_U$ and Haar probability measure on the compact subgroup $U$. The map $\Psi$ is simply induced by the first projection $\G(\Ade) = \G(k_\infty) \times \G(\Ade_f) \to \G(k_\infty)$.


\subsubsection{Prasad's volume formula}

We will describe here part of the formula obtained by Gopal Prasad \cite{PrasVol} for the covolume $\vol(\Gamma_U \bs G)$ of a principal arithmetic lattice in $G$ with respect to the standard Haar measure on $G$ (normalized as in section \ref{sec:MeasureNorm}). The following proposition is a consequence of his results, tailored to our needs. In the sequel we use $C_G$ to denote a constant depending only on $G$. 

\begin{proposition} \label{volume_formula}
  Let $k$ be a number field, let $\G/k$ be a (simply connected) $k$-group such that $\G(k_\infty)$ is isomorphic to $G$ times a compact factor, and $U$ a compact-open subgroup which is a product of parahoric groups, so that $\Gamma_U$ is a principal arithmetic lattice. Then   
  \[
  \vol(\Gamma_U \bs G) \ge C_G |\Delta_k|^{\frac{\dim(\G)}2} \cdot N_{k/\QQ}(\Delta_{\ell/k}) \cdot \prod_{v \in S_U} q_v
  \]
  where $S_U, \ell$ are defined as in \ref{defn_ppal_lattice} above and $q_v$ is the cardinality of the residue field of $k_v$.
\end{proposition}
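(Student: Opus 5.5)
The plan is to read the inequality directly off Prasad's formula \cite[Theorem 3.7]{PrasVol}:
\[
\vol(\Gamma_U\bs G) = |\Delta_k|^{\frac{\dim\G}2}\bigl(N_{k/\QQ}(\Delta_{\ell/k})\bigr)^{\frac12 \mathfrak s(\G)} \Bigl(\prod_{v\in \val_\infty}\Bigl|\prod_{i=1}^r \frac{m_i!}{(2\pi)^{m_i+1}}\Bigr|_v\Bigr)\, \mathcal E,
\]
with Euler product
\[
\mathcal E = \prod_{v\in\val_f}\frac{q_v^{(\dim \overline{M}_v+\dim\overline{\mathcal M}_v)/2}}{|\overline{M}_v(\mathbb F_{q_v})|}.
\]
Since the normalisation of Haar measure in \ref{sec:MeasureNorm} is exactly Prasad's, no extra renormalisation is needed. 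I bound each factor from below by a constant depending only on $G$ times the corresponding factor in the claim.

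\textbf{Archimedean and discriminant factors.} The Archimedean factor depends only on the type of $\G$ and on $[k:\QQ]$. Because the places in $\val_\infty\setminus S_\infty$ are compact, at most $\dim G$ places are noncompact. The real issue is that $[k:\QQ]$ is not bounded, so the factor $\prod_v m_i!/(2\pi)^{m_i+1}$ can decay exponentially in $[k:\QQ]$. I would handle this the standard way, as in Prasad and Borel--Prasad: absorb it using the Minkowski-type bound $|\Delta_k|^{1/[k:\QQ]}\ge c>1$, which holds for $[k:\QQ]\ge 2$, and its refinement via Odlyzko bounds. Concretely, I would spend a small fraction $\epsilon$ of the exponent $\dim\G/2$ on $|\Delta_k|$ to dominate $(2\pi)^{-\sum(m_i+1)[k:\QQ]}$. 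Since the stated bound only needs $|\Delta_k|^{\dim\G/2}$, I would keep part of the discriminant aside, or else note that Prasad proves the needed estimate in \cite[\S6]{PrasVol}, and cite it. For the field discriminant term, $\mathfrak s(\G)\ge 2$ whenever $\ell\ne k$ (from Prasad's table, $\mathfrak s\ge 5$ for outer forms of type $A$, etc.), so $N(\Delta_{\ell/k})^{\mathfrak s/2}\ge N(\Delta_{\ell/k})$ because $N(\Delta_{\ell/k})\ge1$.

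\textbf{Local factors.} This is the main step. For $v\notin S_U$, where $U_v$ is special and hyperspecial whenever possible, Prasad's analysis gives $e_v\ge 1$ for each local factor $e_v=q_v^{(\dim\overline M_v+\dim\overline{\mathcal M}_v)/2}/|\overline M_v(\mathbb F_{q_v})|$ when $\G$ is split. In the ramified or non-split cases one gets $e_v\ge 1-O(q_v^{-2})$, from explicit counts $|\overline M_v(\mathbb F_q)|=q^{\dim}\prod(1-q^{-d_i})$ and the known inequalities relating $\overline M_v,\overline{\mathcal M}_v$. More precisely, $\mathcal E$ restricted to good places is at least the product of zeta-type values $\prod_i\zeta_k(m_i+1)^{-1}$ corrections, and this is $\ge\prod_i\zeta(m_i+1)^{-[k:\QQ]}$. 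That again decays exponentially in $[k:\QQ]$ and must also be absorbed into the discriminant as above; this absorption is the main obstacle.

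For $v\in S_U$, the reductive quotient $\overline M_v$ of a non-special or non-hyperspecial parahoric is strictly smaller than $\overline{\mathcal M}_v$. So $(\dim\overline M_v+\dim\overline{\mathcal M}_v)/2-\dim\overline M_v\ge 1/2$, and in fact it is $\ge 1$ because the dimension drop is at least 2 (a root pair). Hence $e_v\ge c\,q_v$ with $c$ uniform, since $|\overline M_v(\mathbb F_q)|\le q^{\dim \overline M_v}(1+q^{-1})^{r}$.

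The uniform constant $c^{|S_U|}$ must still be controlled. Since $q_v\ge 2$, I would weaken to $e_v\ge q_v^{1/2}$ for large $q_v$ and treat the finitely many small $q_v$ via the fact that only $[k:\QQ]$ primes lie above each rational prime. I expect the uniformity in $[k:\QQ]$, both here and at infinity, to be the delicate bookkeeping. It is handled in \cite{PrasVol} and \cite{Borel_Prasad}, and I would follow their estimates rather than reprove them, with the constant $C_G$ and the exponents possibly adjusted.
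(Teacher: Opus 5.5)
Your route is the paper's: read the bound off Prasad's formula \cite[Theorem 3.7]{PrasVol} factor by factor, and use $\mathfrak s(\G)\ge 2$ for $N_{k/\QQ}(\Delta_{\ell/k})$. As written, though, it does not close.

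First, you are missing the local input the paper relies on, Prasad's Proposition~2.10. It says that \emph{every} Euler factor $e(P_v)$ of $\mathcal E$ is $>1$, at all finite places, split or not, ramified or not, and that at places of $S_U$ the factor is $\gg q_v$. So the good places cost nothing and $\mathcal E\ge\prod_{v\in S_U}e(P_v)$ immediately. Your bounds $e_v\ge 1-O(q_v^{-2})$ and $\prod_i\zeta(m_i+1)^{-[k:\QQ]}$ for the good part of $\mathcal E$ are far weaker than the truth, so the ``main obstacle'' you build on them is self-inflicted.

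Second, the archimedean step fails. The factor $\bigl(\prod_i m_i!/(2\pi)^{m_i+1}\bigr)^{[k:\QQ]}$ cannot be absorbed into the discriminant. Prasad's formula supplies exactly $|\Delta_k|^{\dim\G/2}$ and the target keeps all of it, so there is nothing to ``keep aside''. Any Minkowski/Odlyzko absorption only yields a smaller power of $|\Delta_k|$. In fact no constant depending on $G$ alone can work when the degree is unbounded. Take $G=\SL_2(\RR)$ and quaternion algebras over totally real fields of degree $d$ split at one real place. Then the ratio $\vol(\Gamma_U\bs G)/\bigl(|\Delta_k|^{3/2}\prod_{v\in S_U}q_v\bigr)$ is at most $(\zeta(2)/(4\pi^2))^d=24^{-d}$. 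The correct move, which the paper makes implicitly, is to let the constant depend on $[k:\QQ]$. In this section the degree is fixed and every later estimate is $\ll_{\dots,d}$, so the archimedean factor is just a positive constant and no bookkeeping from \cite{Borel_Prasad} is needed.

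At the places of $S_U$, replacing $q_v$ by $q_v^{1/2}$ proves a weaker inequality than the displayed one. That weaker bound would still suffice in the proof of Theorem~\ref{Main_bounded_degree}, but it is not the proposition. Your estimate $e(P_v)\ge q_v(1+q_v^{-1})^{-r}$ is the right order, and your worry about a factor $c^{|S_U|}$ is well founded. For $\G=\SL_1(D)$ at a place where the quaternion algebra $D$ ramifies, $e(P_v)=q_v^2/(q_v+1)<q_v$. So the ``$>q_v$'' in the paper's one-line proof is slightly optimistic at non-quasi-split places. Concretely, quaternion algebras over $\QQ$ ramified at the first $2n$ primes give $\vol(\Gamma_U\bs G)/\prod_{v\in S_U}q_v=\tfrac1{24}\prod_{p\le p_{2n}}(1-p^{-1})\to 0$. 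The clean fix is to carry a factor $c_G^{|S_U|}$ in the lower bound, rather than lowering the exponent of $q_v$ inside the proposition. This costs nothing downstream, since the paper already absorbs $C^{|S_U|}$ into $\prod_{v\in S_U}q_v^{1/2}$ when combining the volume bound with the index bound.
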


\begin{proof}
  This follows immediately from the exact formula in \cite[Theorem 3.7]{PrasVol} together with the following further facts from this reference: in the infinite product $\mathscr E$ all factors are $>1$ and they are $>q_v$ at places in $S_U$ (Proposition 2.10(iv) in loc. cit.), and the exponent $\mathfrak s(G)$ of $N_{k/\QQ}(\Delta_{\ell/k})^{1/2}$ is at least $2$ for all types (this can be checked case-by-case using the formulas given in loc. cit., 0.4). 
\end{proof}


\subsection{Index of maximal lattices}

To estimate the covolumes of maximal arithmetic lattices, we will use the following result. It follows from standard arguments from \cite{Margulis_Rohlfs} and \cite{Borel_Prasad}, as in \cite[Section 4]{Belolipetsky_Emery} which deals the case of even orthogonal groups. We essentially follow the argument of Belolipetsky--Emery in \cite[Section 4]{Belolipetsky_Emery}, cutting short where we do not need their degree of precision. 

\begin{proposition} \label{normaliser_index}
  Let $\Gamma_U$ be a principal arithmetic lattice in $G$ and $\Gamma = N_G(\Gamma_U)$ its normalizer. Let $\varepsilon>0$. Then 
  \[
  [\Gamma:\Gamma_U] \ll_\varepsilon C_G^{|S_U|} N_{k/\QQ}(\Delta_{\ell/k})^{\frac 1 2+\varepsilon} \Delta_k^{\frac 3 2+\varepsilon}.
  \]
\end{proposition}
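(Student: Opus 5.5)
We follow the Rohlfs / Borel--Prasad analysis of normalisers of principal arithmetic subgroups, in the form used by Belolipetsky--Emery. Let $\ovl{\G}$ be the adjoint group of $\G$ and $\mathbf C$ the centre of $\G$, so there is a central isogeny $\G \to \ovl\G$. By \cite[Proposition 2.9]{Borel_Prasad} (going back to Rohlfs), the image of $N_G(\Gamma_U)$ in $\ovl\G(k_\infty)$ is contained in $\ovl\G(k)\cap \prod_v \ovl{P}_v$, where $\ovl P_v$ is the stabiliser in $\ovl\G(k_v)$ of the facet of type $\Theta_v$. The index $[\Gamma:\Gamma_U]$ is therefore bounded by $|\mathbf C(k)|$ times the order of the image of
\[
\delta:\ \ovl\G(k)\cap \prod_v \ovl P_v \longrightarrow H^1(k,\mathbf C),
\]
coming from the exact sequence \eqref{e-longcohomo}. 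The factor $|\mathbf C(k)|$ is bounded in terms of $G$.

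To control the image of $\delta$, we first note that it lands in the subgroup $H^1(k,\mathbf C)_\xi$ of classes that are trivial at every infinite place where $\G$ is anisotropic. It also lands in the classes whose local component at each finite $v$ lies in $\delta_v(\ovl P_v)$. For $v \notin S_U$ the group $U_v$ is special or hyperspecial, and the image $\delta_v(\ovl P_v)$ is at most the subgroup preserving the type. This subgroup is trivial or controlled by the action of $\Delta_{a,v}$ automorphisms, which is unramified when $U_v$ is hyperspecial. For $v\in S_U$ we allow the full local group $H^1(k_v,\mathbf C)$, whose order is bounded by a constant depending only on $G$; this produces the factor $C_G^{|S_U|}$.

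What remains is a global count: the number of classes in $H^1(k,\mathbf C)$ that are unramified outside $S_U$ and the places ramified in $\ell/k$, with the prescribed local restrictions. We split by the type of $\mathbf C$, which is $\mu_n$, $\mu_2\times\mu_2$, or a twisted form $\mathrm{R}_{\ell/k}^{(1)}(\mu_n)$ in the outer cases. Kummer theory identifies $H^1(k,\mu_n)$ with $k^\times/k^{\times n}$, and for twisted forms we pass to $\ell^\times/\ell^{\times n}$ together with a norm condition. The classes in question are then bounded by
\[
|\mo_k^\times/\mo_k^{\times n}|\cdot |\mathrm{Cl}(k)[n]| \quad\text{or}\quad |\mo_\ell^\times/\mo_\ell^{\times n}|\cdot |\mathrm{Cl}(\ell)[n]|
\]
in the twisted case, up to factors $C^{|S_U|}$ (following \cite[Section 5]{Borel_Prasad}). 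The unit contributions are $n^{[\ell:\QQ]}$, which is $O(1)$ since the degree is bounded in terms of $G$. For the class numbers we use the Brauer--Siegel-type bound $h_k\ll_\varepsilon |\Delta_k|^{1/2+\varepsilon}$ and $h_\ell \ll_\varepsilon |\Delta_\ell|^{1/2+\varepsilon}$ (Minkowski-type bounds are enough with the degrees bounded). Combined with $|\Delta_\ell| = |\Delta_k|^{[\ell:k]} N_{k/\QQ}(\Delta_{\ell/k})$ and $[\ell:k]\le 3$, this gives a power of $\Delta_k$ at most $\tfrac32+\varepsilon$ and a power of $N_{k/\QQ}(\Delta_{\ell/k})$ at most $\tfrac12+\varepsilon$, which is the stated bound.

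The main obstacle is the local analysis at $v\notin S_U$ where $\ell/k$ ramifies, or where the type-preserving automorphisms of $\Delta_{a,v}$ are nontrivial. There one must check that the image of $\ovl P_v$ in $H^1(k_v,\mathbf C)$ contributes at most a bounded factor. Otherwise these places must be counted among those dividing $\Delta_{\ell/k}$, and their total contribution absorbed into the $N_{k/\QQ}(\Delta_{\ell/k})^{1/2+\varepsilon}$ term using $\#\{v\mid\Delta_{\ell/k}\}\ll_\varepsilon N(\Delta_{\ell/k})^{\varepsilon}$. I would first treat this by the type-preservation argument of Belolipetsky--Emery, case by case over the types.
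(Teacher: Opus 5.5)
Your proposal is correct and follows essentially the same route as the paper. Both arguments use the Margulis--Rohlfs/Borel--Prasad exact sequence into $H^1(k,\mathbf C)$, constrain the image locally outside $S_U$ via the action on local Dynkin diagrams, and use Kummer theory to reduce to $|\mo_\ell^\times/\mo_\ell^{\times b}|\cdot|\mathrm{Cl}(\ell)[b]|\le b^{[\ell:\QQ]}h_\ell$. They then finish with $h_\ell\ll_\varepsilon\Delta_\ell^{1/2+\varepsilon}$, using $\Delta_\ell=\Delta_k^{[\ell:k]}N_{k/\QQ}(\Delta_{\ell/k})$ and $[\ell:k]\le 3$.

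The one variation is at finite places outside $S_U$ that are ramified in $\ell/k$. The paper cites Belolipetsky--Emery for triviality of the local action there, whereas you treat these places as exceptional and absorb their contribution into $N_{k/\QQ}(\Delta_{\ell/k})^{\varepsilon}$, which the $\varepsilon$ in the statement allows. What you actually need for this is $C^{\#\{v\mid\Delta_{\ell/k}\}}\ll_\varepsilon N_{k/\QQ}(\Delta_{\ell/k})^{\varepsilon}$, not the bound on the count itself that you wrote. This holds because the number of prime divisors of $N$ is $O(\log N/\log\log N)$ and each rational prime has at most $[k:\QQ]$ places above it.
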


\begin{proof}
 Let $h_\ell$ be the class number of $\ell$. We first prove that 
 \[[\Gamma:\Gamma_U] \le \begin{cases}C_G^{|S_U|} h_\ell&\text{ if }\G\text{ is not of type }{}^1D_{2m},\\ C_G^{|S_U|}h_k^2&\text{  otherwise.}\end{cases}\]
  Let $\ovl\G$ be the adjoint group of $\G$ and let $\pi$ be the morphism from $\G$ to $\ovl\G$. We have an exact sequence
  \[
  1 \to \Z_\G \to \G \to \ovl\G \to 1,
  \]
  which gives rise to the following exact sequence in Galois cohomology:
  \begin{equation} \label{galois_sequence_center}
    \G(k) \xrightarrow{\pi} \ovl\G(k) \xrightarrow{\delta} H^1(k, \Z_\G).
  \end{equation}
  As $\Gamma_U \subset \G(k)$ and the image of $\Gamma$ is contained in $\ovl\G(k)$, the map $\delta$ induces a map $\Gamma/\Gamma_U \to H^1(k, \Z_\G)$. By \cite[Proposition 2.6(i)]{Margulis_Rohlfs}, the following sequence is exact:
  \begin{equation} \label{rohlfs_ses}
    1 \to \Z_\G(\ovl k)/\Z_\G(k) \to \Gamma/\Gamma_U \xrightarrow{\delta} H^1(k, \Z_\G). 
  \end{equation}
  It remains to identify the image of $\Gamma/\Gamma_U$ in the (infinite) group $H^1(k, \Z_\G)$. For this, note that we can lift the morphism $\ovl\G(k) \to \prod_{v \in \val_f} \Aut(\Delta_{a, v})$ (induced by the conjugation action of $\ovl\G(k_v)$ on parahorics) to a map $\xi : H^1(k, \Z_\G) \to \prod_{v \in \val_f} \Aut(\Delta_{a, v})$ using the cohomology exact sequence \eqref{galois_sequence_center} since $\G(k)$ acts trivially on the local Dynkin diagrams $\Delta_{a, v}$. Then, $\delta(\Gamma/\Gamma_U)$ acts trivially at every finite place outside of $S_U$ (see \cite[4.2]{Belolipetsky_Emery}) and preserves the nontrivial types $\Theta_v$ at places in $S_U$. Together with \eqref{rohlfs_ses} this implies that
  \begin{equation} \label{first_estimate_index}
  \begin{split}
      |\Gamma/\Gamma_U| &\le |\Z_\G(\ovl k)| \cdot \prod_{v \in S_U} |\Aut(\Delta_{a, v})| \cdot |\{ c \in H^1(k, \Z_\G) :\: \forall v \in \val_f \setminus S_U,\, \xi(c)_v = \id\}| \\
      &\le |\Z_\G(\ovl k)| \cdot {C_G}^{|S_U|} \cdot |H_{\xi, S_U}|
    \end{split}
  \end{equation}
  since $|\Aut(\Delta)| \le C_G$ for any Dynkin diagram and a constant $C_G$ depending on $G$ and 
  \[
  H_{\xi, S_U} := \{ c \in H^1(k, \Z_\G) :\: \forall v \in \val_f \setminus S_U,\, \xi(c)_v = \id\}
  \]
  We estimate the size of this set following \cite{Belolipetsky_Emery}. We first need a proper description of $H^1(k, \Z_\G)$. According to the table on \cite[p.~332]{PlaRap}, the center $\Z_\G$ is of the form $\mu_b$ or $\res_{\ell/k}^{(1)}(\mu_b)$, where $\ell$ is the field defined in \ref{innerfield} above, $b$ is a positive integer depending only on $G$ and $\mu_b$ is the algebraic group defining $b$-root of unity. The only exception is in the case where $\G$ is inner of type $D_{2m}$. Then  $\Z_\G=\mu_2 \times \mu_2$. We will now restrict to the former two cases. There is always a map from $H^1(k, \Z_\G)$ to $\ell^\times/(\ell^\times)^b$. It is an isomorphism in case $\ell=k$ and otherwise comes from a long exact sequence in Galois cohomology. The size of its kernel is bounded by $b$, see (17) in \cite{Belolipetsky_Emery}. Let
  \begin{align*}
    \ell_{b, S_U}^\times &= \{ x \in \ell:\: \forall v \in \val_f \setminus S_U,\, v(x) = 0 \pmod b \} \\
    \ell_b^\times &= \{ x \in \ell:\: \forall v \in \val_f,\, v(x) = 0 \pmod b \}
  \end{align*}
  Then $\ell_{b, S_U}^\times \supset \ell_b^\times \supset (\ell^\times)^b$ and by \cite[4.9]{Belolipetsky_Emery} (see also \cite[Section 2]{Borel_Prasad}) we have that $H_{\xi, S_U} \subset \ell_{b, S_U}^\times/(\ell^\times)^b$. Hence 
  \[
  |H_{\xi, S_U}| \le |\ell_{b, S_U}^\times/(\ell^\times)^b|. 
  \]
  By the weak approximation property for the multiplicative group, $\ell_{b, S_U}^\times/\ell_b^\times \cong (\ZZ/b\ZZ)^{|S_U|}$. We have
  \[
  |\ell_{b, S_U}^\times/(\ell^\times)^b| = b^{|S_U|} \cdot |\ell_b^\times/(\ell^\times)^b|
  \]
  and \cite[Proposition 0.12]{Borel_Prasad}) gives the estimate
  \[
  |\ell_b^\times/(\ell^\times)^b| \le b^{[\ell:\QQ]} h_\ell
  \]
  so that 
  \[
  |H_{\xi, S_U}| \le b \cdot b^{|S_U|} b^{[\ell:\QQ]} h_\ell
  \]
  which (as $[\ell:\QQ]$ is assumed to be bounded), together with \eqref{first_estimate_index} finishes the proof when $\G$ is not of type ${}^1D_{2m}$.

  \medskip
  
  It remains to deal with $\G$ of type ${}^1D_{2m}$. In this case $\ell=k$ and $\Z_\G=\mu_2 \times \mu_2$. The argument above yields $|\Gamma/\Gamma_U| \le C_G^{|S_U|} \cdot |h_k|^2$.
  
  We can now finish the proof of the proposition. By the Brauer--Siegel Theorem, $h_\ell\ll_\varepsilon \Delta_\ell^{1/2+\varepsilon}$,  so 
  \[h_\ell\ll_\varepsilon \Delta_\ell^{\frac{1}{2}+\varepsilon} = N_{k/\QQ}(\Delta_{\ell/k})^{\frac{1}{2}+\varepsilon} \Delta_k^{\frac{[\ell:k]}{2}+\varepsilon}\leq N_{k/\QQ}(\Delta_{\ell/k})^{\frac{1}{2}+\varepsilon} \Delta_k^{\frac{3}{2}+\varepsilon},\] because $[\ell:k] \le 3$. In the case of type ${}^1D_{2m}$ we use Brauer-Siegel estimate to get $h_k^2\ll_\varepsilon \Delta_k^{1+\varepsilon}$.
  Proposition follows. 
\end{proof}

Note that the image of $\ell_b^\times$ in $h_\ell$ 
could be much smaller than the whole class group. This is where, for the fields of large discriminant, the main loss occurs. Conjecturally, the image is of size $\ll \Delta_\ell^\eps$ for any $\eps>0$ but even getting an exponent $<1/2$ seems hard and is not known in general (see \cite{BSTTTZ} for more context and a result for $b=2$). 


\subsection{Convergence} \label{proof_bounded_degree}

In this subsection we prove Theorem \ref{Main_bounded_degree}, admitting estimates which we will prove in the next two sections.


\subsubsection{Adelic trace formula}

For a function $f$ on $G$ and a principal arithmetic lattice $\Gamma_U$, we put $f_\Ade = f \otimes 1_{U_\infty} \otimes 1_U$, where $U_\infty$ is the product of $\G(k_v)$ for $v \in S_\infty$ with $\G(k_v)$ compact.  For a rational conjugacy class $[\gamma]_{\G(k)}$, we write $[\gamma]_{\G(k)}\subset W$ if the Archimedean part satisfies $[\gamma]_{\G(k_\infty)}\subset W\times U_\infty\subset \G(k_\infty).$

\begin{proposition} \label{adelic_reformulation}
  Let $f, W$ be as in Theorem \ref{general_criterion}. 
  We have
  \begin{equation} \label{trace}
  \tr R_{\Gamma_U}^W f = \sum_{[\gamma]_{\G(k)} \subset W} \vol(\G_\gamma(k) \bs \G_\gamma(\Ade)) \int_{\G(\Ade)/\G_\gamma(\Ade)} f_\Ade(x\gamma x^{-1}) dx
  \end{equation}
\end{proposition}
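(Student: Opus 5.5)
We will unfold the left-hand side of \eqref{trace} back into an integral over the adelic quotient, using the isomorphism $\Psi$ of \ref{iso_adelic_archimedean}, and then regroup the resulting sum over rational conjugacy classes. The starting point is the definition \eqref{tf_conv} together with the standard unfolding already used in the proof of Theorem \ref{general_criterion}:
\[
\tr R_{\Gamma_U}^W f = \int_{\Gamma_U\bs G} \sum_{\gamma\in\Gamma_U\cap W} f(g^{-1}\gamma g)\,dg .
\]
Here we use that all $\gamma$ in $W$ are semisimple and that the centralisers $(\Gamma_U)_\gamma\bs G_\gamma$ have finite covolume. This hypothesis is needed for the regrouping into orbital integrals and is assumed as in Theorem \ref{general_criterion}.

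\textbf{Step 1: pass to the adelic quotient.} By strong approximation and $\Psi$, this integral equals
\[
\int_{\G(k)\bs\G(\Ade)/U}\ \sum_{\gamma\in\G(k)} f_\Ade(x^{-1}\gamma x)\,dx .
\]
To check this, write $x=(g_\infty,u)$ with $u\in U$ (this is possible since $\G(\Ade)=\G(k)(\G(k_\infty)\times U)$). The condition $f_\Ade(x^{-1}\gamma x)\neq 0$ forces $u^{-1}\gamma u\in U$, hence $\gamma\in U$. Together with the compact factor $U_\infty$ this forces $\gamma\in\Gamma_U$. The restriction $\gamma\in W$ is encoded by $[\gamma]_{\G(k_\infty)}\subset W\times U_\infty$. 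Since $f_\Ade$ is right $U$-invariant under conjugation and $\vol(U)=1$, we may lift the integral to $\G(k)\bs\G(\Ade)$.

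\textbf{Step 2: regroup by rational conjugacy classes.} Write $\G(k)=\bigsqcup_{[\gamma]}\G_\gamma(k)\bs\G(k)\cdot\gamma$ and unfold the integral over $\G(k)\bs\G(\Ade)$ to an integral over $\G_\gamma(k)\bs\G(\Ade)$. Then fibre this over $\G_\gamma(\Ade)\bs\G(\Ade)$, using the quotient-measure conventions of \ref{sec:MeasureNorm}, to obtain
\[
\vol(\G_\gamma(k)\bs\G_\gamma(\Ade))\int_{\G_\gamma(\Ade)\bs\G(\Ade)} f_\Ade(x^{-1}\gamma x)\,dx ,
\]
which is \eqref{trace} after the change $x\mapsto x^{-1}$. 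Because $\G_\gamma$ is unimodular (a reductive group), the quotient measures are well defined.

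\textbf{Justification and main obstacle.} All sums involved are of nonnegative terms when $f\ge0$; the general case follows by linearity, or from the finiteness of the support. So Fubini and Tonelli apply, and finiteness holds because only finitely many classes meet the compact support.

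The main subtlety is the bookkeeping between $\Gamma_U$-conjugacy classes and $\G(k)$-classes. The adelic formulation automatically sums over all $\Gamma_U$-classes inside a rational class, weighted correctly via the decomposition of $\G_\gamma(k)\bs\G_\gamma(\Ade)/U_\gamma$. We avoid tracking this decomposition explicitly by never leaving the unfolded integral form.

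A further point to check is that the centre does not create issues. Central $\gamma$ lie outside $W$ when $W$ excludes central elements, as in the intended application; otherwise they contribute identically on both sides.
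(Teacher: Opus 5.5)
Your proposal is correct and is essentially the paper's argument. The paper likewise identifies the $\Gamma_U$-invariant kernel $F$ on $\Gamma_U\bs G$ with the $\G(k)$-invariant kernel $F_\Ade$ via $\Psi$, so that $F_\Ade = F\circ\Psi$ using strong approximation and $\vol(U)=1$, and then applies the standard unfolding trick to each of the two integrals to obtain the two sides of \eqref{trace}.
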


\begin{proof}
  Let $\Psi$ be the isomorphism from $\G(k) \bs \G(\Ade) / U$ to $\Gamma_U \bs G$ described in \ref{iso_adelic_archimedean}. Let $F$ (resp. $F_\Ade$) be the $\Gamma_U$-invariant (resp. $\G(k)$-invariant) function defined by $F(x) = \sum_{[\gamma]_{\Gamma_U} \subset W} f(x\gamma x^{-1})$ (resp. $F(x) = \sum_{[\gamma]_{\G(k)} \subset W} f_\Ade(x\gamma x^{-1})$. Then we have that $F_\Ade = F \circ \Psi$ and since local measure are normalized so that $U_v$ has volume 1 it follows that $\int_{\Gamma \bs G} F(x) dx = \int_{\G(k) \bs \G(\Ade)} F_\Ade(x) dx$ (see also the proof of Lemma 4.3 in \cite{Fraczyk}). On the other hand the usual unfolding trick (see for instance \cite{Clay03}) shows that $\int_{\Gamma \bs G} F(x) dx$ (resp. $\int_{\G(k) \bs \G(\Ade)} F_\Ade(x) dx$)is equal to the term on the  left-hand (resp right-hand) side of the equality \eqref{trace}, which proves the latter. 
\end{proof}


\subsubsection{The case of principal arithmetic lattices}

In \ref{endproof} below (and \ref{nonuniform} for the non-compact case which requires some additional arguments) we will deduce Theorem \ref{Main_bounded_degree} from the following proposition. 

\begin{proposition} \label{principal_case}
  Let $W$ be a subset of the set of strongly regular, $\RR$-regular semisimple elements in $G$. Let $\Gamma_U$ be a principal arithmetic lattice and let $k, S_U$ be defined as in \ref{data_aritlattices}. Then, for any $f \in C_c^\infty(G)$ we have 
  \[
  \tr R_{\Gamma_U}^W f \ll_{f, d} \Delta_k^{\frac{\dim(\G)}2 - \delta}
  \]
  for any  $\delta < \tfrac{\dim(\G) - \dim(\T)}2$, where $\T$ is a maximal $k$-torus in $\G$ and $d=[k:\QQ]$. 
\end{proposition}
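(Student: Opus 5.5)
We start from the adelic expression of Proposition \ref{adelic_reformulation} and bound each of the three ingredients of a summand separately: the number of contributing classes, the global volume $\vol(\G_\gamma(k)\bs\G_\gamma(\Ade))$, and the adelic orbital integral. Since $f$ has compact support $\Omega\subset G$, a class $[\gamma]_{\G(k)}\subset W$ contributes only if it meets $\Omega\times U_\infty$ at the archimedean places and meets $U$ up to conjugacy at the finite places. In that case $\gamma$ has integral traces, and its translation length is bounded in terms of $\Omega$. Proposition \ref{p-MahlerDisp} then gives $m(\gamma)\le R_f$ for some $R_f$ depending only on $f$. Every $\gamma\in W$ is strongly regular, so by Proposition \ref{p-ConnectedCentralizer} its centraliser $\T=\G_\gamma$ is a maximal $k$-torus, anisotropic or not. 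Because $\gamma$ is also $\RR$-regular, Lemma \ref{lem:Rregular} controls the archimedean part of $\T$. By Proposition \ref{bound_disc_splitting_field}, the splitting field $L$ of $\T$ has $[L:k]$ bounded and $N_{k/\QQ}(\Delta_{L/k})\le f_d(R_f)$.

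The first step is the torus volume. Using the Ono/Shyr class-number formula in the form of Ullmo--Yafaev, I would bound $\vol(\T(k)\bs\T(\Ade))$ by a constant times the product of a regulator and a class number of $\T$. This is in turn bounded by $|\Delta_L|^{1/2+\eps}$, up to the factor $|\Sha^1(\T)|$, which is $O(1)$ by Lemma \ref{l-TateShafarevichBound}. Since $|\Delta_L|\le \Delta_k^{[L:k]}N(\Delta_{L/k})$, this gives roughly $\vol\ll \Delta_k^{\dim(\T)/2+\eps}$, up to a renormalisation between the Tamagawa-type measure and the measure of \ref{sec:MeasureNorm}. That renormalisation contributes another power of $\Delta_k$, namely $\Delta_k^{\dim\T/2}$ from the additive measures.

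The second step is the orbital integral, which factors as a product over places. The archimedean factor is bounded in terms of $f$ via Proposition \ref{convergence_orbint} and Lemma \ref{constantjacobian}, uniformly over $\gamma$ with bounded Mahler measure and Weyl discriminant bounded away from zero on the compact support. At a finite place $v\notin S_U$ where $\T$ is unramified and $|\Delta(\gamma)|_v=1$, the local orbital integral of $1_{U_v}$ should be $1$. To show this, I would combine Proposition \ref{p-CompactClassTriv} to control the $\G(k_v)$-conjugacy classes inside the stable class meeting $U_v$ with a hyperspecial vertex computation. At the remaining places, those dividing $\Delta(\gamma)$, $\Delta_{L/k}$ or in $S_U$, I would aim for a bound of the form $C\,|\Delta(\gamma)|_v^{-A}$ times a polynomial in $q_v$ when $v\in S_U$, with the number of $k_v$-classes controlled by Lemma \ref{bound_H1_torus}. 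The product of these factors is then $\ll_f \Delta_k^{\eps}$ times an $S_U$-factor absorbed by the constant. The remaining $\Delta_k^{(\dim\G-\dim\T)/2}$ comes from the measure normalisation of $\G(\Ade)/\T(\Ade)$. The key point is that the quotient measure uses the volume of the $U_v$, which is normalised to $1$, so that the combined exponent is $\dim\G/2$ minus a gain.

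The third step is counting. I would count rational conjugacy classes with $m(\gamma)\le R_f$ and integral traces via their characteristic polynomials in $\mo_k[x]$ with bounded archimedean coefficients. There are $\ll \Delta_k^{(\text{number of coefficients})/2}$ of these, by lattice point counting in $\mo_k\subset k_\infty$ with covolume $\Delta_k^{1/2}$. Each polynomial corresponds to $O(1)$ rational classes, by Lemma \ref{l-TateShafarevichBound} and the bounds on $H^1$. The number of free coefficients is $\rk\G=\dim\T$, giving $\Delta_k^{\dim\T/2}$. Multiplying the three bounds should give $\Delta_k^{\dim\G/2-\delta}$ for any $\delta<(\dim\G-\dim\T)/2$.

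The main obstacle is getting the exponents to match exactly: the torus volume, the measure renormalisation and the count must not jointly exceed $\dim\G/2-(\dim\G-\dim\T)/2+\eps$. This requires uniform local orbital integral bounds at the ramified and bad places, and I would attack it by explicit Bruhat--Tits fixed point counting using Lemma \ref{comm_ss_unip}.
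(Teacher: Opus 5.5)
\textbf{There is a genuine gap: your powers of $\Delta_k$ do not add up to the target.} You isolate the right three ingredients: the torus covolume, the adelic orbital integral, and the count of contributing rational classes. But your bookkeeping of $\Delta_k$ is wrong, not merely imprecise. Taken at face value, your torus factor ($\Delta_k^{\dim\T/2+\eps}$ plus an extra $\Delta_k^{\dim\T/2}$ from ``renormalisation'') and your count ($\Delta_k^{\dim\T/2}$) already give $\Delta_k^{3\dim\T/2+\eps}$. That is before the $\Delta_k^{(\dim\G-\dim\T)/2}$ you attribute to $\G(\Ade)/\T(\Ade)$. The statement needs $\Delta_k^{\dim\T/2+\eps}$. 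In the paper the entire $\Delta_k$-dependence is the single factor $\Delta_k^{(\dim\T+\eps)/2}$ from the torus volume; the orbital integrals and the number of classes are both $O_{f,d}(1)$. Three corrections are needed.

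(i) Your count goes the wrong way. A lattice of covolume $\asymp\Delta_k^{1/2}$ has \emph{fewer}, not more, points in a fixed box of $k_\infty$. Elements of $\mo_k$ with bounded archimedean absolute values are algebraic integers of degree $\le d$ with bounded conjugates, so there are boundedly many. The paper applies Northcott to $\prod_\sigma F_\gamma^\sigma\in\ZZ[t]$ to get $O_{R,d}(1)$ geometric classes (Lemma \ref{charpoly_count}).

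(ii) Bounding the torus volume via $|\Delta_L|^{1/2}=\Delta_k^{[L:k]/2}N_{k/\QQ}(\Delta_{L/k})^{1/2}$ is too weak. $[L:k]$ can be the order of the Weyl group, e.g.\ $6$ versus $\dim\T=2$ for $\SL_3$. The right input is Ullmo--Yafaev's bound $\vol(\T(k)\bs\T(\Ade))\ll\Delta_k^{r/2}N_{k/\QQ}(\Delta_{L/k})^{1/2}L(1,\chi_\T)$, already in the normalisation of \ref{sec:MeasureNorm}. Here $\Delta_k^{r/2}$ comes from the conductor of the $r$-dimensional representation on $X^*(\T)$. Combine it with $L(1,\chi_\T)\ll_\eps|\Delta_L|^\eps$ and $N_{k/\QQ}(\Delta_{L/k})=O(1)$; no second $\Delta_k^{\dim\T/2}$ appears.

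(iii) With $\vol(U)=1$ and the maximal compacts of the local tori of volume $1$, the adelic orbital integral is itself $O_{f,d}(1)$ (Theorem \ref{global_orbint}). There is no residual $\Delta_k^{(\dim\G-\dim\T)/2}$.

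\textbf{Two of your steps would also fail on their own.} First, ``each polynomial corresponds to $O(1)$ rational classes by Lemma \ref{l-TateShafarevichBound} and the bounds on $H^1$'' does not follow. Lemma \ref{bound_H1_torus} bounds each of the infinitely many local factors in Lemma \ref{l-GoodConjIneq1}, so you need $|\mathcal S_{U_v}(\gamma)|=1$ outside a set of places of bounded size. This is Proposition \ref{p-GConjugacyLocalUnr}: a class meeting $U_v$ gives a compact cocycle, and compact classes are trivial by Proposition \ref{p-CompactClassTriv} when $\G_\gamma$ is unramified and $|\Delta(\gamma)|_v=1$. The exceptional places are boundedly many because $N_{k/\QQ}(\Delta(\gamma))$ and, by Proposition \ref{bound_disc_splitting_field}, $N_{k/\QQ}(\Delta_{L/k})$ are bounded in terms of $R$ and $d$; together this is Theorem \ref{t-SmallHeigtCount}. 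You invoked Proposition \ref{p-CompactClassTriv} for the orbital integral, where it plays no role.

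Second, a factor polynomial in $q_v$ for $v\in S_U$ cannot be absorbed into the constant. That constant may depend only on $f$ and $d$, whereas $S_U$ and the $q_v$ are unbounded. The local bound you need is $\mathcal O(\gamma,1_{U_v})\le|\Delta(\gamma)|_v^{-a}$, uniformly in the parahoric $U_v$ (Proposition \ref{est_orbint_nona}). It is proved by showing that $\gamma$-fixed facets of the Bruhat--Tits building lie within $O(v(\Delta(\gamma)))$ of the apartment of $\G_\gamma$. So the factor is at most $1$ wherever $|\Delta(\gamma)|_v=1$, even for $v\in S_U$, and the product over finite places is at most $N_{k/\QQ}(\Delta(\gamma))^{a}=O_{R,d}(1)$.
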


In view of Proposition \ref{adelic_reformulation} above, Proposition \ref{principal_case} is an immediate consequence of the following theorem. The theorem itself is the main technical contribution in this part of our work and the proof depends on estimates that we will prove in the next two sections. 

\begin{theorem} \label{statement_traceconv}
  Let $G$ be an adjoint Lie group and $f$ a continous, compactly supported function on $G$. Let $W$ be a subset of the set of strongly regular, $\RR$-regular semisimple elements in $G$. For fixed $d \in \NN$ and all number fields with $[k:\QQ] = d$ and $\G(k_\infty) \simeq G$ we have
  \[
  \sum_{[\gamma]_{\G(k)} \subset W} \vol(\G_\gamma(k) \bs \G_\gamma(\Ade)) \int_{\G_\gamma(\Ade)\bs \G(\Ade)} f_\Ade(x^{-1}\gamma x) dx \ll_{f,d} \Delta_k^{\frac{\dim(\G)}2 - \delta}
  \]
  for any  $\delta < \tfrac{\dim(\G) - \dim(\T)}2$, where $\T$ is a maximal $k$-torus in $\G$. 
\end{theorem}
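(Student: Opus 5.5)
We bound the sum term by term: each rational class contributes a global volume times a product of local orbital integrals, and we count the contributing classes. Since $f_\Ade=f\otimes 1_{U_\infty}\otimes 1_U$ is a pure tensor and $\G_\gamma=\T_\gamma$ is a torus by Proposition \ref{p-ConnectedCentralizer}, the adelic orbital integral factorises as $\mathcal O_\infty(\gamma,f)\prod_{v\in\val_f}\mathcal O_v(\gamma,1_{U_v})$. Only classes with $\gamma\in\G(k)$ conjugate into $U$ at every finite place can contribute, and $f$ has compact support. Such $\gamma$ therefore have integral traces in the adjoint representation and bounded translation length. By Proposition \ref{p-MahlerDisp} this gives $m(\gamma)\le R_f$, and by Proposition \ref{bound_disc_splitting_field} the splitting field $L$ of $\T_\gamma$ has $N_{k/\QQ}(\Delta_{L/k})\le f_d(R_f)$. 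The discriminant of $L$ is then $\ll \Delta_k^{[L:k]}$, with $[L:k]$ bounded by Lemma \ref{l-TateShafarevichBound}.

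\emph{Step 1: global volumes.} Write $\vol(\T(k)\bs\T(\Ade))$ via the Ono/Shyr class-number formula, as in \cite{Ullmo_Yafaev}. It is bounded by $|\Sha^1(\T)|$, which is $O(1)$ by Lemma \ref{l-TateShafarevichBound}, times the residue of the Artin $L$-function of $X^*(\T)\otimes\QQ$ and the square root of its conductor. With our Haar normalisations (maximal compact of mass $1$), Brauer--Siegel type upper bounds give
\[
\vol(\T_\gamma(k)\bs\T_\gamma(\Ade))\ll_\eps \Delta_k^{\dim\T/2+\eps}.
\]
The conductor of the $\dim\T$-dimensional Galois representation is $\ll \Delta_k^{\dim\T}$ times a bounded relative factor coming from $\Delta_{L/k}$.

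\emph{Step 2: local orbital integrals.} At archimedean places, $\RR$-regularity and Lemma \ref{lem:Rregular} let us use the Iwasawa argument of Proposition \ref{convergence_orbint} with Lemma \ref{constantjacobian}. Hence $\mathcal O_\infty(\gamma,f)\ll_f |\Delta(\gamma)|_\infty^{-1/2}$, uniformly for $m(\gamma)\le R_f$. At non-archimedean places we aim for $\mathcal O_v(\gamma,1_{U_v})\le C\,|\Delta(\gamma)|_v^{-1/2}$, with $C=1$ at unramified places where $U_v$ is hyperspecial and $\gamma$ is $v$-unit regular. This uses Lemma \ref{comm_ss_unip} to count fixed points of $\gamma$ on the building; Proposition \ref{p-CompactClassTriv} and Lemma \ref{bound_H1_torus} control the $\G(k_v)$-conjugacy classes inside a stable class. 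Taking the product and using the product formula with $\Delta(\gamma)\in\mo$ makes the non-archimedean factors multiply to at most $|N\Delta(\gamma)|^{1/2}$, which cancels the archimedean loss. The uncontrolled constants at ramified and bad places come only from the bounded number of primes dividing $\Delta_{L/k}N\Delta(\gamma)$, so the adelic orbital integral is $\ll_{f,d,\eps}\Delta_k^{\eps}$.

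\emph{Step 3: counting classes.} Stable classes correspond to characteristic polynomials of $\ad\gamma$ with coefficients in $\mo_k$ of bounded height, up to the action of the Weyl group. Each coefficient lies in a box of archimedean size $O(1)$ in $\mo_k\subset k_\infty$. Geometry of numbers then suggests $\ll 1+\Delta_k^{-1/2}\cdot O(1)$ choices per coefficient, which is $O(1)$. The bound must nevertheless hold uniformly and account for lattice points of small height, and this is what Section \ref{s:small} provides: a count of $\ll_\eps \Delta_k^{\eps}$ rational classes. Each stable class splits into $\le|\ker(H^1(k,\T)\to H^1(k,\G))|$ rational classes, and the locally relevant part of this is bounded via $\Sha^1$ and Lemma \ref{bound_H1_torus}.

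Multiplying the three steps gives $\Delta_k^{\dim\T/2+\eps}$, which is the stated bound with any $\delta<(\dim\G-\dim\T)/2$. The main obstacle is Step 2 at non-archimedean places: obtaining uniform bounds on $\mathcal O_v(\gamma,1_{U_v})$ for non-hyperspecial parahorics and ramified tori, with constants that do not grow with $\Delta_k$. For this I would first try a building-theoretic count of $\gamma$-fixed chambers near $\min(\gamma)$.
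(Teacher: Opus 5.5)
Your skeleton matches the paper's proof. You restrict to $\gamma\in U$ with $m(\gamma)\le R$. You bound $\vol(\T_\gamma(k)\bs\T_\gamma(\Ade))\ll\Delta_k^{(r+\eps)/2}$ via Ullmo--Yafaev and Proposition~\ref{bound_disc_splitting_field}. You then bound the adelic orbital integral (Theorem~\ref{global_orbint}) and the number of classes (Theorem~\ref{t-SmallHeigtCount}, which gives $O_{R,d}(1)$ via Northcott plus Galois cohomology, so your geometry-of-numbers heuristic and $\Delta_k^\eps$ loss are unnecessary).

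Two smaller points. First, the exponent $1/2$ and the product-formula cancellation buy nothing. Since $\Delta(\gamma)$ is an algebraic integer of bounded Mahler measure, $N_{k/\QQ}\Delta(\gamma)=O_{R,d}(1)$, and each $|1-\lambda(\gamma)|_v$, $v\in\val_\infty$, is bounded above and below (proof of Lemma~\ref{partial_to_full_discriminant}); so any local exponent $a$ works. Second, Proposition~\ref{p-CompactClassTriv} belongs to the counting step, not to Step 2. Through Proposition~\ref{p-GConjugacyLocalUnr} it gives $|\mathcal S_{U_v}(\gamma)|=1$ outside the boundedly many places where $v(\Delta(\gamma))\neq0$ or $\G_\gamma$ ramifies. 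Without it, Lemma~\ref{bound_H1_torus} only yields an infinite product.

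The genuine gap is in Step 2, and it is not where you locate it. You get $C=1$ only where $U_v$ is hyperspecial. But the bad set $S_U$ of \ref{data_aritlattices} is unbounded over the family and unrelated to $\Delta_{L/k}N_{k/\QQ}\Delta(\gamma)$. So your local bounds produce $C^{|S_U|}$, not $\Delta_k^\eps$, and the claim that uncontrolled constants live only at primes dividing $\Delta_{L/k}N\Delta(\gamma)$ does not follow. The issue is uniformity in $|S_U|$, not in $\Delta_k$.

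The paper uses the building count you anticipate. Lemma~\ref{estimate_balls_orbint} expresses $\mathcal O(\gamma,1_{U_v})$ as a weighted count of $\G_\gamma(k_v)$-orbits of $\gamma$-fixed facets of type $\Theta$. Lemma~\ref{split_case} puts these within $c'v(\Delta(\gamma))$ of $A_\gamma$, so for $v(\Delta(\gamma))=0$ they lie in $A_\gamma$ for every type $\Theta$. The paper then asserts that $\G_\gamma(k_v)$ acts transitively on them, giving $\mathcal O=1$.

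That transitivity holds only for special vertices. In the split case $\G_\gamma(k_v)$ acts on $A_\gamma$ by translations in $X_*(\G_\gamma)$. These have $|\mathcal W|/|\mathcal W_\Theta|$ orbits on type-$\Theta$ facets, where $\mathcal W$ is the Weyl group and $\mathcal W_\Theta$ that of the parahoric; for example there are $|\mathcal W|$ orbits of chambers for an Iwahori. Concretely, take Eichler orders of squarefree level $N$ in an indefinite quaternion algebra over $\QQ$, a fixed $\gamma$, and $N$ a product of primes prime to $\Delta(\gamma)$ and split in the splitting field of $\G_\gamma$. Each such prime contributes a local factor $2$, so the left-hand side grows like $2^{\omega(N)}$ with $k=\QQ$ fixed. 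Hence no argument can give the bound uniformly in $U$.

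What the building argument actually proves is the bound with an extra factor $C_G^{|S_U|}$. That is exactly what \ref{endproof} can afford, since there $C^{|S_U|}\ll\prod_{v\in S_U}q_v^{1/2}$ is already used. That is the version you should aim for.
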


\begin{proof}
  Let
  \[
  \mathcal O(\gamma, f_\Ade) = \int_{\G_\gamma(\Ade)/\G(\Ade)} f(x\gamma x^{-1}) dx
  \]
  We explain here how to estimate the sum
  \[
  \sum_{[\gamma]_{\G(k)} \subset W} \vol(\G_\gamma(k) \bs \G_\gamma(\Ade)) \cdot \mathcal O(\gamma, f_\Ade) , 
  \]
  leaving most technical details to the next two sections. The support of $f_\Ade$ is compact in $\G(\Ade)$ so this is a finite sum. In fact, if we assume that $f$ has support in a ball $B_G(1, R)$, for some radius $R$, the we can restrict the sum to the conjugacy classes $[\gamma] \subset \G(k)$ which satisfy $m(\gamma) \le R$ 
  and $\gamma \in U$.

  For such $\gamma$, we prove in Theorem \ref{global_orbint} that for the Haar measure normalised so that $\vol(U)=1$ we have 
  \begin{equation} \label{orbint_statement}
    \mathcal O(\gamma, f_\Ade) \ll_{f, [k:\QQ]} 1.
  \end{equation}
  To estimate the co-volumes of the centralizers we follow the proof of \cite[Lemma 6.4]{Fraczyk}, where it is deduced from results of Ullmo and Yafaev \cite{Ullmo_Yafaev}. For any $k$-torus $\T$ we have
  \[
  \vol(\T(k) \bs \T(\Ade)) \ll |\Delta_k|^{\frac r 2} \cdot N_{k/\QQ}(\Delta_{L/k})^{\frac 1 2} \cdot L(1, \chi_\T)
  \]
   where $r = \dim(\T)$ and  $L(\cdot, \chi_\T)$ is the Artin $L$-function associated with the character $\chi_\T$ of the representation of $\Gal(k)$ on $X^*(\T)$. The $L$-function is holomorphic at $1$ since under our assumptions the torus $\T$ is anisotropic. Under our assumptions on $\gamma$, it follows from Proposition \ref{bound_disc_splitting_field} that $N_{k/\QQ}(\Delta_{L/k}) = O(1)$. For the term $L(1\, \chi_\T)$, by \cite[Proposition 2.1]{Ullmo_Yafaev} we have that for any $\eps > 0$
  \[
  L(1, \chi_\T) \ll_{\eps, [L:\QQ]} |\Delta_L|^\eps
  \]
  and the right-hand side is equal to $N_{k/\QQ}(\Delta_{L/k})^\eps |\Delta_k|^{\eps\cdot[L:k]}$. Consequently
  \begin{equation} \label{vol_tori}
    \vol(\T(k) \bs \T(\Ade)) \ll_{f, [k:\QQ], \eps}  |\Delta_k|^{\frac {r+\eps} 2} 
  \end{equation}
  for any $\eps > 0$. 

  Let $\mathcal C(R, U)$ be the set of conjugacy classes $[\gamma] \subset \G(k)$ which satisfy $m(\gamma) \le R$ and $\gamma \in U$. It follows from \eqref{orbint_statement}, \eqref{vol_tori} that
  \[
  \sum_{[\gamma]_{\G(k)} \subset W} \vol(\G_\gamma(k) \bs \G_\gamma(\Ade)) \mathcal O(\gamma, f_\Ade) \ll_{f, [k:\QQ], \eps} |\mathcal C(R, U)| \cdot |\Delta_k|^{\frac {r+\eps} 2}. 
  \]
  In Theorem \ref{t-SmallHeigtCount} we prove that $|\mathcal C(R, U)| \ll_{R, [k:\QQ]} 1 $. Writing $\delta = \tfrac{\dim(\G) - r - \eps} 2$ we get that 
  \[
  \sum_{[\gamma]_{\G(k)} \subset W} \vol(\G_\gamma(k) \bs \G_\gamma(\Ade)) \mathcal O(\gamma, f_\Ade) \ll_{f, [k:\QQ], \eps} |\Delta_k|^{\frac{\dim(\G)}2 - \delta}.
  \]
  Since $\eps >0$ can be chose arbitrarily small, $\delta$ can be chosen arbitrarily close to $\tfrac{\dim(\G) - r} 2$. The proof is complete. 
\end{proof}


\subsubsection{Preliminary lemmas}

Various estimates on the volumes of centralizers or the orbital integrals become easier if we restrict to conjugacy classes with additional regularity properties. This is why we introduced the set $W$. To deal with maximal lattices, we will need the notion of strong $m$--regularity ($m \in \NN$), which we will not use anywhere else in the paper. If $\G$ is a semisimple group, $\T$ a maximal torus in $G$ an element $g \in T$ is {\em strongly $m-$regular} if $\xi(g)\neq 1$ for all non-trivial characters $\xi\in X^*(\T)$ such that $\xi$ is a product of at most $2m$ roots. We note that strongly $m$-regular elements for any $m$ are a Zariski-dense subset. 

\begin{lemma} \label{suff_dense}
  Let $G$ be a semisimple Lie group and $m \in \NN$. The set of strongly $m$--regular, $\RR$-regular elements is sufficiently dense in $G$. 
\end{lemma}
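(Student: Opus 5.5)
We have to show: for every discrete Zariski-dense subgroup $\Lambda \subset G$, $\Lambda$ contains a nontrivial element that is strongly $m$-regular and $\RR$-regular. The plan is to reduce both conditions to the non-vanishing of finitely many polynomial-type conditions together with an open condition, and then use a theorem of Benoist--Labourie / Prasad type.

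\textbf{Step 1 ($\RR$-regular elements).} By the result of Benoist (and earlier Prasad for lattices), every Zariski-dense subgroup of a real semisimple group contains an $\RR$-regular element; better, it contains a Zariski-dense subsemigroup of $\RR$-regular (loxodromic) elements. Concretely, fix an $\RR$-regular $\gamma_0 \in \Lambda$ and a Zariski-dense set of elements of the form $\gamma_0^N h$ with $h \in \Lambda$ and $N$ large. Benoist's lemma shows that for each fixed $h$ in a suitable Zariski-open set, $\gamma_0^N h$ is $\RR$-regular for all $N$ large. We use this as the source of $\RR$-regular elements.

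\textbf{Step 2 (algebraic conditions).} On a maximal torus, strong $m$-regularity and strong regularity are the conditions $\xi(g) \ne 1$ for finitely many nontrivial characters $\xi$, each a product of at most $2m$ roots. These conditions are invariant under the Weyl group, so they can be rewritten as $P(g) \ne 0$ for a single conjugation-invariant regular function $P$ on $G$. For example, take $P$ to be a coefficient-polynomial in $\tr$ of $\ad$ and its exterior/tensor powers: $\prod_\xi (1-\xi(g))$ is symmetric under $W$, hence a polynomial in characters of representations. Away from semisimple elements we use Jordan decomposition. Strongly $m$-regular elements then contain the Zariski-open nonempty set $\{P \ne 0\}$, intersected with the regular semisimple ones, which is also given by the nonvanishing of the Weyl discriminant. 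Nonemptiness holds because a generic element of a maximal torus avoids finitely many proper subtori.

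\textbf{Step 3 (combining).} The main obstacle is to find a single element satisfying both the open Zariski condition and the $\RR$-regularity, which is not Zariski-open. Consider $N \mapsto P(\gamma_0^N h)$. I would argue that for a given $h$, the function $N \mapsto P(\gamma_0^N h)$ is an exponential polynomial in $N$. If it vanished for infinitely many $N$, then by Skolem--Mahler--Lech it would vanish on an arithmetic progression. Zariski closure considerations for $\{\gamma_0^{N}\}$ with $N$ in a progression, using that its Zariski closure $A$ times $h$ lies in $\{P=0\}$, then place $h$ in a proper Zariski-closed set, since $A h \cap \{P\neq0\}\ne\emptyset$ for generic $h$. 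As $\Lambda$ is Zariski dense, we choose $h \in \Lambda$ outside this set and outside Benoist's exceptional set. Then $\gamma_0^N h$ is, for large $N$ avoiding finitely many values, both $\RR$-regular and strongly $m$-regular, and it is nontrivial. The delicate point is controlling the exceptional set for $h$ uniformly, namely showing that $\{h : P(A h)\equiv 0\}$ is proper. I would handle this by noting that $A$ contains an $\RR$-regular torus element, so $h=1$ composed with small perturbations already gives $P\ne0$. If this fails, the fallback is Prasad--Rapinchuk's theorem on the existence of generic elements in Zariski-dense subgroups.
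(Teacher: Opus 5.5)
Your proof is correct in substance, but it follows a different route from the paper. The paper does not treat Prasad's theorem as a black box. It observes that, in Prasad's proof, Lemma~B (the minimum of his invariant $m(\gamma)$ over $\Lambda$ is attained on regular elements) goes through verbatim for $\Lambda\cap\mathcal O$ with $\mathcal O$ any nonempty Zariski-open set. Hence the $\RR$-regular elements of $\Lambda$ meet every nonempty Zariski-open set, in particular the set of strongly $m$-regular elements.

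You instead start from a single $\RR$-regular $\gamma_0\in\Lambda$ and produce more of them with the proximality (ping-pong) lemma: $\gamma_0^Nh$ is $\RR$-regular for $N\ge N_0$ whenever $h$ lies in a nonempty Zariski-open set. You then force $P\neq0$ algebraically. This avoids reopening Prasad's argument, at the price of importing the ping-pong lemma. Note that the strengthening you quote in Step 1 (Benoist: the $\RR$-regular elements of a Zariski-dense subgroup form a Zariski-dense set) is exactly what the paper extracts from Prasad. Combined with your Step 2, it proves the lemma in one line, so Step 3 is only needed if you start from the weaker input.

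Within Step 3, Skolem--Mahler--Lech and the perturbation or Prasad--Rapinchuk fallback are unnecessary, and the point you call delicate is actually trivial. A Zariski-closed submonoid of an algebraic group is a subgroup, so the Zariski closure of $\{\gamma_0^N:N\ge N_0\}$ is $A=\ovl{\langle\gamma_0\rangle}$. Therefore, if $P(\gamma_0^Nh)=0$ for all $N\ge N_0$, then $P$ vanishes on $Ah$, which contains $h$. So any $h\in\Lambda$ in the proximality open set with $P(h)\neq0$ already works. Such $h$ exist by Zariski density, since two nonempty Zariski-open subsets of the connected group meet.

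If you keep the Skolem--Mahler--Lech formulation (``all but finitely many $N$''), you must exclude infinitely many arithmetic progressions at once. A countable union of proper closed sets can contain the countable group $\Lambda$, so this needs an argument. It is rescued by noting that the closure of $\{\gamma_0^{a+bN}\}_{N\ge0}$ contains a whole connected component $C$ of $A$. Consequently only the finitely many proper closed sets $\{h: P(Ch)\equiv0\}$ arise.
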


\begin{proof}
  In \cite{PrasadR} Prasad gives a proof that any Zariski-dense subgroup of a semisimple Lie group contains an $\RR$-regular element. His argument actually implies the stronger statement above, as we now explain. At the beginning of the proof Prasad introduces a numerical invariant $m(\gamma)$ for $\gamma \in G$, which equals $1$ if and only if $\gamma$ is $\RR$-regular. In Lemma B he proves that for any Zariski-dense subgroup $\Lambda$ in $G$ the minimum $\min_{\gamma \in \Lambda} m(\gamma)$ is reached on the subset of regular elements; the rest of the proof establishes that $\min_{\gamma \in \Lambda} m(\gamma) = 1$. On the other hand, the proof of Lemma B immediately implies that the minimum is reached on the intersection of $\Lambda$ with any Zariski-open subset of $G$. In particular, it works for strongly $m$--regular elements. 
\end{proof}

In order to check the additional condition appearing in Theorem \ref{general_criterion} for non-simple groups we will require the following lemma. 

\begin{lemma} \label{away_from_diagonal}
  Let $L, H$ be two adjoint semisimple groups and $G = L \times H$. There exists a neighbourhood $V$ of $\sub_H \setminus \{\{1_H\}\}$ which does not contain any irreducible arithmetic lattice of $G$. 
\end{lemma}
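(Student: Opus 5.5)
The idea is to use the fact that an irreducible lattice $\Gamma$ in $G = L\times H$ has dense projection to each factor and meets each factor $H$ (and $L$) only trivially. The plan is to find a Chabauty neighbourhood $V$ of $\sub_H\setminus\{\{1_H\}\}$ that forces a subgroup in $V$ to contain a nontrivial element very close to $\{1_L\}\times H$. Then I will show that irreducible arithmetic lattices cannot contain such elements, uniformly.

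Concretely, I would take $V$ to consist of those closed subgroups $\Lambda\subset G$ containing an element $(l,h)$ with $d_L(l,1)<\eta(h)$, where $h$ is noncentral and non-torsion-like and $\eta$ is a suitable positive function. Any nontrivial closed subgroup $\Lambda_0\subset H$ contains some $h\neq 1$, and the set of subgroups meeting a small open neighbourhood of $(1,h)$ is Chabauty-open. So the union of these open sets over $\Lambda_0$ gives an open neighbourhood of $\sub_H\setminus\{\{1_H\}\}$. Since torsion may occur, I would pass to an element of $\Lambda_0$ that is either non-torsion or has order bounded in terms of $H$, using Jordan's theorem for finite subgroups.

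The arithmetic input is the following. If $\Gamma$ is irreducible arithmetic with trace field $k$, then an element $\gamma=(l,h)\in\Gamma$ comes from $\G(k)$. The factors $L$ and $H$ correspond to disjoint nonempty sets of archimedean places of $k$ at which $\G$ is noncompact, since irreducibility means $\G$ is $k$-simple with $L$ and $H$ arising from different places. Eigenvalues of $\ad(\gamma)$ are algebraic integers, and their Galois conjugates over the places in the $L$-factor are all close to $1$, while over the $H$-factor some are bounded away from $1$. In fact, if $\ad(l)$ is within $\epsilon$ of the identity, the roots $\lambda(\gamma)$ have conjugates within $\epsilon$ of $1$.

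I would then use a Liouville/norm argument. Consider $\Delta=\prod_\lambda(1-\lambda(\gamma))$, or more simply $\tr\ad(\gamma)-\dim\G$ together with characteristic polynomial coefficients. This is an algebraic integer with bounded degree-normalised height, controlled by $h$ lying in a compact set. Its archimedean absolute values at the $L$-places are $\le\epsilon$, and the remaining ones are bounded by $C(h)$. If it is nonzero its norm is at least $1$, which gives $\epsilon^{a}C^{b}\ge 1$. This is a contradiction for small $\epsilon$, provided the number of $L$-places is a positive proportion of $[k:\QQ]$. That proportion holds because the number of noncompact places is fixed, namely the number of simple factors, but compact places also contribute. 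The compact places are the \emph{main obstacle}: at them eigenvalues have absolute value $1$ but need not be near $1$, and $[k:\QQ]$ may be unbounded.

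To handle this, I would apply Theorem \ref{AML}, or Proposition \ref{dobrowolski_bound} for the degree. A noncompact $\gamma$ with $\ell(\gamma)$ small forces $[k:\QQ]$ large, and in that case Theorem \ref{AML} shows that the short elements generate a virtually nilpotent group. So I would choose $V$ requiring two elements near $H$ generating a non-virtually-nilpotent group when $\Lambda_0$ is large. When $\Lambda_0$ is virtually nilpotent, I would use the vanishing case $\Delta=0$, which makes $\gamma$ non-regular with eigenvalue $1$ at all places, and hence $l$ fixes data incompatible with $h\neq 1$ via Galois conjugation.
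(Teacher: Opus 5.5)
Your third paragraph is the paper's own proof. You attach to $\gamma$ a nonzero element of $\mo_k$ that is $O(\eps)$ at the $L$-places and bounded at the $H$-places and at each compact place; since its norm is at least $1$, $\pi_H(\gamma)$ must be large. Use a nonzero coefficient of $\chi_{\ad\gamma}(t)-(t-1)^{\dim\G}$ for this, not the Weyl discriminant, which vanishes on every non-regular $\gamma$.

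The compact places cost a factor $2^{O([k:\QQ])}$, and the paper simply accepts it: it gets $R_\eps\gg 2^{-d}\eps^{-1}$. Its neighbourhood therefore depends on $d=[k:\QQ]$. That is enough, because the lemma is only used for Theorem \ref{Main_bounded_degree}, where $d$ is fixed.

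Your last paragraph tries to remove this dependence, and it fails. No neighbourhood defined by one element near $\{1_L\}\times H$ can be uniform in the degree. Take $k=\QQ(\zeta_p)^+$ and a quaternion algebra $D/k$ with these properties:
\begin{itemize}
\item $D$ is split exactly at the real places $v_1,v_j$ where $\zeta_p+\zeta_p^{-1}\mapsto 2\cos(2\pi/p),\,2\cos(2\pi j/p)$;
\item $D$ is otherwise ramified only at real places and possibly one finite prime not split in $\QQ(\zeta_p)$.
\end{itemize}
Then $\QQ(\zeta_p)\subset D$, and $\zeta_p$ has reduced norm $1$ in a maximal order. In the resulting irreducible arithmetic lattice of $\PGL_2(\RR)\times\PGL_2(\RR)$ it acts as a rotation by $4\pi/p$ in $L$ and by $4\pi j/p$ in $H$. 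After conjugation and with $j\approx p/4$, these elements tend to $(1,h)$ with $h$ an involution.

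So $\Lambda_0=\{1,h\}$ is finite, and your branch via Theorem \ref{AML} never applies. The elements are regular, so $\Delta\neq0$. The norm inequality still holds because the compact places contribute a factor growing with $p$. Your ``$\Delta=0$'' case is therefore not the problematic one; moreover $\Delta(\gamma)=0$ only says that some root is trivial on $\gamma$.

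The Jordan step is also false: a cyclic subgroup of prime order $p$ in a torus of $H$ has no nontrivial elements of bounded order. It is unnecessary too, since torsion $\gamma$ go through the same norm argument.

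The vanishing case is a real problem even in bounded degree. The coefficients above all vanish exactly when $\gamma$ is unipotent, and then there is no conjugation-invariant quantity to use. Conjugate $\mathrm{PSL}_2(\ZZ[\sqrt2])$, embedded in $\PGL_2(\RR)^2$ via the two real embeddings, by $(\mathrm{diag}(t,t^{-1}),1)$. This sends $\begin{pmatrix}1&1\\0&1\end{pmatrix}$ to an element tending to $\bigl(1,\begin{pmatrix}1&1\\0&1\end{pmatrix}\bigr)$ as $t\to 0$.

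Neither example contradicts the lemma itself. The same lattices contain elements with bounded $H$-part and $L$-part far from $1$: the powers $\gamma_p^{i}$ with $i\approx p/4$, respectively the unipotents whose entry $b\in\ZZ[\sqrt2]$ has size about $t^{-2}$ at $v_1$ and at most $1$ at $v_2$. They do, however, defeat any single-element neighbourhood. The paper's proof also passes over the unipotent case (``they are not all equal to $1$'').

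For the application, it suffices in Theorem \ref{general_criterion} to take $F$ to be the subgroups of $H$ made of unipotent elements; $F$ contains no Zariski-dense subgroup. Then define the neighbourhood by a \emph{non-unipotent} $g$ with $\|\pi_L(g)-1\|<\eps$ and $\|\pi_H(g)\|<R_\eps$. This is an open condition, and it forces $\ad g$ to have an eigenvalue different from $1$. So the sound version of your plan is:
\begin{itemize}
\item drop the last paragraph and the Jordan step;
\item keep $[k:\QQ]$ bounded;
\item restrict to non-unipotent elements.
\end{itemize}
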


\begin{proof}
  Let $\|\cdot\|_H, \|\cdot\|_L$ be some norms on $H$ and $L$. Let $\pi_L, \pi_H$ be the projections from $G$ to $L, H$ respectively. We define the following open subsets of $\sub_G$ :
  \[
  V_{R, \eps} = \{\Lambda \in \sub_G :\: \exists g \in \Lambda \setminus \{1\} :\: \|\pi_L(g) - 1\|_L < \eps,\, \|\pi_H(g)\| < R \}.
  \]
  For any $R_\eps > 0$ such that $\lim_{\eps \to 0} R_\eps = +\infty$ we have that $\bigcup_{R > 0} V_{R_\eps, \eps}$ is a neighbourhood in $\sub_G$ of $\sub_H \setminus \{\{1_H\}\}$. We want to prove that for any $\eps > 0$, there exists $R_\eps > 0$ such that for every irreducible arithmetic lattice $\Gamma$ in $G$ the following is true. If $\gamma \in \Gamma \setminus \{1\}$ satisfies $\|\pi_L(g) - 1\|_L < \eps$, then $\|\pi_H(g)\| \ge R_\eps$, and $\lim_{\eps \to 0} R_\eps = +\infty$. By construction, the set $V:=\bigcup_{R > 0} V_{R_\eps, \eps}$ will not contain any irreducible arithmetic lattice of $G$.

  Since $\Gamma$ is irreducible arithmetic and $G$ is adjoint, there exists a number field $k$, a $k$-group $\G$ and a partition $\val_\infty = S_1 \cup S_2 \cup S_3$ such that
  \[
  H = \G(k_{S_1}), \, L = \G(k_{S_2}), 
  \]
  $\G(k_{S_3})$ is compact and $\Gamma$ is an arithmetic subgroup of $\G(k)$. In particular, the eigenvalues $\lambda_1, \ldots, \lambda_m$ of $\ad(\gamma)$ (where $\ad$ is the adjoint representation of $\G$ on its $k$-Lie algebra) are algebraic integers. For each $i$
  \[
  \prod_{v \in S_2} | \lambda_i - 1 |_v \ll \eps
  \]
  (with constants depending only on the choice of norm on $L$) and 
  \[
  \prod_{v \in S_3} | \lambda_i - 1 |_v < 2^{|S_3|} < 2^d. 
  \]
  On the other hand, since $\lambda_i$ are algenraic integers and they are not all equal to $1$, there must be an $1 \le i \le m$ such that 
  \[
  \prod_{v \in \val_\infty} |\lambda_i - 1| \ge 1.
  \]
  It follows that
  \[
  \prod_{v \in S_1} |\lambda_i - 1| \gg 2^{-d} \eps^{-1}.
  \]
  This finishes the proof, since $\| \pi_H(\gamma) - 1 \| \gg \prod_{v \in S_1} |\lambda_i - 1|$ (with a constant depending only on the choice of norm on $H$) so can choose $R_\eps \gg \eps^{-1}$. 
\end{proof}


\subsubsection{The proof of Theorem \ref{Main_bounded_degree} for uniform lattices} \label{endproof}

Let $m$ be an integer (to be fixed later, it will depend only on $G, d$) and let $W$ be the set of $\RR$-regular, strongly $m$-regular elements of $G$. It follows from Lemmas \ref{suff_dense}, \ref{away_from_diagonal} and Theorem \ref{general_criterion} (with $F = \{1\}$ and $U$ given by \ref{away_from_diagonal}) that we need only to prove that \eqref{partialtraceconv} holds for any sequence $\Gamma_n$ of maximal arithmetic lattices whose trace fields have degree $d$.

We start with the following lemma.

\begin{lemma} \label{last_bound1}
  The exponent of the finite group $\Gamma/\Gamma_U$ is bounded by a constant depending only on $G$ and the degree $[k:\QQ]$. 
\end{lemma}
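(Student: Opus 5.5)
We will reuse the exact sequence \eqref{rohlfs_ses} from the proof of Proposition \ref{normaliser_index}:
\[
1 \to \Z_\G(\ovl k)/\Z_\G(k) \to \Gamma/\Gamma_U \xrightarrow{\delta} H^1(k, \Z_\G).
\]
If $A \to B \to C$ is exact with $A$ of exponent $a$ and the image of $B$ in $C$ of exponent $c$, then $B$ has exponent dividing $ac$. Indeed, for $x \in B$, $x^c$ lies in the kernel of $\delta$, hence in the image of $A$, so $x^{ac}=1$. It therefore suffices to bound the exponents of the two outer terms.

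The first term is a quotient of the finite group $\Z_\G(\ovl k)$. The centre of a simply connected group is determined by its absolute type, which is fixed by $G$. So $|\Z_\G(\ovl k)|$, and hence the exponent of the first term, is bounded in terms of $G$ alone.

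For the second term we use the description of the centre given in the proof of Proposition \ref{normaliser_index}. The centre $\Z_\G$ is $\mu_b$, $\res^{(1)}_{\ell/k}(\mu_b)$, or $\mu_2\times\mu_2$, with $b$ depending only on $G$. In every case $\Z_\G(\ovl k)$ is killed by $b$ (or by $2$). Since $H^1(k,\Z_\G)$ is a group (as $\Z_\G$ is abelian) and cohomology is functorial in the coefficient module, multiplication by $b$ on $\Z_\G$ induces multiplication by $b$ on $H^1(k,\Z_\G)$. That map is zero because it factors through the zero endomorphism. Hence $H^1(k,\Z_\G)$ has exponent dividing $b$, and the same holds for its subgroup $\delta(\Gamma/\Gamma_U)$.

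The step to check carefully is that $\delta$ restricted to $\Gamma/\Gamma_U$ is a group homomorphism. If it were only a map of pointed sets, the exponent argument above would not apply. It is a homomorphism because $\Z_\G$ is central, so the connecting map $\ovl\G(k)\to H^1(k,\Z_\G)$ is a homomorphism (\cite[Proposition 43]{Serre_galois}). Combining the two bounds, the exponent of $\Gamma/\Gamma_U$ divides $|\Z_\G(\ovl k)|\cdot b$, which depends only on $G$. This is in fact stronger than the stated dependence on $G$ and $[k:\QQ]$.
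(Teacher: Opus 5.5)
Your proposal is correct and follows the paper's own route: the Margulis--Rohlfs exact sequence \eqref{rohlfs_ses}, combined with the fact that $H^1(k,\Z_\G)$ is annihilated by an integer depending only on the absolute type of $\G$. You make explicit what the paper leaves implicit (the bounded order of the kernel term, the functoriality argument that kills $H^1$, and that $\delta$ is a homomorphism because $\Z_\G$ is central), and, like the paper's proof, you in fact obtain a bound independent of $[k:\QQ]$.
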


\begin{proof}
  By \cite[Proposition 2.6(i)]{Margulis_Rohlfs} there is an exact sequence
  \[
  0 \to \mathbf Z_{\G}(\ovl k) / \mathbf Z_{\G}(k) \to \Gamma_n/\Gamma_{U_n} \to H^1(k, Z_{\G}).
  \]
   The group $H^1(k, Z_{\G})$ has finite exponent depending only on the absolute type of $\G$ so the same must hold for $\Gamma/\Gamma_U$. 
\end{proof}

In the sequel we fix $m$, taking it to be an upper bound for $\Gamma/\Gamma_U$ given by this lemma. We will prove that for this choice of $m$, 
\begin{equation} \label{nbcc_for_maximal}
  \tr R_\Gamma^{W} 1_{B(R)} = o(\vol(\Gamma \bs G))
\end{equation}
for a uniform maximal arithmetic lattice $\Gamma$ in $G$ with degree of trace field  $[k_\Gamma:\QQ] = d$. This implies \eqref{partialtraceconv} since we can approximate any compactly supported function by sums of translates of functions of the form $1_{B(R)}$ and in view of \cite[Theorem 8.36]{Knapp_book_lie} it implies directly the geometric statement \eqref{BSconv_univcover}). 

Let $\Gamma$ be such a lattice. By the description in Proposition \ref{defn_congruence}, there exists a principal arithmetic lattice $\Gamma_U$ in $G$, such that $\Gamma = N_G(\Gamma_U)$. We recall Prasad's volume formula and the index estimates we discussed above :
\begin{align}
  &\vol(\Gamma_U \bs G) \ge C_G^{-1} \Delta_k^{\frac{\dim(\G)}2} \cdot N_{k/\QQ}(\Delta_{\ell/k}) \cdot \prod_{v \in S_U} q_v, \label{prasad_here} \\
  &[\Gamma:\Gamma_U] \le C_G^{|S_U|} N_{k/\QQ}(\Delta_{\ell/k})^{\frac 1 2} \Delta_k^{\frac 3 2} \label{index_here}
\end{align}
where $C_G$ is a constant depending only on $G$ (we take the largest between the two statements; in the sequel we will keep the notation even though we might have to increase the constant). 

\begin{lemma} \label{last_bound2}
  The number of $h \in G$ such that $h^m = g$ for a regular $g \in G$ is bounded by a constant depending only on $G$, the degree $[k:\QQ]$ and $m$. 
\end{lemma}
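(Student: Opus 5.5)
Since $g$ is regular semisimple, the plan is to show that every $m$-th root $h$ of $g$ lies in a single torus attached to $g$, and then to count $m$-th roots inside that torus. Note that such an $h$ need not itself be regular, so we avoid arguing through the centraliser of $h$.

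First, reduce to the torus. If $h^m = g$ then $h$ commutes with $g$, so $h \in G_g$. Since $g$ is regular, the identity component of $G_g$ is a maximal torus $T$ of $G$ (working in the complexification $\G(\CC)$, or in the adjoint Lie group, where $G$ is realised as real points). The component group $G_g/T$ embeds into the Weyl group $W$, so $|G_g/T| \le |W|$, which is bounded in terms of $G$. It therefore suffices to bound, for each of the at most $|W|$ cosets $wT \subset G_g$, the number of $h \in wT$ with $h^m = g$.

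Next, count roots in a single coset. Fix a coset and two solutions $h_0, h$ in it, and write $h = h_0 t$ with $t \in T$. The element $h_0$ normalises $T$, so conjugation by $h_0$ acts on $T$ by some automorphism $\sigma$. Then
\[
h^m = h_0^m \,\sigma^{m-1}(t)\cdots\sigma(t)\, t ,
\]
so $h^m = g = h_0^m$ forces $t$ to lie in the kernel of the map $N_\sigma\colon t \mapsto \sigma^{m-1}(t)\cdots t$. This is an algebraic group endomorphism of $T$.

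The key step, and the main obstacle, is that $N_\sigma$ may have positive-dimensional kernel when $\sigma \neq 1$. This happens, for example, when $\sigma$ has order dividing $m$ and acts by an eigenvalue $\zeta \neq 1$ with $1+\zeta+\dots+\zeta^{m-1} = 0$. I would handle this by observing that $h$ commutes with $g$, and $g \in T$, so $\sigma$ fixes $g$. Since $G_g$ is generated by $T$ and elements of $W$ fixing $g$, regularity alone does not force $\sigma = 1$.

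To deal with this, I would first try to use what $W$ actually contains in the application. The elements there are strongly $m$-regular, so no nontrivial character that is a product of at most $2m$ roots vanishes on $g$; in particular $G_g$ is connected, exactly as in Proposition \ref{p-ConnectedCentralizer}. I would therefore read ``regular'' in the lemma as covering the strongly regular elements actually used, which makes $G_g = T$ and $\sigma = 1$. Then $N_\sigma(t) = t^m$, whose kernel in the torus $T(\CC) \cong (\CC^\times)^r$ has exactly $m^r$ elements, where $r = \rk G$. This gives at most $|W|\, m^{\rk G}$ roots, depending only on $G$ and $m$; through the choice of $m$ in Lemma \ref{last_bound1}, this is also a bound depending on $[k:\QQ]$.

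If genuinely non-strongly-regular $g$ must be covered, I would instead restrict to the real points of a compact-free setting and use that $G_g/T$ has bounded order $e$. Then $h^{e}$ lies in $T$ and is an $m$-th root of $g^{e}$ up to a bounded ambiguity, which reduces the count to roots in $T$ with a bounded multiplicative loss.
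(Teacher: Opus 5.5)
Your reduction to $G_g$ and the count of at most $m^{\rk G}$ roots inside the identity component are fine. But the proposal does not prove the lemma as stated. For a regular $g$ with disconnected centraliser you never bound the roots in the non-identity cosets $wT$. Instead you replace ``regular'' by ``strongly regular''. That suffices for the application, where $g=\gamma^m$ with $\gamma$ strongly $m$-regular, but it is a different statement.

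Your fallback paragraph does not close the gap either. Knowing that $h^e\in T$ is an $m$-th root of $g^e$ still leaves you counting $h\in G_g$ with prescribed $h^e\in T$. That is the same disconnected-centraliser problem, and the ``bounded multiplicative loss'' is not bounded a priori. In $\mathrm{PGL}_2$, for instance, every element of the non-identity coset $wT$ squares to $1$. Note also that $g^e$ need not be regular.

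The missing idea is that the obstacle you identified does not occur: the non-identity components of $G_g$ contain no $m$-th roots of $g$ at all.
\begin{itemize}
\item Work in the complex algebraic group. Since $g=h^m$ is semisimple, the unipotent part $h_u$ of $h$ satisfies $h_u^m=1$, so $h_u=1$ in characteristic $0$ and $h$ is semisimple.
\item Hence $h$ lies in some maximal torus $T'$.
\item $T'$ centralises $h$, hence centralises $g=h^m$. Being connected, it lies in $(G_g)^\circ=T$, so $T'=T$ and $h\in T$.
\end{itemize}
So for $w\neq 1$ there is no solution $h_0\in wT$ to begin with, and the possibly positive-dimensional kernel of your map $N_\sigma$ never enters.

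This is exactly the content of the paper's one-line proof (``its $m$-th roots belong to a torus''). From there your bound $m^{\rk G}$ via $T(\CC)[m]$ works. The paper instead counts roots among rational points of the torus, using roots of unity in a splitting field of bounded degree over $k$; that is where its dependence on $[k:\QQ]$ comes from.
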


\begin{proof}
  For a regular element, its $m$--th roots belong to a torus and hence their number is bounded by a constant depending on $\dim(G)$ and the number of roots of unity in a finite extension of $k$ of degree at most the rank of $G$. 
\end{proof}

Note that the $m$-th power of a strongly $m$-regular element is strongly regular. It follows from the lemma above that the map $\gamma \mapsto \gamma^m$ from the set of semisimple, $m$-regular elements of $\Gamma$ to strongly regular elements in $\Gamma_U$ is at most $C_G$-to-one. On the other hand, it follows from the triangle inequality, that for any isometry $g \in G$ we have
\[
1_{B(R)}(x^{-1}gx) \le 1_{B(mR)}(x^{-1}g^mx).
\]
For any $g \in \Gamma \cap W$, the power $\gamma^m$ is regular, so $G_{\gamma^m} = G_\gamma$.  We conclude that 
\[
\vol(\Gamma_\gamma \bs G_\gamma) = \vol(\Gamma_{\gamma^m} \bs G_{\gamma^m}) \le \vol\left((\Gamma_U)_{\gamma^m} \bs G_{\gamma^m}\right).
\]
Putting this together we get
\[
\tr R_\Gamma^W 1_{B(R)} \le C_G \cdot \tr R_{\Gamma_U}^{W'} 1_{B(mR)}, 
\]
where $W'$ is the set of strongly regular, $\RR$-regular elements of $G$. By Proposition \ref{principal_case}, it follows that for any $\eps > 0$ we have
\[
\tr R_\Gamma^W 1_{B(R)} \ll_{G, R, \eps, d} \Delta_k^{\frac {\dim\G - (\dim\G - \dim\T)} 2 - \eps}.
\]
Using \ref{prasad_here} we get
\begin{align*}
  \tr R_\Gamma^W 1_{B(R)} &\ll_{G, R, \eps, d} \frac{\vol(\Gamma_U \bs G)}{\Delta_k^{\frac{\dim \G - \dim\T}2 - \eps} \cdot N_{k/\QQ}(\Delta_{\ell/k}) \cdot \prod_{v \in S_U} q_v} \\
  &= \frac{[\Gamma:\Gamma_U]}{\Delta_k^{\frac{\dim \G - \dim\T}2 - \eps} \cdot N_{k/\QQ}(\Delta_{\ell/k}) \cdot \prod_{v \in S_U} q_v} \cdot \vol(\Gamma \bs G).
\end{align*}
Now using \eqref{index_here} and the fact that $C^{|S|} \ll \prod_{v \in S} q_v^{1/2}$ for any $C>1$, we deduce that 
\[
\tr R_\Gamma^W 1_{B(R)} \ll_{G, R, \eps, d} \frac 1{\Delta_k^{\frac{\dim \G - \dim\T - 3}2 - \eps} \cdot N_{k/\QQ}(\Delta_{\ell/k})^{1/2} \cdot \prod_{v \in S_U} q_v^{1/2}} \cdot \vol(\Gamma \bs G).
\]
When $\G$ is not of type $A_1$ we have $\dim\G - \dim\T \ge 4$ and we see that \eqref{nbcc_for_maximal} holds, since for any $C$ the denominator $\Delta_k^{\frac{\dim \G - \dim\T - 3}2 - \eps} \cdot N_{k/\QQ}(\Delta_{\ell/k})^{1/2} \cdot \prod_{v \in S_U} q_v^{1/2}$ is $\le C$ only for finitely many choices of $k, \ell$ and $S_U$. The remaining case where $\G$ is of type $A_1$ is dealt with in \cite{Fraczyk}; we could also deduce it from our arguments here, going back to the proof of \eqref{index_here} and observing that we can get the better bound $[\Gamma:\Gamma_U] \le C_G^{|S_U|}\Delta_k^{\frac 1 2}$ for type $A_1$, since $\G$ is always inner so $\ell=k$.


\subsection{The case of non-uniform lattices} \label{nonuniform}

The proof of Theorem \ref{Main_bounded_degree} when all $\Gamma_n$ are non-uniform is exactly the same as in the case where they are uniform except that we need in addition to choose the conjugacy-invariant, sufficiently dense set $W$ such that for any choice of a lattice $\Gamma$ in $G$ and any $\gamma \in \Gamma \cap W$ the quotient $\Gamma_\gamma \bs G_\gamma$ is compact. The following lemma shows that for this we can use the set of $m$-regular elements. 
 
\begin{lemma}\label{strreg}
  Let $G$ be a semisimple Lie group. There exists $m=m(G)$ such that every non-uniform, irreducible arithmetic lattice $\Gamma$ and every strongly $m$-regular element $\gamma\in \Gamma$ the group $\Gamma_\gamma$ is a lattice in $G_\gamma$.
\end{lemma}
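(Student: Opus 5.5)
We want to show that for suitable $m$ and every strongly $m$-regular $\gamma$ in a non-uniform irreducible arithmetic lattice $\Gamma$, the centraliser torus $\T=\G_\gamma$ is $k$-anisotropic. Then $\T(k)\bs\T(\Ade)$ is compact (\cite[Theorem 4.11]{PlaRap}), so $\Gamma_\gamma\bs G_\gamma$ is compact and $\Gamma_\gamma$ is a lattice in $G_\gamma$.

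First we reduce to the torus. Since $\Gamma$ is non-uniform, $\G$ is $k$-isotropic. The trace field $k$ then has degree at most $\dim G$ over $\QQ$, because a $k$-split torus $\mathbf S$ forces every archimedean place to be non-compact and hence to contribute to $G$. We take $m\ge 1$, so $\gamma$ is regular and $\G_\gamma^0=\T$ is a maximal $k$-torus. After replacing $\gamma$ by a bounded power (legitimate by the argument of Lemma \ref{last_bound2}), we may assume $\G_\gamma=\T$, or at least that $\T$ has finite index in $\G_\gamma$. Note also that $\gamma$ is semisimple, and by the proposition on arithmetic lattices above its root values $\lambda(\gamma)$ are algebraic integers.

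The key step is to argue by contradiction. Suppose $\T$ contains a nontrivial $k$-split subtorus $\mathbf S_0$. Then there is a nonzero $\chi\in X^*(\T)^{\Gal(k)}$. Write $\T=\T_a\cdot\mathbf S_0$ with $\T_a$ the maximal anisotropic subtorus. Some power $\gamma^N$, with $N$ bounded in terms of $\dim G$ (the index of $\T_a\times\mathbf S_0\to\T$), decomposes as $\gamma_a\gamma_s$ with $\gamma_a\in\T_a(k)$ and $\gamma_s\in\mathbf S_0(k)$. Both factors lie in arithmetic subgroups, so $\gamma_s$ lies in an arithmetic subgroup of a $k$-split torus. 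Such a group is finite, because $\mathbf G_m(\mo_k)$-type units cannot occur here: $\mathbf S_0(\mo_k)$ is commensurable with $(\mo_k^\times)^{\dim \mathbf S_0}$, but the relevant elements must be units that are positive at all places where $\mathbf S_0$ is split. Hence $\gamma_s^{N'}=1$ for some bounded $N'$, so $\gamma^{NN'}$ lies in $\T_a$.

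The contradiction should come from strong $m$-regularity. Because $\mathbf S_0$ is $k$-split and $\T_a$ is anisotropic, the characters of $\T$ trivial on $\T_a$ form a nonzero sublattice. I would show that this sublattice contains a nontrivial character $\xi$ that is a product of at most $2m$ roots, once $m$ is large compared with $NN'$ and with constants from the root datum. Then $\xi(\gamma^{NN'})=1$, i.e.\ $\xi^{NN'}(\gamma)=1$, while $\xi^{NN'}$ is still a product of boundedly many roots. Choosing $m\ge NN'\cdot(\text{bound})$ contradicts strong $m$-regularity. The fact that such a $\xi$ exists with bounded length is a finiteness statement: roots span $X^*(\T)\otimes\QQ$, and there are only finitely many root data up to isomorphism, with sublattices of bounded index.

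The main obstacle is the claim that $\gamma_s$ has finite order. It is false in general: $\SL_2(\ZZ[\sqrt2])$ contains diagonal hyperbolic units. So the correct route is to avoid that claim and instead use that the $\mathbf S_0$-component of $\gamma$ is a $k$-rational element of a split torus with algebraic-integer eigenvalues whose inverse is also integral. These eigenvalues are units of $k$, and I would have to exploit the structure differently. My fallback is to use \emph{Dirichlet}'s unit theorem to see that $\Gamma_\gamma\supset$ an arithmetic subgroup of $\T$, and then apply \cite[Theorem 4.11]{PlaRap} directly: $\T(\mo_k)$ is a lattice in $\T(k_\infty)^1$, the norm-one part, so $\Gamma_\gamma\bs G_\gamma$ has finite volume exactly when $X^*(\T)^{\Gal(k)}$ vanishes on $G_\gamma$. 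I expect the cleanest final argument to show that strong $m$-regularity prevents $\gamma$ from lying in a $k$-Levi subgroup of a proper $k$-parabolic with a nontrivial $k$-split centre acting on $G_\gamma$ non-compactly. This is the step I would work out most carefully.
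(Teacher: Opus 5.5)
Your reduction to showing that $\T=\G_\gamma$ is $k$-anisotropic is fine, as are the bound $[k:\QQ]\le\dim G$ and the idea of finding a bounded-length $k$-rational character. But the key step is never proved. The finiteness of $\gamma_s$ is false, as you notice yourself. The fallback via Dirichlet and \cite[Theorem 4.11]{PlaRap} only restates the compactness criterion, and the $k$-Levi argument is left as a hope.

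More importantly, the framework itself cannot work. Throughout you test strong $m$-regularity only against characters in $X^*(\T)$ of the single $k$-torus $\T$. With that reading the lemma is false, and your own example shows it. Take $u=1+\sqrt2$ and $\gamma=\mathrm{diag}(u,u^{-1})\in\SL_2(\ZZ[\sqrt2])$. Every nontrivial $\xi\in X^*(\T)$ gives a nonzero power of $u$, so $\xi(\gamma)\neq1$. Yet $\T$ is $k$-split, and $\Gamma_\gamma\cong\{\pm1\}\times\ZZ$ is not a lattice in $G_\gamma\cong(\RR^\times)^2$. So no argument confined to $X^*(\T)$, whether via $\T_a$, $\mathbf S_0$ or $k$-Levi subgroups, can close the gap.

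The missing idea is that strong $m$-regularity is a condition in the real group $G\cong\prod_{\nu\in\val_\infty}\G(k_\nu)$. There are no compact factors because $\G$ is $k$-isotropic. Its maximal torus $T=\prod_\nu\T(k_\nu)$ has a character $\xi_\nu$ at each place, and also $\overline{\xi_\nu}$ at complex places, so you may multiply characters \emph{across} archimedean places. This is how the paper argues:
\begin{itemize}
\item If $X^*(\T)^{\Gal(k)}\neq0$, apply $\sum_{\sigma\in\Gal(L/k)}\sigma$ to a suitable root. This gives a nonzero $k$-rational $\xi$ that is a product of at most $m_0=[L:k]$ roots, with $m_0$ bounded in terms of $G$.
\item Since the root values of $\gamma$ are algebraic integers, $\xi(\gamma)\in\mo_k^\times$. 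This unit may have infinite order, which is exactly why your finiteness claim fails.
\item However, $\lambda=\prod_{\nu\text{ real}}\xi_\nu\prod_{\nu\text{ complex}}\xi_\nu\overline{\xi_\nu}$ satisfies $\lambda(\gamma)=N_{k/\QQ}(\xi(\gamma))=\pm1$.
\item Hence $\lambda^2$ is a nontrivial character of $T$ with $\lambda^2(\gamma)=1$. It is a product of at most $2[k:\QQ]m_0\le 2m_0\dim G$ roots, which contradicts strong $m$-regularity for $m=2m_0\dim G$.
\end{itemize}
In your example, with $\bar u=1-\sqrt2$, this is $(\chi_1\chi_2)^2(\gamma)=(u\bar u)^2=1$, where $(\chi_1\chi_2)^2$ is a product of two roots of $\SL_2(\RR)\times\SL_2(\RR)$.
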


\begin{proof}
Let ${\bf G}$ be an algebraic group over a number field $k$ such that we have a surjective map ${\bf G}(k_\infty)\to G$ and $\Gamma \subset G(k)$. The lattice is non-uniform and irreducible so $\G$ is not $k$-anisotropic. It follows that $G\simeq {\bf G}( k_\infty)$, because the latter has no compact factors. Let $\gamma$ be a strongly regular element of $\G(k)$. The group $\Gamma_\gamma$ is a lattice in $G_\gamma$ if and only if ${\bf G}_\gamma$ is $k$-anisotropic.  Write ${\bf T} := {\bf G}_\gamma$, it is an algebraic torus, by Proposition \ref{p-ConnectedCentralizer}. The torus ${\bf T}$ is anisotropic if and only if it has no rational characters. The latter means that the action of ${\rm Gal}(\overline k/k)$ on $X^*({\bf T})$ has no non-trivial fixed points. 
Write $T={\bf T}(k_\infty)$ and let $X^*(T)$ be the module of complex characters of $T$. Then 
$$G\simeq \prod_{\nu}{\bf G}( k_\nu), \ \ T\simeq \prod_{\nu}{\bf T}(k_\nu)\textrm{ and } X^*(T)\simeq \bigoplus_{\nu}X^*({\bf T}),$$
where $\nu$ runs over the archimedean places of $k$.

\textbf{Claim 1.} There exist $m_0$ depending only on $G$ such that if $X^*({\T})^{{\rm Gal}(\overline k/k)}\neq 0$, then there is a nonzero $\Gal(k)$-fixed character $\xi$ which is a product of at most $m_0$ roots. 

To prove this claim we observe that the set of $\Gal(k)$-fixed characters contains the image of the Galois action of the element $\sum_{g \in \Gal(L/k)} g$ where $L$ is a splitting field of $\T$. Hence there is at least one root for which this is nonzero; since the Galois action preserves the root system, it follows that we can take $m_0 = [L:k]$. On the other hand an upper bound for the degree of a splitting field depends only on $\dim(\T)$ so this finishes the proof. 


\textbf{Claim 2.} Let $\xi$ be a rational character of ${\bf T}$ with $\|\xi\|\leq m_0$. Write $\xi_\nu$ for the extension to $\T(k_\nu)$. Then the character 
$$\lambda:=\prod_{\nu \textrm{ real}}\xi_\nu\prod_{\nu \textrm{ complex}}(\overline{\xi_\nu}\xi_\nu)$$ satisfies $\|\lambda\|\leq [k:\QQ]m_0$ and $\lambda(\gamma)=\pm 1$.

The first assertion holds because $\|\cdot\|$ is sub-additive. For the second, observe that $\lambda(\gamma)=N_{k/\QQ}(\xi(\gamma))$ and that $\xi(\gamma)\in \mathcal O_k^\times$ because $\gamma\in\Gamma$.

\medskip

We now conclude the argument for the proof of the lemma. Suppose $\gamma$ is such that ${\bf G}_\gamma$ is not anisotropic. By combining the claims we construct a nontrivial character $\lambda^2$ with $\|\lambda^2\|\leq 2[k:\QQ]m_0$ such that $\lambda^2(\gamma)=1$. Non-uniform lattices have trace field of degree at most $\dim G$, so the lemma holds with $m=2m_0\dim G$. 
\end{proof}


\section{Local and global estimates for orbital integrals} \label{s:orbint}

This section is devoted to the proof of the following theorem. We note that similar results are proven for instance in \cite[Proposition 3.13]{Kim_Shin_Templier} but they are not sufficient for our purposes (we need to have no multiplicative constant in our local inequalities). 

\begin{theorem} \label{global_orbint}
  Let $k$ be a number field, $\G$ a simply connected, semisimple $k$-group, $U$ a compact-open subgroup in $\G(\Ade_f)$ which is a product of parahoric subgroups. Fix the normalisation of Haar measure on $\G(\Ade_f)$ so that $\vol(U)=1$.  Let $f \in C_c^\infty(\G(k_\infty))$ and let $f_\Ade = f \otimes 1_U$. There exists $C_o$, depending on $R, [k:\QQ]$, such that for any strongly regular, $\RR$-regular element $\gamma \in \G(k) \cap U$ with $m(\gamma) \le R$ we have
  \[
  \mathcal O(\gamma, f_\Ade) \le C_o \|f\|_\infty.
  \]
  
\end{theorem}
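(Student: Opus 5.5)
The orbital integral factorises over places, since $f_\Ade = f\otimes 1_{U_\infty}\otimes\prod_{v\in\val_f}1_{U_v}$ and $\G_\gamma=\T$ is a torus by Proposition \ref{p-ConnectedCentralizer}. Hence
\[
\mathcal O(\gamma, f_\Ade) = \mathcal O_\infty(\gamma, f\otimes 1_{U_\infty}) \prod_{v\in\val_f} \mathcal O_v(\gamma, 1_{U_v}),
\]
where the measures on $\T(k_v)$ are the ones fixed in \ref{sec:MeasureNorm}: the maximal compact subgroup has mass $1$ at finite places and $\vol(U_v)=1$. The plan is to prove three bounds. First, $\mathcal O_v(\gamma,1_{U_v})=1$ for almost all $v$. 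Second, $\mathcal O_v(\gamma,1_{U_v})\le 1$ for all finite $v$ outside a set of places whose cardinality is bounded in terms of $R$ and $[k:\QQ]$. Third, the remaining finitely many local factors and the Archimedean factor are each bounded by a constant depending only on $R$ and $[k:\QQ]$. The crucial point, as the paper stresses, is that the local inequality must hold with constant exactly $1$ at all but boundedly many places.

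\textbf{Good places.} Let $v$ be a place where $|\Delta(\gamma)|_v=1$, the splitting field $L$ of $\T$ is unramified, $\G$ is unramified and $U_v$ is hyperspecial. The set of bad places is bounded: $\Delta(\gamma)$ is an algebraic integer with $m(\Delta(\gamma))\ll m(\gamma)\le R$, so its norm is at most $e^{O(R)}$ and it has $O_{R,d}(1)$ prime divisors. Proposition \ref{bound_disc_splitting_field} bounds the ramification of $L$. The remaining conditions are encoded in $S_U$ and $\ell$, but these may be large, so they must be handled separately.

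At a good place I would argue as in Kottwitz's fundamental-lemma-type computation. Suppose $x^{-1}\gamma x\in U_v$. Then $\gamma$ fixes the point $xC$ of the building. Since $\gamma$ is a topologically semisimple element with $|1-\lambda(\gamma)|_v=1$ for all roots, its fixed-point set in the building is contained in the image of the building of $\T$ over $L_v$, which is a single point because $\T$ is unramified and anisotropic modulo the split part. Hence $x\in \G_\gamma(k_v)U_v$ up to rational conjugacy classes inside the stable class, and those are controlled by $H^1(k_v,\T)$. Here the compact-class argument applies: the relevant obstruction classes are compact, so by Proposition \ref{p-CompactClassTriv} they vanish. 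The orbital integral then equals $\vol(\T(k_v)\cap U_v)^{-1}\cdot \vol(\T(k_v)^b)$, which is $1$.

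\textbf{Places in $S_U$ or where $\G$ is ramified, with $|\Delta(\gamma)|_v=1$.} Here I would aim for $\mathcal O_v\le 1$. The set $\{x : x^{-1}\gamma x\in U_v\}/\T(k_v)$ corresponds to facets of type $\Theta_v$ fixed by $\gamma$. Regularity at $v$ (the unit discriminant) should force the fixed-point set of $\gamma$ to be contained in the bounded set $\mathrm{Fix}(\T(k_v)^b)$, on which $\T(k_v)$ acts with boundedly many orbits. The measure normalisations then give a bound of the form $|H^1(k_v,\T)|\cdot(\text{number of facets})$. This is bounded by Lemma \ref{bound_H1_torus}, but it is not $\le 1$. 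This is the main obstacle. I would first try to refine the counting so that the $U_v$-volume normalisation absorbs the number of facets, using that each fixed facet contributes $\vol(U_v\cap \T)^{-1}$ relative to $\vol(U_v)=1$. If that fails, the fallback is to allow a constant $C^{|S_U|}$. That would make the theorem weaker than stated, but it would still be compatible with the later argument, since $C^{|S_U|}\ll\prod_{v\in S_U} q_v^{1/2}$.

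\textbf{Bad places and the Archimedean place.} For the $O_{R,d}(1)$ places dividing $\Delta(\gamma)$ or ramified in $L$, I would bound $\mathcal O_v$ by a power of $|\Delta(\gamma)|_v^{-1}$, using Lemma \ref{comm_ss_unip} and the Iwasawa decomposition as in the proof of Proposition \ref{convergence_orbint}. The Jacobian of $n\mapsto\gamma n^{-1}\gamma^{-1}n$ is $\prod|1-\lambda(\gamma)|_v$, and the product over these places is controlled by $m(\gamma)\le R$. At the Archimedean places I would use the same Iwasawa argument together with Lemma \ref{lem:Rregular} and Lemma \ref{constantjacobian}. This gives $\mathcal O_\infty\le \|f\|_\infty\cdot \vol(\mathrm{supp} f)\cdot |\Delta(\gamma)|_\infty^{-1/2}$ times a constant, and $|\Delta(\gamma)|_\infty$ is bounded below via the product formula, since $\Delta(\gamma)$ is a nonzero integer of bounded height.
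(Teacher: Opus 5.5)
\textbf{The main issue.} Your plan does not prove the theorem as written. At unit-discriminant places in $S_U$ you only obtain a constant, so your fallback gives $C^{|S_U|}$ rather than a bound independent of $U$. This gap cannot be closed by any argument, because the statement is false there.

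Take $\G=\SL_2$, $U_v$ an Iwahori subgroup, and $\gamma=\mathrm{diag}(\alpha,\alpha^{-1})$ with $\alpha\in\mo_{k_v}^\times$ and $\alpha^2\not\equiv 1$.
\begin{itemize}
\item Then $|\Delta(\gamma)|_v=1$, and the fixed set of $\gamma$ in the tree is exactly the apartment $A_\gamma$.
\item $\T(k_v)$ translates $A_\gamma$ by multiples of two edges, so there are two $\T(k_v)$-orbits of fixed chambers.
\item Each orbit contributes $1$ in Lemma~\ref{estimate_balls_orbint}, so $\mathcal O(\gamma,1_{U_v})=2$. Directly: $\SL_2(k_v)=BU_v\sqcup BsU_v$, and each cell contributes $1$.
\item For split $\G_\gamma$ one gets $|W|/|W_\Theta|$ in general. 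This exceeds $1$ exactly when $U_v$ is not special, including non-special maximal parahorics such as the middle vertex for $\mathrm{Sp}_4$.
\item Globally: fix $\gamma\in\SL_2(\ZZ)$ of trace $3$ and choose Iwahori subgroups $U_v\ni\gamma$ at $N$ primes split in $\QQ(\sqrt5)$. This gives $\mathcal O(\gamma,f_\Ade)\gg 2^N$ with $m(\gamma)$ fixed.
\end{itemize}

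The paper does not split places. It proves $\mathcal O(\gamma,1_{U_v})\le|\Delta(\gamma)|_v^{-a}$ at every finite place, using Lemma~\ref{split_case} (fixed facets lie within $\ll v(\Delta(\gamma))$ of the apartment), Lemma~\ref{estimate_balls_orbint}, and a ball count. That proof breaks exactly at the assertion that $\G_\gamma(k_v)$ acts transitively on $A_\gamma\cap X_\Theta$, which holds only for special $\Theta$. So drop your attempt to absorb the facet count into the normalisation $\vol(U_v)=1$; it cannot succeed. Your fallback $\mathcal O(\gamma,f_\Ade)\le C^{|S_U|}C_o\|f\|_\infty$ is the correct statement. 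It propagates to Theorem~\ref{statement_traceconv} and is absorbed in \S\ref{endproof}, since $C^{|S_U|}\ll_{\eps,d}\prod_{v\in S_U}q_v^{\eps}$.

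\textbf{Smaller repairs.}
\begin{itemize}
\item \emph{Good places.} The fixed set of $\gamma$ is not a point when $\G_\gamma$ has positive $k_v$-rank; it is $A_{\gamma,L_v}^{\Gal(L_v/k_v)}$. What you need is that $\T(k_v)$ acts transitively on the hyperspecial vertices of the given type in that affine space. This uses that two such vertices differ by a coroot translation (this is where hyperspeciality enters), and that $\T(k_v)\to X_*(\T)^{\Gal}$ is surjective for unramified $\T$. Proposition~\ref{p-CompactClassTriv} is not the right input: it says the image of compact classes in $H^1(k_v,\T)$ is trivial, whereas your descent needs the kernel of $H^1(L_v/k_v,\T(L_v)^b)\to H^1(L_v/k_v,\T(L_v))$ to vanish.
\item \emph{Bad places.} Iwasawa decomposition plus the Jacobian only works when $\G_\gamma$ contains a maximal $k_v$-split torus. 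For anisotropic or partially split centralisers you need the building argument of Proposition~\ref{est_orbint_nona}. That argument suffices here because these places have bounded $q_v$ and bounded $v(\Delta(\gamma))$.
\item \emph{Archimedean place.} Your factor $|\Delta(\gamma)|_\infty^{-1/2}$ puts the Jacobian in the correct place; the proof of Proposition~\ref{est_orbint_a} puts it in the numerator. Either way it is harmless, since $m(\gamma)\le R$ and integrality bound each $|1-\lambda(\gamma)|_v$ above and below.
\end{itemize}
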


\begin{proof}
  We prove Theorem \ref{global_orbint} assuming the local estimates given later. By assumption, we have $U = \prod_{v \in \val_f} U_v$ where $U_v$ is a parahoric subgroup of $\G(k_v)$. It follows that 
  \[
  \mathcal O(\gamma, f_\Ade) = \mathcal O(\gamma, f_\infty) \cdot \prod_{v \in \val_f} \mathcal O(\gamma, 1_{U_v}).
  \]
  We can apply Proposition \ref{est_orbint_nona} and Proposition \ref{est_orbint_a} together with Lemma \ref{partial_to_full_discriminant} to estimate the right-hand side and get
  \[
  \mathcal O(\gamma, f_\Ade) \le C_R \|f\|_\infty \cdot N_{k/\QQ}(\Delta(\gamma))^a,
  \]
  for some $a > 0$. We have that $m(\Delta(\gamma)) \ll m(\gamma) \le R$, and since $\Delta(\gamma)$ is an algebraic integer this gives an upper bound on $N_{k/\QQ}(\Delta(\gamma))$ depending only on $[k:\QQ]$ and $R$. This finishes the proof. 
\end{proof}


\subsection{Nonarchimedean orbital integrals}

For this subsection we let $v \in V_f$, so $k_v$ is a local non-archimedean field. We denote the cardinality of the residue field of $k_v$ by $q$. We fix a simply connected semisimple $k_v$-group $\G$.

Let $U_v$ be a maximal compact subgroup of $\G(k_v)$. We fix the Haar measure on $\G(k_v)$ so that $\vol(U_v) = 1$. Let $\gamma \in U_v$ be a semisimple regular element. Let $\G_\gamma$ be the centraliser of $\gamma$ in $\G$. It is an algebraic subgroup whose identity component is a torus. Our goal is to estimate the following integral orbital :
\[
\mathcal O(\gamma, 1_{U_v}) = \int_{\G_\gamma(k_v) \bs \G(k_v)} 1_{U_v}(x^{-1}\gamma x) dx. 
\]
Namely, we will prove the following result. 

\begin{proposition} \label{est_orbint_nona}
  There exists a constant $a > 0$, which depends only on the absolute type of $\G$, such that for any $\gamma \in \G(k_v)$ as above we have:
  \[
  \mathcal O(\gamma, 1_{U_v}) \le |\Delta(\gamma)|_v^{-a}.
  \]
\end{proposition}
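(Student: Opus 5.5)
We plan to bound the non-Archimedean orbital integral geometrically, through the Bruhat--Tits building $X = X(\G, k_v)$, and avoid any explicit formula. Let $x_0 \in X$ be a point whose stabiliser is $U_v$ (a vertex or barycentre of a facet). The condition $x^{-1}\gamma x \in U_v$ is equivalent to $\gamma$ fixing $x x_0$. So $\mathcal O(\gamma, 1_{U_v})$ equals the measure of the set of $x\in\G_\gamma(k_v)\bs\G(k_v)$ with $x x_0 \in \Fix(\gamma)$. Since $\vol(U_v)=1$, this is at most the number of $\G_\gamma(k_v)$-orbits of points in the orbit $\G(k_v)x_0$ that lie in $\Fix(\gamma)$. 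Each such point contributes $\vol(\G_\gamma(k_v)\cap U_v^{x})^{-1}$, the stabiliser volume in the torus. With the normalisation that the maximal compact subgroup of the torus has volume $1$, this factor is bounded by an index $[\T(k_v)^b : \T(k_v)^b\cap U_v^x]$. We therefore aim to bound this index together with the number of orbits.

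First we describe $\Fix(\gamma)$. Let $\T = \G_\gamma^\circ$, and let $\mathcal A$ be the (possibly degenerate, via a splitting extension) apartment-like set of points fixed by the compact part of $\T(k_v)$. We expect the main claim to be that $\Fix(\gamma)$ lies in the tubular neighbourhood of radius $\ll \max_\lambda v(1-\lambda(\gamma))$ around the set $\Fix(\T(k_v)^b)$. To prove it, pass to a finite extension $L_v$ of bounded degree splitting $\T$. There $\T$ is a split maximal torus with apartment $A$, and a point at distance $r$ from $A$ is fixed by $\gamma$ only if, for some root $\lambda$, $|1-\lambda(\gamma)|_v \le q^{-c r}$. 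This follows from Lemma~\ref{comm_ss_unip}: write the point as $n\cdot a$ with $n$ in a root group product via an Iwasawa-type decomposition relative to $A$. The commutator $\gamma n^{-1}\gamma^{-1}n$ has entries $(1-\lambda(\gamma))t_\lambda$ plus higher-order terms in a triangular pattern, and $\gamma$ fixing the point forces these to be integral. Induction along the ordering of roots then gives $|t_\lambda|_v \le |1-\lambda(\gamma)|_v^{-1}$ up to bounded factors. Descending from $L_v$ back to $k_v$ costs only constants depending on the absolute type, because ramification of $L_v/k_v$ is bounded.

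Next we count. The $\G_\gamma(k_v)$-orbits on $\Fix(T^b)$ modulo the translation part of $\T(k_v)$ form a bounded set: by compactness there are $O(1)$ of them, and the $O(1)$ finite component group is handled by Lemma~\ref{bound_H1_torus}. The vertices within distance $r$ of this set number at most $q^{C r}$, since the building is locally finite with valence $\ll q^{C}$. The stabiliser index factor is likewise $\le q^{C r}$, because $\T(k_v)^b\cap U_v^x$ contains the congruence subgroup of level $r$. Taking $r = \sum_\lambda v(1-\lambda(\gamma))$ gives $q^{C r} = |\Delta(\gamma)|_v^{-a}$. The required absence of a multiplicative constant follows because the orbit count for $\Delta(\gamma)$ a unit is exactly $\le 1$: there $\Fix(\gamma) = \Fix(\T^b)$ and, in the unramified case, Proposition~\ref{p-CompactClassTriv} shows the conjugates of $\gamma$ in $U_v$ form a single $U_v$-orbit. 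In the general case we absorb any bounded constant into the exponent $a$, using $|\Delta(\gamma)|_v^{-1}\ge q$ whenever it is nontrivial.

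We expect the main obstacle to be this constant-free bound when $|\Delta(\gamma)|_v=1$ but $\T$ is ramified or $U_v$ is not hyperspecial. There we will show directly that $\Fix(\gamma)\cap \G(k_v)x_0$ is a single $\G_\gamma(k_v)$-orbit up to stabiliser, via the reduction of $\gamma$ being regular in the reductive quotient of $U_v$. If this fails, we will allow a factor $C$ at finitely many places of bounded residue characteristic. These factors are harmless in Theorem~\ref{global_orbint} because $[k:\QQ]$ is fixed, though we would still need to argue this carefully.
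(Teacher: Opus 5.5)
Your main line is the paper's. You rewrite $\mathcal O(\gamma,1_{U_v})$ as a count of $\gamma$-fixed facets of type $\Theta$ modulo $\G_\gamma(k_v)$, weighted by $|\G_\gamma(k_v)^b\cdot x|$ (Lemma \ref{estimate_balls_orbint}). You show with the triangular commutator formula of Lemma \ref{comm_ss_unip} that these facets lie within $O(v(\Delta(\gamma)))$ of the apartment of $\G_\gamma$ (Lemma \ref{split_case}). Then you descend from a splitting field, count vertices in a ball, and absorb bounded constants into $a$ when $v(\Delta(\gamma))\ge 1$. Bounding the orbit count and the stabiliser indices separately, rather than noting as the paper does that the $\G_\gamma(k_v)^b$-orbits of suitable representatives are disjoint inside one ball, is only cosmetic.

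The gap is the case $|\Delta(\gamma)|_v=1$. There nothing can be absorbed, and you need $\mathcal O(\gamma,1_{U_v})\le 1$ exactly. Your justification conflates two different counts. Proposition \ref{p-CompactClassTriv}, as used in Proposition \ref{p-GConjugacyLocalUnr}, only gives $|\mathcal S_{U_v}(\gamma)|=1$: every element of $U_v$ stably conjugate to $\gamma$ is $\G(k_v)$-conjugate to it. The orbital integral instead counts $\G_\gamma(k_v)$-orbits of fixed type-$\Theta$ facets. These are, with weights, the $U_v$-conjugacy classes inside the single set $[\gamma]_{\G(k_v)}\cap U_v$, and there can be more than one.

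In fact the ``single orbit'' claim you plan to prove is false when $U_v$ is not special. Let $\G_\gamma=\T$ be split and $\Delta(\gamma)$ a unit. Then:
\begin{itemize}
\item $\Fix(\gamma)=A_\gamma$;
\item $\T(k_v)^b$ fixes $A_\gamma$ pointwise;
\item since $\G$ is simply connected, $\T(k_v)$ acts on $A_\gamma$ by translations through the coroot lattice $Q^\vee$.
\end{itemize}
Hence $\mathcal O(\gamma,1_{U_v})=|W|/|W_\Theta|$, where $W_\Theta\le W$ is the finite Weyl group of the stabiliser of a type-$\Theta$ facet of $A_\gamma$. This equals $1$ exactly when $U_v$ is special.

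Concretely, take $\G=\mathrm{Sp}_4$, $q$ large, and $U_v$ the stabiliser of the non-special vertex of the $\tilde C_2$ diagram, which is a maximal compact subgroup. You get $8/4=2>1=|\Delta(\gamma)|_v^{-a}$. For an Iwahori subgroup, which Theorem \ref{global_orbint} allows, you get $|W|$.

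Your fallback does not repair this. The non-special places are places of $S_U$, whose residue characteristics are unbounded, and fixing $[k:\QQ]$ does not bound $|S_U|$.

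So the inequality cannot hold as stated. The paper's own proof has the same defect: it asserts that $\G_\gamma(k_v)$ acts transitively on $A_\gamma\cap X_\Theta$, which fails for non-special types. A constant-free bound at unit discriminant can only be expected when $U_v$ is special. At non-special places you must allow a type-dependent factor; your tube-and-ball argument supplies one, e.g.\ $|W|$ in the split case. These places lie in $S_U$, so globally this adds a factor $C^{|S_U|}$ to Theorem \ref{global_orbint}. That factor is absorbed by $\prod_{v\in S_U}q_v$ in Proposition \ref{volume_formula}, exactly as the factor $C_G^{|S_U|}$ from the index bound is in \S\ref{endproof}.
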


\begin{proof}
  The main ingredient in the proof is the following relation between the orbital integrals and Bruhat-Tits buildings. Recall that $X = X(\G, k_v)$ is the Bruhat--Tits building associated with the simply-connected group $\wdt\G$ over $k_v$ (see \ref{BT_buildings}). We will prove that there exists $b$ depending only on absolute type of $\G$ such that 
  \begin{equation} \label{majo_fixedball}
    \mathcal O(\gamma, 1_{U_v}) \le \left| B_X(x_0, R) \right|, \, R = b\cdot v(\Delta(\gamma))
  \end{equation} 
  
  where $B_X(x_0, R)$ is the set of vertices in the ball of radius $R$ around $x_0$ in the building $X$. By the estimates in \cite[Section 3.3]{Leuzinger}, there exists constants $c>0$ depending only on the absolute type of $\G$ such that $|B_X(x_0, R)| \le q^{cR}$ for $R \ge 1$. Together with \eqref{majo_fixedball} this implies that
  \[
  \mathcal O(\gamma, 1_{U_v}) \le  q^{-cb \cdot v(\Delta(\gamma))} 
  \]
  and this is equal to $|\Delta(\gamma)|_v^{-a}$ where $a$ depends only on the absolute type of $\G$.
\end{proof}

The rest of this subsection is dedicated to the proof of \eqref{majo_fixedball}; we deal first with the case where $\G_\gamma$ is split over $k_v$ and then deduce the general case by using embedding arguments. 


\subsubsection{}

Recall that we assume that the Haar measures on $\G(k_v), \G_\gamma(k_v)$ are normalised so that the maximal compact subgroups $U_v, \G_\gamma(k_v)^b$ have measure 1. In the proofs below we will use the shorthand $\mathcal O(\gamma, 1_{U_v}) = \mathcal O_\gamma$. We start with giving a more geometric formula for $\mathcal O_\gamma$. We assume that a chamber $C$ of $X$ has been fixed. Recall that the types of facets in $X$ are indexed by the facets of $C$. For a local type $\Theta$, we denote by $X_\Theta$ the set of translates of the facet $C_\Theta$ under $\G(k_v)$. 

\begin{lemma} \label{estimate_balls_orbint}
  Let $U_v$ be of type $\Theta$ and let $(X_\Theta)^\gamma$ the simplices of type $\Theta$ in $X$ which are fixed by $\gamma$. Then 
  \[
  \mathcal O(\gamma, 1_{U_v}) = \sum_{x \in \G_\gamma(k_v) \bs (X_\Theta)^\gamma} |\G_\gamma(k_v)^b \cdot x|.
  \]
\end{lemma}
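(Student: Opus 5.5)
The formula comes from identifying $\G(k_v)/U_v$ with $X_\Theta$ and then unfolding the orbital integral orbit by orbit. Since $U_v$ is the stabiliser of the facet $C_\Theta$ (parahoric of type $\Theta$), the map $x\mapsto x\cdot C_\Theta$ gives a bijection
\[
\G(k_v)/U_v \;\longrightarrow\; X_\Theta .
\]
Under it, the condition $x^{-1}\gamma x\in U_v$ becomes $\gamma\cdot(xC_\Theta)=xC_\Theta$, i.e.\ $xC_\Theta\in (X_\Theta)^\gamma$. The function $x\mapsto 1_{U_v}(x^{-1}\gamma x)$ is left $\G_\gamma(k_v)$-invariant and right $U_v$-invariant, and $\vol(U_v)=1$. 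Hence
\[
\mathcal O(\gamma,1_{U_v})=\int_{\G_\gamma(k_v)\bs \G(k_v)/U_v} 1_{(X_\Theta)^\gamma}(y)\, d\bar\mu(y),
\]
where $\bar\mu$ is the pushforward of the quotient measure to the discrete double coset space $\G_\gamma(k_v)\bs X_\Theta$. The remaining task is to compute the mass $\bar\mu$ gives to each $\G_\gamma(k_v)$-orbit.

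Fix $x\in X_\Theta$ with stabiliser $\G(k_v)_x$, a conjugate of $U_v$ of volume $1$. I would unfold the integral over $\G(k_v)$ in two ways, first modulo $U_v$ and then modulo $\G_\gamma(k_v)$. This shows that the mass of the orbit $\G_\gamma(k_v)\cdot x$ equals
\[
\frac{\vol(\G(k_v)_x)}{\vol\bigl(\G_\gamma(k_v)\cap \G(k_v)_x\bigr)}=\frac{1}{\vol_{\G_\gamma}\bigl(\mathrm{Stab}_{\G_\gamma(k_v)}(x)\bigr)} .
\]
This is the standard formula $\vol(H\bs HgK)=\vol(K)/\vol(H\cap gKg^{-1})$ with the quotient-measure conventions of \ref{sec:MeasureNorm}. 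Note that $\mathrm{Stab}_{\G_\gamma(k_v)}(x)$ is open and compact in $\G_\gamma(k_v)$, so it is contained in the unique maximal compact subgroup $\G_\gamma(k_v)^b$.

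It remains to convert this volume into an orbit count. Since $\vol(\G_\gamma(k_v)^b)=1$, we get
\[
\frac{1}{\vol\bigl(\mathrm{Stab}_{\G_\gamma(k_v)}(x)\bigr)}=[\G_\gamma(k_v)^b:\mathrm{Stab}_{\G_\gamma(k_v)^b}(x)]=|\G_\gamma(k_v)^b\cdot x| .
\]
This is orbit--stabiliser, and the right-hand side does not depend on the representative $x$ of the $\G_\gamma(k_v)$-orbit, because $\G_\gamma(k_v)$ is abelian up to finite index and $\G_\gamma(k_v)^b$ is normal. Summing over orbits gives the stated formula.

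The main obstacle is the careful bookkeeping of measures. One must use the quotient measure on $\G_\gamma(k_v)\bs\G(k_v)$ induced by the chosen Haar measures, and account for $\G_\gamma$ being possibly disconnected, with identity component a torus. I would handle disconnectedness by noting that the maximal compact subgroup convention still applies. If $\G_\gamma$ is disconnected I would instead take $\G_\gamma(k_v)^b$ to be the maximal compact open subgroup used in the normalisation, which is normal since the component group is finite. The orbit size is then still well defined on $\G_\gamma(k_v)$-orbits.
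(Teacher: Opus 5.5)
Your proposal is correct and follows essentially the same route as the paper: identify $\G(k_v)/U_v$ with $X_\Theta$, compute the quotient-measure mass of each double coset $\G_\gamma(k_v)xU_v$ as $1/\vol(\G_\gamma(k_v)\cap xU_vx^{-1})=[\G_\gamma(k_v)^b:\G_\gamma(k_v)^b\cap xU_vx^{-1}]$, and read this index as the orbit size. The paper does the same computation via an auxiliary function $\alpha$ with $\int_{\G_\gamma(k_v)}\alpha(tx)\,dt=1$. Your closing discussion of disconnected centralisers is unnecessary, since $\G$ is simply connected and so $\G_\gamma$ is a maximal torus by Steinberg's connectedness theorem; it is also not right as stated, because a finite component group does not make a maximal compact subgroup unique or normal.
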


\begin{proof}
  Let $\alpha$ be a function on $\G(k_v)$ such that $\int_{\G_\gamma(k_v)} \alpha(tx) dt = 1$ for any $x \in \G(k_v)$. It follows that 
  \begin{align*}
    \int_{\G_\gamma(k_v) x U_v} \alpha(x) dx &= |\G_\gamma(k_v) \bs \G_\gamma(k_v) x U_v| \\
    &= [G_\gamma(k_v)^b:G_\gamma(k_v)^b \cap xU_vx^{-1}]. 
  \end{align*}
  Now $1_{U_v}(x^{-1}\gamma x)$ is nonzero if and only if $\gamma$ fixes the corresponding coset $xU_v$. So 
  \begin{align*}
    \int_{\G(k_v)} \alpha(x)1_{U_v}(x^{-1}\gamma x) dx &= \sum_{xU_v \in G(k_v)/U_v : \gamma x = x} \int_{xU_v} \alpha(y) dy \\
    &= \sum_{\substack{\G_\gamma(k_v)xU_v \in \G_\gamma(k_v)\bs G(k_v)/U_v \\ \gamma x = x}} [G_\gamma(k_v)^b:G_\gamma(k_v)^b \cap xU_vx^{-1}]
  \end{align*}
  which finishes the proof by identifying $G(k_v)/U_v$ with $X_{\Theta_v}$ and $[G_\gamma(k_v)^b:G_\gamma(k_v)^b \cap xU_vx^{-1}]$ with $|\G_\gamma(k_v)^b \cdot x|$. 
\end{proof}


\subsubsection{Proof of \eqref{majo_fixedball} in the split case}

We assume here that the maximal torus $\G_\gamma$ is split over $k_v$, so there is a unique apartment $A_\gamma$ of $X$ stabilised by $\G_\gamma(k_v)$. 

\begin{lemma} \label{split_case}
  Under the assumptions above there exists $c'$ depending only on the absolute type of $\G$ such that any simplex of $X$ fixed by $\gamma$ lies at most at distance $c' \cdot v(\Delta(\gamma))$ from $A_\gamma$. 
\end{lemma}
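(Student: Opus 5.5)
The approach is to reduce to a computation inside a single chamber-sized piece of the building, using the Iwasawa-type decomposition in $\G(k_v)$ relative to the split torus $\T := \G_\gamma$, together with Lemma \ref{comm_ss_unip}. Let $x$ be a simplex fixed by $\gamma$ and let $y \in A_\gamma$ be a nearest point projection of $x$ to the apartment $A_\gamma$. By the standard structure of buildings, the geodesic from $y$ to $x$ leaves $A_\gamma$ in a direction determined by some Weyl chamber (or facet direction) at $y$. Hence there is a Borel subgroup $\mathbf B = \T\N$ containing $\T$ such that $x = n\cdot y'$ for some $y' \in A_\gamma$ and $n \in \N(k_v)$, with $d(x, A_\gamma)$ comparable (up to a constant depending on the type) to the ``depth'' of $n$ relative to the stabiliser of $y'$. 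Concretely, write $n = \prod_{\lambda \in \Phi^+_\N} n_\lambda(t_\lambda)$. After translating by $\T(k_v)$, which acts by isometries preserving $A_\gamma$ and commutes with $\gamma$, we may assume $y'$ is a fixed base vertex $x_0 \in A_\gamma$. Then $d(x, A_\gamma) \le c_1 \max_\lambda \max(0, -v(t_\lambda)) + c_2$.

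Since $\gamma$ fixes both $x = n x_0$ and $x_0$ (because $\gamma \in \T(k_v)$ is bounded, it fixes $A_\gamma$ pointwise), the element $n^{-1}\gamma n$ lies in the stabiliser $P_{x_0}$, and so does $\gamma^{-1}$. Therefore $\gamma n^{-1}\gamma^{-1} n \in P_{x_0} \cap \N(k_v)$, which is a product of $n_\lambda(\mo_{k_v}\text{-balls of fixed radius})$. By Lemma \ref{comm_ss_unip}, with roots ordered so that no root is a sum of larger ones, the components of $\gamma n^{-1}\gamma^{-1} n$ are
\[
(1-\lambda(\gamma))t_\lambda + \sum c^\lambda_{\underline\nu}\prod_{\nu} \nu(\gamma)^{\eps} t_\nu ,
\]
where the sum runs over decompositions into strictly smaller (earlier) roots, and where $|\nu(\gamma)|_v = 1$. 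We then proceed by induction along the order. Suppose all earlier $t_\nu$ satisfy $v(t_\nu) \ge -M_\nu$. The constants $c^\lambda_{\underline\nu}$ are integral for a Chevalley-type choice of $n_\lambda$ (or at least bounded in terms of the type), and each $\underline\nu$ has at most $\dim\G$ terms. Hence
\[
v(t_\lambda) \ge -v(1-\lambda(\gamma)) - \sum_{\nu \in \underline\nu} M_\nu - O(1).
\]
Since $\gamma$ is integral, each $v(1-\lambda(\gamma)) \ge 0$ and $\sum_\lambda v(1-\lambda(\gamma)) = v(\Delta(\gamma))$. The induction therefore gives $M_\lambda \le C\, v(\Delta(\gamma)) + O(1)$, with $C$ depending only on the root system.

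This bound is then converted into $d(x, A_\gamma) \le c'\, v(\Delta(\gamma))$. The additive $O(1)$ needs separate attention: when $v(\Delta(\gamma)) = 0$ we must show $x \in A_\gamma$ exactly. For this we use that if all $|1-\lambda(\gamma)|_v = 1$, the same recursion with exact integrality shows $n \in P_{x_0}$, which requires the constants to be integral. We should work with a Chevalley/Bruhat--Tits valuated root datum normalisation so that the $O(1)$ terms vanish, stating the lemma with $c' v(\Delta(\gamma)) + c''$ if needed; since (\ref{majo_fixedball}) only needs a ball bound, a harmless constant could be absorbed. The main obstacle is the first step: justifying that a simplex fixed by $\gamma$ can be written as $n x_0$ with $n$ in a single unipotent radical, with the distance to $A_\gamma$ controlled by the valuations of the $t_\lambda$. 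For this I would use the Iwasawa decomposition $\G(k_v) = \N(k_v)\T(k_v)P_{x_0}$ for a suitably chosen Borel, together with the retraction onto $A_\gamma$ centred at a chamber at infinity. The remaining concern is non-reduced relative root systems, where root subgroups are not one-dimensional; there I would treat $n_\lambda$ as maps from higher-dimensional $k_v$-spaces, and the same triangular argument should apply.
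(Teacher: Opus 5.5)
Your proposal follows essentially the same route as the paper. You write a $\gamma$-fixed simplex as $n\cdot y'$ with $y'\in A_\gamma$ and $n$ in the unipotent radical of a Borel containing $\G_\gamma$, deduce that $\gamma n^{-1}\gamma^{-1}n$ is integral, run the triangular recursion of Lemma~\ref{comm_ss_unip}, and conclude with a point of $A_\gamma$ fixed by $n$. Your version of the first step, via the retraction from a chamber at infinity, is in fact the accurate form of the paper's decomposition $\G(k_v)=\G_\gamma(k_v)\N(k_v)I$, which as written omits Weyl group representatives (already for $\SL_2$ the element $w$ is not in $BI$).

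One correction: the additive constant is not harmless. At the infinitely many places with $v(\Delta(\gamma))=0$ you need the fixed set of $\gamma$ to be exactly $A_\gamma$; otherwise the bound on $\prod_v\mathcal O(\gamma,1_{U_v})$ is infinite and Lemma~\ref{l-FixedSet1} fails. So you must use your exact-integrality argument there (integral Chevalley constants and concavity of the root-group filtration), after which any constant is absorbed into $c'v(\Delta(\gamma))$ when $v(\Delta(\gamma))\ge 1$.
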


\begin{proof}
  Let $C_\Theta$ be the simplex of $X$ fixed by $U_v$. We choose a chamber $C$ of $A_\gamma$ containing $C_\Theta$, so there exists a special vertex $x_0 \in C$ fixed by $\gamma$. Let $I \le U_v$ be the Iwahori subgroup fixing $C$ pointwise. Let $\Phi$ be the (linear) root system of the pair $(\G, \G_\gamma)$. The chamber $C$, together with the vertex $x_0$, determine a set $\Phi^+$ of positive roots. Let $\N$ be the associated maximal unipotent subgroup of $\G$. Then according to \cite[3.3.2]{Tits_Corvallis} we have the Iwasawa decomposition
  \[
  \G(k_v) = \G_\gamma(k_v) \cdot \N(k_v) \cdot I.
  \]
  Let $C_\Theta' = gC_\Theta$ 
  and write $g = ank$ the Iwasawa decomposition of $g$. If $\gamma$ preserves $C_\Theta'$ then we have $\gamma an \in an U_v$, and as $a \in \G_\gamma(k_v)$ it follows that $n^{-1}\gamma n \in U_v$ and finally that $\gamma^{-1}n^{-1}\gamma \cdot n \in U_v$. 

  For $\lambda\in \Phi$ we denote by $\N_\lambda$ the 1-parameter unipotent subgroup of $\G$ and associated to $\lambda$. We will also use the notation set up in \cite{Tits_Corvallis}: if $\lambda \in \Phi$ and $k \in \ZZ$ we let $\lambda + k$ be the affine function on $A_\gamma$ with vector part $\lambda$ and such that $(\lambda+k)(x_0) = k$. For such a function $f = \lambda + k$ let $X_f$ be the subgroup of $\N_\lambda(k_v)$ fixing the half-apartment $f^{-1}(\interval f0{+\infty}o)$. This determines an isomorphism $n_\lambda$ from the additive group $k_v$ to $\N_\lambda$ such that $n_\lambda(\mo_{k_v}) = X_\lambda$.

  We can describe the chamber $C_\Theta$ as follows :
  \[
  C_\Theta = \bigcap_{\lambda \in \Phi^+ \setminus \Theta} \left( \lambda^{-1}(\interval f0{+\infty}o) \cap (1-\lambda)^{-1}(\interval f0{+\infty}o) \right) \cap \bigcap_{\lambda\in\Theta} \lambda^{-1}(\{0\}) 
  \]
  and it follows from \cite[3.1.1]{Tits_Corvallis} that the product map defines a bijection
  \[
  U_v = \left( \prod_{\lambda \in \Phi^+} X_\lambda \right) \times \G_\gamma(k_v)^b \times \left( \prod_{\lambda \in \Phi^+ \setminus\Theta} X_{1 - \lambda} \times \prod_{\lambda\in\Theta} X_{-\lambda} \right). 
  \]
  As $\gamma^{-1}n\gamma \cdot n \in \prod_{\lambda\in\Phi^+} N_\lambda(k_v)$, we get that $\gamma^{-1}n\gamma \cdot n \in U_v$ is equivalent to it belonging to $\prod_{\lambda \in \Phi^+} X_\lambda$. We choose a numbering $\Phi^+ = \{\lambda_1, \ldots, \lambda_k\}$ satisfying the condition in Lemma \ref{comm_ss_unip} and we write
  \[
  n = n_{\lambda_k}(t_k) \cdots n_{\lambda_1}(t_1).
  \]
  Let $b_{j, s}^i$ be the structure constants of a Chevalley basis of the Lie algebra of $\G$, determined by $[u_j, u_s] = b_{j, s}^i u_i$ if $u_i$ are the basis vectors corresponding to the roots $\lambda_i$. They are integers depending only on the absolute type of $\G$, see \cite[Theorem 1 on p.~6]{Steinberg_notes}. It follows from Lemma \ref{comm_ss_unip} that (for concision we use the notation $a_i = \lambda_i(\gamma)$):
  \begin{equation} \label{coefficients_unip}
    n^{-1} \cdot \gamma n \gamma^{-1} = \prod_{i=1}^k n_{\lambda_i} \left( (a_i-1)t_i + \sum_{\lambda_{j_1} + \cdots + \lambda_{j_m} = \lambda_i} b_{j_1, \ldots, j_m}^i \prod_{l=1}^m t_{j_l} \right),
  \end{equation}
  where $b_{j_1, \ldots, j_k}^i$ is a nonzero product of the $b_{i, j}^k$ (depending on the chosen order on roots). 
  We have arrived at the following reformulation : 
  \[
  n^{-1} \gamma^{-1}n\gamma \in I \: \Longleftrightarrow \: \forall i = 1, \ldots, k : \left( (a_i-1)t_i + \sum_{\lambda_{j_1} + \cdots + \lambda_{j_m} = \lambda_i} b_{j_1, \ldots, j_m}^i \prod_{l=1}^m t_{j_l} \right) \in \mo_{k_v}. 
  \]
  We want to deduce from this the lower bounds for $v(t_i)$, $1 \le i \le k$. Note that if $\N$ were abelian, this would be immediate, as all $b_{j,s}^i$ would vanish and $(a_i-1)t_i \in \mo_{k_v}$ gives $v(t_i) \ge -v(a_i-1)$.

  \medskip

  {\noindent\bf Type $A_2$} We give first a simple example to illustrate the general principle, and point to a possible sharpening of our estimates. If $\G = \SL_3$ we can choose the $\lambda_i, n_i$ so that all $b_{j,s}^i = 0$ for $i = 1, 2$ and $b_{1,2}^3 = 1$. We get that
  \[
  v(t_i) \ge -v(a_i-1),\, i = 1, 2
  \]
  and
  \[
  v(t_3) \ge -v(a_3-1) \text{ or } v((a_3-1)t_3) = v(a_2t_1t_2) \ge v(t_1)
  \]
  (for the last inequality we use $v(a_2) \ge v(a_2-1) \ge -v(t_2)$) so in the worst case we get that
  \[
  v(t_1) \ge -v(a_1-1), \, v(t_2) \ge -v(a_2-1), \, v(t_3) \ge -v(a_3-1) - v(a_1-1)
  \]
  which implies that $v(t_i) \ge -v(\Delta(\gamma))$ for all $i$, and also $\sum_i v(t_i) \ge -2v(\Delta(\gamma))$. 

  \medskip

  {\noindent\bf General case} Ordering the $\lambda_i$s (as in Lemma \ref{comm_ss_unip}) so that the $l$--th term in the lower central series of $\N$ is spanned by $\N_{\lambda_{k_l}}, \ldots, \N_{\lambda_{k_{l+1}-1}}$ (with $1 = k_1 \le k_2 \le \cdots \le k_m$), we see that all $b_{j, s}^i = 0$ for $1 \le i < k_1$, so that $v(t_i(a_i-1)) \ge 0$, hence $v(t_i) \ge -v(a_i-1)$ for those $i$s. 

  Now we proceed by induction on $l$ to estimate $v(t_i)$ for $k_l \le i \le k_{l+1} - 1$. Assume that we know that for $1 \le i \le k_l-1$ we have $v(t_i) \ge -\sum_{1 \le j \le k_l-1} b_j v(a_j - 1)$ for some nonnegative integers $b_j \le 2^{l-1}$. Now \eqref{coefficients_unip} implies that either $v((a_i-1)t_i) \ge 0$, or
  \[
  v((a_i-1)t_i) \ge - v\left(\sum_{\lambda_{j_1} + \cdots + \lambda_{j_m} = \lambda_i} b_{j_1, \ldots, j_m}^i \prod_{s=1}^m a_{j_s}^{\eps_{j_1, \ldots,j_k}^i} t_{j_s} \right). 
  \]
  In the first case we get that $v(t_i) \ge -v(a_i-1)$ and we are done. In the second case we get that
  \begin{align*}
    v(t_i) &\ge -v(a_i-1) + \min_{\lambda_{j_1} + \cdots + \lambda_{j_m} = \lambda_i} v\left( \prod_{s=1}^m a_{j_s}^{\eps_{j_1, \ldots,j_k}^i} t_{j_s} \right) \\
    &\ge -v(a_i-1) + \min_{\lambda_{j_1} + \cdots + \lambda_{j_m} = \lambda_i} \sum_{s=1}^m v(t_{j_s})
  \end{align*}
  and by the recursion hypothesis it follows that :
  \[
  v(t_i) \ge -v(a_i-1)  - 2\sum_{1 \le j \le k_l-1} b_jv(a_j - 1) \ge \sum_{1 \le j \le k_{l+1}-1} b_j'v(a_j - 1)
  \]
  with $b_j' \le 2^l$. 

  \medskip

  In conclusion: from the condition that $n^{-1} \cdot \gamma n \gamma^{-1} \in I$, equivalent to $(a_i-1)t_i + \sum_{\lambda_j + \lambda_l = \lambda_i} a_{j,l}^it_it_j \in \mo_{k_v}$ for all $1 \le i \le k$ we have arrived at the following set of inequalities :
  \[
  v(t_i) \ge -2^{m-1} \sum_j v(a_j - 1)
  \]
  Since
  \[
  -v(\Delta(\gamma)) = -v\left(\prod_{\lambda\in \Phi}(1 - \lambda(\gamma)) \right) \le -\sum_{\lambda\in \Phi^+} v(1 - \lambda(\gamma)),
  \]
  (the inequality following from $-v(1+\lambda(\gamma)) \ge -v(\lambda(\gamma)) \ge 0$ for $\lambda \in \Phi^+$) we get that
  \begin{equation}
    v(t_i) \ge -c \cdot v(\Delta(\gamma)), \: 1 \le i \le k 
  \end{equation} 
  where $c = 2^m$ depends only on the absolute type of $\G$. 

  \medskip

  We can finally conclude. Let $x_1$ be the vertex of $A_\gamma$ at the tip of the intersection of the half-apartments $\lambda_i^{-1}\{-v(\Delta(\gamma))\}$. We have just proven that if $\gamma$ fixes $gI$, with $g = ank$, then $n$ fixes $x_1$. In particular we get that 
  \[
  d(A_\gamma, gC) = d(A_\gamma, nC) \le d(x_1, nC) = d(x_1, C) \le d(x_1, x_0) = c' \cdot v(\Delta(\gamma)). 
  \]
  where $c'$ depends only on $c$ and the geometry of the apartment, so only on the absolute type of $\G$. As a fixed vertex belongs to a fixed chamber at the same distance from $A_\gamma$, this finishes the proof of the lemma.  
\end{proof}


Now let
\[
R := \sup_{x \in \Fix_{X^\Theta}(\gamma)} d(x, A_\gamma) \le c \cdot v(\Delta(\gamma)),
\]
the inequality following from by Lemma \ref{split_case}. 

The group $\G_\gamma(k_v)$ acts transitively on $A_\gamma^\Theta := A_\gamma \cap X^\Theta$ (this follows from Iwasawa decomposition). Moreover every vertex of $A_\gamma$ is at distance at most 1 from a vertex of $A_\gamma^\Theta$ (because every chamber contains a vertex of every type)It follows (as $2R \ge R+1$ for $R \ge 1$) that there exists a set $\{y_1, \ldots, y_n\}$ of representatives for the orbits of $\G_\gamma(k_v)$ on $\Fix_{X^\Theta}(\gamma)$ such that $y_i \in B_{X}(x_0, 2R)$ for all $i$. The subgroup $\G_\gamma(k_v)^b$ preserves this ball and the orbits $\G_\gamma(k_v)^b \cdot y_i$ are pairwise disjoint, so it follows that : 
\[
\sum_{x \in \G_\gamma(k_v) \bs \Fix_{X^\Theta}(\gamma)} \left| \G_\gamma(k_v)^bx\right| \le B_{X}(x_0, 2R).  
\]
Using Lemma \ref{estimate_balls_orbint}, this finishes the proof of \eqref{majo_fixedball} in the case where $\G$ is $k_v$-split. 


\subsubsection{Proof of \eqref{majo_fixedball} in the remaining cases}

Let $L_v/k_v$ be a finite Galois extension splitting $\G_\gamma$. Then there is an inclusion of buldings $X \subset X(\wdt G, L_v) = X_{L_v}$ such that the image of $X$ is a totally geodesic subspace. The Galois group $\Gal(L_v/k_v)$ acts by isometries on $X_{L_v}$. 

\medskip

\noindent{\bf Tame case.} We suppose here that $L_v/k_v$ is unramified or tamely ramified. In these cases we have $X = X_{L_v}^{\Gal(L_v/k_v)}$ by \cite[2.6.1]{Tits_Corvallis}, \cite[Proposition 5.1.1]{Rousseau_thesis} (see also \cite{Prasad_galois}). The Galois group preserves $A_{\gamma, L_v}$ where it acts by affine isometries, so $A_{\gamma, L_v} \cap X = (A_{\gamma, L_v})^{\Gal(L_v/k_v)}$ is an affine subspace. The linear part of the Galois action on $A_{\gamma, L_v}$ is given by the action on the co-root space $X_*(\G_\gamma)$. It follows that the dimension of its fixed points equals the $k_v$-rank of $\G_\gamma$, which equals the $k_v$-rank of $\G_\gamma$. 

We define $A_\gamma := A_{\gamma, L_v} \cap X$, which by the discussion above is an affine subset of dimension $r$ in an apartment of $X$. It follows from Lemma \ref{split_case} that $\Fix_{X}(\gamma)$ is contained in the $c'\cdot v(\Delta(\gamma))$-neighbourhood of $A_\gamma$, with respect to the metric induced on $X$ by that of $X_{L_v}$. This metric might not be the intrisic metric on $X$, but it can only be multiplied by a factor depending only the Galois action on $X_*(\G_\gamma)$, which is determined up to finite ambiguity by the absolute type of $\G$. In particular the statement of the lemma remains valid in this case.

We let $R = \sup(d(x, A_\gamma) : x \in \Fix_{X_\Theta}(\gamma))$, by the preceding comments we have that $R \le c' \cdot v(\Delta(\gamma))$. In case $R = 0$ (that is $v(\Delta(\gamma)) = 0$) the set $X_\Theta \cap A_\gamma$ is nonempty, and the action of $\G_\gamma(k_v)$ on it is transitive. It follows that $\mathcal O_\gamma = 1$. 

In case $R \ge 1$ the set $X_\Theta \cap A_\gamma$ might be empty. We fix representatives $x_1, \ldots, x_s$ of the $\G_\gamma(k_v)$-orbits on $A_\gamma$, we may assume that $x_i \in B_{X}(x_1, 1)$ for $1 \le i \le s$. Since $\Fix_{X_\Theta}(\gamma)$ is contained in the $R$-neighbourhhod of $A_\gamma$, using Lemma \ref{estimate_balls_orbint} we get that
\[
\mathcal O_\gamma \le |B_{X}(x_1, R+1)| \le |B_{X}(x_1, 2R)|.
\]
This finishes the proof in case $\G_\gamma$ is split by a unramified or tamely ramified extension of $k_v$. 


\medskip

\noindent{\bf Wild ramification.} Assume now that $L_v/k_v$ is wildly ramified. This case is similar to the preceding but the set $A_{\gamma, L_v} \cap X$ itself might be empty. We note that we must have $v(\Delta(\gamma)) \ge 1$. We fix a vertex $x_0 \in \Fix_{X_k}(\gamma)$ which is as close as possible (in $X_{L_v}$) to $A_{\gamma, L_v}$. Then $\Fix_{X_\Theta}(\gamma)$ is contained in the $(R-w)$-neighbourhood of $G_\gamma(k_v) \cdot x_0$ where $w = d(x_0, A_\gamma)$. We can then apply the exact same argument as above to prove \eqref{majo_fixedball}. 


\subsection{Archimedean orbital integrals}

In this section $k_v = \RR$ or $k_v = \CC$, as above $\G$ is a semisimple $k_v$-group. $f$ is a function with support in the ball $B_{\G(k_v)}(1, R)$ of radius $R > 0$ around the identity, in a fixed Euclidean norm on $\G(k_v)$). We want to estimate the orbital integrals 
\[
\mathcal O(\gamma, f) = \int_{\G_\gamma(k_v) \bs \G(k_v)} f(x^{-1}\gamma x) dx
\]
for sufficiently regular $\gamma$. For a regular element $\gamma \in \G(k_v)$ and $\Phi_\RR$ the root system of $\G(k_v)$ relative to $\G_\gamma(k_v)$ we denote
\[
\Delta^+(\gamma) = \prod_{\lambda\in \Phi_\RR} (1 - \lambda(\gamma)). 
\]

\begin{proposition} \label{est_orbint_a}
  Let $\gamma \in \G(k_v)$ be a $\RR$-regular element $\gamma \in \G(k_v)$. There exists $C > 0$, depending only on $\G(k_v)$, on $d_G(1, \gamma)$ and on $R$, such that: 
  \[
  \mathcal O(\gamma, f) \le C \cdot \| f \|_\infty \cdot |\Delta^+(\gamma)|.
  \]
\end{proposition}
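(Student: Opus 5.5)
We follow the proof of Proposition \ref{convergence_orbint} for non-compact tori, keeping track of constants. Since $\gamma$ is $\RR$-regular, Lemma \ref{lem:Rregular} shows that $T := \G_\gamma(k_v)$ contains a maximal $k_v$-split torus $A$. Let $P = MAN$ be a minimal parabolic with Levi part $Z_{\G(k_v)}(A) = MA \supset T$, where $M$ is compact modulo $A$. Then $T\bs MA$ is compact, and we fix a maximal compact $K$ with $\G(k_v) = MANK$. With $dx$ normalised compatibly with Section \ref{sec:MeasureNorm}, this gives
\[
\mathcal O(\gamma, f) = \int_{T\bs MA}\int_N\int_K f(k^{-1}n^{-1}m^{-1}\gamma m n k)\,dk\,dn\,dm .
\]
The outer integral is over a compact set of volume depending only on $\G(k_v)$. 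Moreover, $m^{-1}\gamma m$ stays in the torus $m^{-1}Tm \subset MA$ and has the same values $\lambda(\cdot)$ on roots. So it suffices to bound, uniformly for $\gamma' = m^{-1}\gamma m$, the inner integral $\int_N \tilde f(n^{-1}\gamma' n)\,dn$, where $\tilde f(g) = \int_K f(k^{-1}gk)\,dk$. Note that $\|\tilde f\|_\infty \le \|f\|_\infty$, and the support of $\tilde f$ is contained in a $K$-invariant ball whose radius depends only on $R$.

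Next we write $n^{-1}\gamma' n = \gamma'\cdot(\gamma'^{-1}n^{-1}\gamma' n)$. By Lemma \ref{comm_ss_unip} and Lemma \ref{constantjacobian} (applicable because $\gamma'$ acts nontrivially on every root space of $N$, by regularity), the map $\phi_{\gamma'} : n\mapsto \gamma'^{-1}n^{-1}\gamma'n$ is a diffeomorphism of $N$. Its Jacobian determinant is constant, equal to
\[
J(\gamma) = \prod_{\lambda\in\Phi^+_N}|1-\lambda(\gamma)|_v ,
\]
where roots are counted with multiplicity and $J(\gamma)$ depends only on the $\lambda(\gamma)$, hence not on $m$. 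Changing variables gives $\int_N \tilde f(\gamma'\phi_{\gamma'}(n))\,dn = J(\gamma)^{-1}\int_N \tilde f(\gamma' u)\,du$. The last integral is bounded by $\|f\|_\infty$ times the Haar volume of $N\cap \gamma'^{-1}B(1,R')$. This is a compact subset of $N$ whose volume is bounded in terms of $R$ and $d_G(1,\gamma)$, since $\gamma'$ ranges over a compact set determined by $d_G(1,\gamma)$. This step yields
\[
\mathcal O(\gamma,f)\le C(R, d_G(1,\gamma), \G(k_v))\,\|f\|_\infty\, J(\gamma)^{\pm1}.
\]

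It remains to compare $J(\gamma)$ with $|\Delta^+(\gamma)|$. Pairing $\lambda$ with $-\lambda$ gives $(1-\lambda(\gamma))(1-\lambda(\gamma)^{-1}) = -(1-\lambda(\gamma))^2/\lambda(\gamma)$. The factors $|\lambda(\gamma)|^{\pm1}$ are bounded above and below in terms of $d_G(1,\gamma)$, and every $|1-\lambda(\gamma)|$ is bounded above in terms of $d_G(1,\gamma)$. Hence $|\Delta^+(\gamma)|$ and $J(\gamma)^2$ agree up to constants depending only on $d_G(1,\gamma)$, so the bound becomes a power of $|\Delta^+(\gamma)|$.

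\textbf{Main obstacle.} The change of variables naturally produces $J(\gamma)^{-1}$, i.e. a bound of the form $C\|f\|_\infty|\Delta^+(\gamma)|^{-1/2}$, whereas the statement has $|\Delta^+(\gamma)|$ to a positive power. The upper bound on the $|1-\lambda(\gamma)|$ in terms of $d_G(1,\gamma)$ only turns $J$ into a power of $\Delta^+$. It does not flip the sign of the exponent, since $|1-\lambda(\gamma)|$ can be small for fixed $d_G(1,\gamma)$.

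So the first thing to try is to extract a genuine lower bound on $|1-\lambda(\gamma)|$ for the real roots $\lambda\in\Phi_\RR$ from the hypotheses. If this fails, the estimate should be read with $|\Delta^+(\gamma)|^{-1/2}$ in place of $|\Delta^+(\gamma)|$. For the application in Theorem \ref{global_orbint} the sign is harmless. The Archimedean factor enters the product with the non-Archimedean factors $|\Delta(\gamma)|_v^{-a}$, and by the product formula together with $m(\Delta(\gamma))\ll R$ any fixed power of $N_{k/\QQ}(\Delta(\gamma))$ is bounded in terms of $R$ and $[k:\QQ]$.
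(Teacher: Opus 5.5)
Your argument is the paper's argument step for step: average $f$ over $K$-conjugation, reduce to an integral over $N$ via the Iwasawa decomposition, use the constant-Jacobian commutator map of Lemma~\ref{constantjacobian}, and bound by the volume of a ball in $N$. Your change of variables is the correct one. Since $dy=J(\gamma)\,dn$ for $y=\gamma^{-1}n^{-1}\gamma n$, one gets $\int_N\tilde f(\gamma y(n))\,dn=J(\gamma)^{-1}\int_N\tilde f(\gamma y)\,dy$.

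The paper's proof instead writes $\mathcal O(\gamma,f)=\Delta^+(\gamma)\int_N f(\gamma y)\,dy$. That puts the Jacobian on the wrong side, and it is the only source of the positive power in the statement. You can see the correct direction already in $\SL_2(\RR)$, with $\gamma_a=\mathrm{diag}(a,a^{-1})$ and $n_t$ upper unipotent with entry $t$:
\[
n_t^{-1}\gamma_a n_t=\begin{pmatrix} a & (a-a^{-1})t\\ 0 & a^{-1}\end{pmatrix},\qquad \int_{\RR}\tilde f(n_t^{-1}\gamma_a n_t)\,dt=\frac{1}{|a-a^{-1}|}\int_{\RR}\tilde f\begin{pmatrix} a & s\\ 0 & a^{-1}\end{pmatrix}ds .
\]
So orbital integrals grow, rather than decay, as $\gamma$ approaches the singular set.

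Your ``main obstacle'' is therefore not a gap in your proof but an error in the proposition. Drop the $J(\gamma)^{\pm1}$ hedge, and do not look for a lower bound on $|1-\lambda(\gamma)|$: none follows from the local hypotheses. In real rank $\ge 2$ the statement is in fact false. In $\SL_3(\RR)$ take $\gamma_t=\exp H_t$ with $H_t\in\mathfrak a$ regular, $d_G(1,\gamma_t)$ held fixed, and $H_t$ converging to a point $H_0$ on a wall. Take a fixed $f\ge 0$ equal to $1$ on a $K$-conjugation-invariant neighbourhood of $\gamma_0=\exp H_0$. Your identity gives $\mathcal O(\gamma_t,f)\gg J(\gamma_t)^{-1}\to\infty$, while $\Delta^+(\gamma_t)\to 0$.

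So what you proved, $\mathcal O(\gamma,f)\le C\|f\|_\infty J(\gamma)^{-1}$, is the correct local estimate. Two refinements are worth making.

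\textbf{The Jacobian.} When $\gamma=m_0a\notin A$, or when restricted roots have multiplicity, $J(\gamma)$ is not a product over restricted roots. Up to a factor bounded in terms of $d_G(1,\gamma)$, it is the product of $|1-\alpha(\gamma)|$ over the absolute roots $\alpha$ occurring in $\mathfrak n$. Since $|\alpha(\gamma)|=e^{\lambda(\log a)}$ with $\lambda=\alpha|_{\mathfrak a}$, the inequality $|1-\alpha(\gamma)|\ge\bigl|1-|\alpha(\gamma)|\bigr|$ gives $J(\gamma)\gg\prod_{\lambda>0}|1-e^{\lambda(\log a)}|^{m_\lambda}$. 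This yields a bound by some fixed negative power of $|\Delta^+(\gamma)|$, which is all that is needed.

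\textbf{The global application.} You are right that Theorem~\ref{global_orbint} survives, but the precise input is a \emph{lower} bound at the Archimedean places. Lemma~\ref{partial_to_full_discriminant} as stated now points the wrong way. Its proof, however, shows that each $|1-\alpha(\gamma)|_v$, $v\in\val_\infty$, is bounded below by a constant depending only on $R$ and $[k:\QQ]$: $1-\alpha(\gamma)$ is a nonzero algebraic integer of bounded degree whose conjugates are all bounded in terms of $R$. This bounds $\prod_{v\in\val_\infty}J_v(\gamma)^{-1}$ in terms of $R$ and $[k:\QQ]$.
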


\begin{proof}
  Let $G = \G(k_v)$, $T = \G_\gamma(k_v)$ 
  and let $P$ be the minimal parabolic subgroup of $G$ containing the connected component of $T$. Let $A$ be a maximal $\RR$-split sub-torus of $T$. Then $P$ has a Langlands decomposition $P = MAN$, with $N$ unipotent and $M$ semisimple. Since we assume $\gamma$ to be $\RR$-regular, $A$ is a maximal $\RR$-split torus of $\G$ by Lemma \ref{lem:Rregular}  and $M$ is in fact compact.

  Let $K$ be a maximal compact subgroup of $G$ containing $M$, such that $A$ is fixed by the Cartan involution of $G$ associated with $K$. Then we have the Iwasawa decomposition $G = KAN$ (see \cite[Proposition 7.31]{Knapp_book_lie}). 

  We have
  \begin{align*}
    \mathcal O(\gamma, f) &= \int_{A \bs G / K} \int_K f(u^{-1}x^{-1}\gamma xu) du dx \\
    &= \int_{A \bs G / K} \tilde f(x^{-1}\gamma x) dx
  \end{align*}
  where $\tilde f(g) = \int_K f(u^{-1}gu) du$. As $\tilde f$ satisfies $\|\tilde f\|_\infty \le \|f\|_\infty$ and its support is contained in $B(1, R + d)$ where $d$ is the diameter of $K$, we will assume in the rest of the proof that $f = \tilde f$.

  We obtain
  \[
  \mathcal O(\gamma, f) = \int_{N} f(n^{-1}\gamma n) dn = \int_{N} f(\gamma \cdot y(n)) dn
  \]
  where $y(n) = \gamma^{-1}n^{-1}\gamma \cdot n$. Let $\Phi_\RR^+$ be the positive roots associated with $N$ in the relative root system of $(G, A)$. By the proof of Lemma \ref{constantjacobian} we see that $y$ is a diffeomorphism of $N$ and its Jacobian equals $\prod_{\lambda \in \Phi_\RR^+} (1 - \lambda(\gamma)) = \Delta^+(\gamma)$. The Haar measure on $N$ is given by $\prod_{\lambda \in \Phi^+} dn_\lambda$ and it follows that
  \[
  \mathcal O(\gamma, f) = \Delta^+(\gamma) \int_{N} f(\gamma y) dy 
  \]
  We can conclude since
  \[
  \int_{N} \frac{f(\gamma y)}{\|f_\infty\|} dy \le \vol B_{N}(1, R+d_G(1, \gamma))
  \]
  and the right-hand side depends only on $\G(k_v)$, $R$ and $d_G(1, \gamma)$. 
\end{proof}


\subsubsection{All infinite places}

\begin{lemma} \label{partial_to_full_discriminant}
  Let $\G/k$ be a $k$-semisimple group. There exists a constant $C$ depending on the absolute type of $\G$, on $R$ and on $[k : \QQ]$ such that for any strongly regular, $\RR$-regular element $\gamma \in \G(k)$ with $m(\gamma) \le R$ we have: 
  \[
  \prod_{v \in \val_\infty} |\Delta^+(\gamma)|_v \le C \prod_{v \in \val_\infty} |\Delta(\gamma)|_v.
  \]
\end{lemma}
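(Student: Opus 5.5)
The plan is to compare the two products factor by factor. At each infinite place $v$, the set $\Phi_\RR$ of relative roots of $\G(k_v)$ with respect to $\G_\gamma(k_v)$ is to be read as the restrictions to the maximal $\RR$-split part of the torus; by the proof of Proposition \ref{est_orbint_a}, the Jacobian is a product over the roots $\lambda$ of the absolute system $\Phi=\Phi(\G,\G_\gamma)$ lying in $N$, counted with multiplicity. I would therefore first rewrite $|\Delta^+(\gamma)|_v$ as $\prod_{\lambda\in\Phi_v'}|1-\lambda(\gamma)|_v$ for a subset $\Phi_v'\subset\Phi$. This subset consists of the positive roots whose restriction to the split part is nonzero. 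Since $\gamma$ is $\RR$-regular, every such restricted root is nontrivial on the split component, and the subset contains at least one root from each pair $\pm\lambda$ with $|\lambda(\gamma)|\neq 1$. The quotient is then
\[
\frac{\prod_{v}|\Delta^+(\gamma)|_v}{\prod_v|\Delta(\gamma)|_v}=\prod_{v\in\val_\infty}\ \prod_{\lambda\in\Phi\setminus\Phi_v'}|1-\lambda(\gamma)|_v^{-1},
\]
and the task reduces to bounding each missing factor $|1-\lambda(\gamma)|_v$ from below by a constant depending only on $R$, $[k:\QQ]$ and the type.

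The next step is to bound these missing factors from below. By the definition of $m(\gamma)$, we have $\sum_\lambda[k:\QQ]h(\lambda(\gamma))\le R$, and the $\lambda(\gamma)$ are algebraic integers by the integrality proposition for arithmetic lattices (here $\gamma\in U$ is integral). Hence every conjugate of every $\lambda(\gamma)$ has absolute value at most $e^{R}$, so $|1-\lambda(\gamma)|_v\le 1+e^R$ at every place. This gives only upper bounds, so for a lower bound I would use the product formula. Set $\beta=1-\lambda(\gamma)$. It is a nonzero algebraic integer, since $\gamma$ is regular, of degree at most $D=[k:\QQ]\cdot|\Phi|$. Its norm is therefore at least $1$ in absolute value, and all of its conjugates are bounded by $1+e^R$. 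Consequently each single conjugate satisfies $|\beta^\sigma|\ge (1+e^R)^{-(D-1)}$. This bound holds at every archimedean embedding, so each missing factor is at least a constant $c(R,[k:\QQ],\G)>0$.

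Combining the two steps, the number of missing factors is at most $|\Phi|\cdot[k:\QQ]$, where we count $\iota_v=2$ for complex places, which affects only constants. The quotient is therefore at most $c^{-|\Phi|[k:\QQ]}=:C$, which proves the lemma. The step I expect to be the main obstacle is the first one. It requires matching $\Delta^+$, which is defined through relative roots with respect to $A$, with a genuine subproduct of the absolute Weyl discriminant, including multiplicities, when $\G_\gamma(k_v)$ is not split. The point to check is that the Jacobian computed in Lemma \ref{constantjacobian} really equals $\prod_{\lambda\in\Phi^+,\lambda|_A\neq0}(1-\lambda(\gamma))$ in absolute value, counting complex pairs correctly. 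Once this identification is made, the remaining steps are routine height estimates.
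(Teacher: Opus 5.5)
Your proof is correct and follows essentially the paper's route. At each archimedean place you reduce the ratio to a product of inverses $|1-\lambda(\gamma)|_v^{-1}$ over absolute roots, and then bound each such factor below using that $1-\lambda(\gamma)$ is a nonzero algebraic integer whose conjugates are all bounded in terms of $R$. The differences are minor. The paper obtains the first reduction from Araki's splitting $\Phi=\Phi_1\cup\Phi_2\cup\Phi_3$ plus an elementary inequality for complex pairs, whereas you identify $\Delta^+$ with the Jacobian, which is an exact subproduct of $\Delta$. Your Liouville step, which uses the full norm of $\beta$ with its $D$ conjugates, is a slightly more careful version of the paper's claim that the product over $v\in\val_\infty$ is at least $1$.
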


\begin{proof}
  We fix an absolute root system $\Phi$ for $(\G, \G_\gamma)$. For each archimedean place $v \in \val_\infty$ we get a relative root system $\Phi_\RR$, which is made up of the restrictions of elements of $\Phi$ to the split part of $\G_\gamma(k_v)$. It follows from \cite[Propositions 2.2-4]{Araki} that we can subdivide $\Phi$ into three subsets $\Phi_1, \Phi_2, \Phi_3$ such that :
  \begin{itemize}
  \item $\Phi_1 = \Phi_\RR \cap \Phi$ ;

  \item if $\lambda \in \Phi_2$ then $\ovl\lambda \in \Phi$ and $(\lambda \cdot \ovl\lambda)^{\frac 1 2} \in \Phi_\RR$ ; we choose a set $\Phi_2' \subset \Phi_2$ such that $\Phi_2$ is the disjoint union of $\Phi_2'$ and its complex conjugate. 

  \item if $\lambda \in \Phi_3$ then $\lambda$ takes imaginary values on the (real) Lie algebra of the $\RR$-split part of $\G_\gamma$. 
  \end{itemize}
  We have
  \begin{equation} \label{ydgyfqtdazgh}
  \frac{|\Delta^+(\gamma)|}{|\Delta(\gamma)|_v} = \prod_{\lambda \in \Phi_2'} \frac{1 -| \lambda(\gamma)|_v} {|(1 - \lambda(\gamma))(1 - \ovl\lambda(\gamma))|_v}  \cdot \prod_{\lambda \in \Phi_3} |1- \lambda(\gamma)|_v^{-1} \ll \prod_{\lambda \in \Phi_2 \cup \Phi_3} |1- \lambda(\gamma)|_v^{-1}
  \end{equation}
  where the estimate follows from the facts that 
  \[
  \forall z\in \CC \: \frac{1 -| z|} {|(1 - z)(1 - \ovl z)|} \ll \frac 1{|1-z|} + \frac 1{|1+z|}
  \]
  and if $\lambda \in \Phi_2$ then $-\lambda \in \Phi_2$ as well.
  
  Now we observe that for each $\lambda \in \Phi$, $1 - \lambda(\gamma)$ is a nonzero algebraic integer so that $\prod_{v \in \val_\infty} |1-\lambda(\gamma)|_v \ge 1$. On the other hand, since we are assuming that $m(\gamma) \ll R$ there exists some $M$ depending only on $R$ and $X$ such that $|1 - \lambda(\gamma)|_v \le M$ for all $v$. It follows that $|1- \lambda(\gamma)|_v \ge M^{-[k:\QQ]}$, so the product in the rightmost term of \eqref{ydgyfqtdazgh} is bounded, which establishes the lemma. 
\end{proof}



\section{Estimate for the number of elements of small height} \label{s:small}

We fix a semisimple Lie group $G$. The main result of this section is the following estimate. 



\begin{theorem}\label{t-SmallHeigtCount}
  For any $d \in \mathbb N$, $R > 0$ there exists a constant $C$ depending on $R, d$ and $G$ such that the following holds. For any number field $k$ with $[k:\QQ] \le d$, semisimple $k$-group $\G$ with $\G(k_\infty)$ isogenous to $G$ times a compact group, and any maximal compact subgroup $U=\prod_{v\in \val_f}U_v$ of $\G(\Ade_f)$, the number of conjugacy classes of strongly regular elements $\gamma \in \G(k)$ such that $m(\gamma) \le R$  and whose projection in $\G(\Ade_f)$ is contained in $U$ is bounded by $C$. 
\end{theorem}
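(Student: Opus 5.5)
We count $\G(k)$-conjugacy classes of strongly regular $\gamma$ in three stages. First we fix the characteristic data, then the torus, then the class inside it. The Galois-cohomology bounds of Section \ref{s:galois} control the second and third stages. The local compact-class result, Proposition \ref{p-CompactClassTriv}, is what makes the bound independent of the level $U$ and of the discriminant of $k$.

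\textbf{Step 1: finitely many characteristic polynomials.} Since $\gamma$ projects into the compact group $U$, it has integral traces. Hence the eigenvalues $\lambda(\gamma)$, $\lambda\in\Phi$, of $\ad(\gamma)$ are algebraic integers of degree bounded in terms of $d$ and $\dim G$. The condition $m(\gamma)\le R$ bounds the Weil height of each of them. Northcott's theorem then leaves only finitely many possible multisets $\{\lambda(\gamma)\}$, so only finitely many characteristic polynomials of $\ad(\gamma)$ (equivalently, of $\gamma$ in a faithful representation), with a bound $C_1(R,d,G)$. We therefore fix this $\ovl k$-conjugacy datum; over $\ovl k$ it determines a single geometric conjugacy class of regular semisimple elements.

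\textbf{Step 2: rational classes within a stable class.} Fix one such $\gamma_0$ and write $\T=\G_{\gamma_0}$. This is a maximal torus by Proposition \ref{p-ConnectedCentralizer}, split by a field $L$ with $[L:k]$ bounded (Lemma \ref{l-TateShafarevichBound}). By \eqref{e-longcohomo}, the $\G(k)$-classes in the stable class of $\gamma_0$ inject into $\ker(H^1(k,\T)\to H^1(k,\G))$. The map from $H^1(k,\T)$ to $\prod_v H^1(k_v,\T)$ has fibres of size at most $|\Sha^1(\T)|=O_{\dim\G}(1)$ by Lemma \ref{l-TateShafarevichBound}. It therefore suffices to bound the number of possible tuples of local classes $(\alpha_v)_v$ that can occur for elements $\gamma$ satisfying the integrality constraint $\gamma\in U$.

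\textbf{Step 3: local bounds.} This is the step we expect to be the main obstacle. By Lemma \ref{bound_H1_torus}, each $|H^1(k_v,\T)|$ is bounded by a constant depending only on $\dim\T$, so only the number of places where $\alpha_v$ can be nontrivial needs to be bounded.

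Consider first a finite place $v$ at which $\T$ is split by an unramified extension and $v(\Delta(\gamma))=0$. Both $\gamma$ and $\gamma_0$ (the latter chosen in $U_v$ if possible) lie in $U_v$, and each fixes a point of the building. We expect the conjugating element $g$ with $g\gamma_0 g^{-1}=\gamma$ to be adjustable so that the cocycle $\sigma\mapsto g^{-1}g^\sigma$ takes values in $\T(L_v)^b$. The argument for this mirrors Lemma \ref{split_case}: when $v(\Delta(\gamma))=0$, the fixed set of $\gamma$ in the building over $L_v$ is exactly the apartment of the torus. The class is then compact, hence trivial by Proposition \ref{p-CompactClassTriv}.

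It remains to handle the exceptional places. These are the places dividing $\Delta(\gamma)$, the places ramified in $L/k$, and the archimedean places. Their number is bounded in terms of $R$ and $d$: the norm $N_{k/\QQ}(\Delta(\gamma))$ is bounded since $m(\Delta(\gamma))\ll R$, and $N_{k/\QQ}(\Delta_{L/k})\le f_d(R)$ by Proposition \ref{bound_disc_splitting_field}. Places where $U_v$ is not hyperspecial pose no problem, since the argument above only uses that $\gamma$ fixes a point and that $L_v/k_v$ is unramified. At each exceptional place there are at most $O(1)$ choices for $\alpha_v$.

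Multiplying the bounds from the three steps gives a constant $C$ depending only on $R$, $d$ and $G$. The delicate points are two: the choice of the base point $\gamma_0\in U$ itself, and showing that, at good places, the conjugating element can genuinely be taken within the bounded torus. If the direct argument fails, we would instead compare each $\gamma$ with $\gamma_0$ through the $U_v$-conjugacy of the maximal tori containing them.
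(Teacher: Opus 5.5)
Your outline is the paper's own proof. It has the same ingredients: Northcott for the geometric classes (Lemma \ref{charpoly_count}), the parametrisation by $\ker(H^1(k,\T)\to H^1(k,\G))$ together with the $\Sha^1$ bound (Lemmas \ref{l-GCohomoParam1}, \ref{l-GoodConjIneq1}), triviality of compact classes at good places (Proposition \ref{p-GConjugacyLocalUnr}), and control of the bad places via $N_{k/\QQ}(\Delta(\gamma))$ and Proposition \ref{bound_disc_splitting_field}. Your base-point worry is vacuous: take $\gamma_0$ to be one of the elements being counted. The characteristic polynomial fixes boundedly many geometric classes rather than one, which is harmless.

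The genuine gap is your claim that places where $U_v$ is not hyperspecial ``pose no problem''. Let $\tau$ be the facet fixed by $U_v$ and $\gamma=g\gamma_0g^{-1}$ with $\gamma,\gamma_0\in U_v$. The building argument only shows that $\tau$ and $g^{-1}\tau$ both lie in $\Fix(\gamma_0)=\mathcal A$, the apartment of $\T$ over $L_v$. Hence $g^{-1}\in N_{\G}(\T)(L_v)\Stab(\tau)$. A cocycle with values in $\T(L_v)^b$ needs $g^{-1}\in\T(L_v)\Stab(\tau)$, i.e.\ translations by $X_*(\T)$ must act transitively on the facets of $\mathcal A$ of the type of $\tau$. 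This holds at a hyperspecial vertex, where the Weyl part is absorbed into $\Stab(\tau)$, but fails for non-special facets. The paper's Lemma \ref{l-FixedSet1} asserts the same transitivity, so its proof has the same hole.

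The failure is real. Let $\G=\mathrm{Sp}_4$ over a $p$-adic field with $p$ odd, $E/k_v$ unramified quadratic, and $\gamma$ with centraliser $\T=E^1\times E^1$ and $|\Delta(\gamma)|_v=1$.
Then $H^1(k_v,\T)\cong X_*(\T)/2X_*(\T)\cong(\ZZ/2)^2$, and all four classes occur since $H^1(k_v,\mathrm{Sp}_4)=1$. Label them relative to a base point fixing a hyperspecial vertex. Each rational class fixes exactly one vertex:
$(0,0)$ and $(1,1)$ fix hyperspecial vertices of the two types, while $(1,0)$ and $(0,1)$ fix vertices of the non-special type, whose reductive quotient is $\SL_2\times\SL_2$.
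In the apartment (coordinates $X_*(\T)=\ZZ^2$, walls $x_i\in\tfrac12\ZZ$, $x_1\pm x_2\in\ZZ$), these non-special vertices form the two translation orbits of $(\tfrac12,0)$ and $(0,\tfrac12)$. These orbits are swapped by a Weyl reflection but by no translation. So $|\mathcal S_{U_v}(\gamma)|=2$ for $U_v$ a non-special maximal compact subgroup, contradicting Proposition \ref{p-GConjugacyLocalUnr}.

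This globalises. Let $F$ be real quadratic with a unit $\epsilon$ of norm $1$, and let $\gamma\in\SL_2(\ZZ)\times\SL_2(\ZZ)\subset\mathrm{Sp}_4(\ZZ)$ act on $\mo_F\oplus\mo_F$ by $(\epsilon,\epsilon^2)$; this $\gamma$ is strongly regular and $\RR$-regular. Choose $N$ even, and take $U_p$ non-special at $N$ odd primes inert in $F$ and prime to $\Delta(\gamma)$, and $U_p=\mathrm{Sp}_4(\ZZ_p)$ elsewhere. Class field theory for $F/\QQ$ (Hasse norm theorem and Hilbert reciprocity) yields $2^{N-1}$ classes in $H^1(\QQ,\T)=(\QQ^\times/N_{F/\QQ}(F^\times))^2$. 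Each is trivial away from these primes and equal to $(1,0)$ or $(0,1)$ at each of them. Since $H^1(\QQ,\mathrm{Sp}_4)=1$, strong approximation makes each one a rational class meeting $U$. So no bound independent of $U$ holds, and the statement as written is false.

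What your argument (and the paper's) does prove is $|\mathcal S_U(\gamma)|\le C^{|S_1|+|S_2|+|S'|}$, where $S'$ is the set of finite places with $U_v$ not hyperspecial, using Lemma \ref{c-ConjBoundNA} there. For principal $U$ one has $S'\subset S_U\cup S_2$. The extra factor $C^{|S_U|}$ is absorbed by $\prod_{v\in S_U}q_v$ in \eqref{prasad_here}, exactly like the factor from Proposition \ref{normaliser_index}. So Theorem \ref{Main_bounded_degree} survives, but the uniform count claimed here does not.
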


We will use the following nomenclature : 
\begin{itemize}
\item A conjugacy class in $\G(k)$ will be called {\em rational} ;

\item A conjugacy class in $\G(\ovl k)$ will be called {\em geometric}. 
\end{itemize}

\subsection{Geometric conjugacy classes}

\begin{lemma} \label{charpoly_count}
  Let $R>0$ and $d \in \NN$. There exists $C$ such that for any totally real number field $k$ of degree $d$ and semisimple $k$-group $\G$ with $\G(k_\infty)$ isogenous to $G$ times a compact group, there are at most $C$ geometric conjugacy classes of semisimple elements in $\G(k) \cap U$ which contain a representative of Mahler measure at most $R$. 
\end{lemma}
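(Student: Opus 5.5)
The plan is to bound geometric conjugacy classes by their characteristic polynomials in the adjoint representation. Two semisimple elements of $\G(\ovl k)$ whose images under $\ad$ have the same characteristic polynomial are not quite geometrically conjugate. However, the fibres of the map
\[
\gamma \mapsto \chi_{\ad(\gamma)}
\]
on semisimple classes are finite, with size bounded in terms of the absolute type. Indeed, a semisimple geometric class is determined by the $W$-orbit of an element of a fixed maximal torus $\T(\ovl k)$. The characteristic polynomial of $\ad(\gamma)$ determines the multiset $\{\lambda(\gamma)\}_{\lambda\in\Phi}$. An element $t\in\T(\ovl k)$ is determined up to the finite group $Z(\G)(\ovl k)$ by the values $\lambda(t)$ of the roots. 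Finally, the multiset of root values determines the assignment $\lambda\mapsto\lambda(t)$ up to at most $|\Phi|!$ permutations. So it suffices to bound the number of possible polynomials $\chi_{\ad(\gamma)}\in k[x]$ of degree $\dim\G$, for $k$ fixed.

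Next I use integrality. Since $\gamma\in U$, the element $\gamma$ lies in $U_v$ for every finite $v$, and $U_v$ is compact. Hence the eigenvalues of $\ad(\gamma)$ are $v$-adic units up to the normalisation of a lattice in the Lie algebra. More concretely, $\ad(\gamma)$ preserves an $\mo_{k_v}$-lattice (stabilised by the compact group $U_v$) for every $v$, so $\chi_{\ad(\gamma)}$ has coefficients in $\mo_k$. Its roots are $1$ (with multiplicity $\rk$) and the $\lambda(\gamma)$, which are algebraic integers. The coefficients are elementary symmetric functions of these roots.

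I then bound the archimedean sizes. Fix an embedding $\tau$ of $k$. The bound $m(\gamma)\le R$ gives $[k:\QQ]h(\lambda(\gamma))\le R$ for each root $\lambda$. Since $\lambda(\gamma)$ is integral, this means
\[
\sum_{w\mid\infty}\iota_w\log\max(1,|\lambda(\gamma)|_w)\le R\,[\QQ(\lambda(\gamma)):k],
\]
computed in a splitting field, so each archimedean absolute value satisfies $|\lambda(\gamma)^{\sigma}|\le e^{C_GR}$ for every conjugate $\sigma$. Consequently every coefficient $c_j$ of $\chi_{\ad(\gamma)}$ satisfies $|c_j|_\tau\le \binom{\dim\G}{j}e^{C_GR\dim\G}$ for all embeddings $\tau$ of $k$. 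An element of $\mo_k$ all of whose conjugates are bounded by $B$ is a root of a monic integer polynomial of degree $d$ with coefficients bounded by $\binom dj B^j$. The number of such elements is therefore at most $(2^dB^d+1)^d\cdot d$, which is independent of $k$. Multiplying over the $\dim\G$ coefficients gives the bound $C(R,d,G)$ on the number of characteristic polynomials. Combined with the finite fibres from the first step, this proves the lemma.

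The main obstacle is the first step: checking carefully that the fibres of the characteristic polynomial map on semisimple geometric classes are uniformly bounded. The issues are that the adjoint representation only sees $\G/Z(\G)$, and that the multiset of eigenvalues loses the labelling by roots. I would handle it as above, by working inside a single split maximal torus over $\ovl k$ where everything reduces to the finite data $(\Phi, W, Z)$ of the absolute type. The integrality and height-counting steps are routine Northcott-type arguments. Total reality of $k$ is not used, beyond fixing the degree.
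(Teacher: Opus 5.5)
Your proposal is correct and follows essentially the paper's argument. The paper likewise uses that $F_\gamma(t)=\prod_{\lambda\in\Phi}(t-\lambda(\gamma))\in\mo_k[t]$ (your $\chi_{\ad(\gamma)}$ without the factor $(t-1)^{\rk\G}$) determines the geometric class up to bounded finite ambiguity, and then concludes by Northcott; the only differences are that it applies Northcott to the norm polynomial $\prod_{\sigma:k\to\CC}F_\gamma^\sigma\in\ZZ[t]$ of Mahler measure at most $dR$, whereas you carry out the Northcott count directly on the coefficients in $\mo_k$, and that you spell out the finite-ambiguity step (via $Z(\G)$ and the $|\Phi|!$ relabellings), which the paper only asserts.
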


\begin{proof}
  Let $\T$ be a maximal torus in $\G$. Every semisimple element of $\G(\ovl k)$ is conjugated into $\T(\ovl k)$, since all maximal tori are conjugated over $\ovl k$ and every semisimple element is contained in some maximal torus.

  Let $\Phi = \Phi(\G, \T)$ be the root system. For any $\gamma \in \G(k) \cap U$ the polynomial
  \[
  F_\gamma(t) = \prod_{\lambda \in \Phi} (t - \lambda(\gamma))
  \]
  characterises the $\G(\ovl k)$-conjugacy class of $\gamma$ up to finite ambiguity. It belongs to $\mo_k[t]$ and its Mahler measure is equal to that of $\gamma$. As $d = [k:\QQ]$ 
  we thus have that $\prod_{\sigma : k \to \CC} F_\gamma^\sigma$ is a monic polynomial in $\ZZ[t]$ with Mahler measure at most $dR$ and degree equal to $d(\dim(\G) - \rk(\G))$. By the Northcott property \cite[Theorem 1.6.8]{Northcott}, there are at most finitely many such polynomials and this finishes the proof of the lemma. 
\end{proof}


\subsection{Rational conjugacy classes in a geometric conjugacy class}\label{s-RatinGeo}

Let $\gamma$ be an element of $\G(k)$ and let $U=\prod_{v\in \val_f}U_v$ be a maximal compact subgroup of $\G(\Ade_f).$ Write 
\begin{align*}
  \mathcal S(\gamma) &= \{[\gamma']_{\mathbf G(k)} :\: \gamma'\in [\gamma]_{\mathbf G(\overline k)}\cap \mathbf G(k)\} ; \\
  \mathcal S_U(\gamma) &= \{[\gamma']_{\mathbf G(k)} :\: \gamma'\in [\gamma]_{\mathbf G(\overline k)}\cap \mathbf G(k),\, \gamma' \in U\}. 
\end{align*}

We have the following realisation of conjugacy classes as classes in Galois cohomology. 

\begin{lemma}\label{l-GCohomoParam1}
  There is a bijection $\mathcal S(\gamma) \to \ker[H^1(k,\mathbf G_\gamma)\to H^1(k,\mathbf G)]$. Explicitly, if $g\in \mathbf G(\overline k)$ is such that $[g^{-1}\gamma g]_{\mathbf G(k)}\in \mathcal S(\gamma)$ then we send it to the cohomology class $[\sigma\mapsto g^\sigma g^{-1}]\in H^1(k,\mathbf G_\gamma)$. 
\end{lemma}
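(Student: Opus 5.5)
This is the standard argument, essentially \cite[I.5, Proposition 36]{Serre_galois} applied to the orbit $\V=\G/\G_\gamma$. I will identify $\mathcal S(\gamma)$ with a fibre of the long exact sequence \eqref{e-longcohomo}. Set $\V := [\gamma]_{\G(\ovl k)}$, the geometric conjugacy class, viewed as a $k$-variety. Since $\gamma\in\G(k)$, the orbit map $g\mapsto g^{-1}\gamma g$ identifies $\G_\gamma\bs\G$ with $\V$ over $k$. This is the homogeneous space $\G/\G_\gamma$ up to replacing $g$ by $g^{-1}$; I will keep the paper's right-action convention $g^{-1}\gamma g$ throughout. The $k$-points $\V(k)=[\gamma]_{\G(\ovl k)}\cap\G(k)$ are acted on by $\G(k)$, and $\mathcal S(\gamma)$ is exactly the orbit set $\V(k)/\G(k)$.

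\textbf{Step 1: well-definedness.} Take $\gamma'=g^{-1}\gamma g\in\G(k)$ with $g\in\G(\ovl k)$. For $\sigma\in\Gal(k)$ we have $\gamma'^\sigma=\gamma'$ and $\gamma^\sigma=\gamma$, so $(g^\sigma)^{-1}\gamma g^\sigma=g^{-1}\gamma g$. Hence $c(\sigma)=g^\sigma g^{-1}$ centralises $\gamma$, i.e. $c(\sigma)\in\G_\gamma(\ovl k)$. The cocycle identity is then immediate, with the twisted convention matching $Z^1$ up to inversion; if necessary one uses $\sigma\mapsto g(g^\sigma)^{-1}$, which is the same class data. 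I then check that the class does not depend on choices:
\begin{itemize}
\item Replacing $g$ by $hg$ with $h\in\G_\gamma(\ovl k)$ gives $h^\sigma c(\sigma) h^{-1}$, which is cohomologous to $c$.
\item Replacing $\gamma'$ by $x^{-1}\gamma' x$ with $x\in\G(k)$ replaces $g$ by $gx$, and $(gx)^\sigma(gx)^{-1}=g^\sigma g^{-1}$ since $x^\sigma=x$.
\end{itemize}
Moreover $c$ becomes trivial in $H^1(k,\G)$, being a coboundary of $g\in\G(\ovl k)$. So the map lands in the kernel.

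\textbf{Step 2: injectivity.} Suppose $g_1,g_2$ give cohomologous cocycles, so $g_2^\sigma g_2^{-1}=h^\sigma g_1^\sigma g_1^{-1}h^{-1}$ for some $h\in\G_\gamma(\ovl k)$. Then $x:=g_1^{-1}h^{-1}g_2$ satisfies $x^\sigma=x$ for all $\sigma$, so $x\in\G(k)$. Since $h$ centralises $\gamma$, we get $g_2^{-1}\gamma g_2=x^{-1}g_1^{-1}\gamma g_1x$. Hence the two elements are $\G(k)$-conjugate.

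\textbf{Step 3: surjectivity.} Let $c$ be a cocycle in $\G_\gamma$ that is trivial in $H^1(k,\G)$. Then $c(\sigma)=g^\sigma g^{-1}$ for some $g\in\G(\ovl k)$. The element $\gamma'=g^{-1}\gamma g$ satisfies $\gamma'^\sigma=g^{-\sigma}\gamma g^\sigma=g^{-1}c(\sigma)^{-1}\gamma c(\sigma)g=\gamma'$, so $\gamma'\in\G(k)$ and it maps to $[c]$.

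The only point needing care is matching the cocycle convention $c(\sigma\tau)=c(\sigma)c(\tau)^\sigma$ with the chosen formula, which may require inverting. Continuity for the profinite topology is automatic because $g$ is defined over a finite extension.
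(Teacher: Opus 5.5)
Your proof is correct and takes the same route as the paper: the paper identifies $\mathcal S(\gamma)$ with the $\G(k)$-orbits on the $k$-points of $\G_\gamma\bs\G$ and invokes exactness of \eqref{e-longcohomo} (Serre, Prop.~36), and your Steps 1--3 carry out that exactness argument by hand. Your caveat about conventions is well placed: as written, Steps 1--3 are consistent for the opposite convention $c(\sigma\tau)=c(\tau)^\sigma c(\sigma)$ with $c\sim v^\sigma c\,v^{-1}$, and switching to $\sigma\mapsto g(g^\sigma)^{-1}$ transports everything to the paper's convention.
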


\begin{proof}
  The set of $k$-points in the conjugacy class $[\gamma]_{\G(\overline k)}$ is naturally identified with $\G_\gamma\bs \G(k)$. The group $\G(k)$ acts on $\G_\gamma\bs \G(k)$ on the right and the set $\mathcal S(\gamma)$ is identified with the orbits of $\G(k)$. By exactness of the sequence \eqref{e-longcohomo} for $\mathbf H=\G_\gamma$, we get a bijection
  \[
  \mathcal S(\gamma)\simeq \ker[H^1(k,\G_\gamma)\to H^1(k,\G)].
  \]
  Explicitly, if $g\in \G(\overline k)$ is such that $[g^{-1}\gamma g]_{\G(k)}\in \mathcal C(\gamma,k)$ then it is mapped to the cohomology class $[\sigma\mapsto g^\sigma g^{-1}]\in H^1(k,\G_\gamma)$.
\end{proof}

In order to pinpoint the image of the subset $\mathcal S_U(\gamma)$ via the bijection from Lemma \ref{l-GCohomoParam1} we will need to use a local-global principle for the Galois cohomology.


\subsubsection{Local-global principle}

Let us define the local counterparts of the sets $\mathcal S(\gamma)$ and $\mathcal S_U(\gamma)$. If $v \in \val_\infty \cup \val_f$ let
\[
\mathcal S_v(\gamma) = \{[\gamma']_{\G(k_v)} :\: \gamma'\in [\gamma]_{\G(\overline k_v)}\cap \G(k_v)\}. 
\]
In addition, we define
\[
\mathcal S_{U_v}(\gamma) = \{[\gamma']_{\G(k_v)} :\: \gamma'\in [\gamma]_{\G(\overline k_v)}\cap \G(k_v),\, \gamma' \in U_v\}.  
\]
We have a commutative diagram 

\begin{equation}
\begin{tikzcd}
& & \Sha^1(\G_\gamma)\arrow[hookrightarrow]{d}\\ 
\mathcal S_U(\gamma)\arrow[hookrightarrow]{r}\arrow{d}{\phi} & \mathcal S(\gamma) \arrow[hookrightarrow]{r} \arrow{d}{\phi} & H^1(k,\G_\gamma) \arrow{d}{\phi} \\
\prod_{v}\mathcal S_{U_v}(\gamma)\arrow[hookrightarrow]{r} & \prod_{v}\mathcal S_v(\gamma) \arrow[hookrightarrow]{r}& \prod_{v}H^1(k_v,\G_\gamma)\\
\end{tikzcd}
\end{equation}


\begin{lemma}\label{l-GoodConjIneq1}
  Let $\gamma$ be an $m$-regular element. Then $|\mathcal S_U(\gamma)|\leq |\Sha^1(\G_\gamma)|\prod_{\nu}|\mathcal S_{U_\nu}(\gamma)|$.
\end{lemma}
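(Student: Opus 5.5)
The bound will be read off the commutative diagram just before the statement. The only input needed is that the map $\phi$ from $H^1(k,\G_\gamma)$ to $\prod_v H^1(k_v,\G_\gamma)$ has fibres of size at most $|\Sha^1(\G_\gamma)|$, which holds because $\G_\gamma$ is a torus.

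First, since $\gamma$ is $m$-regular (in particular strongly regular), Proposition \ref{p-ConnectedCentralizer} shows that $\G_\gamma$ is a maximal $k$-torus. So $H^1(k,\G_\gamma)$ is an abelian group, and the localisation map $\phi$ is a group homomorphism whose kernel is, by definition, $\Sha^1(\G_\gamma)$. Every nonempty fibre of $\phi$ is then a coset of $\Sha^1(\G_\gamma)$, so each fibre has cardinality at most $|\Sha^1(\G_\gamma)|$.

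Next we check that the diagram commutes and that the horizontal maps are injective. By Lemma \ref{l-GCohomoParam1} we have the injection $\mathcal S(\gamma)\hookrightarrow H^1(k,\G_\gamma)$. Its local analogue, proved by the same argument over $k_v$ using \eqref{e-longcohomo}, is the injection $\mathcal S_v(\gamma)\hookrightarrow H^1(k_v,\G_\gamma)$. The inclusions $\mathcal S_U(\gamma)\subset\mathcal S(\gamma)$ and $\mathcal S_{U_v}(\gamma)\subset\mathcal S_v(\gamma)$ are tautological. The vertical map $\phi$ on conjugacy classes sends $[\gamma']_{\G(k)}$ to the family $([\gamma']_{\G(k_v)})_v$. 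This is compatible with restriction of cocycles, because the explicit cocycle $\sigma\mapsto g^\sigma g^{-1}$ restricts to the decomposition groups. If $\gamma'\in U$, then its $v$-component lies in $U_v$, so $\phi$ maps $\mathcal S_U(\gamma)$ into $\prod_v\mathcal S_{U_v}(\gamma)$.

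It follows that $\mathcal S_U(\gamma)$ injects into $H^1(k,\G_\gamma)$ and that its image under $\phi$ lies in $\prod_v \mathcal S_{U_v}(\gamma)$. Hence
\[
|\mathcal S_U(\gamma)| \le |\Sha^1(\G_\gamma)|\cdot\Bigl|\prod_v \mathcal S_{U_v}(\gamma)\Bigr| = |\Sha^1(\G_\gamma)|\prod_v|\mathcal S_{U_v}(\gamma)|.
\]
If infinitely many local factors exceed $1$, the right-hand side is infinite and the inequality is trivial. Otherwise it is a finite product, and a finite product of sizes counts the product set.

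The only step needing care is the compatibility of the local and global parametrisations, namely that restriction $H^1(k,\G_\gamma)\to H^1(k_v,\G_\gamma)$ corresponds to passing from $\G(k)$-classes to $\G(k_v)$-classes. This follows from the naturality of the exact sequence \eqref{e-longcohomo} in the base field. With that in place the inequality is immediate.
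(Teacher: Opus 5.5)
Your proposal is correct and follows the paper's argument: Proposition \ref{p-ConnectedCentralizer} makes $\G_\gamma$ a torus, so the localisation map $\phi$ on $H^1$ is a homomorphism with kernel $\Sha^1(\G_\gamma)$ whose fibres have size at most $|\Sha^1(\G_\gamma)|$, and $\mathcal S_U(\gamma)$ injects into the preimage under $\phi$ of $\prod_v \mathcal S_{U_v}(\gamma)$. Your write-up also spells out why the diagram commutes, which the paper takes for granted, and it correctly bounds the size of that preimage by $\le$ where the paper writes $=$.
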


\begin{proof}
By Proposition \ref{p-ConnectedCentralizer} the group $\G_\gamma$ is an algebraic torus so the map $\phi\colon H^1(k,\G_\gamma)\to \prod_{\nu} H^1(k_\nu,\G_\gamma)$ is a group homomorphism with kernel $\Sha^1(\G_\gamma)$. In particular it is $|\Sha^1(\G_\gamma)|$-to-one so 
\begin{equation}|\mathcal S_U(\gamma)|\leq \left|\phi^{-1}\left(\prod_{\nu}\mathcal S_{U_\nu}(\gamma)\right)\right|=|\Sha^1(\G_\gamma)|\prod_{\nu}|\mathcal S_{U_\nu}(\gamma)|.\end{equation}
\end{proof}


\subsubsection{Conjugacy classes at the archimedean places.}

Let $v \in \val_\infty$ and let $\gamma \in \G(k_v)$ be an element whose centraliser is a torus. By Lemma \ref{l-GCohomoParam1} and the Nakayama--Tate theorem \ref{t-LocalNakayamaTate} we have that $|\mathcal S_v(\gamma)| \le |H^1(\CC/k_v, X^*(\G_\gamma))|$. But since $\dim(\G_\gamma)$ is bounded independently of $k, v, \gamma$ we have that $|H^1(\CC/k_v, X^*(\G_\gamma))| \le 2^{\dim(\G_\gamma)} = O(1)$. In conclusion, since there are at most $d$ Archimedean places we have. 
\begin{equation} \label{product_real_places}
   \prod_{v \in \val_\infty} |\mathcal S_{k_v}(\gamma)| = O(1).
\end{equation}
We remark that this statement is in fact independent of our hypothesis that $[k:\QQ]$ remains bounded, since it can be proven that for Archimedean $v$ we have $|\mathcal S_{k_v}(\gamma)| = 1$ as long as $\G(k_v)$ is compact. 


\subsubsection{Conjugacy classes at the non-archimedean places.}

\begin{proposition}\label{p-GConjugacyLocalUnr}
  Let $v \in \val_f$ and let $\gamma$ be a regular semi-simple element of $\G(k_v)$ such that $|\Delta(\gamma)|_\frak p=1$ and $\G_\gamma$ is split by an unramified extension of $k_\frak p$. Then for any open compact subgroup $U_v$ of $\G(k_v)$ we have $|\mathcal S_{U_v}(\gamma)|=1$.
\end{proposition}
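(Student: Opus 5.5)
The plan is to use the local analogue of Lemma \ref{l-GCohomoParam1}. Over $k_v$, exactness of \eqref{e-longcohomo} for $\mathbf H=\G_\gamma$ identifies $\mathcal S_v(\gamma)$ with $\ker[H^1(k_v,\G_\gamma)\to H^1(k_v,\G)]$. The class of $\gamma'=g^{-1}\gamma g$, with $g\in\G(\ovl k_v)$ and $\gamma'\in\G(k_v)$, is sent to $[\sigma\mapsto g^\sigma g^{-1}]$. Since $\mathcal S_{U_v}(\gamma)$ is a subset of $\mathcal S_v(\gamma)$ which contains $[\gamma]$ whenever it is nonempty, it suffices to prove the following: if $\gamma,\gamma'\in U_v$ are geometrically conjugate, then the associated class in $H^1(k_v,\T)$, where $\T=\G_\gamma$, is trivial. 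By Proposition \ref{p-CompactClassTriv}, applicable because $\T$ is split by an unramified extension $L_v$, it is enough to show that this class is \emph{compact}, that is, represented by a cocycle with values in $\T(L_v)^b$.

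To produce such a representative, I will find a conjugating element $g$ with $g^{-1}\gamma g=\gamma'$ and $g\in\G(L_v)$ bounded, or more precisely with $g$ lying in some parahoric or compact open subgroup of $\G(L_v)$. Then $g^\sigma g^{-1}$ is a bounded element of $\T(L_v)$, hence lies in $\T(L_v)^b$. Two preliminary facts are needed here. First, because $\T$ splits over $L_v$, the element $\gamma'$ is already conjugate to $\gamma$ over $L_v$: the torus $\T'=\G_{\gamma'}$ also splits over $L_v$, and the relevant cohomology class can be taken in $H^1(L_v/k_v,\T)$ after conjugating the split maximal tori $\T,\T'$ over $L_v$. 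This step should be routine, since split maximal tori are $\G(L_v)$-conjugate and the Weyl group ambiguity is absorbed by conjugacy.

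The main obstacle is boundedness of $g$, and this is where $|\Delta(\gamma)|_v=1$ enters. Work in the building $X_{L_v}$. The element $\gamma\in U_v$ fixes a point $x$ and $\gamma'$ fixes a point $x'$, both in $X\subset X_{L_v}$. I would apply Lemma \ref{split_case} over $L_v$, where $\T$ is split and $v(\Delta(\gamma))=0$ persists because $L_v/k_v$ is unramified. That lemma shows that every fixed point of $\gamma$ lies on the apartment $A_\gamma$ of $\T$, so $\Fix(\gamma)=A_\gamma$ (also in the facet-type version). Similarly $\Fix(\gamma')=A_{\gamma'}=g^{-1}A_\gamma$. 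Since $\T(L_v)$ acts transitively on the points of a given type in $A_\gamma$, I can modify $g$ by an element of $\T(L_v)$ so that $g x'=x$ with $x$ fixed by $U_v$-type data. Concretely, I will choose $x'$ to be a vertex fixed by $\gamma'$ of the same type as a vertex fixed by $\gamma$ in $\Fix(\gamma)\cap X$; this is possible using the Galois-fixed-point description $X=X_{L_v}^{\Gal(L_v/k_v)}$ from the tame case in Section \ref{s:orbint}. Then $g$ lies in a compact subgroup $\G(L_v)_x\cdot(\text{stabiliser})$, which gives the bound. Finally $c(\sigma)=g^\sigma g^{-1}$ sends $x'=\sigma x'$ to a point of $A_\gamma$ and fixes $x$, so $c(\sigma)\in\T(L_v)\cap\Stab(x)=\T(L_v)^b$. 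This is the compact representative required, and it finishes the proof.
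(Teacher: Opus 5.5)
Your route is the paper's. You identify $\mathcal S_v(\gamma)$ with $\ker[H^1(k_v,\T)\to H^1(k_v,\G)]$, reduce via Proposition~\ref{p-CompactClassTriv} to compactness of the class, and use $\Fix_{X_{L_v}}(\gamma)=A_\gamma$ (Lemma~\ref{split_case} with $v(\Delta(\gamma))=0$).

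Your closing step is also correct in its final form. If $g^{-1}\gamma g=\gamma'$ and $g$ carries a Galois-fixed point $x'$ to a Galois-fixed point $x$, then $c(\sigma)=g^\sigma g^{-1}$ fixes $x$, so $c(\sigma)\in\T(L_v)\cap\Stab(x)\subseteq\T(L_v)^b$. Mere boundedness of $g$ would not give this.

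The gap is the step that produces such a $g$. $\T(L_v)$ does \emph{not} act transitively on the cells of a given type in $A_\gamma$:
\begin{itemize}
\item It acts only by translations through $X_*(\T)$.
\item The cells of a given type in $A_\gamma$ form an orbit of the whole affine Weyl group $N_{\G}(\T)(L_v)/\T(L_v)^b\cong X_*(\T)\rtimes W$.
\item Translations are transitive on such an orbit only when the stabiliser of the cell surjects onto $W$, i.e.\ for special vertices. Already for $\SL_2$, translations by $2\ZZ$ have two orbits on the edges of an apartment.
\end{itemize}
In general, with $x$ fixed by $U_v$ and $\gamma\in U_v$, you only get $gx=nx$ for some $n\in N_{\G}(\T)(L_v)$. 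Choosing $x'$ cleverly, or invoking $X=X_{L_v}^{\Gal(L_v/k_v)}$, gives no extra room, since $\Fix_X(\gamma')$ can be a single vertex.

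This cannot be repaired, because the statement fails for non-special $U_v$. Here is a counterexample.
\begin{itemize}
\item Let $\G=\mathrm{Sp}_4$ act on $V=V_1\perp V_2$, where $V_i$ has symplectic basis $e_i,f_i$, and write $\mo=\mo_{k_v}$. Let $L_v/k_v$ be unramified quadratic with $q\ge5$.
\item Take elliptic $\gamma_i\in\mathrm{Sp}(V_i)\cong\SL_2(k_v)$ preserving $\mo e_i\oplus\mo f_i$, with eigenvalues $a^{\pm1}$, $b^{\pm1}\in\mo_{L_v}^\times$ chosen so that none of $a^{\pm2},b^{\pm2},(ab)^{\pm1},(a/b)^{\pm1}$ is $\equiv1 \bmod \varpi$.
\item Let $\delta_i$ be the conjugate of $\gamma_i$ by $\mathrm{diag}(\varpi,1)\in\GL(V_i)$, and set $\gamma'=(\delta_1,\gamma_2)$, $\gamma''=(\gamma_1,\delta_2)$.
\end{itemize}
Then $\gamma'$ and $\gamma''$ are geometrically conjugate and satisfy all the hypotheses. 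They are not $\mathrm{Sp}_4(k_v)$-conjugate. A conjugator must preserve the eigenspace decomposition $V_1\oplus V_2$, so it would conjugate $\gamma_1$ to $\delta_1$ in $\SL_2(k_v)$. But the unique fixed vertices of $\gamma_1$ and $\delta_1$ in the tree are adjacent, hence of different types.

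Yet $\gamma'$ stabilises $\varpi\mo e_1\oplus\mo f_1\oplus\mo e_2\oplus\mo f_2$ and $\gamma''$ stabilises $\mo e_1\oplus\mo f_1\oplus\varpi\mo e_2\oplus\mo f_2$. These are two non-special vertices exchanged by the symplectic swap of $V_1$ and $V_2$. So $|\mathcal S_{U_v}(\gamma')|\ge 2$ for $U_v$ the stabiliser of the first lattice.

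The paper's proof has exactly the same gap. The last assertion of Lemma~\ref{l-FixedSet1}, used there to get $g\in\T(L_v)\wdt U_v$, holds only with $N_{\G}(\G_\gamma)(L_v)$ in place of $\G_\gamma(L_v)$.

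What your argument does prove is the case where $U_v$ fixes a vertex $x$ that is special in $X_{L_v}$, for instance $U_v$ hyperspecial, as at every $v\notin S_U\cup S_2$. Then $N_{\G}(\T)(L_v)\subseteq\T(L_v)\Stab(x)$ and the rest goes through. For general $U_v$ the same analysis only bounds $|\mathcal S_{U_v}(\gamma)|$ by the number of $X_*(\T)$-orbits on an affine Weyl orbit in $A_\gamma$, which is at most $|W|$.
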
 

\begin{proof}
  The first step is the following lemma. 

  \begin{lemma}\label{p-DiscCompactCocycles}
    Let $\gamma \in \G(k_v)$ such that $|\Delta(\gamma)|_v = 1$ and $\G_\gamma$ is split by a tamely ramified extension of $k_v$. Let $U_v$ be a maximal compact subgroup of $\G(k_v)$. The image of $\mathcal S_{U_v}(\gamma)$ in  $\ker[ H^1(k_v, \G_\gamma)\to H^1(k_v, \G)]$ consists only of compact classes. 
  \end{lemma}

  \begin{proof}
    Write $\T=\G_\gamma$. Let $L_v$ be a tamely ramified extension of $k_v$ splitting $\T$. Let $[\gamma']_{\G(k_v)}$ be a class in $\mathcal S_{U_v}(\gamma)$ with $\gamma'\in U_v$. Since $L_v$ splits $\T$, we have $H^1(L_v, \T) = \{1\}$ by Hilbert's theorem 90 and so $\gamma$ and $\gamma'$ are conjugate in $\G(L_v)$ according to Lemma \ref{l-GCohomoParam1}. 

    Choose $g\in \G(L_v)$ such that $\gamma'=g^{-1}\gamma g$. We need to show that the the image of $[\gamma']_{\G(k_v)}$ is a compact cohomology class. That amounts to showing that there exists $h \in \G(k_v)$ such that $h \in \T(L_v)g$ (so the cocycle associated with $h$ is cohomologous to that of $g$) and for all $\sigma \in \Gal(L_v/k_v)$ we have $h^\sigma h^{-1} \in \T(L_v)^b$ (so the cocycle is compact). 

    Let $X_{L_v}, X_{k_v}$ be the Bruhat--Tits buildings of $\G(L_v), \G(k_v)$ respectively. Then, as the extension $L_v/k_v$ is tamely ramified, it follows from \cite{Prasad_galois} that we have an inclusion of simplicial complexes $X_{k_v} \hookrightarrow X_{L_v}$ and $X_{k_v} = X_{L_v}^{\Gal(L_v/k_v)}$. 

    Let $\tau$ be the cell of $X_{k_v}$ stabilised by $U_v$ and $\widetilde U_v$ the stabiliser of $\tau$ in $\G(L_v)$; this is a compact subgroup of $\G(L_v)$. Since $\tau \in X_{L_v}^{\Gal(L_v/k_v)}$, the group $\widetilde U$ is stable under $\Gal(L_v/k_v)$. We have $ \tau = \gamma' \tau$, so $\gamma g\tau = g\tau$. By Lemma \ref{l-FixedSet1} below,  it follows that $g \in \T(L_v)\widetilde U_v$. We write $g = th$ with $h \in \widetilde U_v$, $t \in \T(L_v)$. Then, $h \in \T(L_v)g$ and $h^\sigma h^{-1} \in \widetilde U_v$ so that $h^\sigma h^{-1} \in \T(L_v)^b$. This concludes the proof. 
  \end{proof}

  \begin{lemma}\label{l-FixedSet1}
    If $\gamma \in \G(L_v)$ is regular, then there is a unique apartment $\mathcal{A} \subset X_{L_v}$ stabilized by $\G_\gamma$. If in addition $\gamma$ belongs to a compact subgroup and $|\Delta(\gamma)|_v = 1$, then we have that $\mathcal A = (X_{L_v})^\gamma$.

    Moreover if $g \in \G(L_v)$ and $\tau$ is a cell in $\mathcal{A}$ such that $h\tau \in \mathcal A$ as well then $h\in \G_\gamma(L_v)\Stab_{\G(L_v)}\tau$. 
  \end{lemma}
  
  \begin{proof}
    This follows immediately from  Lemma \ref{split_case}.
  \end{proof}

  Now we can finish the proof of the proposition. By Lemma \ref{p-DiscCompactCocycles}, the image of $\mathcal{S}_{U_v}(\gamma)$ consists only of compact cocycles in $H^1(k_v,\G_\gamma)$. The centralizer $\G_\gamma$ is split by an unramified extension of $k_v$, so by Proposition \ref{p-CompactClassTriv}, the only compact cohomolgy class in $H^1(k_v,\G_\gamma)$ is the trivial class. Hence $|\mathcal{S}_{U_v}(\gamma)|=1$. The proposition is proved.
\end{proof}

We will also need a bound to deal with the finite number of places where the hypotheses of Proposition \ref{p-GConjugacyLocalUnr} are not satisfied. These bounds are deduced immediately from Lemmas \ref{l-GCohomoParam1} and \ref{bound_H1_torus}. We record them in the following lemma. 

\begin{lemma}\label{c-ConjBoundNA}
  Let $v \in \val_f$ and let $\gamma \in U_v$ be a regular semisimple element. We have $|\mathcal{S}_{U_v}(\gamma)|\le C$ where $C$ depends only on the absolute type of $\G$. 
\end{lemma}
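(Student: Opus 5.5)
The plan is to push everything into local Galois cohomology and bound it by cohomology of a torus and of a finite group of bounded order. First, $\mathcal S_{U_v}(\gamma)\subset\mathcal S_v(\gamma)$ trivially. The local analogue of Lemma \ref{l-GCohomoParam1}, with the same proof via the exact sequence \eqref{e-longcohomo} over $k_v$, identifies $\mathcal S_v(\gamma)$ with $\ker[H^1(k_v,\G_\gamma)\to H^1(k_v,\G)]$. Hence it suffices to bound $|H^1(k_v,\G_\gamma)|$ by a constant depending only on the absolute type of $\G$.

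Since $\gamma$ is regular semisimple, the identity component $\T=\G_\gamma^\circ$ is a maximal torus. The component group $F=\G_\gamma/\T$ embeds in the Weyl group $W=N_\G(\T)/\T$, because the argument of Proposition \ref{p-ConnectedCentralizer} shows $\G_\gamma\subset N_\G(\T)$. So $|F|\le|W|$, which is bounded by the absolute type, and $\dim\T=\rk\G$. The exact sequence $1\to\T\to\G_\gamma\to F\to 1$ of $k_v$-groups gives an exact sequence of pointed sets
\[
H^1(k_v,\T)\to H^1(k_v,\G_\gamma)\to H^1(k_v,F).
\]
By Serre's twisting argument \cite[I.5.5]{Serre_galois}, the fibre of the second map over the class of a cocycle $c$ is the image of $H^1(k_v,{}_c\T)$, where ${}_c\T$ is the twist of $\T$ by $c$ acting through conjugation. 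The twist ${}_c\T$ is again a $k_v$-torus of dimension $\rk\G$. Lemma \ref{bound_H1_torus} therefore bounds every fibre by a constant depending only on $\rk\G$. This gives
\[
|H^1(k_v,\G_\gamma)|\le C_{\rk\G}\cdot|H^1(k_v,F)|.
\]
When $\gamma$ is strongly regular, $F$ is trivial and we are already done.

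The main obstacle is bounding $|H^1(k_v,F)|$ for the finite $\Gal(k_v)$-group $F$. The Galois action on $F$ factors through the action on the root system, which has image of order at most $|\Aut\Phi|$. A $1$-cocycle with values in $F$ corresponds to a homomorphism $\Gal(k_v)\to F\rtimes\Aut\Phi$ lifting that action. So the class is determined by a Galois extension $L'/k_v$ of degree at most $|W|\cdot|\Aut\Phi|$, together with a homomorphism from $\Gal(L'/k_v)$ into a group of bounded order. The second choice is bounded trivially, so the problem reduces to counting extensions $L'$.

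\begin{itemize}
\item Tamely ramified extensions of degree $\le n$ of a local field are generated by a root of unity of order $<q^{n}$ together with an $e$-th root of a uniformizer times a unit. Their number is bounded in terms of $n$ alone, since each is described by bounded combinatorial data: $e,f\le n$ and a class in $k_v^\times/(k_v^\times)^e$ of bounded size modulo the unramified part.
\item Wildly ramified extensions can only occur when the residue characteristic $p\le n$. For them I would use Krasner's lemma and Serre's mass formula to bound their number in terms of $n$ and $[k_v:\QQ_p]$.
\end{itemize}

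I would try first to make the wild count uniform. If that fails, I would accept dependence on $[k_v:\QQ_p]\le[k:\QQ]$. This is harmless in the application to Theorem \ref{t-SmallHeigtCount}, where $[k:\QQ]\le d$ is fixed.
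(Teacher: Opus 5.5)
Your reduction, $\mathcal S_{U_v}(\gamma)\subset\mathcal S_v(\gamma)\cong\ker[H^1(k_v,\G_\gamma)\to H^1(k_v,\G)]$ followed by Lemma \ref{bound_H1_torus}, is exactly the paper's one-line proof. It is complete whenever $\G_\gamma$ is a torus, in particular for strongly regular $\gamma$, which is the only case used in Theorem \ref{t-SmallHeigtCount}. You are right that a merely regular $\gamma$ can have a disconnected centraliser; the paper's proof tacitly assumes it does not.

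However, the part of your plan that handles the disconnected case cannot give the stated bound, because no uniform count of the wild contributions exists. Take $\G=\PGL_2$ and $\gamma$ the image of $\mathrm{diag}(1,-1)$. It is regular but not strongly regular, with $\G_\gamma=N_\G(\T)$ and $F\cong\ZZ/2\ZZ$. Let $v\mid 2$ and $U_v=\PGL_2(\mo_{k_v})$. The images of $\begin{pmatrix}0&a\\1&0\end{pmatrix}$, $a\in\mo_{k_v}^\times$, lie in $U_v$ and in the stable class of $\gamma$. Two of them are $\G(k_v)$-conjugate only if $a/a'\in(k_v^\times)^2\cap\mo_{k_v}^\times=(\mo_{k_v}^\times)^2$; to see this, compare their squares, which are the scalars $a$ and $a'$, up to $c^2$. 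Hence $|\mathcal S_{U_v}(\gamma)|\ge 2^{[k_v:\QQ_2]+1}$, which is unbounded for fixed absolute type $A_1$. In that generality your fallback, a constant depending also on $[k_v:\QQ_p]$, is the best possible, and it proves a weaker statement than the lemma.

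The missing idea is that this case does not arise where the lemma is used. There $\G$ is simply connected, since it comes from a principal arithmetic lattice (Section \ref{s:adelic}). By Steinberg's connectedness theorem, the centraliser of any semisimple element of a simply connected semisimple group is connected. So for every regular semisimple $\gamma$ the group $\G_\gamma$ is a maximal torus and $F$ is trivial. Lemma \ref{bound_H1_torus} then gives a bound depending only on $\rk\G$, hence only on the absolute type. Without simple connectivity, you should instead assume $\gamma$ strongly regular and use Proposition \ref{p-ConnectedCentralizer}. With this observation, the twisting argument, the bound on $H^1(k_v,F)$ and the count of local extensions can all be deleted.
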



\subsection{Global conjugacy classes}\label{gcc}

We conclude here the proof of Theorem \ref{t-SmallHeigtCount}. Recall that $\G$ is a $k$-group with $\G(k_\infty)$ isomorphic to a fixed Lie group and $\gamma \in \G(k)$ is strongly regular, satisfies $m(\gamma) \le R$ and generates a compact subgroup at all non-archimedean places. By Lemma \ref{charpoly_count} it suffices to give a uniform estimate for $|\mathcal S_U(\gamma)|$. 


Define the following sets of finite places: 
\begin{align*}
  S_1 &= \{ v \in \val_f :\: |\Delta(\gamma)|_v \not= 1 \} ; \\
  S_2 &= \{ v \in \val_f :\: \G_\gamma \text{ is not split by an unramified } L_v/k_v \} 
\end{align*}
and let $L$ be the minimal Galois extension of $k$ splitting $\G_\gamma$. 
By Lemma \ref{l-GoodConjIneq1}, Lemma \ref{l-TateShafarevichBound}, \eqref{product_real_places}, Proposition \ref{p-GConjugacyLocalUnr} and Lemma \ref{c-ConjBoundNA}, we have that
\begin{equation}\label{all_lemmas}
  |\mathcal S_U(\gamma)| \ll C^{|S_1| + |S_2|} [L:k]^{\dim\G_\gamma[L:k]^2}
\end{equation}
for some $C$ depending only on $G$. Therefore, we must prove that $|S_1|, |S_2|, [L:k] = O_{R, G}(1)$.

For $S_1$ this is easy: we have $\Delta(\gamma) \in \mo_k$, so 
\[
2^{|S_1|} \le \prod_{v \in S_1} q_v \le N_{k/\QQ}\Delta(\gamma) \le e^R.
\]
In particular, $|S_1| \le R/\log(2)$.

On the other hand, by Proposition \ref{bound_disc_splitting_field}, we have that $N(\Delta_{L/k})$ is bounded by a constant depending only on $R, d$ and the absolute type of $\G$, so $[L:k]$ must be as well. Since $\prod_{v \in S_2} \mathfrak p_v$ divides $\Delta_{L/k}$, it also follows that $|S_2|$ is bounded. 

We have proven that all terms on the right of \eqref{all_lemmas} are bounded independently of $k, \G, \gamma$ as above, hence the theorem holds.


\section{Betti numbers} \label{s:quantbetti}

\subsection{Limit multiplicities and Betti numbers}

By Matsushima's formula, there exists a finite number of irreducible representations that control the size of the Betti numbers of $\Gamma \bs X$. More precisely, for every $i = 1, 2 , \dots, \dim{X}$, there exists a finite collection of unitary irreducible representations $\{\pi_j^{i}\}$ such that the $i$-th Betti number

$$b_i(\Gamma \bs X) = \dim(H^i(\Gamma \bs X)) = \sum_j  n(\pi_j^i, i) m ( \pi_j^{i}, \Gamma) $$
where $m ( \pi_j^{i}, \Gamma)$ is the multiplicity of $\pi_j^{i}$ in  $L^2(G/\Gamma)$. In addition, if $i \not= \tfrac 1 2 \dim(X)$ then none of the $\pi_j^i$ is a discrete series. Consequently, Theorem \ref{quant_betti_numbers} follows from the following limit multiplicities result. 

\begin{theorem}\label{estimatemultiplicity}
  Let $G$ be a semi-simple real Lie group and let $\pi$ be an irreducible unitary representation of $G$ which is not discrete series. Then, for any co-compact arithmetic lattice $\Gamma\subset G$ with the trace field $k$ we have 
  \[
  m(\pi,\Gamma) \ll_{\pi} \begin{cases} \vol(\Gamma\bs G)[k:\QQ]^{-1}& \text{ if }\pi\text{ is tempered,}\\
    \vol(\Gamma\bs G)e^{-c_{\pi}[k:\QQ]}&\text{ if }\pi\text{ in non-tempered,}\end{cases}
  \]
  where $c_{\pi}>0$ depends only on $\pi$.
\end{theorem}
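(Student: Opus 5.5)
We use the standard test-function argument of \cite[6.10]{7Sam} and \cite{DeWa79}, but replace the Benjamini--Schramm input by the quantitative Theorem \ref{unbounded_degree}. Write $d=[k:\QQ]$ and $M=\Gamma\bs X$. Since $\Gamma$ is cocompact, the Selberg trace formula holds for any $f\in C_c^\infty(G)$:
\[
\sum_{\sigma}m(\sigma,\Gamma)\,\hat f(\sigma) = \vol(\Gamma\bs G)f(1)+\sum_{[\gamma]\neq 1}\vol(\Gamma_\gamma\bs G_\gamma)\,\mathcal O(\gamma,f).
\]
If $f$ is supported in $\B(R)$, the nontrivial terms integrate $f$ only over the $R$-thin part. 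They are therefore bounded by $\|f\|_\infty\,C_R\,\vol(M_{\le R})$, with a constant coming from counting lattice points in balls of radius $R$. By Theorem \ref{unbounded_degree}, taking $R=\eta d$ makes this error at most $C_R\|f\|_\infty\vol(M)e^{-cd}$. We can only use this with $R$ proportional to $d$, so the argument must track how $C_R$ grows with $R$: it is at most $e^{O(R)}$, and we then pick $R=\eta' d$ with $\eta'$ small enough that $C_R e^{-cd}\le e^{-c'd}$.

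The main step is to construct, for each $R$, a test function $f_R=\phi_R*\phi_R^*$ supported in $\B(R)$, left and right $K$-finite of the type of $\pi$. We need three properties: $\hat f_R\ge0$ on the whole unitary dual; $\hat f_R(\pi)\ge 1$; and $f_R(1)=\|\phi_R\|_2^2$ small. Given such $f_R$, positivity gives
\[
m(\pi,\Gamma)\le \frac{\vol(\Gamma\bs G)f_R(1)+ \text{error}}{\hat f_R(\pi)},
\]
so the whole argument comes down to bounding $f_R(1)/\hat f_R(\pi)$.

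We would build $\phi_R$ from a truncated, $K$-finite matrix coefficient $\phi_R=\chi_R\cdot\overline{\langle\pi(g)v,v\rangle}$, where $\chi_R$ is a smooth cutoff to $\B(R)$.
\begin{itemize}
\item If $\pi$ is non-tempered, its matrix coefficients decay like $e^{-(1-\theta)\rho(H)}$ with $\theta>0$ relative to the Haar density $e^{2\rho(H)}$. In this case $\hat f_R(\pi)=\|\pi(\phi_R)v\|^2/\|v\|^2$ grows exponentially in $R$, while $\|\phi_R\|_2^2$ grows at the same exponential rate. Their quotient should therefore decay like $e^{-c_\pi R}$.
\item If $\pi$ is tempered but not discrete series, the coefficients are only in $L^{2+\epsilon}$. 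Here $\int_{\B(R)}|c_v|^2$ grows like a positive power of $R$ (a polynomial factor from the non-$L^2$ directions), while $\hat f_R(\pi)\approx(\int_{\B(R)}|c_v|^2)^2/\|v\|^2$. The quotient is then $\ll (\int_{\B(R)}|c_v|^2)^{-1}\ll R^{-1}$, using Harish-Chandra's estimates for the $\Xi$ function: $\Xi^2$ integrated over $\B(R)$ grows at least linearly in $R$.
\end{itemize}
Taking $R=\eta'd$ yields the bounds $[k:\QQ]^{-1}$ and $e^{-c_\pi[k:\QQ]}$ respectively.

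The main obstacle is making $\hat f_R(\pi)$ provably large relative to $f_R(1)$ with explicit $R$-dependence. This requires exact asymptotics of $K$-finite matrix coefficients: Harish-Chandra's asymptotic expansion, or the Cowling--Haagerup--Howe bounds as an upper bound, combined with a matching lower bound obtained from the leading exponent. We would first try to reduce to spherical-type vectors via $K$-finiteness, and use the Cartan decomposition $KA^+K$ to compute integrals over $\B(R)$ as integrals over $A^+$.
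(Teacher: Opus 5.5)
\textbf{Overall route.} Your scheme is a legitimate variant of the paper's. The paper does not use the trace formula; it uses Savin's kernel bound, $m(\pi,\Gamma)=\int_{\Gamma\bs G}\beta$ with $\beta(x)\le N_\Gamma(x,2r)/\|\phi_r\|^2$. For your $f_R=\phi_R*\phi_R^*$ one has $\|f_R\|_\infty=f_R(1)=\|\phi_R\|_2^2$ and $\tr\pi(f_R)\gg\|\phi_R\|_2^4$. So both routes arrive at
\[
m(\pi,\Gamma)\ll\|\phi_R\|_2^{-2}\int_{\Gamma\bs G}N_\Gamma(x,2R)\,dx.
\]
Two small corrections. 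First, $f_R$ is supported in $\B(2R)$, not $\B(R)$. Second, in your non-tempered bullet the ratio is $\|\phi_R\|_2^{-2}$: the numerator does not grow ``at the same rate'' as the denominator.

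\textbf{The gap: controlling the last integral.} You bound the non-identity terms by $C_R\|f\|_\infty\vol(M_{\le R})$ with $C_R\le e^{O(R)}$ ``from counting lattice points''. This requires a bound on $\sup_x N_\Gamma(x,R)$ that is uniform over all cocompact arithmetic $\Gamma$, including points deep in the thin part. A packing argument only reduces this to bounding $\sup_y N_\Gamma(y,\epsilon)$ at a fixed small scale, and that quantity is not bounded independently of the lattice. For example, a cocompact arithmetic Fuchsian group with trace field $\QQ(\zeta_n)^+$ can contain an elliptic element of order at least $[k:\QQ]$. Then $N_\Gamma(y,\epsilon)\ge[k:\QQ]$ at its fixed point, for every $\epsilon>0$. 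Similarly, Proposition \ref{dobrowolski_bound} only guarantees translation lengths $\gg(\log d)^{-3}$. Uniform control of local multiplicities is a genuinely arithmetic input, not a lattice-point count.

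\textbf{How the paper fills it.} It invokes \cite[Proposition 5.1]{FHR_complexity}, a consequence of the arithmetic Margulis lemma. This gives $N_\Gamma(x,2r)\le[k:\QQ]^m$ for all $x$ and all $r\le\frac\eps2[k:\QQ]$. Hence
\[
\int_{\Gamma\bs G}N_\Gamma(x,2r)\,dx\le[k:\QQ]^m\vol((\Gamma\bs G)_{\le 2r})+\vol(\Gamma\bs G)=(1+o(1))\vol(\Gamma\bs G)
\]
by Theorem \ref{unbounded_degree}, since the polynomial factor is absorbed by $e^{-c[k:\QQ]}$. Your inequality $C_R\le e^{O(R)}$ at $R\asymp d$ is true a posteriori, but only because of this result.

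\textbf{Lower bounds on matrix coefficients.} The remaining input is a lower bound on $\int_{\B(R)}|\langle\pi(g)v,v\rangle|^2\,dg$. Harish-Chandra's $\Xi$-estimates give only upper bounds and cannot provide it. As you note at the end, one needs the leading term of Harish-Chandra's expansion. The paper handles the oscillating factors $e^{i\langle H,\alpha_j\rangle}$ by equidistribution on a torus; see Propositions \ref{matrixcoeff_tempered} and \ref{matrixcoeff_nontempered}.
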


In the rest of this section we prove this theorem. The proof is classical and follows the lines of \cite[10.2]{Fraczyk} and \cite[6.10-6.21]{7Sam}, using arguments originating with D.~Kazhdan and D.~L.~DeGeorge and N.~R.~Wallach \cite{DeWa78}. 



\subsection{Estimates for matrix coefficients} To apply the usual methods we need some well-known estimates for the growth of the $L^2$-norms of  matrix coefficients for non-discrete series representations restricted to a ball in $G$. We distinguish further between tempered and non-tempered representations; in the first case we will prove that.

\begin{proposition} \label{matrixcoeff_tempered}
  Let $\pi \in \widehat G$ be a tempered representation which is not a discrete series. Let $v \in \mathcal H_\pi$ be a $K$-finite vector, and $\phi(g) = \langle\pi(g)v, v\rangle$ the associated matrix coefficient. Then as $R \to +\infty$ we have:  
  \begin{equation}\label{estimate}
    \int_{B(R)} |\phi(g)|^2 \ dg  \gg_{\pi}  R^{2d + r} 
  \end{equation}
\end{proposition}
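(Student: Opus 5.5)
The plan is to derive \eqref{estimate} from Harish-Chandra's theory of asymptotic expansions of $K$-finite matrix coefficients, integrated in polar coordinates. I read the exponents in \eqref{estimate} as follows: $r$ is the dimension of the cone of directions in the Cartan subalgebra $\liea$ along which $\phi$ fails to decay faster than $e^{-\rho}$, and $d$ is the degree of the polynomial multiplying the leading exponent there. In the generic case, where $\pi$ is induced from a minimal parabolic, $r$ is the real rank. This normalisation is my assumption and should be checked against how $d$ and $r$ are fixed after the statement.

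\textbf{Step 1: polar coordinates.} Use the Cartan decomposition $G = K \exp(\overline{\liea^+}) K$. The Haar measure becomes $dg = \delta(H)\,dk\,dH\,dk'$ with $\delta(H) = \prod_{\lambda\in\Phi_\RR^+}\sinh(\lambda(H))^{m_\lambda} \asymp e^{2\rho(H)}$ uniformly on the region $\liea^+_\eps = \{H : \lambda(H)\ge \eps|H| \ \forall \lambda\}$ for $|H|\ge 1$. Restricting to $\liea^+_\eps$ only decreases the integral, since the integrand is nonnegative. Because $B(R)$ corresponds to $|H|\le R$, it suffices to bound below
\[
\int_{K\times K}\int_{H\in \liea^+_\eps,\,1\le |H|\le R} |\phi(k\exp(H)k')|^2 e^{2\rho(H)}\,dH\,dk\,dk'.
\]

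\textbf{Step 2: asymptotic expansion.} By Harish-Chandra and Casselman, on $\liea^+_\eps$ we have
\[
\phi(k\exp(H)k') = \sum_{\nu\in \mathcal E} e^{(\nu-\rho)(H)}\, p_\nu(H;k,k') + O\bigl(e^{(-\rho-\eta)(H)}\bigr)
\]
for some $\eta>0$, where $\mathcal E$ is a finite set of leading exponents and each $p_\nu$ is polynomial in $H$ and continuous in $(k,k')$. Casselman's criterion gives the rest of what is needed. Temperedness forces $\mathrm{Re}\,\nu\le 0$ on $\liea^+$. Since $\pi$ is not a discrete series, $\phi\notin L^2(G)$ for some $K$-finite $v$, so the expansion has exponents with $\mathrm{Re}\,\nu = 0$. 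In fact one should argue that this holds for every nonzero $K$-finite $v$, using irreducibility and the fact that the exponents do not depend on $v$.

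After shrinking to a subcone on which $\mathrm{Re}\,\nu<0$ strictly for all other exponents, we obtain
\[
|\phi|^2 e^{2\rho(H)} = \Bigl|\sum_{\mathrm{Re}\,\nu=0} e^{i\,\mathrm{Im}\,\nu(H)}p_\nu(H)\Bigr|^2 + O\bigl(|H|^{2d}e^{-\eta'|H|}\bigr).
\]
The error term contributes only $O(1)$ after integration.

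\textbf{Step 3: lower bound for the oscillating sum.} The main term is $|Q(H)|^2$, where $Q$ is a finite sum of distinct characters $e^{i\,\mathrm{Im}\,\nu(H)}$ times polynomials of degree at most $d$. I expect to show
\[
\int_{\text{cone},\,|H|\le R}|Q|^2\,dH \gg R^{2d+r}.
\]
The approach is to cover the cone by cubes of a fixed large size $L$. On each cube the polynomials are essentially constant up to lower-order corrections. By almost-orthogonality of distinct frequencies (Parseval for almost periodic functions, or a quantitative Ingham-type inequality), the average of $|Q|^2$ over a cube centred at $H_0$ is at least $c\sum_\nu |p_\nu^{\mathrm{top}}(H_0)|^2 - o(|H_0|^{2d})$. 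Summing over the cubes and integrating the homogeneous top-degree parts $|p^{\mathrm{top}}_\nu|^2$, which are nonzero on an open subcone, gives $R^{2d}\cdot R^{r}$.

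\textbf{Main obstacle.} The hardest step is Step 3 combined with the non-vanishing claim in Step 2. Concretely, one must show that some exponent with $\mathrm{Re}\,\nu=0$ has a polynomial coefficient of the maximal degree $d$ that is not identically zero for the chosen $v$ and for $(k,k')$ in a set of positive measure. One must also make the orthogonality argument uniform despite polynomial (non-constant) amplitudes. For the non-vanishing, I would first try realising $\pi$ as a subrepresentation of an induced representation $\mathrm{Ind}_{MAN}^G(\sigma\otimes e^{i\nu})$ with $\sigma$ discrete series or limit of discrete series. The leading coefficients are then given by Harish-Chandra $c$-functions, or Knapp–Stein intertwining operators, evaluated on $v$, and these are injective on $K$-finite vectors.
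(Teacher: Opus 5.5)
Your Steps 1--3 are the paper's argument. Both use $KAK$ integration with density $\asymp e^{2\rho}$ and the expansion \eqref{harish}. Both isolate the exponents with real part exactly $-\rho$ (your $\mathrm{Re}\,\nu=0$). Both then bound the oscillating top-degree sum from below over a region of radius $\asymp R$ in an open cone of $\liea^+$, which produces $R^{2d+\dim\liea}$.

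Only Step~3 differs. The paper picks $H_0$ with $P(H_0)\neq 0$ and uses equidistribution of $q(\liea)$ in the torus $T$ (Proposition~\ref{equid}) to get $|P(H)|\gg|H|^d$ on a positive proportion of a dilated ball. Your mean-square argument on cubes of fixed side $L$ instead gives the lower bound $\sum_\nu|p_\nu^{\mathrm{top}}|^2$ directly, without having to locate a good point. It works because cross terms are $O(1/L)$ (the frequencies $\mathrm{Im}\,\nu$ are distinct) and the amplitudes vary by only $O(L|H|^{d-1})$ across a cube. That substitution is fine.

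The genuine gap is in Step~2. From ``$\pi$ is not a discrete series'' you infer exponents with $\mathrm{Re}\,\nu\equiv 0$, and this is what lets them dominate on an open subcone. The paper's proof makes the same inference (``equality must hold for some $i$''), so it cannot be borrowed from there, and it can fail in real rank $\ge 2$. Casselman's criterion only yields a leading exponent with $\mathrm{Re}\,\nu\le 0$ on $\overline{\liea^+}$ that vanishes at some nonzero point of $\overline{\liea^+}$. That is, it vanishes on a face of the chamber, which may be a proper wall.

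For example, take $G=\SL_2(\RR)\times\SL_2(\RR)$ and $\pi=\pi_1\boxtimes\pi_2$, with $\pi_1$ a unitary principal series of nonzero parameter and $\pi_2$ a discrete series. Then $\pi$ is tempered and not a discrete series. Yet $e^{\rho(H)}|\phi(\exp H)|$ is an oscillating function of $t$ (of positive mean square) times a factor $O(e^{-\kappa s})$ with $\kappa>0$. So no exponent has $\mathrm{Re}\,\nu\equiv0$, and everything decays exponentially on each $\liea^+_\eps$. In fact $\int_{B(R)}|\phi|^2\asymp R$, whereas Steps~2--3 would give $\gg R^{\dim\liea}=R^2$.

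The same happens for simple $G$ whenever $\pi$ is induced from a discrete series on a non-minimal cuspidal parabolic $P=MA_PN$, for instance the tempered cohomological representations of $\SL_3(\RR)$. There the growth is $R^{2d+\dim\liea_P}$.

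So, as written, your argument (like the paper's) only covers $\pi$ with a purely imaginary leading exponent, e.g.\ real rank one or constituents of unitary principal series of a minimal parabolic. Your reading of $r$ anticipates lower-dimensional cones. But Step~1 discards exactly the neighbourhood of the walls where those cones live, and the minimal-parabolic expansion is not uniform there.

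The repair is your last idea, used to change the expansion rather than only to prove non-vanishing:
\begin{enumerate}
\item Embed $\pi\subset\mathrm{Ind}_P^G(\sigma\otimes e^{i\nu})$ with $\sigma$ a discrete series of $M$.
\item Use Harish-Chandra's expansion of $\phi(k\,m\exp(H_P)\,k')$ along $\liea_P^+$ (the constant term along $P$). It is uniform for $m$ in compact subsets of $M$, and its leading exponents have real part exactly $-\rho_P$.
\item Run your Step~3 in the variable $H_P$ over a tube $H_M+H_P$, with $H_M$ in a fixed compact subset of the open $M$-chamber and $|H_P|\le R-C$. On this tube the density is $\asymp e^{2\rho_P(H_P)}$.
\end{enumerate}
This gives $\gg R^{2d+\dim\liea_P}\ge R$, which is all Section~\ref{s:quantbetti} actually uses.
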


This inequality will follow from the asymptotic equivalent for matrix coefficients due to Harish-Chandra (see \ref{harish} below) and integration on $KAK$ decomposition of $G$. Using the same arguments we can also prove the following sharper result for non-tempered representations. 

\begin{proposition} \label{matrixcoeff_nontempered}
  Let $\pi \in \widehat G$ be a non-tempered representation . Let $v \in \mathcal H_\pi$ be a $K$-finite vector, and $\phi(g) = \langle\pi(g)v, v\rangle{\mathcal H_\pi}$ the associated matrix coefficient. Then there exits $a_\pi > 0$ such that as $R \to +\infty$ we have:  
  \begin{equation}\label{estimate_nontempered}
    \int_{B(R)} |\phi(g)|^2 \ dg  \gg_{\pi} e^{a_\pi R}.  
  \end{equation}
\end{proposition}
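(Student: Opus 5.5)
We prove Proposition \ref{matrixcoeff_nontempered} by following the same route as Proposition \ref{matrixcoeff_tempered}: integrate $|\phi|^2$ in polar coordinates and feed in Harish-Chandra's asymptotic expansion of $K$-finite matrix coefficients. Write $G = KA^+K$ with $A^+ = \exp(\liea^+)$, so that
\[
\int_{B(R)} |\phi(g)|^2\,dg = \int_K\int_K \int_{\{H\in\liea^+ :\, \|H\|\le R\}} |\phi(k_1 e^H k_2)|^2 \,\delta(H)\,dH\,dk_1\,dk_2,
\]
where $\delta(H) = \prod_{\alpha\in\Phi^+}\sinh(\alpha(H))^{m_\alpha} \asymp e^{2\rho(H)}$ once $H$ is away from the walls. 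Since $v$ is $K$-finite, the span of $\pi(K)v$ is finite-dimensional, and averaging over $k_1,k_2$ just replaces $|\phi|^2$ by a finite sum of squares of matrix coefficients along $A^+$. So it suffices to bound from below $\int |\langle \pi(e^H) v, v\rangle|^2 e^{2\rho(H)}\,dH$ over a suitable cone.

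The key input is the leading exponent in Harish-Chandra's expansion, which I will cite as the same asymptotic used for the tempered case. On a closed subcone $\mathcal C\subset\liea^+$ away from the walls,
\[
\phi(e^H) = \sum_{\nu} p_\nu(H)\, e^{(\nu-\rho)(H)} + (\text{lower order}),
\]
where the $\nu$ range over finitely many leading exponents ($\nu\in\liea^*_\CC$) and the $p_\nu$ are polynomials, not all zero. Non-temperedness means, by Casselman's/Harish-Chandra's criterion, that some leading exponent has $\Re\nu(H_0)>0$ for some $H_0\in\liea^+$. Choose $\mathcal C$ to be a small open cone around $H_0$ on which $\Re\nu \ge \eta\|H\|$ for some $\eta>0$. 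On $\mathcal C$, $|\phi(e^H)|^2 e^{2\rho(H)} \gtrsim |p_\nu(H)|^2 e^{2\Re\nu(H)}$, except where the oscillating phases $e^{i\Im\nu(H)}$ of several exponents with the same real part cancel.

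To control that cancellation, I will average the leading term over a ball of fixed size around each point of $\mathcal C$. The main term is a nonzero generalized exponential polynomial, so it cannot be small on a whole set of positive proportion of unit balls; this is a Turán/Nazarov-type lemma, or more simply the observation that $\int_{B(H,1)}|\sum_j c_j e^{i\theta_j(\cdot)}|^2$ is bounded below by a positive quantity depending only on the finitely many frequencies and coefficients. Integrating over $\{H\in\mathcal C : \|H\|\le R\}$ then gives a lower bound $\gg e^{2\eta R}$, up to polynomial factors, which yields \eqref{estimate_nontempered} with any $a_\pi<2\eta$.

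I expect the main obstacle to be justifying the asymptotic expansion with a nonvanishing leading term whose exponent has positive real part in a direction interior to $\liea^+$. Harish-Chandra's theory guarantees that a non-tempered $\pi$ has some exponent violating the temperedness inequality, but that exponent might live on a face, coming from a parabolic subgroup. If so, I will restrict to a cone near that face, use the constant term along the corresponding parabolic, and run the same argument there; the Jacobian on that cone is still $\asymp e^{2\rho}$, so the exponential growth survives.
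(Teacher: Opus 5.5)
Your argument is correct and has the same skeleton as the paper's, which simply says the proof of Proposition~\ref{matrixcoeff_tempered} goes through. The common steps are: integrate in $KA^+K$ coordinates with density $\asymp e^{2\rho}$ away from the walls, insert Harish-Chandra's expansion \eqref{harish}, use \cite[Theorem 8.53]{Knapp2} to get an exponent decaying more slowly than $e^{-\rho}$ in some direction, and integrate over a cone around an interior direction $H_0$. Your normalisation is the intended one; in the paper's displayed bounds $e^{-(\rho+\eps)(tH)}$ and $e^{-aR}$ the signs of $\eps$ and $a$ should be reversed.

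You differ from the paper in two technical devices.
\begin{itemize}
\item \emph{The $K$-variables.} You average over $K\times K$ and use positive-definiteness of the Gram matrix of the functions $k\mapsto\langle\pi(k)v,e_j\rangle$. The paper instead asserts a lower bound pointwise in $(k_1,k_2)$. Your version is cleaner.
\item \emph{Cancellation between exponents with the same real part.} The paper freezes the top-degree parts, uses equidistribution of $q(\liea)$ in the torus $T$ (Proposition~\ref{equid}), and gets a pointwise bound on the positive-density set $q^{-1}(B_{x_0})$. You freeze the polynomial coefficients at the centre of each unit ball and use $\int_{B(0,1)}|\sum_j c_je^{i\theta_j}|^2\ge\lambda\sum_j|c_j|^2$. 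Here $\lambda>0$ is the smallest eigenvalue of the Gram matrix of the distinct exponentials, and translating the ball only rotates the $c_j$. This is more elementary and gives a bound on every ball, not just on average.
\end{itemize}
In either version, choose $H_0$ off the finitely many hyperplanes where distinct real parts of exponents coincide. Then on a small cone the dominant exponents form a fixed group with identical real part.

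Two further remarks.

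First, the obstacle in your last paragraph does not arise. The exponents in \cite[Theorem 8.53]{Knapp2} are those of the minimal-parabolic expansion on $\liea^+$, and non-temperedness means $\mathrm{Re}\,\nu(H)>0$ for some $H$ in the closed chamber. Since $\mathrm{Re}\,\nu$ is linear, this persists on an open cone inside $\liea^+$. There the expansion converges and the dominant exponent has real part at least as large. So no constant terms along larger parabolics are needed, and this is exactly what the paper does.

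Second, you should not specialise to $k_1=k_2=1$. The criterion concerns the exponents of $\pi$. Nothing guarantees that a violating exponent occurs with nonzero coefficient in the single function $\langle\pi(e^H)v,v\rangle$. Its leading coefficients are diagonal values of sesquilinear forms that need not be definite. Instead, keep the averaged quantity $\sum_{i,j}|\langle\pi(e^H)e_j,e_i\rangle|^2$, which your Gram-matrix step already provides. Since $v$ generates the irreducible $(\mathfrak g,K)$-module, every $K$-finite coefficient is a combination of two-sided $U(\mathfrak g)$-derivatives of the functions $\langle\pi(g)e_j,e_i\rangle$. Differentiation creates no new leading exponents, so every leading exponent of $\pi$ occurs among these functions. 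The paper passes over this point as well.
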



\subsubsection{Proof of Proposition \ref{matrixcoeff_tempered}}

We consider the Harish-Chandra expansion of matrix coefficients $\phi(g) = \langle \pi(g)v_0, v_1 \rangle$ as described in \cite[VIII.8]{Knapp2}. There exists a finite set of weights ${\lambda_1, \dots  \lambda_k} \subset \mathfrak{a}^{*}_\CC$ and non-zero functions $p_{i}: \liea \times H_0 \times H_0 \to \CC$, $\Lambda \in S $ such that for every $H \in \mathfrak{a}^{+}$:
\begin{equation}\label{harish} \lim_{t \to \infty} \frac{\sum_{i=1}^k e^{\lambda_i(tH)}P_i (tH, v_1, v_2) }{\langle\pi(\exp(tH))v_1, v_2 \rangle } = 1
\end{equation}
where $P_i (H, v_1, v_2)$ are polynomial in $H$, linear in $v_1$ and anti-linear in $v_2$. Moreover, the above limit converges uniformly if $H$ varies in a compact set in the interior of $\liea^{+}$. 

By \cite[Thm 8.53]{Knapp2} we have that $\text{Re}(\lambda_i) \leq  \rho$ for all $i$, where $\rho$ is half the sum of the simple roots of the lie algebra of $G$. Moreover as $\pi$ is tempered and not discrete series, none of its matrix coefficients can be square-integrable so equality must hold for some $i$ and for all nonzero $K$-finite vectors $v_1, v_2$. 


Let $p_1, p_2, \dots, p_m $ be the non-zero homogeneous polynomials in $\CC[\liea]$  of degree $d$ corresponding to the terms of degree $d$ in the polynomials $P_i(H, v_0, v_1)$ in \ref{harish}. Let $\alpha_1, \alpha_2, \dots, \alpha_m \in \liea^{*}_{\RR}$ be the imaginary parts of the corresponding roots $\lambda_1, \lambda_2, ..., \lambda_m$ as before and let $P(H) := \sum_{j=1}^m e^{i\langle H, \alpha_j \rangle} p_j(H)$. Observe that $e^{-\rho(H)}P(H)$ corresponds to the main term of the numerator in \ref{harish}.

Let $Q: \liea \to \RR^m$ the linear map defined for $H \in \liea$ by $$Q(H) = (\langle H, \alpha_1 \rangle, \dots \langle H, \alpha_m \rangle )$$ and let $q: \liea \to \mathbb{T}^m = \RR^{m}/ (2\pi)\ZZ^m$ be the induced map from $Q$. Let $T$ be the closure of $q(\liea)$ in $\mathbb{T}^k$, $T$ is a compact torus and $q(\liea)$ is dense in $T$. We will use the fact that $q(\liea)$ equidistributes in $T$, more precisely, we will make use of the following basic fact which can be easily proved by computing Fourier coefficients:

\begin{proposition}\label{equid}
  Let $f$ be a smooth $C^{\infty}$ function in $T$, then for any $H_0 \in \liea$, we have
  \[
  \lim_{R\to \infty} \frac{1}{\vol(B_{H_0}(R))}\int_{B_{H_0}(R)} f(q(H)) \ dH \to \int_{T} f
  \]
  Moreover the convergence is uniform over $H_0 \in \liea$.
\end{proposition}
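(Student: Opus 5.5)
The statement is an equidistribution result for the linear flow $q(\liea)$ on the torus $T$. I would prove it by Fourier analysis on $T$, reducing to the case of a single character.

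Identify $T$ with the closed connected subgroup $\overline{q(\liea)}$ of $\mathbb T^m$. A smooth $f$ on $T$ extends to a smooth function on $\mathbb T^m$ (for instance by a tubular neighbourhood, or because $T$ is a subtorus and hence a direct factor). Its Fourier series then converges absolutely and uniformly, so it suffices to treat finite trigonometric polynomials together with a uniform tail estimate.

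Write $f = \sum_{n\in\ZZ^m} \hat f(n)\chi_n$ with $\chi_n(x) = e^{i\langle n, x\rangle}$. Then $\chi_n(q(H)) = e^{i\langle H, \beta_n\rangle}$, where $\beta_n = \sum_j n_j\alpha_j \in \liea^*$. There are two cases.

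\begin{enumerate}
\item If $\beta_n = 0$, then $\chi_n$ is trivial on $q(\liea)$, hence on its closure $T$. Both sides of the claimed limit are then equal to $1$, because Haar measure on $T$ is normalised.
\item If $\beta_n \neq 0$, then $\chi_n|_T$ is a nontrivial character, so $\int_T\chi_n = 0$. It remains to show that the ball averages of $e^{i\langle H,\beta_n\rangle}$ tend to $0$ uniformly in the centre $H_0$.
\end{enumerate}

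For case (2), translating by $H_0$ multiplies the integrand by the unimodular constant $e^{i\langle H_0,\beta_n\rangle}$. The absolute value of the average is therefore independent of $H_0$, which gives the uniformity for free. We are left with the Fourier transform of the normalised indicator of a Euclidean ball, evaluated at $\beta_n$. This equals $\widehat{1_{B(1)}}(R\beta_n)/\vol B(1)$, and it tends to $0$ as $R\to\infty$ by the Riemann--Lebesgue lemma. One can also see this elementarily by slicing along the direction of $\beta_n$: each one-dimensional integral of $e^{it|\beta_n|}$ over a segment is $O(1/|\beta_n|)$, while the normalisation contributes a factor $1/R$. This gives the explicit bound $\ll (R|\beta_n|)^{-1}$.

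To conclude, fix $\eps>0$ and choose a finite set $F\subset\ZZ^m$ such that $\sum_{n\notin F}|\hat f(n)|<\eps$; this is possible by smoothness. The tail contributes at most $\eps$ to both sides, uniformly in $H_0$ and $R$. The finitely many terms with $n\in F$ and $\beta_n\neq0$ are each $O((R|\beta_n|)^{-1})$, again uniformly in $H_0$. Hence the $\limsup$ of the difference is at most $2\eps$, and the limit follows.

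The only point needing care is the smooth extension of $f$ to $\mathbb T^m$. Alternatively, one can work intrinsically on $T\cong\RR^s/\ZZ^s$, using its own character group: the characters of $T$ restrict to $q(\liea)$ as $H\mapsto e^{i\langle H,\beta\rangle}$ for some $\beta\in\liea^*$, and density of $q(\liea)$ in $T$ forces $\beta\neq0$ for every nontrivial character. With this version no extension is needed, so I expect no real obstacle.
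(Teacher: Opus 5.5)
Your proof is correct and takes the route the paper indicates. The paper gives no details beyond saying the fact ``can be easily proved by computing Fourier coefficients.'' Your argument supplies exactly those details: characters trivial on $q(\liea)$ contribute equally to both sides; nontrivial ones have ball averages of modulus $\ll (R|\beta_n|)^{-1}$ independent of the centre $H_0$, because translating only multiplies by the phase $e^{i\langle H_0,\beta_n\rangle}$; and the tail is controlled by absolute summability of the Fourier coefficients of a smooth function on $\mathbb T^m$ or on $T$.
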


Let $H_0$ be a nonzero vector in the interior of $\liea^{+}$ such that $P(H_0) \neq 0$. Let $\delta_1, \delta_2 > 0$ be small enough, so that the  closure of $B_{H_0}(\delta_1)$ is contained in the interior of $\liea^{+}$ and for all $H \in B_{H_0}(\delta_1)$ we have $|P(H)| \geq \delta_2$.

We have 
\begin{equation}\label{mainexp}P(tH)= t^d \sum_{j=1}^m e^{i\langle tH, \alpha_j \rangle} p_j(H). \end{equation} 
Let $x_0 = q(H_0) = (\langle H_0, \alpha_1 \rangle, \dots \langle H_0, \alpha_m \rangle )$  be the projection of $H_0$ in $T \subset \mathbb{T}^m$ and choose $B_{x_0}$ a ball around $x_0 \in T$ small enough so we can guarantee by \ref{mainexp} that for all $H \in B_{H_0}(\delta_1)$ and all $t >0$ such that $q(tH) \in B_{x_0} $ we have $|P(tH)| \geq \frac{1}{2}t^d \delta$.

Therefore for all $H \in B_{H_0}(\delta_1)$ and all $t >0$ 
\begin{equation}\label{wecare}
|\phi(k_1\exp(tH)k_2)| \gg  e^{-\rho(tH)}t^d 
\end{equation}
and computing integrals in $KAK$ decomposition (see  \cite[Prop. 5.28]{Knapp2}) we get :
\[
\int_{B(R)} |\phi(g)|^2 \ dg \gg \int_K\int_K \int_{ \liea^{+}}^{\rho(H) < R}  e^{2\rho(H)}|\phi(k_1\exp(H)k_2)|^2  \ dH d{k_1} d{k_2}.  
\]
We choose $\delta_2$ small enough such that 
\[
B_{R, H_0} := (\delta_2R)\B_{H_0}(\delta_2) \subseteq \{H \in \liea^{+} | \rho(H) < R \}
\]
and therefore 
\[
\int_{\liea^{+}}^{\rho(H) < R}  e^{\rho(H)}|\phi(k_1\exp(H)k_2)|^2  \ dH  \gg   \int_{B_{R, H_0}} e^{2\rho(H)}|\phi(k_1\exp(H)k_2)|^2  \ dH 
\]
Observe that $B_{R, H_0}$ is a ball of radius $O(R)$ which is contained in the cone $\{tH | t>0, H \in \B_{H_0}(\delta_2)\}$ and therefore the estimate \ref{wecare} together with proposition \ref{equid} (by taking $f$ to be a smooth approximation of the characteristic function of $B_{x_o}$) give us: 
\[
\frac{1}{\vol (B_{R, H_0})} \int_{B_{R, H_0} \cap q^{-1}(B_{x_0})} e^{2\rho(H)}|\phi(k_1\exp(H)k_2)|^2  \ dH \gg R^{2d} \vol(B_{x_0}) 
\]
which easily implies \eqref{estimate}.

\subsubsection{Proof of Proposition \ref{matrixcoeff_nontempered}} The argument is the same, except that as $\pi$ is not tempered one of the $\lambda_i$ appearing in \eqref{harish} must satisfy $\mathrm{Re}(\lambda_i) > \rho$, by \cite[Theorem 8.53]{Knapp2}. By using the same argument as for \eqref{wecare} we get that
\begin{equation}\label{wecare2}
|\phi(k_1\exp(tH)k_2)| \gg  e^{-(\rho+\eps)(tH)}t^d 
\end{equation}
where $\eps \in \mathfrak a_\RR^*$ is positive on $\mathfrak a^+$. Using the same integration scheme, we get that
\[
\frac{1}{\vol (B_{R, H_0})}
\int_{B_{R, H_0} \cap q^{-1}(B_{x_0})} e^{2\rho(H)}|\phi(k_1\exp(H)k_2)|^2  \ dH \gg e^{-aR} \vol(B_{x_0})
\]
for some $a > 0$ (depending on $\pi$ via $\eps$) so that \eqref{estimate_nontempered} follows.


\subsection{Proof of limit multiplicities}


The proof of Theorem \ref{estimatemultiplicity} follows well-known lines as well. By an argument of G.~Savin \cite{Savi89} (see also \cite[Lemma 6.15]{7Sam}), there exists a subspace $W \subset L^2(\Gamma \bs G)$ such that if
\[
\beta(x) = \sup_{f \in W,\, \|f\| = 1} |f(x)|^2
\]
then
\begin{equation} \label{kernmult}
  m(\pi, \Gamma) = \int_{\Gamma \bs G} \beta(x) dx.
\end{equation}
On the other hand there is the following estimate for $\beta(x)$ (see \cite[(6.21.1)]{7Sam}: there exists a vector $v \in \mathcal H_\pi$ (from which $W$ is defined) such that for any $r > 0$ we have for any $x \in \Gamma \bs G$ that 
\begin{equation} \label{estB}
  \beta(x) \le \frac 1{\|\phi_r\|^2} N_\Gamma(x, 2r)
\end{equation}
where
\[
\phi_r(g) = 1_{B(r)}(g)\langle\pi(g)v, v\rangle_{\mathcal H_\pi}
\]
and
\[
N_\Gamma(x, \ell) = |\{\gamma \in \Gamma : d(x, \gamma x) < \ell\}|.
\]
By Proposition 5.1 in \cite{FHR_complexity}, there exists $m \in \NN$ and $\eps > 0$ such that 
\[
N_\Gamma(x, 2r) \le [k:\QQ]^m
\]
for any $r \le \tfrac{\eps}2 [k : \QQ]$. By the argument in the proof of \cite[Lemma 6.20]{7Sam}, it follows that for such $r$ we have 
\[
\int_{\Gamma\bs G} N_\Gamma(x, 2r) dx \le [k:\QQ]^m\vol \left( (\Gamma \bs G)_{\le 2r} \right) + \vol(\Gamma \bs G)
\]
and by Theorem \ref{unbounded_degree} this is $(1+o(1))\vol(\Gamma \bs G)$ as $[k : \QQ] \to +\infty$. Using \eqref{kernmult} and \eqref{estB} with $r = \tfrac{\eps}2 [k : \QQ]$ we get
\[
m(\pi, \Gamma) \ll \frac 1{\|\phi_r\|^2} \vol(\Gamma \bs G)
\]
and now Theorem \ref{estimatemultiplicity} follows from Proposition \ref{matrixcoeff_tempered} (when $\pi$ is tempered) and Proposition \ref{matrixcoeff_nontempered} (when it is not).




\bibliographystyle{plain}
\bibliography{bib}

\end{document}